\numberwithin{equation}{section} 
\let\mathbbalt\mathbb
\newcommand*{\K}{\mathbbalt{K}}
\newcommand*{\N}{\mathbbalt{N}}
\newcommand*{\R}{\mathbbalt{R}}
\newcommand*{\Acal}{\mathcal{A}}
\newcommand*{\Fcal}{\mathcal{F}}
\newcommand*{\Jcal}{\mathcal{J}}
\newcommand*{\Tcal}{\mathcal{T}}
\newcommand*{\Ucal}{\mathcal{U}}
\newcommand*{\Vcal}{\mathcal{V}}
\newcommand*{\Wcal}{\mathcal{W}}
\newcommand*{\Xcal}{\mathcal{X}}
\newcommand*{\Ycal}{\mathcal{Y}}
\newcommand*{\Dscr}{\mathscr{D}}
\newcommand*{\Hscr}{\mathscr{H}}
\newcommand*{\Kscr}{\mathscr{K}}
\newcommand*{\Lscr}{\mathscr{L}}
\newcommand*{\Mscr}{\mathscr{M}}
\newcommand*{\Nscr}{\mathscr{N}}
\newcommand*{\Pscr}{\mathscr{P}}
\newcommand*{\Rscr}{\mathscr{R}}
\newcommand*{\Tscr}{\mathscr{T}}
\newcommand*{\Vscr}{\mathscr{V}}
\newcommand*{\Wscr}{\mathscr{W}}
\newcommand*{\Lp}[1]{L^{#1}}
\DeclareMathOperator{\Proj}{P}
\newcommand{\dee}{\mathop{}\!\mathrm{d}}
\newcommand*{\e}{\mathrm{e}}
\DeclareMathOperator*{\sgn}{sgn}
\DeclareMathOperator*{\TV}{T.\!V.}
\newcommand*{\Id}{\mathrm{Id}}
\newcommand*{\del}{\partial}
\newcommand*{\epsv}{\varepsilon}
\newcommand*{\phiv}{\varphi}
\newcommand*{\ds}{\displaystyle}
\newtheorem{thm}{Theorem}[section]
\newtheorem{cor}[thm]{Corollary}
\newtheorem{lem}[thm]{Lemma}
\newtheorem{prp}[thm]{Proposition}
\newtheorem*{mainres}{Main result}
\theoremstyle{remark}
\newtheorem{rem}[thm]{Remark}
\newtheorem{exm}[thm]{Example}
\theoremstyle{definition}
\newtheorem{dfn}[thm]{Definition}
\renewcommand{\left}{\mleft}
\renewcommand{\right}{\mright}
\newcommand{\ip}[2]{\langle#1,#2\rangle}
\DeclarePairedDelimiter\abs{\lvert}{\rvert}
\DeclarePairedDelimiter\norm{\lVert}{\rVert}%
\newcommand{\diff}[2]{\frac{\dee #1}{\dee #2}}
\newcommand{\pdiff}[2]{\frac{\partial #1}{\partial#2}}
\newcommand{\arxiv}[1]{\href{https://arxiv.org/abs/#1}{arXiv:#1}} 
\newcommand{\ssrn}[1]{\href{https://dx.doi.org/10.2139/ssrn.#1}{ssrn:#1}} 
\title[Equivalence of entropy solutions and gradient flows for 1D Euler systems]{Equivalence of entropy solutions and gradient flows for pressureless 1D Euler systems} 
\author[J.\ A.\ Carrillo]{Jos\'{e} Antonio Carrillo\textsuperscript{1}}
\address{\textsuperscript{1}Mathematical Institute, University of Oxford, Oxford OX2 6GG, United Kingdom}
\email{carrillo@maths.ox.ac.uk}
\author[S.\ T.\ Galtung]{Sondre Tesdal Galtung\textsuperscript{2,3}}
\address{\textsuperscript{2}Department of Mathematical Sciences, NTNU -- Norwegian University of Science and Technology, 7491 Trondheim, Norway}
\address{\textsuperscript{3}Department of Mathematics and Cybernetics, SINTEF Digital, 0314 Oslo, Norway (\textit{current address})}
\email{sondre.galtung@sintef.no}
\keywords{Pressureless Euler system, Euler--Poisson system, entropy solutions, gradient flows, sticky particles, front tracking}
\subjclass{35Q35, 76N10, 35L67, 49J40, 82C22}
\begin{document}
  
  \begin{abstract}
    We study distributional solutions of  pressureless Euler systems on the line.
    In particular we show that Lagrangian solutions \cite{brenier2013sticky}, introduced by Brenier, Gangbo, Savar\'{e} and Westdickenberg, and entropy solutions \cite{nguyen2015one}, studied by Nguyen and Tudorascu for the Euler--Poisson system, are equivalent.
    For the Euler--Poisson system this can be seen as a generalization to second-order systems of the equivalence between \(L^2\)-gradient flows and entropy solutions for a first-order aggregation equation proved by Bonaschi, Carrillo, Di Francesco and Peletier \cite{bonaschi2015equivalence}. The key observation is an equivalence between Ole\u{\i}nik's E-condition for conservation laws and a characterization due to Natile and Savar\'{e} of the normal cone for \(L^2\)-gradient flows. This new equivalence allows us to define unique solutions after blow-up for classical solutions of the Euler--Poisson system with quadratic confinement due to Carrillo, Choi and Zatorska \cite{carrillo2016pressureless}, as well as to describe their asymptotic behavior.
  \end{abstract}
  
  \maketitle
  
  
  \section{Introduction}
  We will study distributional solutions of the Cauchy problem for the 1D pressureless Euler system
  \begin{subequations} \label{eq:EF}
    \begin{align}
      \del_t \rho + \del_x (\rho v) &= 0, \label{eq:EF:den} \\
      \del_t (\rho v) + \del_x (\rho v^2) &= f[\rho], \label{eq:EF:mom}
    \end{align}
  \end{subequations}
  with initial density \( \rho(0,x) = \rho_0(x) \) and velocity \( v(0,x) = v_0(x) \), where \(f[\rho]\) is a force field generated by the fluid itself.
  Furthermore, we assume initial data \(\rho_0 \in \Pscr_2(\R)\), the set of probability measures \(\Pscr(\R)\) with bounded second moments, and \(v_0 \in \Lp{2}(\R,\rho_0)\), such that the initial kinetic energy is finite.
  That is, we have
  \begin{equation}\label{eq:init} 
     \rho_0 \in \Pscr_2(\R) \iff \int_{\R} \abs{x}^2 \dee\rho_0 < \infty \quad \text{and} \quad v_0 \in \Lp{2}(\R,\rho_0) \iff \int_{\R} v_0^2 \dee\rho_0 < \infty.
  \end{equation}
  Moreover, we assume the map \(f[\rho] \colon \Pscr_2(\R) \to \Mscr(\R)\), where \(\Mscr(\R)\) are the signed Borel measures with finite total variation, is absolutely continuous with respect to \(\rho\), that is,
  \begin{equation}\label{eq:f:RN}
    f[\rho] = f_\rho \rho, \qquad f_\rho \in \Lp{2}(\R,\rho),
  \end{equation}
  where \(f_\rho\) is the Radon--Nikodym derivative of \(f[\rho]\) with respect to \(\rho\).
  The system \eqref{eq:EF} was studied under these assumptions in \cite{brenier2013sticky} with forcing and no forcing in \cite{brenier1998sticky,natile2009wasserstein}. 
  A special case of \eqref{eq:EF} which is of interest to us is the pressureless Euler--Poisson system, studied in, e.g., \cite{nguyen2008pressureless,nguyen2015one} in the form
  \begin{subequations} \label{eq:EP}
    \begin{align}
      \del_t \rho + \del_x (\rho v) &= 0, \label{eq:EP:den} \\
      \del_t (\rho v) + \del_x (\rho v^2) &= -\left(\alpha \Phi_x + \beta \right) \rho, \label{eq:EP:mom}
    \end{align}
  \end{subequations}
  where \( \alpha, \beta \in \R \), while \(\Phi_x(t,x) = \phi' \ast \rho(t,x) \) with \( \phi(x) = \frac12\abs{x} \) such that \( \del^2_x\Phi = \rho \) in the distributional sense.
  Note that in \cite{nguyen2008pressureless,nguyen2015one} one only specifies \(\del^2_x\Phi = \rho\), which determines \(\Phi_x\) up to a constant which can easily be absorbed in \(\beta\), and this explains why they work with a different \(\phi\) corresponding to the Heaviside function.
  We prefer to work with the symmetric \(\phi\) given above, which then can be thought of as a 1D Poisson potential.
  A positive or negative sign for \(\alpha\) then yields respectively an attractive or repulsive force.
  
  We are particularly interested in the two, seemingly distinct, notions of solution introduced in \cite{brenier2013sticky} and \cite{nguyen2008pressureless,nguyen2015one}.
  Each of which yields a unique solution to their respective systems under appropriate assumptions on the forcing terms, even faced with mass concentration.
  Our aim is to show that these notions in fact are equivalent. More precisely, we prove the equivalence of Lagrangian solutions \cite{brenier2013sticky} and entropy solutions \cite{nguyen2008pressureless,nguyen2015one} for the Euler system \eqref{eq:EF}, which contains the attractive and repulsive Euler--Poisson systems \eqref{eq:EP} as special cases. A very recent paper \cite{suder2023lagrangian} describes this equivalence between sticky particles flow described in \cite{hynd2019lagrangian} and entropy solutions of \eqref{eq:EF} for \(f[\rho] \equiv 0\) .
  
  An analogous equivalence has been shown in \cite{bonaschi2015equivalence} for the nonlocal interaction equation where a probability measure \(\mu \in \Pscr_2(\R)\) satisfies
  \begin{equation}\label{eq:interact}
    \del_t \mu + \del_x\left( \mu \; \del_x \phi \ast \mu \right) = 0, \qquad t > 0, \quad x \in \R
  \end{equation}
  for an interaction potential \(\Phi(x) = \alpha \abs{x}\) with \(\alpha \in \{-1,1\}\), and the Burgers-type conservation law
  \begin{equation}\label{eq:Burgers}
    \del_t M + \del_x \Acal(M) = 0, \qquad t > 0, \quad x \in \R
  \end{equation}
  with flux function \(\Acal(m) = \alpha (m-m^2)\) and \(M\) being the (cumulative) distribution function of \(\mu\), \(M(t,x) = \mu(t,(-\infty,x])\).
  Here \(\alpha = 1\) results in an attractive potential, while \(\alpha = -1\) gives a repulsive potential.
  The dynamics of \eqref{eq:interact} is relatively simple in the sense that an attractive potential promotes accumulation of mass \(\mu\), while a repulsive potential spreads the mass.
  This is particularly clear for a sum of Dirac masses \(\mu = \sum_{i=1}^n m_i \delta_{x_i}\), i.e., a set of \(n\) ``particles'', as an attractive potential will aggregate the particles into one particle of unit mass, while a repulsive potential will instantly diffuse the Dirac measures into a measure which is absolutely continuous with respect to the Lebesgue measure.
  The aggregation and diffusion turn out to have a one-to-one correspondence with respectively shock and rarefaction waves of the conservation law \eqref{eq:Burgers}, see \cite[Section 2]{bonaschi2015equivalence}.
  In fact, there the equivalence of \eqref{eq:interact} and \eqref{eq:Burgers} is first established at the particle level before passing to a limit to cover the general case.
  
  Note that \eqref{eq:interact} is a first-order system, unlike the second-order Euler--Poisson system \eqref{eq:EP}, and so it does not feature the issues of sub- and supercritical velocities in the repulsive case.
  Indeed, if particles in the repulsive \eqref{eq:interact} are initially separate, they will never meet, as the inter-particle repulsive forces will only drive them further apart.
  On the other hand, for particles in the repulsive \eqref{eq:EP}, for certain, so-called supercritical, initial velocity configurations, particles might collide despite the mutually repulsive forces acting on them.
  This also raises the question of what is the ``natural'' or physical way of extending the solutions beyond collision-time, or singularity formation.
  
  In this connection to particle dynamics, we also confirm the final remark of Nguyen and Tudorascu \cite{nguyen2015one} that their entropy solution of the Euler--Poisson system coincides with the corresponding Lagrangian solution of Brenier et.\ al \cite{brenier2013sticky} in the attractive Poisson case on the basis that they both can be realized as limits of the same sticky discrete particle systems; indeed, we find these notions of solution are equivalent directly from their definitions regardless of the attractive/repulsive character of the force. Furthermore, we deal with the following generalization of \eqref{eq:EP},
  \begin{equation}\label{eq:EPQ}
    \begin{aligned}
      \del_t \rho + \del_x (\rho v) &= 0, \\
      \del_t (\rho v) + \del_x (\rho v^2) &= -\gamma \rho v -\lambda^2 (\tilde{\phi}'\ast\rho)\rho.
    \end{aligned}
  \end{equation}
  In the above, \(\gamma \ge 0\) is a linear damping parameter, while \(\tilde{\phi} = \frac12 x^2 - \abs{x}\) represents a potential with Coulomb repulsion and quadratic confinement.
  This system has been studied in detail in \cite{carrillo2016pressureless} for classical solutions up until blow-up, or mass concentration, for which precise critical thresholds are given.
  We want to use the aforementioned solution concepts to study \eqref{eq:EPQ} beyond the time of blow-up and its asymptotic behavior.
  
  There is a rich literature on pressureless Euler systems, and so we will only present a few works relevant to us, mostly in the one-dimensional case.
  The system \eqref{eq:EF} with \(f[\rho] \equiv 0\), often named simply the pressureless Euler system, has been thoroughly studied, and has found applications for instance in modeling the formation of the early universe, in terms of ``sticky dust'', cf.\ \cite{zeldovich1970,shandarin1989large}.
  Indeed, in \cite{brenier1998sticky} the authors show that sticky, or adhesive, particle evolution can be formulated in terms of a scalar conservation law and give rise to weak solutions of this system satisfying an entropy condition;
  in particular, they construct weak solutions for compactly supported \(\rho_0 \in \Pscr(\R)\), denoted \(\Pscr_{\mathrm{c}}(\R)\), and continuous and bounded velocity \(v_0 \in C_{\mathrm{b}}(\R)\).
  This system has been studied from a slightly different point of view in \cite{e1996generalized}, which also covers the self-gravitating case \eqref{eq:EP} with \(\beta =0\) and \(\alpha =1\), where one employs a continuation of characteristics-type argument.
  In \cite{bouchut1999duality} the authors introduce the notion of duality solutions for the pressureless Euler system, which are also related to solutions of an associated scalar conservation law, and prove existence and uniqueness of these.
  Another existence and uniqueness result is found in \cite{huang2001well} for \(v_0\) bounded and measurable with respect to the nonnegative Radon measure \(\rho_0\),  where one relies on an entropy inequality, see \eqref{eq:onesided} below, and weak continuity of the initial energy.
  
  In \cite{natile2009wasserstein}, the authors study the pressureless system using adhesive dynamics and gradient flows with Wasserstein metrics while recovering the results of \cite{brenier1998sticky}. The authors in \cite{cavalletti2015simple} give a  more direct proof of construction of sticky solutions of \eqref{eq:EP} in the attractive case using projections.  
  Inspired by \cite{brenier1998sticky}, the authors of \cite{nguyen2008pressureless} use particle approximations and scalar conservation laws to introduce what they call entropy solutions of the Euler--Poisson system \eqref{eq:EP}.
  Later they refined their results in \cite{nguyen2015one} to allow for the same assumptions on initial data as in \cite{brenier2013sticky}, as well as treating additional viscous terms in the right-hand side.
  
  Several works for the Euler--Poisson system deal with classical solutions up until blow-up \cite{ELT01,TW08,tadmor2011variational,CCTT16,carrillo2016pressureless,BLT23}. Their strategy is based on explicit representations of the solutions along characteristics, leading to critical thresholds for the existence and uniqueness of classical solutions, see \cite{CCP17} for numerical illustrations. An existence and uniqueness result of entropy solutions related to ours using the framework of conservation laws \cite{lefloch2015existence} covers \eqref{eq:EP} in the attractive case, see Remark \ref{rem:lefloch} for more details. Furthermore, in the recent work \cite{huang2023global} uniqueness of entropy solutions for the attractive Euler--Poisson system \eqref{eq:EP} with \(\alpha = 1\), \(\beta=0\), and initial data \(\rho_0 \in \Pscr_\mathrm{c}(\R)\) and \(v_0 \in \Lp{\infty}(\R,\rho_0)\) is established. Note however that in this work, like in \cite{huang2001well}, entropy solutions are defined directly for the system \eqref{eq:EP} through a one-sided Lipschitz condition, see \eqref{eq:onesided} below, and weak continuity of \(\rho v^2\) to \(\rho_0 v_0^2\) as \(t\to0+\), see Remark \ref{rem:huang} for more details.
  
  Other pressureless Euler-type systems \cite{TT14} have recently been treated using the connection between sticky particle solutions and entropy solutions of scalar balance laws. For instance, solutions of the Euler-alignment system have been obtained for compactly supported \(\rho_0 \in \Pscr(\R)\) and \(v_0 \in \Lp{\infty}(\R,\rho_0)\) in \cite{leslie2021sticky}, where \(f[\rho]\) is replaced by an alignment force of Cucker--Smale-type involving the velocity \(v\).
  
  Connections between scalar conservation laws, optimal transport, monotone nondecreasing rearrangements, and gradient flows have been explored in several papers \cite{bolley2005contractive,CDL06,CDL07, brenier2009L2,bonaschi2015equivalence,difrancesco2016scalar}.
  
  Finally, we emphasize that a technical difficulty one needs to overcome is the lack of Lipschitz bounds for the flux of the associated conservation law to \eqref{eq:EF}.
  As in \cite{nguyen2015one}, we then build upon the uniqueness results of \cite{golovaty2012existence}, which is based on earlier ideas of \cite{carrillo1999entropy,karlsen2002note} rather than classical existence and uniqueness results for scalar conservation laws, cf.\ \cite{kruzkov1970first,bressan2000hyperbolic,holden2015front}.
  
  Since we only consider systems without pressure terms in the flux, we will from here on omit the word pressureless when we speak of the systems \eqref{eq:EF} and \eqref{eq:EP}.
  
  \subsection{Main results}
  Here we present a formal overview of our main results, namely the equivalence of the Lagrangian and entropy solution concepts for the Euler system \eqref{eq:EF}.
  
  A first clue to the connection between these solution concepts is the fact that they both apply a ``trick'' to reduce the second-order system for the variables \(\rho\) and \(v\) to a first order equation.
  For Lagrangian solutions, the new unknown is the optimal transport map, or monotone rearrangement, \( X \colon [0,T]\times (0,1) \to \R \), which pushes the Lebesgue measure \(\mathfrak{m}\) on \((0,1)\) to \(\rho(t,\cdot)\), that is, \(X(t,\cdot)_\#\mathfrak{m} = \rho(t,\cdot)\).
  For a given time \( t \in [0,T]\), the map \(X(t,\cdot)\) belongs to the cone \(\Kscr\) of nondecreasing functions in \(\Lp{2}(0,1)\).
  Note that concentration of mass in \(\rho(t,\cdot)\) corresponds to ``flat'' sections of \(X(t,\cdot)\), and in this setting another useful subspace of \(\Lp{2}(0,1)\) is the set \(\Hscr_{X}\) of functions which are constant wherever \(X\) is constant.
  The time-evolution of the transport map is subject to a differential inclusion, rather than an equation, involving a velocity \(U\), which may very well depend on \(X\), prescribed by the forcing term in the Euler system.
  This differential inclusion reads \(U-\dot{X} \in N_X\Kscr\), where \(N_X\Kscr\) is the normal cone of \(\Kscr\) at \(X\).
  However, when there is mass concentration, the prescribed velocity \(U\) may be incompatible with \(X\) remaining in the cone \(\Kscr\), that is, \(U\) does not belong to \(T_X\Kscr\), the tangent cone of \(\Kscr\) at \(X\).
  This is analogous to when in the method of characteristics, the characteristics are prescribed velocities which lead them to cross.
  In such situations, one way of ensuring that \(X\) remains nondecreasing  is to let its velocity \(\dot{X}\) be given by the \(\Lp{2}\)-projection of \(U\) onto \(\Hscr_{X} \subset T_X\Kscr\), i.e., \(\dot{X} = \Proj_{\Hscr_{X}}U\).
  This then ensures a flat section remains flat, moving with a common velocity.
  Of course, we then require \(U-\Proj_{\Hscr_{X}}U \in N_X\Kscr\) for \(X\) to still satisfy the differential inclusion.
  It then turns out that for all times, the velocity of \(X\) coincides with  \(\Lp{2}\)-projection \(\Proj_{T_X\Kscr}U\) of \(U\) onto the tangent cone \(T_X\Kscr\), i.e., the minimal element of the set \(U - N_X\Kscr\); hence, this acts as a selection principle.
  We refer to Section \ref{s:gradflow} for details on this solution framework.
  
  On the other hand, the new unknown in the entropy solution-setting is the distribution function of \(\rho\), that is \( M \colon [0,T]\times \R \to [0,1] \).
  In order to combine the two equations of the Euler system into a single conservation law, one then needs to devise an appropriate time-dependent flux function \(\Ucal(t,M)\), which in fact turns out to be the primitive of the prescribed velocity \(U\) of the Lagrangian solutions. This novel procedure generalizes the ideas in \cite{nguyen2008pressureless,leslie2021sticky} and leads to a conservation law of the form
  \begin{equation}\label{eq:clawintro}
    \del_t M + \del_x \Ucal(t,M) = 0.
  \end{equation}
  Here, concentration of mass in \(\rho(t,\cdot)\) corresponds to ``jumps'' in \(M(t,\cdot)\), i.e., a shock for the conservation law.
  If \(M\) is sufficiently regular, say, for simplicity, a piecewise smooth solution of the conservation law, we know from the theory that for \(M\) to be an admissible weak solution, such jumps must satisfy the Rankine--Hugoniot condition which determines the shock velocity as the ratio of the jump in \(\Ucal(t,M)\) to the jump in \(M\).
  Moreover, uniqueness of entropy solutions to \eqref{eq:clawintro} can be established if the jumps satisfy an inequality known as \textit{Ole\u{\i}nik's E-condition}, see \cite{oleinik1959uniqueness}.
  This condition determines whether a jump should be sustained as a shock, or if it should be smoothed out, turning it into a rarefaction wave.
  This framework is detailed in Section \ref{s:entropy}.
  
  We emphasize that the condition above should not be confused with another inequality \cite{oleinik1957discontinuous}, see also \cite[Section 11.2]{dafermos2016hyperbolic}, frequently called the Ole\u{\i}nik entropy condition and used to single out the desired solution.
  For \eqref{eq:EF} with \(f[\rho] \equiv 0\) this inequality takes the following form, cf.\ \cite{huang2001well,nguyen2008pressureless,natile2009wasserstein}; for a.e.\ \(t > 0\) we have
  \begin{equation}\label{eq:onesided}
    \frac{v(t,x_2)-v(t,x_1)}{x_2-x_1} \le \frac1t \enspace \text{for \(\rho\)-a.e.} \ x_1 \le x_2.
  \end{equation}
  
  Note that for a given probability measure \(\rho\), if its corresponding optimal transport map \(X\) and distribution function \(M\) both are taken to be right-continuous, \(X\) and \(M\) are each other's generalized inverses, see, e.g., \cite{delafortelle2015}.
  Then at least for the initial data, there is a correspondence between the two notions of solution.
  A more interesting issue is whether this reciprocity remains valid as these functions evolve in time, as it is not obvious that these procedures ``pick'' the same solutions.
  The key to answering this question lies in a more interesting correspondence between elements of the two notions presented above.
  For instance, as a consequence of being each others generalized inverses, a horizontal segment of an optimal transport map corresponds to a vertical segment of the distribution function.
  Then it would perhaps not be so surprising if the mechanisms preserving these segments, the projection \(\Proj_{\Hscr_{X}}U\) for the former and the Rankine--Hugoniot condition for the latter, were equivalent.
  A closer look at the definition of the projection \(\Proj_{\Hscr_{X}}\) and the relation between the prescribed velocity \(U\) and flux function \(\Ucal\) reveals that this is indeed the case.
  
  The next connection is perhaps less obvious, but thanks to a characterization of the normal cone \(N_X\Kscr\) due to Natile and Savar\'{e} \cite{natile2009wasserstein}, one can show that the requirement that the projection satisfies \(U-\Proj_{\Hscr_{X}}U \in N_X\Kscr\) is equivalent to the Ole\u{\i}nik E-condition for \(M\) and \(\Ucal(t,M)\).
  \begin{table}[ht]
    \begin{tabular}{l|lcl}
      Concept \(\backslash\) Framework & Lagrangian solution & & Entropy solution \\ \hline
      Solution variable & Optimal transport map \(X\)& 
      & Distribution function \( M \) \\
      Shock admissibility & \(\dot{X} = \Proj_{\Hscr_{X}}U\)
      & \(\iff\)
      & Rankine--Hugoniot condition \\
      Selection principle & \(U - \Proj_{\Hscr_{X}}U \in N_X\Kscr \) & 
      & Ole\u{\i}nik E-condition
    \end{tabular}
    \caption{Correspondence table for the notions of solution.}
    \label{tab:corr}
  \end{table}
  
  These correspondences are summarized in Table \ref{tab:corr} and lie at the heart of the argument for our main result, which can be informally stated as follows.
  
  \begin{mainres}
    Lagrangian $\iff$ Entropy solutions for 1D pressureless Euler systems. 
    The equivalence is based on showing the correspondence of the concepts in Table \ref{tab:corr}. 
  \end{mainres}
  
  Section \ref{s:equiv} contains precise statements and proofs of the above results.
  
  This framework allows us to establish a well-defined entropy solution for systems which previously have been studied up to the point where classical solutions break down. The main contributions of our work in this respect are:
  \begin{itemize}
    \item For the repulsive Euler--Poisson system, i.e., \eqref{eq:EP} with \(\alpha > 0\), mass may concentrate depending on the initial velocity configuration, but will tend to diffuse or spread out over time. Even if this can be obtained for Lagrangian solutions after a very careful and detailed reading of \cite{brenier2013sticky}, we here establish this result for entropy solutions to \eqref{eq:clawintro}, and therefore for Lagrangian solutions, as a consequence of the equivalence of the entropy and Lagrangian solution concepts.
    
    \item For the Euler system \eqref{eq:EPQ} with damping, i.e., \(\gamma > 0\), despite breakdown of classical solutions, we show for the first time that a globally defined entropy solution exists, tending asymptotically to a uniform mass distribution. This is in line with the behavior of globally defined classical solutions in \cite{carrillo2016pressureless}, while now this result is established asymptotically independently of mass concentration happening or not during some time interval. We show examples of temporary mass concentration in Section \ref{s:examples}.
  \end{itemize}
  
  Finally, we emphasize that the one-dimensional setting is essential for both frameworks:
  For the Lagrangian solutions, it gives a well-defined ordering to define the cone \(\Kscr\).
  On the other hand, for the entropy solutions it allows us to work with the distribution function \(M\) rather than the measure \( \rho \), which on the line uniquely determine one another.
  The one-dimensional setting is also needed for the reciprocal relationship between the optimal transport map and distribution function of a probability measure, i.e., that they are each other's generalized inverses.
  Since the 1D setting is essential for both solution concepts, it seems unlikely that these results may be generalized to higher dimensions, at least in any naive way.
  However, on the line one might allow the inclusion of more general forcing terms, e.g., the alignment force studied in the recent work \cite{leslie2021sticky}.
  
  Following \cite{brenier2013sticky}, Section \ref{s:motivation} is devoted to motivate the Lagrangian solutions using particle dynamics, as this concept might be more intuitive in this simpler setting.
  The particle dynamics will be studied further in Section \ref{s:EPparticle} in both repulsive and attractive Poisson forces showcasing the right continuation after collisions between particles dictated by the Lagrangian and entropy solution concepts. We finally discuss in Section \ref{s:examples} the repulsive Poisson force case with/without confinement and/or damping. These novel cases show the full power of this equivalence to characterize the unique continuations after blow-up in these additional examples pressureless Euler systems of the type \eqref{eq:EPQ}.
  
  
  \section{Motivation from particle dynamics}\label{s:motivation}
  In this section we will motivate the need for well-defined notions of solution from a particle-point of view, as the sticky particle dynamics lays the foundations for the works \cite{brenier2013sticky} and \cite{nguyen2008pressureless,nguyen2015one}.
  In particular we want to, as in \cite{brenier2013sticky}, set the stage for the Lagrangian solutions.
  This allows us to introduce a few concepts from convex analysis in a somewhat simple setting, which we can then recall as specific cases when we present the Lagrangian solutions in generality in Section \ref{s:gradflow}.
  We will also return to the particle dynamics in Section \ref{s:EPparticle} where we will study the particle dynamics for \eqref{eq:EP} in detail.
  
  When we talk about particle solutions of \eqref{eq:EP}, we mean linear combinations of Dirac measures in the following form, which is typically called an empirical measure in the case of \(\rho\),
  \begin{equation}\label{eq:particlesols}
    \rho(t,x) = \sum_{i=1}^n m_i \delta_{x_i(t)}, \qquad \rho v(t,x) = \sum_{i=1}^n m_i v_i(t) \delta_{x_i(t)},
  \end{equation}
  where \(x_i(t)\) for \(i \in \{1,\dots,n\}\) is the trajectory of a particle with velocity \(v_i(t)\) and mass \(m_i > 0\).
  Since we are considering \(\rho \in \Pscr(\R)\), the masses should always sum to one, i.e., \(\sum_{i=1}^n m_i = 1\).
  For the rest of this section we assume that the \(i\)\textsuperscript{th} particle has initial position \(x_i^0\) and velocity \(v_i^0\), where the initial positions are distinct and ordered:
  \begin{equation*}
    x_1^0 < x_2^0 < \cdots < x_{n-1}^0 < x_n^0.
  \end{equation*}
  At some later time, particles may meet, and so we will have to impose a rule for resolving the dynamics in those cases; for now, the only requirement we impose is that particle trajectories may not cross, that is, the initial ordering of particles is retained.
  
  \subsection{First considerations and simplifications}\label{ss:classical}
  Initially, the \(n\) distinct particles will be governed by the particle system corresponding to \eqref{eq:EP} given by 
  \begin{equation}\label{eq:EPparticle}
    \ddot{x}_i = -\frac{\alpha}{2}\sum_{j=1, j \neq i}^{n}m_j \sgn(x_i-x_j) - \beta, \qquad i \in \{1,\dots,n\},
  \end{equation}
  which can seen as Newton's second law for a set of particles moving in a conservative field governed by the scalar potential \(E_\mathrm{p}(\{x_i\})\) where
  \begin{equation*}
    E_\mathrm{p}(\{x_i\}) = \frac{\alpha}{2}\sum_{i=1}^{n}\sum_{\substack{j=1 \\ j \neq i}}^n m_i m_j \phi(x_i-x_j) + \beta \sum_{i=1}^n m_i x_i = \frac{\alpha}{4}\sum_{i=1}^{n}\sum_{\substack{j=1 \\ j \neq i}}^n m_i m_j \abs{x_i-x_j} + \beta \sum_{i=1}^n m_i x_i.
  \end{equation*}
  The first term on the right-hand side of \eqref{eq:EPparticle}, scaled by \(\alpha\), is the force per unit mass coming from the Poisson interaction forces between particles, and is therefore an internal force for the system.
  On the other hand, the second term scaled by \(\beta\) is the force per unit mass coming from an external force, which could be an external gravitational or electrical field.
  Introducing the kinetic energy \(E_\mathrm{k}(\{x_i\}) = \frac12 \sum_{i=1}^n m_i \dot{x}_i^2\), then the Euler--Lagrange equations for the Lagrangian \(L = E_\mathrm{k}-E_\mathrm{p}\) yields \eqref{eq:EPparticle}.
  Moreover, the energy \(E_\mathrm{k}+E_\mathrm{p}\), which corresponds to the Hamiltonian after introducing the momentum \(p_i = m_i \dot{x}_i\), is conserved.
  The total momentum \(\sum_{i=1}^n m_i \dot{x}_i\) however, is not conserved because of the external force.
  If we introduce the center of mass \(\bar{x} \coloneqq \sum_{i=1}^nm_i x_i\) and average velocity, which here coincides with the total momentum, \(\bar{v} = \sum_{i=1}^nm_i\dot{x}_i\), we find that \(\dot{\bar{x}} = \bar{v}\).
  Furthermore, \(\dot{\bar{v}} = -\beta\) since the sum of internal forces equals zero by Newton's third law, meaning that the total momentum changes at a linear rate, while the center of mass will follow a parabolic trajectory.
  If we instead of the absolute positions \(x_i\) consider the positions \(x_i'\) relative to the center of mass, i.e., \(x_i' = x_i - \bar{x}\), we can consider the system \eqref{eq:EPparticle} without external forces.
  As we have seen, the external force only affect the center of mass \(\bar{x}\), while the internal forces only affect the relative positions \(x_i'\) governed by \eqref{eq:EPparticle} with \(x_i\) replaced by \(x_i'\) and \(\beta =0\).
  The relative center of mass \(\sum_{i=1}^n m_i x_i' = 0\) and momentum \(\sum_{i=1}^n m_i \dot{x}_i' = 0\) are conserved, and the kinetic energy \(E_k\) can be decomposed as
  \begin{equation*}
      E_\mathrm{k}(\{\dot{x}_i\}) = \frac12\sum_{i=1}^nm_i\dot{x}_i^2 = \frac12 \dot{\bar{x}}^2 + \frac12 \sum_{i=1}^n m_i (\dot{x}_i')^2.
  \end{equation*}
  Therefore, we could without loss of generality consider the dynamics of the relative positions, not relabeled with primes, governed by \eqref{eq:EPparticle} with \(\beta = 0\), and add the motion of the center of mass afterwards to obtain the full dynamics.
  However, we choose to keep \(\beta\) in our approach, for ease of comparison to the works \cite{nguyen2008pressureless,nguyen2015one}.
  
  \subsection{Particle dynamics as weighted \(\ell^2\)-gradient flow}\label{ss:gradflow:disc}
  In this section, we follow \cite{brenier2013sticky} in motivating the differential inclusion formulation of the particle dynamics.
  For ease of notation, let us write \(\mathbf{m} = (m_1,\dots,m_n)\), \(\mathbf{x} = (x_1,\dots,x_n)\) and \(\mathbf{v} = (v_1,\dots,v_n) \in \R^n\), such that \(\mathbf{m} \in \R^n_+\) while \(\mathbf{x}, \mathbf{v} \in \R^n\). 
  Let us for the moment consider a slightly more general setting than in the previous section, where the dynamics initially are given by the accelerated motion
  \begin{equation}\label{eq:partgen}
    \dot{\mathbf{x}} = \mathbf{v}, \qquad \dot{\mathbf{v}} = \mathbf{a}_\mathbf{m}(\mathbf{x}),
  \end{equation}
  where \(\mathbf{a}_\mathbf{m} = (a_{\mathbf{m},1}(\mathbf{x}),\dots,a_{\mathbf{m},n}(\mathbf{x}))\) is defined for \(\mathbf{x} \in \R^n\).
  For the set of particles \(\mathbf{x} \in \R^n\), our rule of non-crossing trajectories, or well-ordering, is equivalent to requiring \(\mathbf{x}\) to lie in the convex cone
  \begin{equation}\label{eq:cone:part}
    \K^n \coloneqq \{ \mathbf{x} \in \R^n \colon x_j \le x_{j+1}, \, j \in \{1,\dots,n-1\} \}.
  \end{equation}
  When a non-empty subset of particles collide, we have \(\mathbf{x} \in \partial \K^n\), the boundary of the convex set.
  To be precise, let us introduce the set of indices for collided particles \(\Omega_{\mathbf{x}} \coloneqq \{j \colon x_j = x_{j+1}, \, j \in \{1,\dots,n-1\}\}\), then \(\partial \K^n = \{\mathbf{x} \in \K^n \colon \Omega_{\mathbf{x}} \neq \emptyset\}\).
  Now, since \(\mathbf{x}\) is required to lie within \(\K^n\), not all velocities are admissible for a given \(\mathbf{x}\).
  The set of admissible velocities at position \(\mathbf{x}\) is given by the \textit{tangent cone} \(T_\mathbf{x} \K^n\), defined as
  \begin{equation}\label{eq:TC1:disc}
    T_\mathbf{x} \K^n \coloneqq \mathrm{cl}\{\vartheta(\mathbf{y}-\mathbf{x}) \colon \mathbf{y} \in \K^n, \vartheta \ge 0 \},
  \end{equation}
  the closure of line segments in the direction \(\mathbf{y}-\mathbf{x}\) for which \(\mathbf{y}\) belongs to \(\K^n\).
  One can show that \eqref{eq:TC1:disc} is equivalent to
  \begin{equation}\label{eq:TC2:disc}
    T_\mathbf{x} \K^n = \{ \mathbf{v} \in \R^n \colon v_j \le v_{j+1} \: \forall {j, j+1} \in \Omega_{\mathbf{x}} \}.
  \end{equation}
  We see that if \(\Omega_{\mathbf{x}} = \emptyset\), then all velocities are admissible, i.e., \(T_\mathbf{x}\K^n = \R^n\).
  However, when particles collide at time \(t\), i.e., \(\mathbf{x}(t) \in \del \K^n\), we have to change the dynamics \eqref{eq:partgen} in order to stay within the cone.
  One could imagine several ways of doing this, but assuming inelastic collisions, one is led to a projection
  \begin{equation*}
    \mathbf{v}(t+) = \Proj_{T_{\mathbf{x}(t)}\K^n}\mathbf{v}(t-),
  \end{equation*}
  where \(\Proj_{T_{\mathbf{x}(t)}\K^n}\) is the weighted \(\ell^2\)-projection onto \(T_{\mathbf{x}(t)}\K^n\) with respect to the norm \(\norm{\cdot}_{\mathbf{m}}\) induced by the weighted inner product \(\ip{\cdot}{\cdot}_{\mathbf{m}}\), that is
  \begin{equation*}
    \ip{\mathbf{v}}{\mathbf{w}}_{\mathbf{m}} = \sum_{i=1}^n m_i v_i w_i,  \qquad \norm{\mathbf{v}}_\mathbf{m} = \left(\sum_{i=1}^n m_i v_i^2 \right)^{1/2}.
  \end{equation*}
  It is then also true that the new velocity is minimizing in the sense \(\ip{\mathbf{v}(t-)-\mathbf{v}(t+)}{\mathbf{u}}_{\mathbf{m}} \le 0\) for all \(\mathbf{u} \in T_{\mathbf{x}(t)}\K^n\).
  Thinking of this change in velocity as the result of an instantaneous force, this force must be an element of the \textit{normal cone} \(N_{\mathbf{x}(t)}\K^n\), defined by
  \begin{equation*}
    N_\mathbf{x}\K^n = \{ \mathbf{n} \in \R^n \colon \ip{\mathbf{n}}{\mathbf{y}-\mathbf{x}}_{\mathbf{m}} \le 0 \; \forall \mathbf{y} \in \K^n \}.
  \end{equation*}
  which coincides with the subdifferential \(\del I_{\K^n}(\mathbf{x})\) of the indicator function \(I_{\K^n}\) of \(\K^n\) at the point \(\mathbf{x}\).
  This then suggests the following generalization of \eqref{eq:partgen} to describe the dynamics also during collisions, namely the second order differential inclusion
  \begin{equation}\label{eq:DI2:disc}
    \dot{\mathbf{x}} = \mathbf{v}, \qquad \dot{\mathbf{v}} + N_{\mathbf{x}}\K^n \ni \mathbf{a}_\mathbf{m}(\mathbf{x}).
  \end{equation}
  
  It was shown in \cite{natile2009wasserstein} that along a curve \(t \mapsto \mathbf{x}(t)\) satisfying a global stickiness condition, there is a monotonicity property for the normal cones; that is,
  \(N_{\mathbf{x}(s)}\K^n \subset N_{\mathbf{x}(t)}\K^n\) for \(s < t\).
  This property can then be used to reduce the second order differential inclusion \eqref{eq:DI2:disc} to a first order inclusion.
  Under this monotonicity, for a map \(\xi \colon [0,\infty) \to \R^n\) with \(\xi(t)\in N_{\mathbf{x}(t)}\K^n\), e.g., \(\mathbf{a}_\mathbf{m} - \dot{\mathbf{v}}\) in \eqref{eq:DI2:disc}, then
  \begin{equation*}
    \int_s^t \xi(r)\dee r \in N_{\mathbf{x}(t)}\K^n
  \end{equation*}
  for all \(t > s\).
  Formally, we then have
  \begin{equation*}
    \mathbf{v}(t) + N_{\mathbf{x}(t)}\K^n \ni \mathbf{v}(s) + \int_s^t \mathbf{a}_\mathbf{m}(\mathbf{x}(r))\dee r,
  \end{equation*}
  and so we can rewrite \eqref{eq:DI2:disc} as the first-order differential inclusion
  \begin{equation}\label{eq:DI1:disc}
    \dot{\mathbf{x}} + N_{\mathbf{x}}\K^n \ni \mathbf{u}, \qquad \dot{\mathbf{u}} = \mathbf{a}_\mathbf{m}(\mathbf{x}).
  \end{equation}
  Now, the idea presented in \cite{brenier2013sticky} is that one can introduce a well-defined solution of the differential inclusion \eqref{eq:DI1:disc} by choosing \(\dot{\mathbf{x}} = \mathbf{v}\) where the velocity \(\mathbf{v}\) is the minimal element in \(\mathbf{u}-N_{\mathbf{x}}\K^n\), i.e., \(\mathbf{v} = \Proj_{T_{\mathbf{x}}\K^n}\mathbf{u}\).
  Then, since we define the particle solution to be the trajectory for which the velocity is the unique element in the tangent cone \(T_\mathbf{x}\K^n\) closest to the free velocity \(\mathbf{u}\) with respect to the weighted \(\ell^2\)-norm \(\norm{\cdot}_{\mathbf{m}}\), this can be regarded as a discrete version of \(\Lp{2}\)-gradient flow.
  In the next section we will see how this principle can be adapted to the continuous setting and an acceleration induced by the forcing term \(f[\rho]\) in \eqref{eq:EF}.
  
  \section{\(\Lp{2}\)-gradient flow} \label{s:gradflow}
  We will here present the concept of Lagrangian solutions for the Euler system \eqref{eq:EF} as introduced by Brenier, Gangbo, Savar\'{e} and Westdickenberg \cite{brenier2013sticky}.
  Note that for the first order interaction equation \eqref{eq:interact} in \cite{bonaschi2015equivalence}, this corresponds to their \(\Lp{2}\)-gradient flow solutions.
  This notion of solution relies on concepts from convex analysis and theory on differential inclusions found in the monograph of Br\'{e}zis \cite{brezis1973operateurs}, in addition to results of Natile and Savar\'{e} \cite{natile2009wasserstein} from their study of the Euler system without forcing, i.e., \(f[\rho] \equiv 0\).  
  For the reader's convenience we will restate several of their definitions and results which we make use of. We refer to Section 3, see also Section 6, of \cite{brenier2013sticky} for the complete picture.
  
  \subsection{Elements of optimal transport}
  Consider the space \(\Pscr(\R^m)\) of Borel probability measures on \(\R^m\).
  The push-forward \(\nu \coloneqq Y_\#\mu\) of a measure \(\mu \in \Pscr(\R^m)\) under the Borel map \(Y\colon \R^m \to \R^n\) is the measure \(\nu\) defined through \(\nu(A) = \mu(Y^{-1}(A))\) for all Borel sets \(A \subset \R^n\).
  For any Borel map \(\phiv \colon \R^n \to [0,\infty]\) we then have the change-of-variable formula
  \begin{equation} \label{eq:varchange}
    \int_{\R^n} \phiv(y)\dee Y_\#\mu(y) = \int_{\R^m} \phiv(Y(x))\dee\mu(x).
  \end{equation}
  For \(p \in [1,\infty) \) we let \(\Pscr_p(\R^n)\) denote the space of all \(\rho \in \Pscr(\R^n)\) with bounded \(p\)\textsuperscript{th} moment, i.e., \(\int_{\R^n}\abs{x}^p\dee\rho(x) < \infty\).
  Then the \(p\)-Wasserstein distance between two measures \(\rho_1, \rho_2 \in \Pscr_p(\R^n) \) can be defined as
  \begin{equation} \label{eq:Wasserstein}
    d_{\Wscr_p}(\rho_1,\rho_2)^p \coloneqq \min \left\{  \int_{\R^n\times\R^n} \abs{x-y}^p \dee\varrho(x,y) \colon \varrho \in \Pscr(\R^n\times\R^n), \: \varpi^{i}_\#\varrho = \rho_i \right\},
  \end{equation}
  where \( \varpi^i(x_1,x_2) = x_i \) is the projection onto the \(i\)\textsuperscript{th} coordinate.
  It can be shown that there always exists an optimal transport plan \(\varrho\) which achieves the infimum of the integral in \eqref{eq:Wasserstein}.
  We will be concerned with \(p=2\) and measures on the line, i.e., \(n=1\).
  In this setting there is a unique optimal plan, which can be characterized explicitly.
  
  For any \(\rho \in \Pscr(\R)\) we define its right-continuous distribution function
  \begin{equation*}
    M_\rho(x) \coloneqq \rho((-\infty,x]) \quad\text{for all } x \in \R,
  \end{equation*}
  noting that \( \del_x M_\rho = \rho \) in \(\Dscr'(\R)\), i.e., in the distributional sense.
  Then we can define its right-continuous generalized inverse as
  \begin{equation*}
    X_\rho(m) \coloneqq \inf\{x \colon M_\rho(x) > m \} \quad \text{for all } m \in \Omega,
  \end{equation*}
  where \(\Omega \coloneqq (0,1)\), which is clearly nondecreasing.
  This is in fact the optimal transport map pushing the one-dimensional Lebesgue measure \(\mathfrak{m} \coloneqq \mathcal{L}^1|_\Omega\) to \(\rho\) on \(\R\),
  the Lebesgue measure on \(\Omega\), meaning
  \begin{equation*}
    (X_\rho)_\#\mathfrak{m} = \rho, \qquad \int_\R \phiv(x)\dee\rho(x) = \int_\Omega \phiv(X_\rho(m))\dee m
  \end{equation*}
  for any Borel map \( \phiv \colon \R \to [0,\infty] \) by \eqref{eq:varchange}, where \(\dee m\) denotes the integration with respect to \(\mathfrak{m}\). 
  For instance, \( \rho \in \Pscr_2(\R) \) if and only if \( X_\rho \in \Lp{2}(\Omega) \).
  The Hoeffding--Frech\'{e}t theorem, cf.\ \cite[Theorem 2.18]{villani2003topics}, then characterizes the optimal transport plan \(\varrho\) between \(\rho_1\) and \(\rho_2\) as 
  \begin{equation}\label{eq:optplan}
    \varrho = (X_{\rho_1,\rho_2})_\#\mathfrak{m}, \quad X_{\rho_1, \rho_2} = (X_{\rho_1}, X_{\rho_2}) \quad \text{for all } m \in \Omega.
  \end{equation}
  Therefore we have that their 2-Wasserstein distance is given by
  \begin{equation*}
    d_{\Wscr_2}(\rho_1,\rho_2)^2 = \int_\Omega \abs{X_{\rho_1}(m)-X_{\rho_2}(m)}^2\dee m = \norm{X_{\rho_1}-X_{\rho_2}}^2_{\Lp{2}(\Omega)}.
  \end{equation*}
  The map \(\rho \mapsto X_\rho\) is an isometry between \(\Pscr_2(\R)\) and \(\Kscr\), where
  \begin{equation}\label{eq:cone}
    \Kscr \coloneqq \{ X \in \Lp{2}(\Omega) \colon X \text{ is nondecreasing} \},
  \end{equation}
  and without loss of generality we may consider right-continuous representatives in \(\Kscr\), which are defined everywhere.
  
  \subsubsection{Comparison to the particle case}
  The idea is now to replace the set of particles labeled by \(i \in \{1,\dots,n\}\) with ``particles'' labeled by \(m \in \Omega = (0,1)\) and which at time \(t\) has position \(X(t,m) \in \R\).
  Of course, these are no longer discrete particles, but a continuum of mass distributed over the real line.
  For a fixed \(t\), we know from the previous considerations that \(X\) can be uniquely characterized by a measure \(\rho\), i.e., it is the nondecreasing and right-continuous map \(X \colon \Omega \to \R\) such that
  \begin{equation*}
    X(m) \le x \iff m \le \rho((-\infty,x]) \qquad \text{for all } x \in \R.
  \end{equation*}
  
  Going the other way, to any solution \((\rho,v)\) of \eqref{eq:EF} we can associate a map \(X \colon [0,\infty) \times \Omega \to \R\) with \(X(t,\cdot)\) nondecreasing, and a velocity \(V \colon [0,\infty) \times \Omega \to \R\) such that for \(t \ge 0\) we have
  \begin{equation}\label{eq:XV}
    X(t,\cdot)_\#\mathfrak{m} = \rho(t,\cdot), \qquad V(t,\cdot) = v(t,X(t,\cdot)) = \del_t X(t,\cdot).
  \end{equation}
  The idea is now, analogous to the discrete particle case, to associate \eqref{eq:EF} to a differential inclusion for the pair \((X,V)\).
  Here the cone \(\K^n\) is replaced with the set of optimal transport maps \(\Kscr\) defined in \eqref{eq:cone}, which can be shown to be a closed, convex cone in \(\Lp{2}(\Omega)\).
  To handle mass concentration, corresponding to particle collisions in the discrete case, we will recall some results from convex analysis.
  
  \subsection{Elements of convex analysis}\label{ss:convex}
  
  The normal cone \(N_X \Kscr\) of \(\Kscr\) at \(X \in \Kscr\) is
  \begin{equation*}
    N_X \Kscr \coloneqq \left\{ W \in \Lp{2}(\Omega) \colon \int_\Omega W (Y-X)\dee m \le 0 \; \forall Y \in \Kscr \right\},
  \end{equation*}
  where, as in the discrete case, \(N_X \Kscr = \del I_\Kscr(X)\), the subdifferential at \(X\) of the indicator function \(I_\Kscr \colon \Lp{2}(\Omega) \to [0,+\infty]\),
  \begin{equation*}
    I_\Kscr(X) = \begin{cases}
      0, & X \in \Kscr, \\ +\infty, & X \notin \Kscr.
    \end{cases}
  \end{equation*}
  Moreover, the tangent cone \(T_X\Kscr\) at \(X\) can be defined as the polar cone of \(N_X\Kscr\), that is,
  \begin{equation}\label{eq:TC1}
    T_X\Kscr \coloneqq \left\{ U \in \Lp{2}(\Omega) \colon \int_\Omega U(m) W(m)\dee m \le 0 \; \forall W \in N_X\Kscr \right\}.
  \end{equation}
  We recall from the particle dynamics in Section \ref{ss:gradflow:disc} that there were two equivalent characterizations \eqref{eq:TC1:disc} and \eqref{eq:TC2:disc} of the tangent cone \(T_{\mathbf{x}}\K^n\).
  This is true also in our current setting, for the tangent cone \(T_X\Kscr\) as well as the normal cone \(N_X\Kscr\), and it will be convenient to work with both characterizations.
  For the alternative descriptions, we need, analogously to the set of collided particles \(\Omega_\mathbf{x}\), the set of concentrated mass \(\Omega_X\), namely
  \begin{equation}\label{eq:OX}
    \Omega_X \coloneqq \{ m \in \Omega \colon X \text{ is constant in a neighborhood of } m \}.
  \end{equation}
  The following characterization, illustrated in Figure \ref{fig:NX}, can be found in \cite[Lemma 2.3]{brenier2013sticky} and builds upon \cite[Theorem 3.9]{natile2009wasserstein}.
  \begin{lem}[Characterization of the normal cone \(N_X\Kscr\)]\label{lem:NX}
    Let \(X \in \Kscr\) and \(W \in \Lp{2}(\Omega)\) be given, and write
    \begin{equation*}
      \Xi_W(m) \coloneqq \int_0^m W(\omega)\dee\omega \quad \text{for all } m \in [0,1]. 
    \end{equation*}
    Then \(W \in N_X\Kscr\) if and only if \(\Xi_W \in \Nscr_X\), where \(\Nscr_X\) is the convex cone defined as
    \begin{equation*}
      \Nscr_X \coloneqq \left\{  \Xi \in C([0,1]) \colon \Xi \ge 0 \enspace\mathrm{in}\enspace [0,1] \enspace\mathrm{and}\enspace \Xi = 0 \enspace \mathrm{in}\enspace \Omega\setminus\Omega_X \right\}.
    \end{equation*}
    In particular, for every \(X_1, X_2 \in \Kscr\) we have
    \begin{equation}\label{eq:OX&NX}
      \Omega_{X_1} \subset \Omega_{X_2} \implies N_{X_1}\Kscr \subset N_{X_2}\Kscr.
    \end{equation}
  \end{lem}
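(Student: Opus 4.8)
The plan is to prove both inclusions at once by translating the defining integral inequality for $N_X\Kscr$ into statements about the primitive $\Xi_W$, using that $\Xi_W$ is absolutely continuous on $[0,1]$ with $\Xi_W'=W$ a.e.\ and $\Xi_W(0)=0$, continuity being automatic since $W\in\Lp1(\Omega)$. First I would simplify the membership condition. Since $\Kscr$ is a convex cone containing $0$, $X$ and $2X$, testing $\int_\Omega W(Y-X)\dee m\le0$ with $Y=2X$ and with $Y=0$ gives $\int_\Omega WX\dee m=0$; hence $W\in N_X\Kscr$ is equivalent to the pair of conditions ``$\int_\Omega WY\dee m\le0$ for all $Y\in\Kscr$'' together with ``$\int_\Omega WX\dee m=0$''. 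Testing also with the constant functions $Y=\pm1\in\Kscr$ shows $\Xi_W(1)=\int_0^1W\dee m=0$, so $\Xi_W$ vanishes at both endpoints; since $0,1\notin\Omega_X$, we may and do read this vanishing into the description of $\Nscr_X$.

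Next I would analyse the inequality. For each $m_0\in\Omega$ the bounded, nondecreasing function $Y=-\mathbf 1_{[0,m_0)}$ lies in $\Kscr$ and satisfies $\int_\Omega WY\dee m=-\Xi_W(m_0)$; hence the inequality forces $\Xi_W\ge0$ on $\Omega$, and on $[0,1]$ by continuity. For the converse, once $\Xi_W\ge0$ I would integrate by parts to obtain $\int_\Omega WY\dee m=-\int_{(0,1)}\Xi_W\dee Y$ for every $Y\in\Kscr$, which is $\le0$ because $\dee Y$ is a nonnegative measure. The boundary contributions in this integration by parts vanish because $\Xi_W(0)=\Xi_W(1)=0$ and $|\Xi_W(m)|\le\norm{W}_{\Lp2(\Omega)}\sqrt m$ near $m=0$ (symmetrically near $m=1$), whereas every nondecreasing $Y\in\Lp2(\Omega)$ satisfies $\sqrt m\,|Y(m)|\to0$ as $m\to0^+$ and $\sqrt{1-m}\,|Y(m)|\to0$ as $m\to1^-$.

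Then I would treat the remaining equality. The same integration by parts gives $\int_\Omega WX\dee m=-\int_{(0,1)}\Xi_W\dee X$. On the open set $\Omega_X$ the map $X$ is locally constant, so its Stieltjes measure $\dee X$ charges no subset of $\Omega_X$, while by the definition of $\Omega_X$ every point of $\Omega\setminus\Omega_X$ has $\dee X$-positive neighbourhoods; thus $\spt(\dee X)\cap\Omega=\Omega\setminus\Omega_X$. Since $\Xi_W\ge0$ is continuous and $\dee X\ge0$, the identity $\int_{(0,1)}\Xi_W\dee X=0$ is equivalent to $\Xi_W\equiv0$ on $\spt(\dee X)$, hence to $\Xi_W\equiv0$ on $\Omega\setminus\Omega_X$. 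Combining the three reversible equivalences gives $W\in N_X\Kscr\iff\Xi_W\in\Nscr_X$. The ``in particular'' statement is then immediate: $\Omega_{X_1}\subset\Omega_{X_2}$ implies $\Omega\setminus\Omega_{X_2}\subset\Omega\setminus\Omega_{X_1}$, so $\Nscr_{X_1}\subset\Nscr_{X_2}$, whence $N_{X_1}\Kscr\subset N_{X_2}\Kscr$.

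I expect the main obstacle to be the legitimacy of these integrations by parts when $\rho$ (equivalently $X$) has unbounded support, since then the test functions $Y\in\Kscr$ need not be bounded and $\dee Y$ need not be a finite measure. The remedy is precisely the order above: first obtain $\Xi_W\ge0$ using only bounded test functions $-\mathbf 1_{[0,m_0)}$ and constants, and only afterwards, with $\Xi_W\ge0$ available, pass to the limit $a\to0^+$, $b\to1^-$ in $\int_a^b WY\dee m=-\int_{(a,b]}\Xi_W\dee Y+(\text{boundary terms})$, using monotone convergence on the right-hand integral and the decay estimates above to annihilate the boundary terms. A secondary nuisance is the endpoint bookkeeping, which is why it pays to record $\Xi_W(0)=\Xi_W(1)=0$ from the start.
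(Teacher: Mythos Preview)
The paper does not prove this lemma: it is quoted from \cite[Lemma~2.3]{brenier2013sticky} (building on \cite[Theorem~3.9]{natile2009wasserstein}), so there is no in-paper argument to compare against. Your self-contained proof is correct: the reduction via the cone property to $\int WY\,\dee m\le0$ for all $Y\in\Kscr$ together with $\int WX\,\dee m=0$, the extraction of $\Xi_W\ge0$ from the step-function tests $Y=-\mathbf 1_{[0,m_0)}$, the Stieltjes integration by parts with the Chebyshev-type decay $\sqrt m\,|Y(m)|\to0$ for monotone $Y\in\Lp{2}$ killing the boundary terms, and the identification $\spt(\dee X)\cap\Omega=\Omega\setminus\Omega_X$ all go through cleanly.

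Your remark about ``reading the vanishing at $0,1$ into $\Nscr_X$'' is well taken and in fact necessary: as literally written in the paper, $\Nscr_X$ does not force $\Xi(1)=0$ when $X$ is constant on a right neighbourhood $(1-\epsilon,1)$, and in that edge case the stated equivalence fails (e.g.\ $X$ constant on all of $\Omega$ and $W\equiv1$ give $\Xi_W(m)=m\in\Nscr_X$ but $W\notin N_X\Kscr$). Your argument actually proves the corrected version with $\Xi(0)=\Xi(1)=0$ added to $\Nscr_X$; this is automatic for $\Xi_W$ in the forward direction (via the constant test functions, as you note) and holds in every application of the lemma within the paper, so the discrepancy is cosmetic.
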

  \begin{figure}
    \includegraphics[width=0.8\linewidth]{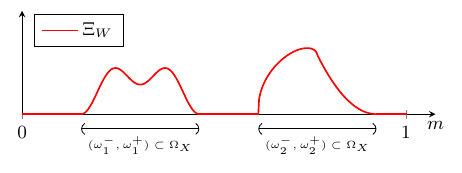}
    \caption{Example of \(\Xi_W \in \Nscr_X\) from Lemma \ref{lem:NX}. The corresponding \(X\) is essentially constant on the intervals \((\omega_1^-, \omega_1^+)\) and \((\omega_2^-, \omega_2^+)\).}
    \label{fig:NX}
  \end{figure}
  
  Moreover, we have a useful tangent cone characterization, found in \cite[Lemma 2.4]{brenier2013sticky}: given \(X \in \Kscr\), then
  \begin{equation}\label{eq:TC2}
    T_X\Kscr = \left\{ U \in \Lp{2}(\Omega) \colon U \text{ is nondecreasing on each interval } (m_l, m_r) \subset \Omega_X \right\}.
  \end{equation}
  
  Based on the set \(\Omega_X\) in \eqref{eq:OX} we can for \(X \in \Kscr\) also introduce the closed subspace \(\Hscr_X \subset \Lp{2}(\Omega)\) given by
  \begin{equation}\label{eq:HX}
    \Hscr_X \coloneqq \left\{ U \in \Lp{2}(\Omega) \colon U \text{ is constant on each interval } (m_l,m_r) \subset \Omega_X \right\}.
  \end{equation}
  For \(U \in \Lp{2}(\Omega)\), the \(\Lp{2}\)-projection onto this subspace is then given by
  \begin{equation}\label{eq:Proj:HX}
    \Proj_{\Hscr_X}U = \begin{cases}
      U, & \text{for a.e.\ } m \in \Omega\setminus\Omega_X \\
      \frac{1}{m_r-m_l}\int_{m_l}^{m_r} U(\omega)\dee \omega, & \text{for a.e.\ } m \in (m_l, m_r), \text{ a maximal interval of } \Omega_X.
    \end{cases}
  \end{equation}
  As can be expected from the particle dynamics, such a projection can be applied to the velocity to ensure that \(X\) stays in the cone \(\Kscr\).
  A direct consequence of \eqref{eq:TC2} is the equivalence relation
  \begin{equation*}
    U \in \Hscr_{X} \iff \pm U \in T_X \Kscr.
  \end{equation*}
  Moreover, if \(X_1, X_2 \in \Kscr\), we have the implication
  \begin{equation}\label{eq:OX&HX}
    \Omega_{X_1} \subset \Omega_{X_2} \implies \Hscr_{X_2} \subset \Hscr_{X_1}
  \end{equation}
  Observe that if we take \(U \in \Hscr_{X}\) then the integral in \eqref{eq:TC1} vanishes, meaning \(N_X\Kscr \subset \Hscr_X^\perp\) for all \(X \in \Kscr\), where \(\Hscr_X^\perp\) is the orthogonal complement of \(\Hscr_{X}\).
  As a consequence,
  \begin{equation}\label{eq:Proj:K&HY}
    Y = \Proj_{\Kscr}X \implies Y = \Proj_{\Hscr_{Y}}X, \quad \Hscr_{Y} \subset \Hscr_{X}. 
  \end{equation}
  In fact, \cite[Lemma 2.5]{brenier2013sticky} provides a more precise characterization of \(\Hscr_X\) in terms of \(N_X\Kscr\). Indeed, we have
  \begin{equation*}
    \Hscr_X^\perp = \left\{ W \in \Lp{2}(\Omega) \colon W = 0 \text{ a.e.\ in } \Omega\setminus \Omega_X, \int_{m_l}^{m_r}W(m)\dee m = 0 \: \forall \text{ maximal interval } (m_l, m_r) \subset \Omega_X \right\},
  \end{equation*}
  and it is the closed, linear subspace of \(\Lp{2}(\Omega)\) generated by \(N_X\Kscr\). Moreover it satisfies
  \begin{equation}\label{eq:HXperp}
    \Hscr_X^\perp = \left\{ W \in \Lp{2}(\Omega) \colon \int_{\Omega} W(m) \phiv(X(m))\dee m = 0 \: \forall \phiv \in C_{\mathrm{b}}(\R) \right\}.
  \end{equation}
  
  \subsection{Lagrangian solutions}\label{ss:LagDI}
  Following the same arguments as for the particle dynamics in Section \ref{ss:gradflow:disc}, we are led to study first-order differential inclusions of the form
  \begin{equation}\label{eq:DIV}
    \dot{X}(t) + \del I_\Kscr(X(t)) \ni V^0 + \int_{0}^{t}F[X(s)]\dee s \text{ for a.e. } t \ge 0, \qquad X(0) = \lim\limits_{t\to0+}X(t) = X^0.
  \end{equation}
  Here \(\dot{X}\) is the differential map of \(X \colon [0,\infty) \to \Kscr \subset \Lp{2}(\Omega) \).
  \subsubsection{The Lagrangian force representation}
  The link between \eqref{eq:DIV} and the Euler system \eqref{eq:EF} lies in the Lagrangian representation \(F \colon \Kscr \to \Lp{2}(\Omega)\) of the force distribution \(f \colon \Pscr(\R) \to \Mscr(\R)\), which must satisfy the distributional identity
  \begin{equation}\label{eq:f&F}
    \int_\R \phiv(x) f[\rho](\dee x) = \int_\Omega \phiv(X_\rho(m)) F[X](m)\dee m \text{ for all } \phiv \in \mathscr{D}(\R) \text{ and } \rho \in \Pscr(\R),
  \end{equation}
  where \(X_\rho \in \Kscr\) and \((X_\rho)_{\#} \mathfrak{m} = \rho\).
  From \eqref{eq:HXperp} we see that \(F[X]\) is not uniquely defined unless \(\Hscr_X^\perp = \{0\} \), meaning \(\Hscr_X = \Lp{2}(\Omega)\), that is, \(\Omega_X = \emptyset\); this is exactly when \(X\) is strictly increasing.
  In order to facilitate the study of solutions for \eqref{eq:DIV}, in addition to \eqref{eq:f&F} we will require \(F\) to be everywhere defined in \(\Kscr\) and satisfy additional boundedness and continuity properties. 
  \begin{dfn}[Boundedness] \label{dfn:bnd:F}
    The operator \(F \colon \Kscr \to \Lp{2}(\Omega)\) is bounded if there is a constant \(C \ge 0\) such that
    \begin{equation}\label{eq:bnd}
      \norm{F[X]}_{\Lp{2}(\Omega)} \le C \left(1 + \norm{X}_{\Lp{2}(\R)} \right) \text{ for all } X \in \Kscr.
    \end{equation}
    We say it is pointwise linearly bounded if there is a constant \(C_\mathrm{p} \ge 0\) such that
    \begin{equation*}
      \abs{F[X](m)} \le C_\mathrm{p} \left( 1 + \abs{X(m)} + \norm{X}_{\Lp{1}(\Omega)} \right).
    \end{equation*}
  \end{dfn}
  Observe that if \(F\) is pointwise linearly bounded, then it is bounded with \(C = 2 C_\mathrm{p}\) in \eqref{eq:bnd}.
  Recall that a modulus of continuity \(\omega\) is a continuous and concave function \(\omega \colon [0,\infty) \to [0,\infty)\) satisfying  \(0 = \omega(0) < \omega(r)\) for all \(r > 0\).
  \begin{dfn}[Uniform continuity]\label{dfn:unicont:F}
    We say that \(F \colon \Kscr \to \Lp{2}(\Omega)\) is uniformly continuous if there is a modulus of continuity \(\omega\) such that
    \begin{equation}\label{eq:unicont:F}
      \norm{F[X_1] - F[X_2]}_{\Lp{2}(\Omega)} \le \omega\left(\norm{X_1-X_2}_{\Lp{2}(\Omega)}\right) \quad \text{for all } X_1, X_2 \in \Kscr.
    \end{equation}
    Similarly, \(F\) is Lipschitz continuous if \eqref{eq:unicont:F} holds with \(\omega(r) = Lr\) for some \(L > 0\) and all \(r \ge 0\).
  \end{dfn}
  If a uniformly continuous \(F\) is defined by \eqref{eq:f&F} on the convex subset \(\Kscr_\mathrm{si} \subset \Kscr\) of strictly increasing maps and satisfies \eqref{eq:unicont:F} on \(\Kscr_\mathrm{si}\), it admits a unique extension to \(\Kscr\) which preserves the compatibility \eqref{eq:f&F} and continuity \eqref{eq:unicont:F} conditions.
  
  A final important property of \(F\) concerns its action on the subset \(\Omega_X\) where \(X\) is constant, i.e., where mass concentrates.
  For a sticky evolution of \(X\), this set would be nondecreasing in time, and since the force map determines the change in velocity, it would be natural to assume \(F[X] \in \Hscr_X\) for \(X \in \Kscr\) in this setting.
  However, it turns out that the following weaker condition is sufficient to preserve the sticky property.
  \begin{dfn}[Sticking]\label{dfn:sticking}
    The map \(F \colon \Kscr \to \Lp{2}(\Omega)\) is called sticking if
    \begin{equation}\label{eq:sticking}
      F[X] - \Proj_{\Hscr_{X}}F[X] \in \del I_\Kscr(X) \enspace\text{for all } X \in \Kscr.
    \end{equation}
  \end{dfn}
  
  \subsubsection{Definition, existence and uniqueness}
  The definition of a Lagrangian solution of \eqref{eq:DIV} is given in \cite[Definition 3.4]{brenier2013sticky}, which we recall below.
  \begin{dfn}[Lagrangian solutions of \eqref{eq:DIV}]\label{dfn:LagSol}
    Let \(F\colon \Kscr \to \Lp{2}(\Omega)\) be a uniformly continuous operator and let \(X^0 \in \Kscr\), \(V^0 \in \Lp{2}(\Omega)\) be given.
    A Lagrangian solution to \eqref{eq:DIV} with initial data \((X^0,V^0)\) is a curve \(X \in \mathrm{Lip}_\mathrm{loc}([0,\infty);\Kscr)\) satisfying \(X(0) = X^0\) and \eqref{eq:DIV}.
  \end{dfn}
  If we introduce the velocity \(U\) prescribed by the initial data \(V^0\) and the force \(F\), i.e.,
  \begin{equation}\label{eq:U}
    U(t) \coloneqq V^0 + \int_0^t F(X(s))\dee s,
  \end{equation}
  we see that \eqref{eq:DIV} is equivalent to
  \begin{subequations}\label{eq:DIU}
    \begin{alignat}{2}
      \dot{X}(t) &+ \del I_\Kscr(X(t)) \ni U(t) \text{ for a.e.\ } t \ge 0, &\qquad X(0) &= X^0, \label{eq:DIU:X} \\
      \dot{U}(t) &= F(X(t)), & U(0) &= V^0, \label{eq:DIU:U}
    \end{alignat}
  \end{subequations}
  where we note that the continuity of \(F\) yields \(U \in C^1([0,\infty); \Lp{2}(\Omega))\).
  Here and for the rest of the paper we use the term \textit{prescribed velocity} to distinguish \(U\) from the actual velocity of \(X\), since the two do not coincide in general; indeed, this happens when \(U \notin T_X\Kscr\).
  
  Lagrangian solutions exhibit several useful properties, contained in the following \cite[Theorem 3.5]{brenier2013sticky}.
  \begin{lem}[Properties of Lagrangian solutions]\label{lem:LagSol:props}
    Let \(F \colon \Kscr \to \Lp{2}(\Omega)\) be a uniformly continuous operator and let \((X,U)\) be a corresponding Lagrangian solution of \eqref{eq:DIU}.
    Then the following properties hold:
    \begin{itemize}
      \item The right-derivative of \(X(t)\) exists for all \(t \ge 0\) and equals
      \begin{equation*}
        V \coloneqq \diff{^+}{t}X.
      \end{equation*}
      \item The velocity \(V(t)\) for \(t \ge 0\) is the minimal element of the closed and convex set \(U(t)-\del_\Kscr I(X(t))\). Note that if we replace \(\dot{X}\) with \(V\) then \eqref{eq:DIV} and \eqref{eq:DIU} hold for all \(t \ge 0\).
      \item For all \(t \ge 0\), the minimal element \(V\) is the projection on the tangent cone given by
      \begin{equation*}
        V(t) = \Proj_{T_X(t)\Kscr} U(t)
      \end{equation*}
      \item \(V(t)\) is right-continuous for all \(t \ge 0\), and in particular
      \begin{equation*}
        \lim\limits_{t\to0+}V(t) = V^0 \iff V^0 \in T_{X^0}\Kscr.
      \end{equation*}
      Moreover, if \(\Tcal \subset (0,\infty)\) is the subset of times at which \(t \mapsto \norm{V(t)}_{\Lp{2}(\Omega)}\) is continuous, then \((0,\infty)\setminus\Tcal\) is negligible, and \(V\) is continuous, \(X\) is differentiable in \(\Lp{2}(\Omega)\) at every point of \(\Tcal\).
      Setting \(\rho(t) = X(t)_\#\mathfrak{m}\), there exists a unique map \(v(t,\cdot) \in \Lp{2}(\Omega,\rho)\) such that
      \begin{equation*}
        \dot{X}(t) = V(t) = \Proj_{\Hscr_{X(t)}} U(t) = v(t,\cdot)\circ X(t) \in \Hscr_{X(t)}, \quad \text{for every } t \in \Tcal.
      \end{equation*}
      \item If moreover \(F\) is linked to \(f\) by \eqref{eq:f&F}, \(\rho_0 = X^0_\#\mathfrak{m}\), \(V^0 = v_0\circ X^0\), then \((\rho,v)\) defined above is a distributional solution to \eqref{eq:EF} such that
      \begin{equation*}
        \lim\limits_{t\to0+}\rho(t,\cdot) = \rho_0 \text{ in } \Pscr_2(\R), \qquad \lim\limits_{t\to0+}\rho(t,\cdot)v(t,\cdot) = \rho_0 v_0 \text{ in } \Mscr(\R)
      \end{equation*}
    \end{itemize}
  \end{lem}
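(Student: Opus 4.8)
This is \cite[Theorem 3.5]{brenier2013sticky}, and the plan is to retrace that argument, whose engine is the theory of evolution inclusions governed by subdifferentials of convex functions \cite{brezis1973operateurs} combined with the normal-cone analysis of \cite{natile2009wasserstein}. First I would record that, since \(\Kscr\) is a closed convex cone in the Hilbert space \(\Lp{2}(\Omega)\), the indicator \(I_\Kscr\) is proper, convex and lower semicontinuous, so \(\del I_\Kscr(X) = N_X\Kscr\) is maximal monotone with \(\overline{D(\del I_\Kscr)} = \Kscr\), while continuity of \(F\) and \eqref{eq:U} yield \(U \in C^1([0,\infty);\Lp{2}(\Omega)) \subset W^{1,1}_{\mathrm{loc}}\). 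Applying the classical existence and regularity theorem for \(\dot X + \del I_\Kscr(X) \ni U\) then gives a unique \(X \in \mathrm{Lip}_\mathrm{loc}([0,\infty);\Kscr)\) with \(X(0) = X^0\), whose right-derivative \(V = \tfrac{\dee^+}{\dee t}X\) exists for all \(t \ge 0\), is right-continuous, and equals the element of minimal norm of the closed convex set \(U(t) - \del I_\Kscr(X(t))\); this is the content of the first two bullets. For the third bullet I would invoke Moreau's decomposition: since \(T_X\Kscr\) and \(N_X\Kscr\) are mutually polar cones (cf.\ \eqref{eq:TC1}), the nearest point to the origin in \(U(t) - N_{X(t)}\Kscr\) is \(U(t)\) minus the nearest point of \(N_{X(t)}\Kscr\) to \(U(t)\), i.e.\ \(U(t) - \Proj_{N_{X(t)}\Kscr}U(t) = \Proj_{T_{X(t)}\Kscr}U(t)\), so \(V(t) = \Proj_{T_{X(t)}\Kscr}U(t)\) for every \(t\); at \(t = 0\) this reads \(V(0) = \Proj_{T_{X^0}\Kscr}V^0\), which by right-continuity is \(\lim_{t\to0+}V(t)\) and coincides with \(V^0\) exactly when \(V^0 \in T_{X^0}\Kscr\).

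For the regularity bullet I would note that \(t \mapsto \norm{V(t)}_{\Lp{2}(\Omega)}\) is bounded, by local Lipschitzness of \(X\), and of bounded variation on compact time intervals, via the energy inequality \(\tfrac{\dee}{\dee t}\tfrac12\norm{V}_{\Lp{2}(\Omega)}^2 \le \ip{F[X]}{V}_{\Lp{2}(\Omega)}\) (obtained by differentiating the inclusion and using monotonicity of \(N_\bullet\Kscr\)) with integrable right-hand side; hence its discontinuity set is at most countable, so \(\Tcal\) is co-negligible, and at each \(t_0 \in \Tcal\) the right-continuity of \(V\) upgrades to continuity and \(X\) is two-sidedly differentiable in \(\Lp{2}(\Omega)\). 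The crucial point is that there the forward difference quotients \(h^{-1}(X(t_0+h) - X(t_0))\) lie in \(T_{X(t_0)}\Kscr\) and the backward ones in \(-T_{X(t_0)}\Kscr\); both cones are closed, so their common limit satisfies \(V(t_0) \in T_{X(t_0)}\Kscr \cap (-T_{X(t_0)}\Kscr) = \Hscr_{X(t_0)}\). Combined with \(V(t_0) = \Proj_{T_{X(t_0)}\Kscr}U(t_0)\) and \(U(t_0) - V(t_0) \in N_{X(t_0)}\Kscr \subset \Hscr_{X(t_0)}^\perp\), uniqueness of the orthogonal decomposition of \(U(t_0)\) forces \(V(t_0) = \Proj_{\Hscr_{X(t_0)}}U(t_0)\); and since \(\Hscr_{X(t_0)}\) is exactly the set of maps \(g\circ X(t_0)\) with \(g \in \Lp{2}(\R,\rho(t_0))\), one gets the factorization \(V(t_0) = v(t_0,\cdot)\circ X(t_0)\) with \(v(t_0,\cdot)\) unique in \(\Lp{2}(\R,\rho(t_0))\).

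The last bullet, that \((\rho,v)\) is a distributional solution of \eqref{eq:EF} with the stated initial traces, is the step I expect to be the main obstacle, the difficulty being that \(V\) is only right-continuous in time; I would circumvent this by testing against the \(C^1\) curve \(U\) rather than \(V\). For \(\psi \in C_\mathrm{c}^\infty((0,\infty)\times\R)\) and a.e.\ \(t\), the inclusion \(U(t) - V(t) \in \Hscr_{X(t)}^\perp\) together with \eqref{eq:HXperp} gives \(\int_\Omega \psi(t,X(t))V(t)\dee m = \int_\Omega \psi(t,X(t))U(t)\dee m\), while \((\del_x\psi)(t,X(t))V(t) \in \Hscr_{X(t)}\) (a product of maps constant on the flat intervals of \(X(t)\)) gives \(\int_\Omega (\del_x\psi)(t,X(t))V(t)U(t)\dee m = \int_\Omega (\del_x\psi)(t,X(t))V(t)^2\dee m\). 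Differentiating \(t \mapsto \int_\Omega \psi(t,X(t))U(t)\dee m\) --- legitimate because \(X\) is Lipschitz, \(U \in C^1\) and \(\psi\) is smooth --- using \(\dot X = V\) a.e., \(\dot U = F[X]\) and the compatibility \eqref{eq:f&F}, and integrating over \(t\) (compact support), I would arrive at
\[
  \int_0^\infty\int_\R (\del_t\psi)\dee(\rho v)\dee t + \int_0^\infty\int_\R (\del_x\psi)\dee(\rho v^2)\dee t + \int_0^\infty\int_\R \psi\dee f[\rho]\dee t = 0,
\]
the weak momentum balance, after rewriting the \(\Omega\)-integrals via the change of variables \eqref{eq:varchange} as integrals against \(\rho\), \(\rho v\) and \(\rho v^2\); the continuity equation follows at once from \(\dot X(t) = v(t,\cdot)\circ X(t)\) and \(\rho(t) = X(t)_\#\mathfrak{m}\). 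Finally the initial traces would be read off from \(X(t) \to X^0\) and \(U(t) \to V^0\) in \(\Lp{2}(\Omega)\) as \(t \to 0+\), together with \(V^0 = v_0\circ X^0\) and the isometry \(\rho \mapsto X_\rho\) between \(\Pscr_2(\R)\) and \(\Kscr\), applied to the same testing identities.
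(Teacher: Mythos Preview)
Your identification is correct: the paper does not prove this lemma but merely quotes it as \cite[Theorem 3.5]{brenier2013sticky}, so there is no ``paper's own proof'' to compare against. Your sketch faithfully retraces the argument of that reference---Br\'{e}zis' regularity theory for subdifferential inclusions for the first bullets, Moreau's polar decomposition for the tangent-cone projection, the forward/backward difference-quotient trick to land \(V(t_0)\) in \(\Hscr_{X(t_0)} = T_{X(t_0)}\Kscr \cap (-T_{X(t_0)}\Kscr)\), and the substitution of \(U\) for \(V\) via \eqref{eq:HXperp} when testing the momentum equation---and is sound.
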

  
  Assuming \(F\) to be merely pointwise linearly bounded and uniformly continuous, one can show existence of Lagrangian solutions to \eqref{eq:DIU}.
  We will however be interested in the cases where we also have uniqueness, and to this end we have the following result from \cite[Theorem 3.6]{brenier2013sticky}.
  \begin{thm}[Existence and uniqueness of Lagrangian solutions]\label{thm:LagSol:EU}
    Assume \(F \colon \Kscr \to \Lp{2}(\Omega)\) is Lipschitz.
    Then for every \(T >0\) and \((X^0,V^0) \in \Kscr \times \Lp{2}(\Omega)\) there exists a unique Lagrangian solution \(X\) to \eqref{eq:DIV} for \(t \in [0,T]\).
  \end{thm}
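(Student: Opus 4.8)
The plan is to recast \eqref{eq:DIV} as a fixed-point problem and to solve the inner differential inclusion by the abstract theory for evolution equations governed by subdifferentials. The key point is that \(\del I_\Kscr\) is the subdifferential of a proper, convex, lower semicontinuous functional on the Hilbert space \(\Lp{2}(\Omega)\) --- indeed \(I_\Kscr\) is such because \(\Kscr\) is closed and convex --- hence \(\del I_\Kscr\) is maximal monotone, and for any time-Lipschitz forcing \(g\) and any \(X^0 \in \Kscr = \overline{D(\del I_\Kscr)}\) the equation \(\dot{X} + \del I_\Kscr(X) \ni g(t)\), \(X(0)=X^0\), admits a unique strong solution \(X \in \mathrm{Lip}([0,T];\Lp{2}(\Omega))\) taking values in \(\Kscr\), by the results of Br\'ezis \cite{brezis1973operateurs}.

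Concretely, fix \(T>0\) and define a solution map \(\mathcal{S}\) on \(C([0,T];\Kscr)\): to \(Y \in C([0,T];\Kscr)\) associate \(U_Y(t) \coloneqq V^0 + \int_0^t F[Y(s)]\dee s\), which by the Lipschitz (hence linear-growth) bound on \(F\) lies in \(C^1([0,T];\Lp{2}(\Omega))\) with \(\dot{U}_Y = F[Y]\) and is Lipschitz in \(t\) with a constant controlled by \(\sup_{[0,T]}\norm{Y}_{\Lp{2}(\Omega)}\); then let \(\mathcal{S}(Y)\) be the unique strong solution of \(\dot{X} + \del I_\Kscr(X) \ni U_Y\), \(X(0)=X^0\). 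Thus \(\mathcal{S}\) maps \(C([0,T];\Kscr)\) into \(\mathrm{Lip}([0,T];\Kscr)\), and a fixed point of \(\mathcal{S}\) is exactly a curve satisfying \eqref{eq:DIV}, equivalently \eqref{eq:DIU}, in the sense of Definition \ref{dfn:LagSol}.

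For the contraction estimate, let \(X_i = \mathcal{S}(Y_i)\) and write \(w_i(t) \coloneqq U_{Y_i}(t) - \dot{X}_i(t) \in \del I_\Kscr(X_i(t))\). Since each \(X_i\) is absolutely continuous into \(\Lp{2}(\Omega)\) the chain rule applies, and monotonicity of \(\del I_\Kscr\) gives \(\ip{w_1(t)-w_2(t)}{X_1(t)-X_2(t)} \ge 0\), so that \(\tfrac12 \tfrac{\dee}{\dee t}\norm{X_1(t)-X_2(t)}^2 \le \ip{U_{Y_1}(t)-U_{Y_2}(t)}{X_1(t)-X_2(t)} \le \norm{U_{Y_1}(t)-U_{Y_2}(t)}\,\norm{X_1(t)-X_2(t)}\). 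With \(X_1(0)=X_2(0)=X^0\) and \(\norm{U_{Y_1}(s)-U_{Y_2}(s)} \le L\int_0^s \norm{Y_1(r)-Y_2(r)}\dee r\), two integrations give \(\sup_{[0,T]}\norm{X_1-X_2} \le \tfrac{LT^2}{2}\sup_{[0,T]}\norm{Y_1-Y_2}\). Hence \(\mathcal{S}\) is a contraction whenever \(LT^2 < 2\), and the Banach fixed-point theorem produces a unique Lagrangian solution on such an interval. Because the threshold depends only on \(L\), one extends to an arbitrary interval by iterating the construction started from \((X(T),U(T))\) and concatenating: concatenations of Lipschitz curves are Lipschitz, \(U\) remains \(C^1\) (it is the single primitive \(V^0 + \int_0^t F[X(s)]\dee s\) throughout), and the inclusion continues to hold for a.e.\ \(t\). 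Finally, running the same monotonicity computation with \(X_1, X_2\) two Lagrangian solutions sharing the data yields \(\norm{X_1(t)-X_2(t)} \le L\int_0^t\int_0^s \norm{X_1-X_2}\dee r\dee s\), whence \(X_1 \equiv X_2\) by an elementary Gronwall-type comparison, giving uniqueness on all of \([0,T]\).

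The step requiring the most care is the invocation of the abstract theory: one must check that the strong solution from \cite{brezis1973operateurs} genuinely has the \(W^{1,\infty}\)-in-time regularity --- with a modulus governed by the time-Lipschitz constant of \(U_Y\), hence ultimately by \(\sup\norm{Y}\) and the data --- needed both to make \(\mathcal{S}\) well defined as a self-map of \(C([0,T];\Kscr)\) and to legitimise the energy identity for \(\norm{X_1-X_2}^2\), as well as that \(\lim_{t\to0+}X(t)=X^0\) in the sense of Definition \ref{dfn:LagSol}. The remaining ingredients --- maximal monotonicity of \(\del I_\Kscr\), the linear growth of \(F\) ruling out finite-time blow-up, and the gluing of pieces --- are routine.
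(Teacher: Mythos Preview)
The paper does not supply its own proof of this theorem; it quotes the result from \cite[Theorem 3.6]{brenier2013sticky}. Your argument is correct and self-contained, so there is no gap to flag.

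The route is, however, not quite the one the cited source takes. As the paper indicates later (Section \ref{ss:CCZ}), the proof in \cite{brenier2013sticky} works on the product space \(\Lp{2}(\Omega)\times\Lp{2}(\Omega)\): one writes the coupled system \eqref{eq:DIU} as \(\diff{}{t}(X,U) + A(X,U) \ni 0\) with \(A(X,U) = (\del I_\Kscr(X) - U,\, -F[X])\), observes that \(A\) is a Lipschitz perturbation of the maximal monotone operator \((\del I_\Kscr(X),0)\), and invokes the abstract well-posedness theory for such perturbations directly. This delivers existence and uniqueness in one stroke, with no fixed-point iteration or gluing. Your decoupled approach --- freeze \(Y\), solve the inner inclusion via Br\'ezis, then contract in \(Y\) --- is equally valid and perhaps more transparent about where the Lipschitz constant of \(F\) enters; it also makes the role of the monotonicity estimate explicit rather than hiding it inside the abstract perturbation lemma. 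The product-space argument, on the other hand, avoids the small-time restriction and the concatenation step entirely, and extends without change to forcings \(F[X,U]\) depending Lipschitz on \(U\) as well (which is exactly why the paper mentions it in Section \ref{ss:CCZ}).
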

  
  Let us return to the matter of sticky evolution: it turns out that we may define a subclass of the Lagrangian solutions with this behavior.
  \begin{dfn}[Sticky Lagrangian solutions]\label{dfn:LagSol:sticky}
    A Lagrangian solution is called \textit{sticky} if for \(t_1 \le t_2\) we have \(\Omega_{X(t_1)} \subset \Omega_{X(t_2)}\).
  \end{dfn}
  Because of the implications \eqref{eq:OX&NX} and \eqref{eq:OX&HX},
  any sticky Lagrangian solution satisfies the monotonicity condition
  \begin{equation*}
    \del I_{\Kscr}(X(t_1)) \subset \del I_{\Kscr}(X(t_2)), \quad \Hscr_{X(t_2)} \subset \Hscr_{X(t_1)} \enspace \text{for any } t_1 \le t_2.
  \end{equation*}
  From this property one may derive the following representation formulas for sticky solutions, \cite[Proposition 3.8]{brenier2013sticky}.
  \begin{prp}[Projection formula]\label{prp:projFormula}
    If \(X\) is a sticky Lagrangian solution, then
    \begin{equation*}
      V(t) \in \Hscr_{X(t)} \enspace \text{for all times } t \ge 0
    \end{equation*}
    and \((X,V)\) satisfy
    \begin{align}
      X(t) &= \Proj_{\Kscr} \left( X^0 + \int_0^t U(s)\dee s \right) = \Proj_{\Kscr} \left( X^0 + t V^0 + \int_0^t (t-s) F[X(s)]\dee s \right), \label{eq:sticky:X} \\
      V(t) &= \Proj_{\Hscr_{X(t)}} U(t) = \Proj_{\Hscr_{X(t)}} \left( V^0 + \int_0^t F[X(s)]\dee s \right). \label{eq:sticky:V}
    \end{align}
  \end{prp}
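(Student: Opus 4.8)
The plan is to derive the three claims in order, noting that in each representation the second equality follows from the first by substituting the definition \eqref{eq:U} of $U$ and invoking Fubini's theorem, so the real content is: (i) $V(t)\in\Hscr_{X(t)}$ for every $t$; (ii) $X(t)=\Proj_{\Kscr}\bigl(X^0+\int_0^t U(s)\dee s\bigr)$; (iii) $V(t)=\Proj_{\Hscr_{X(t)}}U(t)$. Throughout I would use that, by Lemma \ref{lem:LagSol:props}, the right-derivative $V(t)$ exists for all $t\ge0$, is right-continuous, is the minimal element of $U(t)-\del I_\Kscr(X(t))$, and equals $\Proj_{\Hscr_{X(t)}}U(t)$ for $t$ in the full-measure set $\Tcal$; and that, by Definition \ref{dfn:LagSol:sticky} together with \eqref{eq:OX&NX} and \eqref{eq:OX&HX}, stickiness gives $N_{X(s)}\Kscr\subset N_{X(t)}\Kscr$ and $\Hscr_{X(t)}\subset\Hscr_{X(s)}$ whenever $s\le t$.

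For (i), I would fix $t_0\ge0$ and choose $t_k\downarrow t_0$ with $t_k\in\Tcal$, which is possible since the complement of $\Tcal$ is negligible. Then $V(t_k)=\Proj_{\Hscr_{X(t_k)}}U(t_k)\in\Hscr_{X(t_k)}\subset\Hscr_{X(t_0)}$ by the stickiness inclusion above, and since $\Hscr_{X(t_0)}$ is a closed subspace of $\Lp{2}(\Omega)$ while $V(t_k)\to V(t_0)$ by right-continuity, the limit $V(t_0)$ lies in $\Hscr_{X(t_0)}$.

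For (ii), local Lipschitz continuity of $X$ gives $X(t)-X^0=\int_0^t\dot X(s)\dee s$ in $\Lp{2}(\Omega)$, and \eqref{eq:DIU:X} gives $\xi(s)\coloneqq U(s)-\dot X(s)\in N_{X(s)}\Kscr$ for a.e.\ $s$. I would then show directly that $Z\coloneqq\int_0^t U(s)\dee s-\bigl(X(t)-X^0\bigr)=\int_0^t\xi(s)\dee s$ belongs to $N_{X(t)}\Kscr$: for any $Y\in\Kscr$, Fubini's theorem turns $\int_\Omega Z(m)\bigl(Y(m)-X(t,m)\bigr)\dee m$ into $\int_0^t\bigl(\int_\Omega\xi(s,m)(Y(m)-X(t,m))\dee m\bigr)\dee s$, whose inner integrand is $\le0$ for a.e.\ $s\le t$ because $\xi(s)\in N_{X(s)}\Kscr\subset N_{X(t)}\Kscr$, so the whole expression is nonpositive; this is exactly the defining inequality for $Z\in N_{X(t)}\Kscr$. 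Since $X(t)\in\Kscr$ and $\bigl(X^0+\int_0^t U(s)\dee s\bigr)-X(t)=Z\in N_{X(t)}\Kscr=\del I_\Kscr(X(t))$, the variational characterization of the metric projection yields $X(t)=\Proj_{\Kscr}\bigl(X^0+\int_0^t U(s)\dee s\bigr)$. Substituting \eqref{eq:U} and exchanging the order of integration in $\int_0^t\int_0^s F[X(r)]\dee r\dee s$ (legitimate since $s\mapsto F[X(s)]$ is continuous, hence bounded on $[0,t]$) converts this into $X^0+tV^0+\int_0^t(t-s)F[X(s)]\dee s$, giving the second form.

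For (iii), since $V(t)$ is the minimal element of $U(t)-\del I_\Kscr(X(t))$ we have $U(t)-V(t)\in N_{X(t)}\Kscr$, and by the observation following \eqref{eq:TC1} this is contained in $\Hscr_{X(t)}^\perp$; combined with $V(t)\in\Hscr_{X(t)}$ from (i), the decomposition $U(t)=V(t)+(U(t)-V(t))$ is the orthogonal splitting of $U(t)$ along the closed subspace $\Hscr_{X(t)}$, so $V(t)=\Proj_{\Hscr_{X(t)}}U(t)$; the second equality is again \eqref{eq:U}. I expect the main obstacle to be step (ii) — in particular keeping the almost-everywhere statements and the direction of the stickiness inclusions straight so that the Fubini interchange legitimately reduces membership of the time-integral in the fixed cone $N_{X(t)}\Kscr$ to the pointwise inclusions $\xi(s)\in N_{X(s)}\Kscr$; once this is in place, (i) and (iii) are short assemblies of facts already recorded in Lemma \ref{lem:LagSol:props} and Section \ref{ss:convex}.
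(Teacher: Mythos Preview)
Your proof is correct. Note, however, that the paper does not actually prove this proposition: it is quoted verbatim as \cite[Proposition 3.8]{brenier2013sticky}, with only the one-line remark that the monotonicity $\del I_{\Kscr}(X(t_1)) \subset \del I_{\Kscr}(X(t_2))$ and $\Hscr_{X(t_2)} \subset \Hscr_{X(t_1)}$ for $t_1\le t_2$ is what drives the representation formulas. Your argument fleshes out precisely this remark, and each step is sound: the right-continuity-plus-closedness argument for (i), the Fubini reduction of $\int_0^t\xi(s)\dee s\in N_{X(t)}\Kscr$ to the pointwise inclusions $\xi(s)\in N_{X(s)}\Kscr\subset N_{X(t)}\Kscr$ for (ii), and the orthogonal-decomposition argument for (iii) all go through as you describe. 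This is essentially the argument one finds in \cite{brenier2013sticky} itself.
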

  
  As mentioned before, there is a sufficient criterion for a Lagrangian solution to be sticky.
  Indeed, by \cite[Theorem 3.11]{brenier2013sticky}, if the force \(F\) is Lipschitz continuous and sticking in the sense of Definition \ref{dfn:sticking}, then every Lagrangian solution of \eqref{eq:DIV} with \((X^0,V^0) \in \Kscr \times \Hscr_{X^0}\) is (globally) sticky,
  and given by the projection formula of Proposition \ref{prp:projFormula}.
  
  \begin{rem}[Generalized Lagrangian solutions]
    In \cite[Section 4]{brenier2013sticky} the authors introduce the notion of generalized Lagrangian solutions.
    We will not go into detail on these, as they lack uniqueness with respect to initial data.
    However, we mention that forcing the evolution of \(X\) to be sticky, despite \(F\) not satisfying the sticking condition \eqref{eq:sticking}, which would correspond to replacing \(F[X]\) with \(\Proj_{\Hscr_{X}}F[X]\) in \eqref{eq:DIV} (which would still satisfy \eqref{eq:f&F}), yields such a generalized Lagrangian solution.
    We will compare the Lagrangian solution to this type of generalized solution in Example \ref{exm:2particles} and in Section \ref{ss:BGSW}.
  \end{rem}
  
  \subsection{Lagrangian solutions for the Euler system}\label{ss:LagEF}
  From the final bullet point of Lemma \ref{lem:LagSol:props} we have that a Lagrangian solution of the differential inclusion \eqref{eq:DIV} provides us with a solution of the original Euler system \eqref{eq:EF}.
  In this section we will see how suitable assumptions on the force distribution \(f[\rho]\) allow us to apply Theorem \ref{thm:LagSol:EU} for the differential inclusion involving its Lagrangian representation \(F[X]\), and thereby infer that this solution also is a \textit{stable selection} with respect to initial data in an appropriate metric space, that is
  \begin{equation}\label{eq:TP2}
    \Tscr_2 \coloneqq \left\{ (\rho, v) \colon \rho \in \Pscr_2(\R), v \in \Lp{2}(\R,\rho) \right\}
  \end{equation}
  endowed with the metric
  \begin{equation}\label{eq:TP2met}
    d_{\Tscr_2}( (\rho_1, v_1), (\rho_2, v_2) )^2 \coloneqq d_{\Wscr_2}(\rho_1,\rho_2)^2 + d_{\Vscr_2}((\rho_1, v_1), (\rho_2, v_2))^2,
  \end{equation}
  involving the semi-distance
  \begin{equation*}
    d_{\Vscr_2}((\rho_1,v_1), (\rho_2,v_2))^2 \coloneqq \int_{\R \times \R} \abs{v_1(x)-v_2(y)}^2 \dee\varrho(x,y) = \int_\Omega \abs{v_1(X_{\rho_1}(m))-v_2(X_{\rho_2}(m))}^2\dee m,
  \end{equation*}
  where, recalling \eqref{eq:optplan}, \(\varrho\) is the unique optimal transport map between the measures \(\rho_1\) and \(\rho_2\).
  Here we may recall our assumptions \eqref{eq:init} on the initial data \((\rho_0, v_0)\) and observe that this aligns with the definition of \eqref{eq:TP2}.
  The monotone rearrangements of the measures are \(X_{\rho_1}\) and \(X_{\rho_2}\), and defining \(V_{\rho_i} = v_i \circ X_i\) for \(i=1,2\) we observe by \eqref{eq:optplan} that we in fact have the relation
  \begin{equation}\label{eq:TP2&L2}
    d_{\Tscr_2}( (\rho_1, v_1), (\rho_2, v_2) )^2 = \norm{X_{\rho_1}-X_{\rho_2}}_{\Lp{2}(\Omega)}^2 + \norm{V_{\rho_1}-V_{\rho_2}}_{\Lp{2}(\Omega)}^2.
  \end{equation}
  Next, in a similar vein to the Lagrangian setting, we define the notion of boundedness and continuity in this metric space.
  \begin{dfn}[Boundedness] \label{dfn:bnd:f}
    A map \(f \colon \Pscr_2(\R) \to \Mscr(\R) \) as in \ref{eq:f:RN} is bounded if there is a constant \( C \ge 0\) such that
    \begin{equation*}
      \norm{f_\rho}^2_{\Lp{2}(\R,\rho)} \le C \left( 1 + \int_\R \abs{x}^2\dee\rho \right) \text{ for all } \rho \in \Pscr_2(\R).
    \end{equation*}
    The map is called pointwise linearly bounded if there is \(C_\mathrm{p} \ge 0 \) such that
    \begin{equation*}
      \abs{f_\rho(x)} \le C_\mathrm{p} \left( 1 + \abs{x} + \int_\R \abs{y}\dee\rho(y) \right) \text{ for a.e.\ \(x \in \R\) and all \(\rho \in \Pscr_2(\R) \). }
    \end{equation*}
  \end{dfn}
  \begin{dfn}[Uniform continuity I] \label{dfn:unicont:f}
    A map \( f \colon \Pscr_2(\R) \to \Mscr(\R) \) as in \eqref{eq:f:RN} is uniformly continuous if there exists a modulus of continuity \( \omega\) such that
    \begin{equation}\label{eq:unicont:f}
      d_{\Tscr_2} ((\rho_1, f_{\rho_1}), (\rho_2,f_{\rho_2})) \le \omega\left( d_{\Wscr_2}(\rho_1, \rho_2) \right) \text{ for all } \rho_1, \rho_2 \in \Pscr_2(\R).
    \end{equation}
    As before, if \(\omega(r) = Lr\) for some \(L \ge 0\) and all \(r \ge 0\), then \(f\) is Lipschitz continuous. 
  \end{dfn}
  Recall from Section \ref{ss:convex} the correspondence between measures \(\rho \in \Pscr_2(\R)\) and optimal transport maps \(X \in \Lp{2}(\Omega)\), namely \( X \in \Kscr \) and \( X_\#\mathfrak{m} = \rho \).
  For such correspondences we want a map \( F \colon \Kscr \to \Lp{2}(\Omega) \) satisfying \eqref{eq:f&F}, and a possible choice is simply \( F[X] = f_\rho \circ X \) for \(f_\rho\) given by \eqref{eq:f:RN}.
  Then the boundedness and continuity properties for the force distribution \(f\) in Definitions \ref{dfn:bnd:f} and \ref{dfn:unicont:f} translates into the corresponding properties for \(F\) in Definitions \ref{dfn:bnd:F} and \ref{dfn:unicont:F}.
  However, different choices for \(F\) can be convenient, and for this one can make use of another definition.
  
  \begin{dfn}[Uniform continuity II]
    The map \( f \colon \Pscr_2(\R) \to \Mscr(\R) \) is \textit{densely} uniformly continuous if \eqref{eq:unicont:f} holds for measures that are uniformly continuous with respect to the Lebesgue measure with bounded densities. Dense Lipschitz continuity is defined analogously.
  \end{dfn}
  
  By \cite[Lemma 6.4]{brenier2013sticky}, if \( f \colon \Pscr_2(\R) \to \Mscr(\R) \) is densely uniformly continuous, there exists a unique uniformly continuous map \( F \colon \Kscr \to \Lp{2}(\Omega) \) such that \eqref{eq:f&F} holds for all \((X, \rho)\) satisfying \( X \in \Kscr \) and \( X_\#\mathfrak{m} = \rho \).
  Furthermore, we can then define a densely uniformly continuous force distribution \(f\) to be \textit{sticking} if its corresponding map \(F\) described above satisfies Definition \ref{dfn:sticking}.
  
  Having established these definitions, \cite[Theorem 6.6]{brenier2013sticky} tells us that if \( f \colon \Pscr_2(\R) \to \Mscr(\R) \) is densely Lipschitz continuous, there exists a stable selection of a solution \((\rho, v)\) of \eqref{eq:EF} with respect to initial data \( (\rho_0, v_0) \) in \( (\Tscr_2, d_{\Tscr_2}) \).
  
  At the end of Section 6 in \cite{brenier2013sticky} the authors give several examples of force functionals covered by their theory.
  Below we see how two of these connect our examples in Section \ref{s:examples}.
  
  \subsubsection{Euler--Poisson system}
  Theorem \ref{thm:LagSol:EU} can be applied to \eqref{eq:EP} where according to \cite[Example 6.9]{brenier2013sticky}, compare also with \cite[Proposition 2.10]{bonaschi2015equivalence}, we have \(F[X](m) = -\alpha(m-\frac12) -\beta \).
  That is, the force operator \(F[X]\) does not even depend on \(X\), meaning it is Lipschitz continuous in the sense of Definition \ref{dfn:unicont:F}.
  Furthermore, this means we can compute the prescribed velocity \(U\) in \eqref{eq:U} for all times directly:
  \begin{equation}\label{eq:U:EP}
    U(t,m) = V^0(m) + t A(m), \qquad A(m) \coloneqq -\alpha\left(m -\frac12\right) - \beta.
  \end{equation}
  In this case, the force operator \(f[\rho]\) turns out to be pointwise linearly bounded and densely uniformly continuous.
  Moreover, for \( \alpha \ge 0\) it is also sticking, meaning the unique Lagrangian solution is given by the projection formula of Proposition \ref{prp:projFormula}.
  We study the non-sticking case in Section \ref{ss:BGSW}.
  
  \subsubsection{Euler--Poisson system with quadratic confinement}
  A more general force distribution compared to the above is given in \cite[Example 6.10]{brenier2013sticky}.
  Given \( \sigma \in \Lp{\infty}(\R) \) they define \( f[\rho] = - \rho \del_x q_\sigma \) for all \( \rho \in \Pscr_2(\R) \),
  where \( q_\sigma \) satisfies \( -\del_{xx} q_\sigma = \lambda (\rho -\sigma) \).
  Of course, this only defines \( \del_x q_\sigma\) up to a constant, and making the same choice for the \(\rho\)-part as the previous example and defining
  \( S_\sigma(x) = \int_0^x\sigma(y)\dee y \) one arrives at the following Lagrangian representation
  \begin{equation*}
    F[X](m) = -\lambda \left( m-\frac12 - S_\sigma(X(m)) \right) \quad \text{for all } m \in \Omega.
  \end{equation*}
  Then \(F \colon \Kscr \to \Lp{2}(\Omega) \) is pointwise linearly bounded and Lipschitz-continuous.
  Choosing \( \sigma \equiv 1 \) such that \( S_\sigma(x) = x \) and \(\lambda > 0\) this corresponds to a repulsive Poisson force combined with a confining force driving mass towards the origin.
  In Section \ref{ss:CCZ} we will consider a variant of this studied in \cite{carrillo2016pressureless} where the confining force comes from a quadratic confinement potential which drives mass toward the center of mass \(\bar{x}(t)\).
  In the above situation this behavior can be achieved by choosing \(S_\sigma(t,x) = x-\bar{x}(t)\).
  
  \section{Entropy solutions}\label{s:entropy}
  In this section, following the terminology used for the first order interaction equation \eqref{eq:interact} in \cite{bonaschi2015equivalence}, we will introduce the concept of entropy solutions for the Euler system \eqref{eq:EF}.
  This was already done for the specific case of the Euler--Poisson system \eqref{eq:EP} by Nguyen and Tudorascu \cite{nguyen2008pressureless,nguyen2015one}, which in turn is based on the work of Brenier and Grenier \cite{brenier1998sticky} on the Euler system without forcing.
  The main idea is to approximate the density \(\rho\) with an empirical measure, and noting that its piecewise constant distribution function satisfies a conservation law with a piecewise linear flux function, much in the spirit of front tracking \cite{dafermos1972polygonal}.
  Then, applying the calculus for functions of bounded variation (\(BV\)) developed by Vol'pert \cite{volpert1967spaces}, it turns out that the corresponding solution of the conservation law gives rise to solutions of the Euler system, and a solution of the original problem is obtained from a limiting argument.
  This idea can be regarded as using the flow along characteristics to combine \eqref{eq:EF:den} and \eqref{eq:EF:mom} into a single scalar conservation law for an appropriate flux function, analogous to what was done in \cite{leslie2021sticky}, and this is how we motivate the flux function below.
  Under our assumptions, the flux function will in general be time-dependent and not Lipschitz-continuous in the solution variable.
  Therefore, as in \cite{nguyen2015one}, we will rely on results developed by Golovaty and Nguyen \cite{golovaty2012existence}, rather than the standard existence and uniqueness theory for conservation laws which typically requires the flux function to be Lipschitz.
  
  \subsection{Derivation of the scalar conservation law}
  Here we will derive a scalar conservation law with the aim of finding solutions of \eqref{eq:EF}.
  To this end we introduce the right-continuous distribution functions
  \begin{equation}\label{eq:MQ:def}
    M(t,x) = \int_{(-\infty,x]}\dee\rho(t,y), \qquad Q(t,x) = \int_{(-\infty,x]} v(t,y)\dee\rho(t,y),
  \end{equation}
  where we have used the dummy variable \(y\) to indicate integration over the spatial variable.
  In order to clarify the notation in the rest of the paper, we emphasize that integration in space with respect to a time-dependent measure \(\rho = \rho(t,x)\) is indicated by the differential \(\dee\rho(t,\cdot)\), where we here have omitted the variable of integration.
  If there is additional integration in time, there will be a separate, additional differential for this, e.g., \(\dee t\).
  We write \(M(0,x) = M^0(x)\) and \(Q(0,x) = Q^0(x)\) for the initial data of \eqref{eq:MQ:def}, and we recall from our assumptions \eqref{eq:init} on \(\rho_0\) and \(v_0\) that these distribution functions are initially well-defined functions of bounded variation.
  Then, integrating \eqref{eq:EF:den} and \eqref{eq:EF:mom} together with \eqref{eq:f:RN}, \(f[\rho] = f_\rho \rho\), yields two advection-type equations
  \begin{subequations}\label{eq:MQ:evol}
    \begin{align}
      \del_t M + v \del_xM &= 0, \label{eq:M:evol} \\
      \del_t Q + v \del_x Q &= \int_{(-\infty,x]}f_\rho(t,y) \dee\rho(t,y). \label{eq:Q:evol}
    \end{align}
  \end{subequations}
  Now the goal is to rewrite \eqref{eq:M:evol} as a conservation law for \(M\) by using the relation \(\del_x Q = v \del_x M\) coming from \eqref{eq:MQ:def}, and expressing \(Q\) as a function of \(M\) using the evolution equation \eqref{eq:Q:evol}. 
  From \eqref{eq:M:evol} we see that \(M\) is constant along characteristics traveling with velocity \(v\), which we want to take advantage of in the second equation.
  One way of deriving the conservation laws corresponding to the Euler (\(f[\rho] \equiv 0\)) and the Euler--Poisson system in  \cite{brenier1998sticky, nguyen2008pressureless} is then to introduce characteristics \(\eta(t,x)\) satisfying
  \begin{equation*}
    \del_t \eta(t,x) = v(t,\eta(t,x)), \qquad \eta(0,x) = x
  \end{equation*} 
  as in \cite{leslie2021sticky}, cf.\ Example \ref{exm:flux:EP} below.
  However, in order to highlight the connection to the Lagrangian solutions of Section \ref{s:gradflow}, we will instead use the optimal transport map \(X\) defined
  through the push-forward relation \(\rho = X_\#\mathfrak{m}\).
  We recall that \(X(t,m)\), with \(X(0,m) = X^0(m)\), is the right-continuous generalized inverse \(M^{-1}(t,m)\) of \(M(t,x)\).
  Then, if \(\rho(t,\cdot)\) contains no atoms, we have \(M(t,X(t,m)) = m\), which together with \eqref{eq:M:evol} implies that \(\dot{X}(t,m) = v(t,X(t,m))\).
  Assuming smooth, i.e., atomless, solutions, we can recast \eqref{eq:Q:evol} as
  \begin{equation*}
    \del_t Q(t,X(t,m)) + v(t,X(t,m)) \del_x Q(t,X(t,m)) = \int_\R \chi_{(-\infty,0]}(y-X(t,m))f_\rho\dee\rho(t,y).
  \end{equation*}
  Using the push-forward relation, this is equivalent to
  \begin{equation*}
    \diff{}{t} Q(t,X(t,m)) = \int_{0}^m F[X](t,\omega)\dee\omega,
  \end{equation*}
  where \(F\) corresponds to the Lagrangian representation \eqref{eq:f&F} of the forcing term from Section \ref{s:gradflow}, and we recall that one possible choice for this is \(F[X] = f_\rho \circ X\).
  Integrating this equation in time and interchanging the order of the integrals leads to
  \begin{equation*}
    Q(t,X(t,m)) = Q(0,X(0,m)) + \int_0^m \int_0^t F[X](s,\omega)\dee s \dee \omega .
  \end{equation*}
  Plugging in for \(m = M(t,x)\) and taking into account the initial conditions from \eqref{eq:MQ:def}, we arrive at
  \begin{equation}\label{eq:Q&U}
    \begin{aligned}
      Q(t,x) &= Q^0(X^0(M(t,x))) + \int_0^{M(t,x)} \int_0^t F[X](s,m)\dee s \dee m \\
      &= \int_0^{M(t,x)} \left( v_0(X(0,m)) + \int_0^t F[X](s,m)\dee s \right) \dee m \eqqcolon \Ucal(t,M(t,x)),
    \end{aligned}
  \end{equation}
  and we have, rather formally, expressed \(Q(t,x)\) as a function of \(M(t,x)\).
  Using \eqref{eq:Q&U} to define the flux function \(\Ucal(t,m)\), we express \eqref{eq:M:evol} as the scalar conservation law
  \begin{equation}\label{eq:claw}
    \del_t M + \del_x \Ucal(t,M) = 0.
  \end{equation}
  Note that the flux function \(\Ucal\) is exactly the primitive of the prescribed velocity \(U\) in \eqref{eq:U} from Section \ref{s:gradflow}.
  Differentiating \(\Ucal(t,M)\) with respect to \(x\) we get a Radon measure which is absolutely continuous with respect to \(\rho(t,x) = \del_x M(t,x)\), and the Radon--Nikodym derivative of \(\del_x Q\) with respect to \(\del_x M\) corresponds to the velocity of the characteristics for the conservation law \eqref{eq:claw}, which shows a formal connection between the two solution concepts.
  If \(\rho\) contains atoms, then \(M\) contains jumps, and for any sufficiently regular flux function we can apply a \(BV\)-chain rule as developed in \cite{volpert1967spaces}.
  Note that in general, compositions of \(BV\) functions are not necessarily of bounded variation, cf.\ \cite{josephy1981composing}, and in order to have a chain rule for general \(BV\) functions we would need the outer function to be at least Lipschitz.
  However, in our one-dimensional setting where \(M\) is the distribution function of a probability measure, we can require less regularity.
  The following result, found in \cite[Lemma 2.1]{nguyen2008pressureless}, will be of use.
  \begin{lem}\label{lem:decomp}
    Let \(\mu\) be a Borel probability measure on \(\R\) and let \(M\) be its right-continuous distribution function.
    Write
    \begin{equation*}
      \mu = \sum_{j \in \Jcal} m_j \delta_{x_j} + \rho,
    \end{equation*}
    where \(\{x_j\}_{j\in\Jcal}\) is the set of (at most countable) discontinuities of \(M\), with \(m_j \coloneqq \mu({x_j})\).
    If \(\rho\) is nonzero, then we have
    \begin{equation}
      M_\#\rho = \chi_{J^c} \mathfrak{m} \quad \text{for} \quad J \coloneqq \bigcup_{j\in\Jcal} \left( M(x_j-), M(x_j) \right).
    \end{equation}
  \end{lem}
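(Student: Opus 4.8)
The plan is to reduce the statement to two elementary properties of the right-continuous generalized inverse \(X\coloneqq X_\mu\colon\Omega\to\R\) of \(M\), namely that \(X(m)=\inf\{x:M(x)>m\}\) and \(X_\#\mathfrak{m}=\mu\) (cf.\ Section~\ref{s:gradflow}). First I would record that the intervals \(I_j\coloneqq\bigl(M(x_j-),M(x_j)\bigr)\subset\Omega\), \(j\in\Jcal\), are nonempty since \(\abs{I_j}=M(x_j)-M(x_j-)=m_j>0\), and pairwise disjoint since \(x_j<x_k\) implies \(M(x_j)\le M(x_k-)\) by monotonicity of \(M\), so that \(I_j\) lies to the left of \(I_k\). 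Next, \(X\equiv x_j\) on \(I_j\): if \(m\in I_j\) then \(M(x)\le M(x_j-)<m\) for \(x<x_j\) while \(M(x)\ge M(x_j)>m\) for \(x\ge x_j\), whence \(\inf\{x:M(x)>m\}=x_j\). Since \(J=\bigcup_{j\in\Jcal}I_j\) by definition and the \(I_j\) are disjoint and open, \(\mathfrak{m}|_J=\sum_{j\in\Jcal}\mathfrak{m}|_{I_j}\), and pushing forward gives
\[
  X_\#\bigl(\mathfrak{m}|_J\bigr)=\sum_{j\in\Jcal}\abs{I_j}\,\delta_{x_j}=\sum_{j\in\Jcal}m_j\delta_{x_j}.
\]
Subtracting this from \(X_\#\mathfrak{m}=\mu=\sum_{j\in\Jcal}m_j\delta_{x_j}+\rho\) — using additivity of the push-forward under the partition \(\mathfrak{m}=\mathfrak{m}|_J+\mathfrak{m}|_{J^c}\), \(J^c\coloneqq\Omega\setminus J\) — identifies \(X_\#\bigl(\mathfrak{m}|_{J^c}\bigr)=\rho\).

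It then remains to compute \(M_\#\rho=M_\#\bigl(X_\#(\mathfrak{m}|_{J^c})\bigr)=(M\circ X)_\#\bigl(\mathfrak{m}|_{J^c}\bigr)\), for which I would show that \(M\circ X\) agrees with the identity \(\mathfrak{m}\)-almost everywhere on \(J^c\). From the definition of \(X\) together with monotonicity and right-continuity of \(M\) one gets \(M(x)\le m\) for \(x<X(m)\) and \(M(x)>m\) for \(x>X(m)\), hence, letting \(x\) tend to \(X(m)\) from each side, \(M(X(m)-)\le m\le M(X(m))\) for every \(m\in\Omega\). If \(M(X(m))>m\) then \(M\) must jump at \(X(m)\) (otherwise continuity there would force \(M(X(m))=m\)), say \(X(m)=x_j\), and then \(m\in[M(x_j-),M(x_j))=\{M(x_j-)\}\cup I_j\). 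Thus the set where \(M\circ X\) differs from the identity is contained in \(J\cup\{M(x_j-):j\in\Jcal\}\), whose intersection with \(J^c\) is the countable — hence \(\mathfrak{m}\)-null — set \(\{M(x_j-):j\in\Jcal\}\setminus J\). Therefore \((M\circ X)_\#(\mathfrak{m}|_{J^c})=\mathrm{id}_\#(\mathfrak{m}|_{J^c})=\mathfrak{m}|_{J^c}=\chi_{J^c}\mathfrak{m}\), which is the claim. (When \(\Jcal=\emptyset\) the argument degenerates to the classical probability integral transform \(M_\#\mu=\mathfrak{m}\), so no separate case is needed; the hypothesis \(\rho\neq0\) only serves to make the conclusion non-vacuous.)

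The routine ingredients are the pointwise identities \(X\equiv x_j\) on \(I_j\) and \(M(X(m)-)\le m\le M(X(m))\), together with a couple of boundary checks — that \(X\) is \(\R\)-valued on \(\Omega\), so the push-forwards make sense, and that \(M_\#\rho\) charges neither endpoint of \([0,1]\), which is automatic here since \(J^c\subset(0,1)\). The one load-bearing step is the last one: recognising that the \emph{only} obstruction to \(M\circ X=\mathrm{id}\) is \(M\) jumping strictly past the level \(m\), that this occurs exactly on \(\bigcup_{j}I_j=J\) apart from the countable set of left jump-values \(M(x_j-)\), and hence that restricting to \(J^c\) removes it entirely. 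If a proof avoiding the generalized inverse is preferred, the same identity can be obtained in distribution-function form by verifying \(\rho(\{x:M(x)\le m\})=m-\abs{J\cap[0,m]}\) for all \(m\in[0,1]\), splitting into the cases where \(M\) attains the level \(m\) and where it jumps over it; this is more computational but entirely self-contained.
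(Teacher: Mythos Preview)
Your argument is correct. The paper does not actually prove this lemma; it simply cites \cite[Lemma 2.1]{nguyen2008pressureless}. Your approach via the generalized inverse \(X\) is clean and self-contained: the two key observations --- that \(X\) is constant equal to \(x_j\) on each jump interval \(I_j\), and that \(M\circ X=\mathrm{id}\) off \(J\) up to a countable set --- do all the work, and the push-forward bookkeeping in between is routine. The alternative you sketch at the end (computing \(\rho(\{M\le m\})\) directly) is closer in spirit to how such identities are often verified in the probability literature, but your inverse-based route meshes better with the paper's Lagrangian framework, where \(X\) is already the central object.
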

  Now we can state the generalized \(BV\)-chain rule, comparable to the results \cite[Theorem 2.2, Corollary 2.3]{nguyen2008pressureless} which were proved with approximation arguments.
  \begin{lem}\label{lem:BVcalc}
    Consider \(p\in [1,\infty]\) and a function \(f \in \Lp{p}(0,1)\), for which we define \(F(m) \coloneqq \int_0^m f(\omega)\dee \omega\).
    If \(M\) is the right-continuous distribution function of some Borel probability measure \(\mu\) on \(\R\), then \(F \circ M \in BV(\R)\) with distributional derivative \(g \mu\), where
    \begin{equation}\label{eq:BVchain}
      g(x) \coloneqq \begin{cases}
        f \circ M(x), & \mu(\{x\}) = 0, \\
        \frac{F\circ M(x)-F\circ M(x-)}{\mu(\{x\})}, & \mu(\{x\}) \neq 0
      \end{cases}
    \end{equation}
    in the \(\mu\)-a.e.\ sense. Furthermore \(g \in \Lp{p}(\R,\mu)\) with \(\norm{g}_{\Lp{p}(\R,\mu)} \le \norm{f}_{\Lp{p}(0,1)}\), where the inequality is replaced by equality if \(\mu\) is diffuse, i.e., there are no \(\mu\)-atoms.
  \end{lem}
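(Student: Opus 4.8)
The plan is to compute the distributional derivative of $F\circ M$ directly and to read off both the $BV$ regularity and the norm bound from the resulting expression. Since $F\in C([0,1])$ with $F(0)=0$, the function $F\circ M$ is bounded, hence in $\Lp{1}_{\mathrm{loc}}(\R)$, and admits the layer-cake representation $F(M(x))=\int_0^1\chi_{(0,M(x))}(\omega)\,f(\omega)\dee\omega$ for every $x\in\R$. Testing against $\phi\in C_{\mathrm{c}}^\infty(\R)$ and applying Fubini's theorem --- legitimate because $\int_\R\abs{\phi'(x)}\int_0^1\abs{f(\omega)}\dee\omega\dee x<\infty$ --- gives
\[
\int_\R F(M(x))\,\phi'(x)\dee x=\int_0^1 f(\omega)\left(\int_\R\chi_{\{M>\omega\}}(x)\,\phi'(x)\dee x\right)\dee\omega.
\]
Because $M$ is nondecreasing and right-continuous and $\mu$ is a probability measure, for every $\omega\in(0,1)$ the superlevel set $\{x:M(x)>\omega\}$ is a half-line with finite left endpoint $X_\mu(\omega)$ (whether or not this endpoint lies in the set is immaterial here), so the inner integral equals $-\phi(X_\mu(\omega))$, and therefore $\int_\R F(M)\,\phi'\dee x=-\int_0^1 f(\omega)\,\phi(X_\mu(\omega))\dee\omega$.

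Next I would invoke Lemma \ref{lem:decomp}: write $\mu=\sum_{j\in\Jcal}m_j\delta_{x_j}+\rho$ with atoms $\{x_j\}$, set $J=\bigcup_j(M(x_j-),M(x_j))$, and split $(0,1)=J\sqcup J^c$. On each interval $(M(x_j-),M(x_j))\subset J$ the map $X_\mu$ is constant equal to $x_j$ and has length $m_j$, so the part of the integral over $J$ equals $\sum_j\phi(x_j)\big(F(M(x_j))-F(M(x_j-))\big)=\sum_j\phi(x_j)\,g(x_j)\,m_j$, which is exactly $\int_\R\phi\,g\dee\mu$ restricted to the atomic part of $\mu$. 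On $J^c$, using $(X_\mu)_\#\mathfrak{m}=\mu$ and that $X_\mu$ carries $J$ onto the atoms with total mass $\sum_j m_j\delta_{x_j}$, one gets $(X_\mu)_\#(\mathfrak{m}|_{J^c})=\rho$; moreover $M(X_\mu(\omega))=\omega$ for a.e.\ $\omega\in J^c$ (this identity can fail only on the countable set of $\omega\in J^c$ for which $X_\mu(\omega)$ is an atom), whence a change of variables yields $\int_{J^c}f(\omega)\,\phi(X_\mu(\omega))\dee\omega=\int_\R (f\circ M)\,\phi\dee\rho=\int_\R g\,\phi\dee\rho$ (the case $\rho=0$ being trivial since then $\mathfrak{m}(J^c)=0$). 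Here Lemma \ref{lem:decomp} enters a second time, through $M_\#\rho=\chi_{J^c}\mathfrak{m}\ll\mathfrak{m}$, to guarantee that $f\circ M$ is unambiguously defined $\rho$-a.e.\ even though $f$ is only a Lebesgue-a.e.\ equivalence class. Adding the two contributions gives $\int_\R F(M)\,\phi'\dee x=-\int_\R\phi\,g\dee\mu$, i.e.\ $\del_x(F\circ M)=g\mu$ in $\Dscr'(\R)$; since $\mu$ is a probability measure and $g\in\Lp{1}(\R,\mu)$ (as shown next), $g\mu$ is a finite measure, so $F\circ M\in BV(\R)$.

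For the norm bound I would write $\int_\R\abs{g}^p\dee\mu=\sum_j\abs{g(x_j)}^p m_j+\int_\R\abs{g}^p\dee\rho$. The diffuse term equals $\int\abs{f}^p\dee(M_\#\rho)=\int_{J^c}\abs{f(\omega)}^p\dee\omega$ by Lemma \ref{lem:decomp}. For the atomic term, $g(x_j)$ is the average of $f$ over $(M(x_j-),M(x_j))$ (a set of measure $m_j$), so Jensen's inequality gives $\abs{g(x_j)}^p m_j\le\int_{M(x_j-)}^{M(x_j)}\abs{f(\omega)}^p\dee\omega$, and summing over the pairwise disjoint intervals yields $\sum_j\abs{g(x_j)}^p m_j\le\int_J\abs{f(\omega)}^p\dee\omega$; hence $\int_\R\abs{g}^p\dee\mu\le\int_0^1\abs{f(\omega)}^p\dee\omega$, which is the claim after taking $p$-th roots. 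The case $p=\infty$ is the same argument with essential suprema in place of averages and integrals. If $\mu$ is diffuse then $J=\emptyset$, $g=f\circ M$, and $M_\#\mu=\mathfrak{m}$, so all the inequalities above collapse to equalities.

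The only genuinely delicate point I anticipate is the measure-theoretic bookkeeping forced by the fact that $f$ is defined only modulo Lebesgue-null sets while neither $M$ nor its generalized inverse $X_\mu$ need be absolutely continuous: it is precisely Lemma \ref{lem:decomp} that rescues $f\circ M$ on the diffuse part, and the explicit averaged formula \eqref{eq:BVchain} that must replace it on the atoms. Everything else reduces to a careful application of the generalized-inverse identities $\{M>\omega\}=(X_\mu(\omega),\infty)$ (up to an endpoint) and $M\circ X_\mu=\mathrm{id}$ a.e., together with Fubini's theorem and Jensen's inequality.
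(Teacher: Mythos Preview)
Your proof is correct and follows essentially the same route as the paper's: both split $(0,1)$ into the jump set $J$ and its complement via Lemma~\ref{lem:decomp}, handle the atomic part by direct computation and the diffuse part by the push-forward identity, and obtain the norm bound from Jensen's inequality. The only cosmetic difference is that the paper verifies the cumulative identity $F(M(x))=\int_{(-\infty,x]}g\dee\mu$ directly (and deduces $F\circ M\in BV$ separately from the partition bound $\TV(F\circ M)\le\TV(F)$), whereas you compute the distributional derivative by testing against $\phi'$ and passing through the generalized inverse $X_\mu$, then read off $BV$ from the finiteness of $g\mu$.
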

  
  Note that the function \(g\) in \eqref{eq:BVchain}, which in fact is the Radon--Nikodym derivative of the (signed) Radon measure \(\del_xF(M)\) with respect to the Radon measure \(\del_x M\), can be equivalently written as 
  \begin{equation}\label{eq:VolpertAvg}
    g(x) = \int_0^1 f((1-s)M(x-) + sM(x))\dee s,
  \end{equation}
  a representation due to Vol'pert \cite{volpert1967spaces} for compositions of general Lipschitz functions and \(BV\) functions.
  
  \begin{proof}[Proof of Lemma \ref{lem:BVcalc}]
    Note that \(f\colon\R\to\R\) is a function of bounded variation if and only if we for any partition \(\{x_i\}\) of \(\R\) have
    \begin{equation*}
      \TV(f) \coloneqq \sup_{x_i} \sum_{i} \abs{f(x_i)-f(x_{i-1})} < \infty.
    \end{equation*}
    Since \(M\) is a monotone increasing function, any partition \(\{x_i\}_i\) of \(\R\) defines a partition \(\{M(x_i)\}_i\) of \([0,1]\), and the set of such partitions of \([0,1]\) generated by partitions of \(\R\) is strictly smaller than the set of all possible partitions of \([0,1]\) since there is no way to pick \(x_i\) such that \(M(x_i) \in J\), the `jump set' of \(M\).
    Indeed, the range of \(F\circ M\) is strictly included in the range of \(F\).
    Hence
    \begin{equation*}
      \sup_{x_i} \sum_{i} \abs{F(M(x_i))-F(M(x_{i-1}))} \le \sup_{m_i} \sum_{i} \abs{F(m_i)-F(m_{i-1})},
    \end{equation*}
    from which we observe it to be sufficient if \(F \in BV([0,1])\).
    Now, since \(F \in W^{1,p}(0,1)\), it is absolutely continuous and consequently of bounded variation, and so \(F \circ M \in BV(\R)\).
    
    A (signed) Radon measure on \(\R\) is uniquely determined by its distribution function, and we note that \(M\) can be expressed as
    \begin{equation*}
      M(x) = \int_{(-\infty,x]}\dee \rho + \sum_{j \in \Jcal} m_j \chi_{(-\infty,x]}(x_j).
    \end{equation*}
    To verify \eqref{eq:BVchain} we shall similarly rewrite \(F\circ M\) in a form from which it is clear that it is the distribution function of the signed measure \(g \mu \ll \mu\) with \(g\) as in \eqref{eq:BVchain}.
    Indeed, by Lemma \ref{lem:decomp} we have    
    \begin{align*}
      \int_0^{M(x)}f(m)\dee m &= \int_0^1 \chi_{(0,M(x)]} \chi_{J^c} f(m) \dee m + \int_0^1 \chi_{(0,M(x)]} \chi_{J} f(m) \dee m \\
      &= \int_\R \chi_{(0,M(x)]}(M(y)) f(M(y))\dee\rho(y) + \sum_{j\in\Jcal} \chi_{(-\infty,x]}(x_j) \int_{M(x_j-)}^{M(x_j)}f(m)\dee m \\
      &= \int_{(-\infty,x]} f\circ M \dee\rho + \sum_{j\in\Jcal} \frac{F(M(x_j))-F(M(x_j-))}{m_j} m_j \chi_{(-\infty,x]}(x_j),
    \end{align*}
    from which the result follows.
    The last statement for \(p \in [1,\infty)\) can be proved using Lemma \ref{lem:decomp} and Jensen's inequality, namely
    \begin{align*}
      \norm{g}_{\Lp{p}(\R,\mu)}^p &= \int_\R \abs{g(x)}^p\dee\mu = \int_\R \abs{f\circ M(x)}^p\dee\rho + \sum_{j \in \Jcal} m_j \left|\frac{1}{m_j}\int_{M(x_j-)}^{M(x_j)}f(m)\dee m\right|^p \\
      &\le \int_{J^c} \abs{f(m)}^p\dee m + \int_{J} \abs{f(m)}^p\dee m = \norm{f}_{\Lp{p}(0,1)}^p.
    \end{align*}
    The case of \(p = \infty\), i.e., \(F\) Lipschitz, follows from a similar decomposition.
  \end{proof}
  
  We see from \eqref{eq:Q&U} that the integral term in the velocity \(U\) at time \(t\) involves the generalized inverse \(X(s,m)\) for \(s \in [0,t]\).
  As it turns out, in some cases one can use \textit{a priori} knowledge about the solution to express \(\Ucal(t,M)\) in a more explicit form.
  Below we give some examples of such cases, and the first of these corresponds to the Euler--Poisson system \eqref{eq:EP}.
  
  \begin{exm}[Poisson force]\label{exm:flux:EP}
    From the assumption \(\rho_0 \in \Pscr(\R)\) and the mass conservation due to \eqref{eq:EP:den} we have that a solution of \eqref{eq:EP} must satisfy \(\rho \in \Pscr(\R)\).
    This allows us to rewrite \eqref{eq:EP:mom} as
    \begin{align*}
      (\rho v)_t + (\rho v^2)_x 
      &= -\frac{\alpha}{2} \left( \int_{(-\infty, x)} \rho(t,y)\dee y +  \int_{(-\infty,x]} \rho(t,y)\dee y - 1 \right) \rho - \beta \rho \\
      &= -\frac{\alpha}{2}(M(t,x-) + M(t,x)-1) \rho - \beta \rho = -\del_x \left(\frac{\alpha}{2}(M(t,x)^2-M(t,x)) + \beta M(t,x)\right),
    \end{align*}
    where in the last identity we have used \eqref{eq:BVchain}.
    Integrating \eqref{eq:EP} we arrive at the nonconservative, transport-like system
    \begin{equation}\label{eq:MQ:evol:EP}
      \del_t M + v \del_xM = 0, \qquad \del_t Q + v \del_x Q = \frac{\alpha}{2} \left( M^2 - M \right) + \beta M.
    \end{equation}
    Since \(\del_x Q = v \del_x M\) and \(M\) is constant along characteristics, we see that if we can find a flux function \(\Vcal^0 \colon [0,1] \to \R\) satisfying the compatibility condition \(\Vcal^0(M^0) = Q^0\), then the transport equations above can be combined into a single conservation law, as shown in \cite{nguyen2008pressureless}.
    The appropriate function \(\Vcal^0\), depending only on the initial data of \eqref{eq:EP}, is given by
    \begin{equation*}
      \Vcal^0(m) = \int_0^m v_0((M^0)^{-1}(\omega))\dee \omega,
    \end{equation*}
    where \((M^0)^{-1}(m)\) is the right-continuous generalized inverse of \(M^0(x)\).
    Note that the integrand is exactly \(V^0(m) \coloneqq v_0 \circ X^0\), that is, \(\Vcal^0\) is the primitive of \(V^0\).
    In the end we obtain the conservation law \eqref{eq:claw} with flux function
    \begin{equation}\label{eq:flux:EP}
      \Ucal(t,m) = \Vcal^0(m) + t \Acal(m), \qquad \Acal(m) = -\frac{\alpha}{2}(m^2-m) - \beta m.
    \end{equation}
    Observe that the flux function \(\Ucal(t,m)\) is the primitive of the prescribed velocity \(U(t,m)\) from \eqref{eq:U:EP}.
  \end{exm}
  
  \begin{exm}[Poisson force with linear damping]\label{exm:flux:EPld}
    We can consider a slightly more general system than \eqref{eq:EP} by including a linear damping term for the momentum, that is, we look at
    \begin{equation} \label{eq:EPld}
      \begin{aligned}
        \del_t \rho + \del_x (\rho v) &= 0, \\
        \del_t (\rho v) + \del_x (\rho v^2) &= -\left(\tfrac{\alpha}{2} (\sgn\ast\rho) + \beta \right) \rho - \gamma \rho v.
      \end{aligned}
    \end{equation}
    Here \(\gamma \in \R\) should satisfy \(\gamma > 0\) to be considered a damping term.
    As before, the idea is to integrate \eqref{eq:EPld} and consider the distribution functions \eqref{eq:MQ:def} of \(\rho\) and \(\rho v\).
    Analogous to \eqref{eq:MQ:evol:EP} we then arrive at the transport-like equations
    \begin{equation*}
      \del_t M + v \del_x M = 0, \qquad \del_t Q + v \del_x Q = -\left( \frac{\alpha}{2}(M^2-M) + \beta M + \gamma Q \right).
    \end{equation*}
    The second equation can be rewritten, using an integrating factor, as
    \begin{equation*}
      \del_t (\e^{\gamma t} Q) + v \del_x(\e^{\gamma t} Q) = -\e^{\gamma t}\left( \frac{\alpha}{2}(M^2-M) + \beta M\right),
    \end{equation*}
    which we can integrate along characteristics, where \(M\) is constant.
    As before, choosing \(\Vcal^0\) such that \(Q(0,x) = \Vcal^0(M(0,x))\) we find that \(M\) satisfies the conservation law \eqref{eq:claw}, this time with a time-dependent flux function \(\Ucal(t,m)\) given by
    \begin{equation}\label{eq:flux:EPld}
      \Ucal(t,m) = \e^{-\gamma t} \Vcal^0(m) + \frac{1-\e^{-\gamma t}}{\gamma} \Acal(m) = \Vcal^0(m) -\frac{1-\e^{-\gamma t}}{\gamma}(\gamma \Vcal^0(m)-\Acal(m)),
    \end{equation} 
    with \(\Acal\) as defined in \eqref{eq:flux:EP}.
    Note that for \(\gamma = 0\) we have to interpret the fraction above as a limit \(\gamma \to 0\), for which we recover \eqref{eq:flux:EP}.
    We observe that the damping reduces the time it takes for the forcing term to dominate the flux function.
    
    Note that \eqref{eq:EPld} is not directly covered by \eqref{eq:EF}, as the forcing term now depends on the velocity in addition to the density.
    However, this can still be seen as a Lipschitz perturbation of a gradient flow, as argued in \cite{brenier2013sticky}, see Section \ref{ss:CCZ}.
  \end{exm}
  
  \subsection{Entropy solutions for the scalar conservation law}
  We now consider existence and uniqueness of weak solutions for \eqref{eq:claw}.
  Introducing a convex, Lipschitz entropy function \(\eta \colon [0,1] \to \R\) with corresponding entropy-flux function \(q \colon \R\times[0,1] \to \R\) satisfying \(\del_m q(t,m) = \eta'(m) \del_m \Ucal(t,m)\), one typically requires an entropy solution to satisfy
  \begin{equation*}
    \del_t \eta(M) + \del_x q(t,M) \le 0
  \end{equation*}
  in the distributional sense, cf.\ \cite[Section 4.5.1]{dafermos2016hyperbolic}.
  That is, considering the associated Cauchy problem for \eqref{eq:claw}, for any \(\phiv \in C_{\mathrm{c}}^\infty([0,T)\times\R)\) with \(\phiv \ge 0\) we have
  \begin{equation}\label{eq:Kruzkov}
    \int_{0}^{T} \int_{\R} \left[ \eta(M) \phiv_t + q(t,M)\phiv_x  \right]\dee x\dee t + \int_{\R} \phiv(0,x) \eta(M^0(x)) \dee x \ge 0.
  \end{equation}
  Since such convex functions can be seen as limits of piecewise affine functions, this is equivalent to \eqref{eq:Kruzkov} being true for the Kru\v{z}kov entropy-entropy flux pair
  \begin{equation}\label{eq:KruzkovPair}
    \eta_k(m) = \abs{m-k}, \qquad q_k(t,m) = \sgn(m-k) \left(\Ucal(t,m)-\Ucal(t,k)\right) = \int_k^m \sgn(\omega-k) U(t,\omega)\dee \omega,
  \end{equation}
  for all \(k \in \R\), see, e.g., \cite{holden2015front}.
  In fact, it is sufficient to consider \(k\) in a dense set of \(\R\), cf.\ \cite[Proposition 2]{panov2002generalized} or \cite[Section 3]{golovaty2012existence}.
  
  In Example \ref{exm:flux:EP} we saw that the flux function \(\Ucal(t,M)\) corresponding to the Euler--Poisson system takes the form
  \begin{equation*}
    \Ucal(t,m) = \int_0^m V^0(\omega)\dee \omega + t\int_0^m A(\omega)\dee \omega,
  \end{equation*}
  where \(A(m) = -\alpha(m-\frac12) -\beta\) and \(V^0 = v_0 \circ X^0 \in \Lp{2}(0,1)\) due to \(v_0 \in \Lp{2}(\R,\rho_0)\).
  In particular, we see that \(m \mapsto \Ucal(t,m) \in W^{1,2}(0,1)\), which means that the flux function is in general not Lipschitz-continuous.
  This contrasts the case studied in \cite{brenier1998sticky}, where they assume \(v_0 \in \Lp{\infty}(\R,\rho_0)\).
  Then their flux function \(\Ucal(t,M) \equiv \Vcal^0(M)\) is Lipschitz and one can employ the standard assumptions used in the classical version of Kru\v{z}kov's doubling of variables-argument, cf.\ \cite{bressan2000hyperbolic}, to prove uniqueness for \eqref{eq:claw}.
  Another issue with the flux function \eqref{eq:flux:EP} may be degenerate in the sense that \(m \mapsto \Ucal(t,m)\) is affine on non-degenerate intervals, and this can cause issues in existence proofs for \eqref{eq:claw}.
  
  The above issues for the Euler--Poisson flux \eqref{eq:flux:EP} motivated the generalizations of existence and uniqueness proofs found in \cite{golovaty2012existence}, where they consider scalar conservation laws on the line where the flux function is merely continuous in \(m\) and may in general depend on both time \(t\) and position \(x\).
  These results were then used in \cite{nguyen2015one} to relax the assumptions used to study the Euler--Poisson system \eqref{eq:EP} in \cite{nguyen2008pressureless}.
  We follow \cite[Definition 3.1]{nguyen2015one} in defining entropy solutions of \eqref{eq:claw}.
  \begin{dfn}[Entropy solution of \eqref{eq:claw}]\label{dfn:entropy}
    Let \(M^0 \in \Lp{1}_\mathrm{loc}(\R)\).
    A function \(M \in \Lp{\infty}\left((0,T)\times\R\right) \cap C\left([0,T];\Lp{1}_\mathrm{loc}(\R)\right)\) is an entropy solution of \eqref{eq:claw} if
    \begin{equation}\label{eq:Kruzkov2}
      \int_{0}^{T} \int_{\R} \left( \abs{M-k} \phiv_t + \sgn(M-k)(\Ucal(t,M)-\Ucal(t,k))\phiv_x  \right)\dee x\dee t + \int_\R \abs{M^0(x)-k}\phiv(0,x)\dee x \ge 0
    \end{equation}
    for all \(k\) in a dense set of \(\R\) and nonnegative test functions \(\phiv \in C^\infty_{\mathrm{c}}([0,T)\times\R)\).
  \end{dfn}
  As usual, replacing the entropy-entropy flux pair with the identity \(\Id\) and \(\Ucal(t,M)\) and removing the nonnegativity requirement on \(\phiv\) in \eqref{eq:Kruzkov2}, we recover the definition of a weak solution, i.e.,
  \begin{equation}\label{eq:weak}
    \int_{0}^{T} \int_{\R} \left( M \phiv_t + \Ucal(t,M) \phiv_x  \right)\dee x\dee t + \int_\R M^0(x)\phiv(0,x)\dee x = 0.
  \end{equation}
  
  \subsubsection{The Rankine--Hugoniot- and Ole\u{\i}nik E-condition}
  For piecewise smooth solutions of \eqref{eq:claw}, the requirements for weak and entropy solutions can be reduced to conditions on the discontinuities, cf.\ \cite[Chapter 2]{holden2015front}.
  To this end, assume \(M(t,x)\) takes the values \(M_l(t)\) and \(M_r(t)\) to the left and right of some shock curve \(\Gamma = \{(t,x) \colon x = s(t)\}\).
  Denoting by \([[M]]\) the jump in \(M\) across the curve \(\Gamma\), and correspondingly \([[\eta(M)]]\), \([[q(t,M)]]\) for the other quantities, this leads to the inequality
  \begin{equation*}
    \int_{\Gamma} \phiv(t,x) \left([[\eta(M)]]\dot{s}(t) - [[q(t,M)]] \right) \ge 0,
  \end{equation*}
  which gives us \(\dot{s}(t)[[\eta(M)]] \ge [[q(t,M)]]\).
  For \(\eta(M) = M\) and \(q(t,M) = \Ucal(t,M)\) the above is an equality and we get the \textit{Rankine--Hugoniot condition}
  \begin{equation}\label{eq:RH}
    \dot{s}(t) = \frac{[[\Ucal(t,M)]]}{[[M]]}.
  \end{equation}
  Using the Kru\v{z}kov entropy-flux pair, the inequality \eqref{eq:Kruzkov2} is equivalent to
  \begin{equation}\label{eq:Oleinik}
    \frac{\Ucal(t,M_r)-\Ucal(t,k)}{M_r-k} \le \dot{s}(t) \le \frac{\Ucal(t,k)-\Ucal(t,M_l)}{k-M_l}
  \end{equation}
  for any \(M_l < k < M_r\), which is known as the \textit{Ole\u{\i}nik E-condition}, cf.\ \cite{oleinik1959uniqueness,dafermos2016hyperbolic}.
  In fact, for the distributional entropy inequality to reduce to the Ole\u{\i}nik E-condition, it is enough for the weak solution to be of class \(BV_\mathrm{loc}\), see \cite[Section 4.5]{dafermos2016hyperbolic}, which is exactly what we are interested in.
  
  \subsubsection{Existence and uniqueness results}
  To establish existence and uniqueness of entropy solutions of \eqref{eq:claw} we have to make some assumptions on its flux function.
  However, as it stands, the formal expression for \(\Ucal(t,m)\) in \eqref{eq:Q&U} has an apparent intricate dependence on the generalized inverse of the solution \(M\) up until time \(t\); that is, for \(F[X] = f_\rho \circ M^{-1}\) we have
  \begin{equation*}
    \Ucal(t,m) = \int_0^m \left( (v_0 \circ M_0^{-1})(\omega) + \int_0^t f_\rho(M^{-1}(s,\omega))\dee s\right)\dee\omega.
  \end{equation*}
  On the other hand, in cases such as Examples \ref{exm:flux:EP} and \ref{exm:flux:EPld} the part involving the forcing term \(f_\rho\) simplifies, and we are led to consider flux functions of the form
  \begin{equation}\label{eq:flux}
    \Ucal(t,m) = \Vcal^0(m) + \sum_{i=1}^n \int_0^t b(s)\dee s \int_0^m F_i(\omega)\dee\omega \eqqcolon \sum_{i=0}^n B_i(t) \Fcal_i(m)
  \end{equation}
  for \(\Vcal^0 = v_0 \circ M^{-1}_0 \in \Lp{2}(0,1)\), \(F_i \in \Lp{p}(0,1)\) and \(b_i \in \Lp{1}(0,T)\).
  Indeed, since \(\Ucal(t,m)\) in \eqref{eq:Q&U} can be thought of as the primitive of the prescribed velocity \(U\) defined in \eqref{eq:U}, it is natural to expect \(U\), and then also \(\Ucal\), to be a function of \(t\) for fixed \(m \in \Omega\).
  The functions \(\Fcal_i\) in \eqref{eq:flux} are continuous, while \(B_i \in \Lp{\infty}(0,T) \subset \Lp{2}(0,T)\), which means that our problem is covered by the theory in \cite{golovaty2012existence}.
  In particular we have the following crucial \(\Lp{1}\)-contraction principle \cite[Theorem 2.2]{golovaty2012existence}, the proof of which relies on establishing a Kato-type inequality.
  Here, for \(c \in \R\), its positive part is written \(c^+ = \max\{c,0\}\).
  \begin{thm}[\(\Lp{1}\)-contraction]\label{thm:L1ctr}
    Assume \(M\) and \(\tilde{M}\) are entropy solutions of \eqref{eq:claw} with flux function of the form \eqref{eq:flux} and initial data \(M^0, \tilde{M}^0 \in \Lp{\infty}(\R)\). Then
    \begin{equation}
      \int_{\R} (M(t,x)-\tilde{M}(t,x))^+\dee x \le \int_{\R} (M^0(x)-\tilde{M}^0(x))^+\dee x.
    \end{equation}
  \end{thm}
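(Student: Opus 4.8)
The plan is to adapt the Kru\v{z}kov doubling-of-variables method to the present setting, following \cite{golovaty2012existence}; the only features that distinguish this from the classical argument are that the flux $m \mapsto \Ucal(t,m)$ in \eqref{eq:flux} is merely continuous (and possibly affine on non-degenerate intervals) rather than Lipschitz, and that it depends on $t$. Neither obstructs the argument, but both require care at the relevant steps.

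First I would pass to the one-sided (``semi-Kru\v{z}kov'') entropies. Taking the half-sum of the entropy inequality \eqref{eq:Kruzkov2} with the weak formulation \eqref{eq:weak} tested against the same nonnegative $\phiv$ shows that, for every $k$ in the given dense set and every nonnegative $\phiv \in C^\infty_{\mathrm{c}}([0,T)\times\R)$,
\begin{equation*}
  \int_0^T\!\!\int_\R \Big[ (M-k)^+ \phiv_t + \sgn^+(M-k)\big(\Ucal(t,M)-\Ucal(t,k)\big)\phiv_x \Big]\dee x\dee t + \int_\R (M^0-k)^+\phiv(0,x)\dee x \ge 0,
\end{equation*}
where $\sgn^+(c) \coloneqq \chi_{\{c>0\}}$, and symmetrically for $\tilde M$ with $(k-\tilde M)^+$ and $\sgn^+(k-\tilde M)$ replacing $(M-k)^+$ and $\sgn^+(M-k)$. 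Since each $\Fcal_i$ in \eqref{eq:flux} is continuous and $B_i \in \Lp{\infty}(0,T)$, the map $(t,k)\mapsto\Ucal(t,k)$ is continuous, so by dominated convergence these inequalities extend from $k$ in the dense set to all $k\in\R$ — this is the point where mere continuity of the flux is used, and it is exactly the reduction carried out in \cite[Section 3]{golovaty2012existence}, cf.\ also \cite[Proposition 2]{panov2002generalized}.

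Next I would double the variables: write the first inequality at $(t,x)$ and the second at $(s,y)$, insert $k=\tilde M(s,y)$ in the first and $k=M(t,x)$ in the second (legitimate after convolution, since $M,\tilde M\in\Lp{\infty}$ and the integrands are continuous in $k$), and test against $\phiv_\epsv(t,x,s,y)=\theta(t)\,\xi(x)\,\delta_\epsv(t-s)\,\delta_\epsv(x-y)$, with $\delta_\epsv$ a standard mollifier, $0\le\theta\le1$ approximating $\chi_{[0,\tau]}$, and $0\le\xi\le1$ approximating the constant $1$. Adding the two inequalities and letting $\epsv\to0$, the time-derivative terms combine into $\iint (M(t,x)-\tilde M(t,x))^+\theta'(t)\xi(x)\dee x\dee t$ and the flux terms into $\iint \sgn^+(M-\tilde M)\big(\Ucal(t,M)-\Ucal(t,\tilde M)\big)\theta(t)\xi'(x)\dee x\dee t$, up to an error from the mismatch in the time arguments of the two fluxes. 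Here the separable structure is essential: $\Ucal(t,m)-\Ucal(s,m)=\sum_i\big(B_i(t)-B_i(s)\big)\Fcal_i(m)$, and since each $B_i$ is absolutely continuous this error tends to $0$ with $\epsv$. The non-Lipschitz dependence of $\Ucal(t,\cdot)$ on $m$ causes no trouble, because $M$ and $\tilde M$ take values in a fixed compact interval on which every $\Fcal_i$ is uniformly continuous; all estimates in the doubling step then go through with a modulus of continuity in place of the usual Lipschitz constant, and no step needs $\del_m\Ucal$.

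This produces the Kato-type inequality
\begin{equation*}
  \del_t (M-\tilde M)^+ + \del_x\Big[\sgn^+(M-\tilde M)\big(\Ucal(t,M)-\Ucal(t,\tilde M)\big)\Big] \le 0 \quad \text{in } \Dscr'\big((0,T)\times\R\big).
\end{equation*}
Letting first $\xi\uparrow 1$ — justified because in our setting $M^0-\tilde M^0\in\Lp{1}(\R)$ (they are distribution functions of probability measures with finite first moment, so $\int_\R(M-\tilde M)^+\dee x$ stays finite; for general $\Lp{\infty}$ data one instead localizes using a finite domain of dependence) — and then $\theta\to\chi_{[0,\tau]}$ yields
\begin{equation*}
  \int_\R (M(\tau,x)-\tilde M(\tau,x))^+\dee x \le \int_\R (M^0(x)-\tilde M^0(x))^+\dee x
\end{equation*}
for a.e.\ $\tau\in(0,T)$, hence for all $\tau$ by the continuity $M,\tilde M\in C([0,T];\Lp{1}_{\mathrm{loc}}(\R))$ built into Definition \ref{dfn:entropy}. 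The main obstacle is the bookkeeping in the doubling step — controlling the flux–time-mismatch error and verifying that the merely continuous, degenerate flux does not spoil the cancellations — which is precisely the content of the Kato-type inequality established in \cite[Theorem 2.2]{golovaty2012existence}, on which we rely.
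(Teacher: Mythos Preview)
Your proposal is correct and matches the paper's approach: the paper does not give its own proof of this theorem but simply cites \cite[Theorem 2.2]{golovaty2012existence}, noting that it rests on a Kato-type inequality --- exactly the argument you sketch. Your write-up in fact supplies more detail than the paper does, correctly identifying the two delicate points (continuity-only flux and time-dependence via the separable structure \eqref{eq:flux}) and how they are handled.
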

  Several important results can be deduced from Theorem \ref{thm:L1ctr}, cf.\ Corollaries 2.8 and 2.9 in \cite{golovaty2012existence}.
  For instance, since the constant functions 0 and 1 are entropy solutions, we see that if \(M\) is an entropy solution with initial data \(M^0(x) \in [0,1]\), then we must have \(M(t,x) \in [0,1]\) for a.e.\ \(x\).
  Moreover, we would like to use it to show that the distributional derivatives of entropy solutions are probability measures, and to this end we introduce some useful function spaces as in \cite[Definition 3.2]{nguyen2015one}.
  \begin{dfn}\label{dfn:Rset}
    A function \(M \colon \R \to \R\) belongs to the set \(\Rscr\) if it is nondecreasing, right-continuous and has limits \(0\) and \(1\) at respectively \(-\infty\) and \(+\infty\).
    Moreover, \(M \in \Rscr\) belongs to \(\Rscr_p\) for \(p \ge 1\) if its distributional derivative \(\del_x M\) has finite \(p\)-moment, that is, \(\int_\R \abs{x}^p \del_x M\) is finite.
  \end{dfn}
  We note that the distributional derivative \(\del_x M\) of any function \(M \in \Rscr\) is a Borel probability measure on \(\R\) with \(M(x) = \del_x M((-\infty,x])\).
  The map \(M \mapsto \del_x M\) is injective from \(\Rscr\) (\(\Rscr_p\)) to \(\Pscr(\R)\) (\(\Pscr_p(\R)\)).
  The space \((\Pscr_p(\R), d_{\Wscr_p})\) is a complete metric space equipped with the \(p\)-Wasserstein metric \(d_{\Wscr_p}\).
  In view of the above discussion, \(\Rscr_p\) from Definition \ref{dfn:Rset} is a metric space with the induced metric
  \begin{equation}\label{eq:metricEquiv}
    d_{\Rscr_p}(M,\tilde{M}) \coloneqq d_{\Wscr_p}(\del_x M, \del_x \tilde{M}) = \left(\int_\Omega \abs*{X(m)-\tilde{X}(m)}^p\dee m\right)^{1/p},
  \end{equation}
  where \(X\) and \(\tilde{X}\) are the respective right-continuous generalized inverses of \(M\) and \(\tilde{M}\).
  Moreover, as a consequence of Fubini's theorem we have
  \begin{equation}\label{eq:metricEquiv1}
    d_{\Wscr_1}(\del_x M, \del_x \tilde{M}) = \norm{X-\tilde{X}}_{\Lp{1}(0,1)} = \norm{M-\tilde{M}}_{\Lp{1}(\R)},
  \end{equation}
  cf.\ \cite[Proposition 2.17]{santambrogio2015optimal}, \cite[Remarks 2.19]{villani2003topics}.
  The following important corollary of Theorem \ref{thm:L1ctr} guarantees that entropy solutions of \eqref{eq:claw} have a representative which remains in the set \(\Rscr\).
  \begin{cor}\label{cor:remainInR}
    Let \(M\) be a entropy solution of \eqref{eq:claw} with flux function \eqref{eq:flux} and initial data \(M^0 \in \Rscr\). Then there exists a unique solution \(\tilde{M} \in C([0,T]; \Lp{1}_\mathrm{loc}(\R))\) such that \(M(t,\cdot) \in \Rscr\) and \(\tilde{M}(t,\cdot) = M(t,\cdot)\) as functions in \(\Lp{1}_\mathrm{loc}(\R)\) for \(t \in [0,T]\).
  \end{cor}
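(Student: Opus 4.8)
The plan is to produce, for each $t\in[0,T]$, a canonical representative $\tilde M(t,\cdot)$ of the $\Lp{1}_{\mathrm{loc}}$-class $M(t,\cdot)$ and to check it has the three properties defining $\Rscr$: monotonicity, the limits $0$ and $1$ at $\mp\infty$, and right continuity. Boundedness and monotonicity come straight out of Theorem~\ref{thm:L1ctr}. Since the constant functions $0$ and $1$ are entropy solutions of \eqref{eq:claw} and $0\le M^0\le 1$, the comparison principle that follows from the $\Lp{1}$-contraction (cf.\ Corollary~2.8 of \cite{golovaty2012existence}) gives $0\le M(t,x)\le 1$ for a.e.\ $(t,x)$. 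Moreover, the flux \eqref{eq:flux} does not depend on $x$, so for every $h\in\R$ the shift $(t,x)\mapsto M(t,x+h)$ is again an entropy solution, started from $M^0(\cdot+h)$; for $h>0$ we have $M^0(\cdot+h)\ge M^0$, whence comparison yields $M(t,\cdot+h)\ge M(t,\cdot)$ a.e., i.e.\ $M(t,\cdot)$ agrees a.e.\ with a nondecreasing function. Write $a(t)$ and $b(t)$ for the essential limits of $M(t,\cdot)$ at $-\infty$ and $+\infty$; then $0\le a(t)\le b(t)\le 1$ and $\del_x M(t,\cdot)$ is a nonnegative measure of total mass $b(t)-a(t)$.

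The crux is to show $a(t)=0$ and $b(t)=1$, i.e.\ that no mass escapes to infinity. Because the flux \eqref{eq:flux} is only Hölder --- not Lipschitz --- in $m$, there is no finite speed of propagation to invoke; instead I exploit that $\Ucal(t,\cdot)$ is continuous on $[0,1]$ and $B_i\in\Lp{\infty}(0,T)$, so $C_T:=\sup\{\abs{\Ucal(t,m)}:t\in[0,T],\,m\in[0,1]\}<\infty$. Testing the weak formulation \eqref{eq:weak} with $\phi(t,x)=\theta(t)\psi(\epsilon x)$, where $\theta\in C^\infty_{\mathrm c}([0,T))$ and $\psi\in C^\infty_{\mathrm c}(\R)$ is nonnegative with $\psi\equiv 1$ near the origin, then multiplying by $\epsilon$ and letting $\epsilon\downarrow 0$: the flux term is $O(\epsilon)\to 0$ by the uniform bound $C_T$, while the remaining terms converge by dominated convergence to
\begin{equation*}
  \int_0^T\theta'(t)\left(b(t)\int_0^\infty\psi(y)\dee y+a(t)\int_{-\infty}^0\psi(y)\dee y\right)\dee t+\theta(0)\int_0^\infty\psi(y)\dee y=0 .
\end{equation*}
Varying $\theta$ shows $b(t)\int_0^\infty\psi+a(t)\int_{-\infty}^0\psi=\int_0^\infty\psi$ for a.e.\ $t$; since the ratio $\int_0^\infty\psi:\int_{-\infty}^0\psi$ can be made to take any value in $(0,\infty)$ by choosing the support of $\psi$ asymmetrically, comparing two such ratios forces $b(t)=1$ and $a(t)=0$ for a.e.\ $t$. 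Hence $\del_x M(t,\cdot)\in\Pscr(\R)$ and $M(t,\cdot)$ lies in $\Rscr$ up to its representative for a.e.\ $t$.

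Finally, since an entropy solution lies in $C([0,T];\Lp{1}_{\mathrm{loc}}(\R))$ by Definition~\ref{dfn:entropy}, the bound $0\le M(t,\cdot)\le 1$ and the monotonicity pass to \emph{every} $t\in[0,T]$ (an $\Lp{1}_{\mathrm{loc}}$-limit of a.e.\ nondecreasing, $[0,1]$-valued functions retains both), and the mass identity $b(t)-a(t)=1$ likewise extends to all $t$ --- either by repeating the computation above for each fixed $t$, or by combining the $\Lp{1}_{\mathrm{loc}}$-continuity with equi-tightness of $\{\del_x M(t,\cdot)\}_{t\in[0,T]}$. For each $t$ one then lets $\tilde M(t,\cdot)$ be the right-continuous nondecreasing representative of the class $M(t,\cdot)$; this lies in $\Rscr$, equals $M(t,\cdot)$ in $\Lp{1}_{\mathrm{loc}}(\R)$, inherits the $C([0,T];\Lp{1}_{\mathrm{loc}}(\R))$ regularity, and is the unique such representative since right continuity pins it down. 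I expect the mass/limits step of the middle paragraph --- equivalently the extension of mass conservation to all $t$ --- to be the only genuine difficulty.
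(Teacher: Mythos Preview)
Your argument is correct. The paper itself does not give a proof but simply states that it is identical to that of \cite[Proposition~3.16]{nguyen2015one}, so a line-by-line comparison with the paper is not possible. The comparison with constants and the translation argument for boundedness and monotonicity are standard and are certainly what Nguyen--Tudorascu use. Your scaling test-function device for the limits $a(t)=0$, $b(t)=1$---testing \eqref{eq:weak} with $\theta(t)\psi(\epsilon x)$, multiplying by $\epsilon$, and using only that the (non-Lipschitz) flux is uniformly \emph{bounded} on $[0,T]\times[0,1]$---is a clean, self-contained way to obtain mass conservation without finite speed of propagation; whether or not it coincides with the cited proof, it is a perfectly good route.

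On the point you flag as the only genuine difficulty: the first of your two alternatives works, and more directly than you indicate. From the weak formulation together with $M\in C([0,T];\Lp{1}_{\mathrm{loc}}(\R))$ one obtains, for \emph{every} fixed $t\in[0,T]$ and every $\phi\in C^\infty_{\mathrm c}(\R)$,
\[
\int_\R M(t,x)\phi(x)\dee x-\int_\R M^0(x)\phi(x)\dee x=\int_0^t\int_\R\Ucal(s,M)\,\phi'(x)\dee x\dee s,
\]
and running your scaling $\phi(x)=\psi(\epsilon x)$ on this identity (rather than on the space--time form) yields $b(t)P+a(t)N=P$ for every $t$, not just almost every $t$. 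So the extension to all $t$ is immediate and you do not need the equi-tightness route, which you have not established and which would in fact require an additional argument.
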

  The proof of this result is identical to that of \cite[Proposition 3.16]{nguyen2015one}.
  As a consequence of Corollary \ref{cor:remainInR}, we will from now on, when there is no confusion, always identify the entropy solution with its unique representative in \(\Rscr\).
  Furthermore, for an entropy solution \(M \in \Rscr\) of \eqref{eq:claw} with initial data \(M^0 \in \Rscr\), we will, when there is no confusion, identify the respective distributional derivatives with \(\rho, \rho_0 \in \Pscr(\R)\) for ease of notation.
  
  So far we have been considering uniqueness, but we would of course also like to have existence of entropy solutions.
  This is established using vanishing viscosity methods in \cite[Theorem 3.7]{golovaty2012existence}, from which the next result follows directly.
  \begin{thm}[Existence and uniqueness]\label{thm:entropy:EU}
    Assume a flux function \(\Ucal(t,m)\) of the form \eqref{eq:flux}.
    For any \(M^0 \in \Lp{\infty}(\R)\), \eqref{eq:claw} has a unique entropy solution \(M \in C([0,T];\Lp{1}_\mathrm{loc}(\R))\) satisfying
    \begin{equation*}
      \norm{M}_{\Lp{\infty}((0,T)\times\R)} \le \norm{M^0}_{\Lp{\infty}(\R)}.
    \end{equation*}
    Moreover, if \(M^0 \in \Lp{1}(\R)\), then \(M \in C([0,T]; \Lp{1}(\R))\).
  \end{thm}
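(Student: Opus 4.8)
The plan is to reduce the statement to the existence and uniqueness theory of Golovaty and Nguyen \cite{golovaty2012existence}, whose standing hypotheses on the flux were tailored precisely to fluxes of the type \eqref{eq:flux}; the only genuine work is to verify that \eqref{eq:flux} meets those hypotheses. Writing $\Ucal(t,m) = \sum_{i=0}^n B_i(t)\Fcal_i(m)$ as in \eqref{eq:flux}, each $\Fcal_i$ is a primitive of an $\Lp{p}(0,1)$ function --- in particular $\Fcal_0 = \Vcal^0$ is the primitive of $V^0 = v_0 \circ X^0 \in \Lp{2}(0,1)$, which exists since $v_0 \in \Lp{2}(\R,\rho_0)$ --- hence absolutely continuous on $[0,1]$, so continuous and of bounded variation there; and each $B_i$ is either constant or a primitive of some $b_i \in \Lp{1}(0,T)$, hence lies in $\Lp{\infty}(0,T) \subset \Lp{2}(0,T)$. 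Consequently $m \mapsto \Ucal(t,m)$ is continuous with a modulus of continuity bounded uniformly in $t$ by $\sum_i \norm{b_i}_{\Lp{1}(0,T)}\,\TV(\Fcal_i)$, while $t \mapsto \Ucal(t,m) \in \Lp{2}(0,T)$ for each fixed $m$. This places \eqref{eq:flux} in the admissible class of \cite{golovaty2012existence}; crucially, no Lipschitz bound in $m$ is required, which is essential since $\Vcal^0$ need not be Lipschitz.

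With admissibility in hand, existence of an entropy solution $M \in C([0,T];\Lp{1}_{\mathrm{loc}}(\R))$ for arbitrary $M^0 \in \Lp{\infty}(\R)$ is the content of the vanishing-viscosity construction in \cite[Theorem 3.7]{golovaty2012existence}: one regularizes \eqref{eq:claw} by adding $\epsv\,\del_{xx}M^\epsv$, obtains a uniform $\Lp{\infty}$ bound from the parabolic maximum principle together with a local $BV$ estimate and time-equicontinuity in $\Lp{1}_{\mathrm{loc}}$, extracts a subsequence converging in $\Lp{1}_{\mathrm{loc}}$, and passes to the limit in the Kru\v{z}kov inequality \eqref{eq:Kruzkov2}. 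Uniqueness I would read off directly from Theorem \ref{thm:L1ctr}: for two entropy solutions $M,\tilde{M}$ sharing the datum $M^0$ one has $\int_\R (M(t,\cdot)-\tilde{M}(t,\cdot))^+\dee x \le \int_\R (M^0-M^0)^+\dee x = 0$, and interchanging the roles of $M$ and $\tilde{M}$ kills the integral of the negative part as well, so $M(t,\cdot) = \tilde{M}(t,\cdot)$ a.e.\ for every $t$.

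The quantitative bounds come from the same contraction principle. Comparing $M$ with the constant entropy solutions $\pm\norm{M^0}_{\Lp{\infty}(\R)}$ --- admissible since constants solve \eqref{eq:claw} and lie in $\Lp{\infty}$ --- the pointwise inequalities $M^0 \le \norm{M^0}_{\Lp{\infty}(\R)}$ and $-\norm{M^0}_{\Lp{\infty}(\R)} \le M^0$ together with Theorem \ref{thm:L1ctr} force $\abs{M(t,x)} \le \norm{M^0}_{\Lp{\infty}(\R)}$ for a.e.\ $x$, hence $\norm{M}_{\Lp{\infty}((0,T)\times\R)} \le \norm{M^0}_{\Lp{\infty}(\R)}$. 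If moreover $M^0 \in \Lp{1}(\R)$, comparing $M$ with the zero solution in both directions yields $\norm{M(t,\cdot)}_{\Lp{1}(\R)} \le \norm{M^0}_{\Lp{1}(\R)}$ for all $t$; the upgrade of the $\Lp{1}_{\mathrm{loc}}(\R)$-continuity to continuity into $\Lp{1}(\R)$ is then again part of \cite[Theorem 3.7]{golovaty2012existence}.

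The expected main obstacle is bookkeeping rather than analysis: one must confirm that, after specializing the implicit flux \eqref{eq:Q&U} to the closed form \eqref{eq:flux}, the \emph{only} regularity surrendered relative to the classical Kru\v{z}kov setting is the Lipschitz bound in $m$, so that the machinery of \cite{golovaty2012existence} --- itself built with the Euler--Poisson flux \eqref{eq:flux:EP} in view --- applies verbatim. Granting that, the proof is an assembly: Theorem \ref{thm:L1ctr} supplies uniqueness and the $\Lp{\infty}$ and $\Lp{1}$ bounds, and \cite[Theorem 3.7]{golovaty2012existence} supplies existence and the $C([0,T];\Lp{1}(\R))$ regularity.
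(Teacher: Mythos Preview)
Your proposal is correct and mirrors the paper's own treatment: the paper notes just before the theorem that the $\Fcal_i$ are continuous and $B_i \in \Lp{\infty}(0,T) \subset \Lp{2}(0,T)$, placing the problem in the framework of \cite{golovaty2012existence}, and then states that the result ``follows directly'' from \cite[Theorem 3.7]{golovaty2012existence}, with uniqueness already secured by Theorem~\ref{thm:L1ctr}. You have simply written out the details the paper leaves implicit, including the comparison with constants for the $\Lp{\infty}$ bound (which the paper mentions as a consequence of Corollaries~2.8--2.9 in \cite{golovaty2012existence}).
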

  At this stage, we have verified that entropy solutions for \eqref{eq:claw} in the sense of Definition \ref{dfn:entropy} exist, are unique, and belong to the set \(\Rscr\) such that their distributional derivatives belong to \(\Pscr(\R)\).
  The next result, apart from being a useful property of the entropy solutions, will help us establish that entropy solutions with initial data in \(\Rscr_p\) remain in this space.
  
  \begin{lem}[Lipschitz-continuity for \(\Lp{1}\)-norm]\label{lem:L1Lip}
    Let \(M(t) \in \Lscr^\infty((0,T)\times\R) \cap C([0,T];\Lscr^1_\mathrm{loc}(\R))\) be the entropy solution of \eqref{eq:claw} with flux function \eqref{eq:flux} and \(M(t,\cdot) \in \Rscr\).
    Then, for \(0 \le s < t \le T\) we have
    \begin{equation}\label{eq:L1Lip}
      \norm{M(t)-M(s)}_{\Lp{1}(\R)} \le \abs{t-s} \sum_{i=0}^{n} \norm{F_i}_{\Lp{p}(0,1)} \norm{B_i}_{\Lp{\infty}(s,t)}.
    \end{equation}
    Moreover, \(M \in BV_\mathrm{loc}([0,T]\times\R)\).
  \end{lem}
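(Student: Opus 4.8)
The plan is to read off the time regularity directly from the weak formulation, using nothing about the solution except that each time slice $M(r,\cdot)$ is the distribution function of a probability measure. First I would test the distributional identity $\del_t M + \del_x\Ucal(t,M)=0$ on $(0,T)\times\R$, which follows from \eqref{eq:weak}, against product functions $\Phi(t,x)=\theta(t)\psi(x)$ with $\psi\in C^\infty_{\mathrm{c}}(\R)$ and $\theta\in C^\infty_{\mathrm{c}}((0,T))$. Since $M\in C([0,T];\Lp{1}_{\mathrm{loc}}(\R))$, the map $t\mapsto\int_\R M(t,x)\psi(x)\dee x$ is continuous with distributional derivative $t\mapsto\int_\R\Ucal(t,M(t,x))\psi'(x)\dee x\in\Lp{\infty}(0,T)$, so for $0\le s<t\le T$ one obtains the integrated identity
\[
  \int_\R (M(t,x)-M(s,x))\,\psi(x)\dee x = \int_s^t\int_\R \Ucal(r,M(r,x))\,\psi'(x)\dee x\dee r.
\]

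Next I would expand $\Ucal(r,M(r,\cdot))=\sum_{i=0}^n B_i(r)\Fcal_i(M(r,\cdot))$ and integrate by parts in $x$ term by term. The decisive step is the $BV$-chain rule of Lemma \ref{lem:BVcalc} applied with $f=F_i$ (so $F=\Fcal_i$) and $\mu=\rho(r,\cdot)=\del_x M(r,\cdot)$, which is a Borel \emph{probability} measure precisely because $M(r,\cdot)\in\Rscr$ (a consequence of Corollary \ref{cor:remainInR}): this yields $\del_x\bigl(\Fcal_i(M(r,\cdot))\bigr)=g_i(r,\cdot)\,\rho(r,\cdot)$ with $\norm{g_i(r,\cdot)}_{\Lp{p}(\R,\rho(r,\cdot))}\le\norm{F_i}_{\Lp{p}(0,1)}$, and since $\rho(r,\cdot)$ has unit mass, Hölder's inequality (trivial when $p=\infty$) upgrades this to $\norm{g_i(r,\cdot)}_{\Lp{1}(\R,\rho(r,\cdot))}\le\norm{F_i}_{\Lp{p}(0,1)}$. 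Substituting, the right-hand side above becomes $-\int_s^t\sum_i B_i(r)\int_\R \psi\,g_i(r,\cdot)\dee\rho(r,\cdot)\dee r$, whose modulus is at most $\norm{\psi}_{\Lp{\infty}(\R)}\int_s^t\sum_i|B_i(r)|\,\norm{F_i}_{\Lp{p}(0,1)}\dee r$. Taking the supremum over $\psi\in C^\infty_{\mathrm{c}}(\R)$ with $\norm{\psi}_{\Lp{\infty}}\le1$ identifies the left-hand side as $\norm{M(t)-M(s)}_{\Lp{1}(\R)}$ (in particular this difference is integrable), and $\int_s^t|B_i(r)|\dee r\le|t-s|\,\norm{B_i}_{\Lp{\infty}(s,t)}$ gives \eqref{eq:L1Lip}.

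For the $BV_{\mathrm{loc}}$ claim I would combine two one-sided bounds. Spatially, $M(t,\cdot)\in\Rscr$ is nondecreasing with limits $0$ and $1$, so $\TV_x M(t,\cdot)=1$ for every $t$, whence $\del_x M$ is a locally finite measure on $[0,T]\times\R$ with mass at most $T$ on $(0,T)\times(-R,R)$. In time, the estimate \eqref{eq:L1Lip} just proved shows $t\mapsto M(t,\cdot)$ is Lipschitz into $\Lp{1}(\R)$, which via difference quotients forces $\del_t M$ to be a finite measure on $(0,T)\times(-R,R)$ with mass bounded by $\sum_i\norm{F_i}_{\Lp{p}(0,1)}\norm{B_i}_{\Lp{1}(0,T)}$, finite because $B_i\in\Lp{\infty}(0,T)\subset\Lp{1}(0,T)$. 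The standard characterization of $BV$ then gives $M\in BV((0,T)\times(-R,R))$ for every $R>0$, i.e. $M\in BV_{\mathrm{loc}}([0,T]\times\R)$, the endpoint times being covered by the $C([0,T];\Lp{1}_{\mathrm{loc}})$ regularity.

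The only place where the absence of a global Lipschitz bound on the flux could bite is the integration-by-parts step, and it is exactly there that the hypothesis $M(r,\cdot)\in\Rscr$ rescues the argument: because $\del_x M(r,\cdot)$ is a probability measure, the merely $\Lp{p}$ bound on $F_i$ supplied by Lemma \ref{lem:BVcalc} becomes an $\Lp{1}$ bound against $\rho(r,\cdot)$, which is all that is needed. The remaining care is the routine approximation argument justifying the time-integrated weak formulation, which is straightforward once $M\in C([0,T];\Lp{1}_{\mathrm{loc}})$ is in hand. Note in passing that entropy admissibility is never used here---any weak solution of \eqref{eq:claw} whose time slices are distribution functions of probability measures satisfies \eqref{eq:L1Lip}.
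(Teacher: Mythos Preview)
Your proposal is correct and follows essentially the same route as the paper: both test the weak formulation against product functions to obtain the time-integrated identity, then invoke the $BV$-chain rule (Lemma~\ref{lem:BVcalc}) and H\"older's inequality against the probability measure $\rho(r,\cdot)$ to bound $\TV\bigl(\Ucal(r,M(r,\cdot))\bigr)$, and finally combine the spatial bound $\TV_x M(r,\cdot)=1$ with the time-Lipschitz estimate for the $BV_{\mathrm{loc}}$ conclusion. Your observation that only the weak formulation and the $\Rscr$-property of the time slices are used (not entropy admissibility) is accurate and worth noting.
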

  \begin{proof}
    The argument follows the lines of the proof of \cite[Theorem 7.10]{holden2015front} or \cite[Theorem 4.3.1]{dafermos2016hyperbolic}, with some adjustments to cater for our non-Lipschitzian flux function.
    Let \(\chi^\epsv(t)\) for \(t \in [0,T]\) be a smooth approximation of the characteristic function for the interval \([s,t] \subset [0,T]\) such that \(\lim\limits_{\epsv\to0+}\chi^\epsv = \chi_{[s,t]}\).
    Furthermore we choose \(\phiv_\epsv(t,x) = \chi^\epsv(t) \phi(x)\) for an arbitrary \(\phi \in C_\mathrm{c}^\infty(\R)\).
    For this test function, the weak formulation of \eqref{eq:claw} reads
    \begin{equation*}
      \int_0^T \int_\R \left( M \del_t \phiv_\epsv + \Ucal(t,M) \del_x \phiv_\epsv  \right)\dee x \dee t + \int_\R M^0(x)\phiv_\epsv(0,x)\dee x = 0,
    \end{equation*}
    and letting \(\epsv \to 0+\) we obtain
    \begin{equation*}
      -\int_\R \phi(x) (M(t,x)-M(s,x))\dee x + \int_s^t \int_\R \phi'(x) \Ucal(r,M(r,x))\dee x \dee r = 0.
    \end{equation*}
    Since \(\phi\) was arbitrary we have
    \begin{align*}
      \norm{M(t,\cdot)-M(s,\cdot)}_{\Lp{1}(\R)} &= \sup_{\abs{\phi}\le 1} \int_\R \phi(x) (M(t,x)-M(s,x))\dee x \\
      &= \int_s^t \sup_{\abs{\phi}\le 1} \int_\R \phi'(x) \Ucal(r,M(r,x)) \dee x \dee r \\
      &= \int_s^t \TV(\Ucal(r,M(r,x)))\dee r.
    \end{align*}
    If \(m \mapsto \Ucal(t,m)\) were Lipschitz, i.e., \(p=\infty\) in \eqref{eq:flux}, we could have finished the argument by using \(\TV(M(t,x)) = 1\).
    We will instead use Lemma \ref{lem:BVcalc} to estimate \(\TV(\Ucal(t,M))\) as follows,
    \begin{align*}
      \abs*{\int_\R \phi'(x) \Ucal(t,M(t,x))\dee x} &= \abs*{-\int_\R \phi(x) \del_x \Ucal(t,M(t,x))} = \abs*{-\sum_{i=0}^{n}B_i(t) \int_\R \phi(x) \Fcal_{i,M}'(t,x)\dee\rho(t,x)} \\
      &\le \sum_{i=0}^{n}\abs{B_i(t)} \left( \int_{\R}\abs{\phi(x)}^{p'} \dee\rho(t,x) \right)^{\frac{1}{p'}} \norm{\Fcal_{i,M}'}_{\Lp{p}(\R,\rho)} \le \sum_{i=0}^{n}\abs{B_i(t)} \norm{F_i}_{\Lp{p}(0,1)},
    \end{align*}
    where we for an arbitrary time \(t\) integrate over space.
    Taking supremum over \(\abs{\phi}\le 1\) and combining with the previous estimate we obtain \eqref{eq:L1Lip}.
    Note that we could also have arrived at the same bound by estimating the total variation of \(\Ucal(t,M(t,\cdot))\) as in the proof of Lemma \ref{lem:BVcalc}.
    Since \(\Rscr \subset BV(\R)\), then as in the proof of \cite[Theorem 4.3.1]{dafermos2016hyperbolic} it follows from the characterization of \(BV_{\mathrm{loc}}\)-functions in \cite[Theorem 1.7.2]{dafermos2016hyperbolic} and \eqref{eq:L1Lip} that \(M \in BV_\mathrm{loc}([0,T]\times\R)\).
  \end{proof}
  Moreover, the estimate on the total variation of the flux function \(\Ucal(t,M)\) from \eqref{eq:flux} suggests the following generalization of Lemma \ref{lem:BVcalc}.
  \begin{cor}\label{cor:BVcalcU}
    Assume \(p \in [1,\infty]\), take \(\Ucal(t,m)\) of the form \eqref{eq:flux}, and let \(M\) be the right-continuous distribution function of a measure \(\mu \in \Pscr(\R)\). Then \(\Ucal(t,M) \in BV(\R)\) and \(\del_x \Ucal(t,M) = g(t,\cdot) \mu\) for some \(g(t,\cdot) \in \Lp{p}(\R,\mu)\) where
    \begin{equation*}
      g(t,x) = \begin{cases}
        \sum_{i=0}^n B_i(t) F_i \circ M(x), & \mu(\{x\}) = 0, \\
        \sum_{i=0}^n B_i(t) \frac{\Fcal_i \circ M(x) - \Fcal_i\circ M(x-) }{\mu(\{x\})}, & \mu(\{x\}) \neq 0,
      \end{cases}
    \end{equation*}
    and
    \begin{equation*}
      \norm{g(t,\cdot)}_{\Lp{p}(\R,\mu)} \le \sum_{i=0}^n \abs{B_i(t)} \norm{F_i}_{\Lp{p}(0,1)}.
    \end{equation*}
  \end{cor}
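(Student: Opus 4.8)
The plan is to exploit the fact that the sum in \eqref{eq:flux} is \emph{finite} and reduce the claim to a termwise application of Lemma \ref{lem:BVcalc}.

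First I would observe that each $\Fcal_i(m) = \int_0^m F_i(\omega)\dee\omega$ is exactly the primitive of an $\Lp{p}(0,1)$ function to which Lemma \ref{lem:BVcalc} applies (with $f = F_i$, $F = \Fcal_i$). Hence $\Fcal_i \circ M \in BV(\R)$, its distributional derivative is $g_i\mu$ with
\[
  g_i(x) = \begin{cases}
    F_i \circ M(x), & \mu(\{x\}) = 0, \\
    \dfrac{\Fcal_i\circ M(x) - \Fcal_i\circ M(x-)}{\mu(\{x\})}, & \mu(\{x\}) \neq 0
  \end{cases}
\]
in the $\mu$-a.e.\ sense, and $\norm{g_i}_{\Lp{p}(\R,\mu)} \le \norm{F_i}_{\Lp{p}(0,1)}$. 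Since $B_0 \equiv 1$ and $B_i(t) = \int_0^t b_i(s)\dee s$ with $b_i \in \Lp{1}(0,T)$, each $B_i$ is finite (indeed absolutely continuous) on $[0,T]$, so $\Ucal(t,\cdot) = \sum_{i=0}^n B_i(t)\Fcal_i$ is a finite linear combination of the $\Fcal_i$. As $BV(\R)$ is a vector space and the distributional derivative is linear, $\Ucal(t,M) = \sum_{i=0}^n B_i(t)(\Fcal_i\circ M) \in BV(\R)$ with
\[
  \del_x \Ucal(t,M) = \sum_{i=0}^n B_i(t)\,\del_x(\Fcal_i\circ M) = \Bigl(\sum_{i=0}^n B_i(t) g_i\Bigr)\mu.
\]
Setting $g(t,\cdot) \coloneqq \sum_{i=0}^n B_i(t) g_i$ and using that both the composition $F_i\circ M$ and the difference quotient are linear in the index $i$ recovers the two-case formula in the statement.

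For the norm bound I would simply apply the triangle inequality in $\Lp{p}(\R,\mu)$ together with the termwise estimates above:
\[
  \norm{g(t,\cdot)}_{\Lp{p}(\R,\mu)} \le \sum_{i=0}^n \abs{B_i(t)}\,\norm{g_i}_{\Lp{p}(\R,\mu)} \le \sum_{i=0}^n \abs{B_i(t)}\,\norm{F_i}_{\Lp{p}(0,1)}.
\]
The case $p = \infty$ goes through verbatim, using that each $\Fcal_i$ is then Lipschitz, as in the last line of the proof of Lemma \ref{lem:BVcalc}. There is essentially no obstacle here: because the sum is finite, no approximation or limiting argument is needed, and the only point worth a word is that $\Ucal(t,\cdot)$ itself need not be a single $\Lp{p}$ primitive with a clean norm, but it is a finite linear combination of such primitives, and $BV(\R)$, the reference measure $\mu$, and the $\Lp{p}(\R,\mu)$ norm are all compatible with finite linear combinations. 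Alternatively, one could bypass Lemma \ref{lem:BVcalc} as a black box and rerun its proof directly for $\Ucal(t,M) = \int_0^{M(x)} U(t,\omega)\dee\omega$ with $U(t,\cdot) = \sum_i B_i(t) F_i \in \Lp{p}(0,1)$; this is precisely the total-variation computation already carried out in the proof of Lemma \ref{lem:L1Lip}.
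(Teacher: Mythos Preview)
Your proof is correct and matches the paper's own argument exactly: the paper states that the proof is ``a straightforward adaption of the proof of Lemma \ref{lem:BVcalc}; that is, the structure of the flux function allows us to apply the same arguments termwise,'' which is precisely what you do by invoking Lemma \ref{lem:BVcalc} for each $\Fcal_i\circ M$ and then combining via linearity and the triangle inequality.
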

  This also slightly generalizes the corresponding result for the Euler--Poisson-type flux \(\Ucal(t,\cdot) = \Fcal + t \Psi \) in \cite[Proposition 2.1]{nguyen2015one}, where \(\Fcal \in W^{1,p}(0,1)\) and \(\Psi \in C^1([0,1])\).
  The proof of Corollary \ref{cor:BVcalcU} is a straightforward adaption of the proof of Lemma \ref{lem:BVcalc}; that is, the structure of the flux function allows us to apply the same arguments termwise.
  
  Let us from now on denote by \(\Ucal_M'(t,x)\) the Radon--Nikodym derivative of \(\del_x\Ucal(t,M)\) with respect to \(\del_x M\), and according to Corollary \ref{cor:BVcalcU} this takes the form
  \begin{equation*}
    \Ucal_M'(t,x) = \sum_{i=0}^n B_i(t) \Fcal_{i,M}'(t,x),
  \end{equation*}
  where \(\Fcal_{i,M}' \in \Lp{p}(\R,\del_xM(t,\cdot))\) satisfies \(\del_x \Fcal_i(M(t,\cdot)) = \Fcal_{i,M}'(t,x) \del_xM(t,\cdot)\) by Lemma \ref{lem:BVcalc}.
  From the properties of entropy solutions and Corollary \ref{cor:BVcalcU} it follows that
  \begin{equation*}
    \int_0^T \norm{\Ucal'_M(t,\cdot)}_{\Lp{p}(\R,\rho(t))} \dee t \le T \sum_{i=0}^n \norm{B_i}_{\Lp{\infty}(0,T)}\norm{F_i}_{\Lp{p}(0,1)} < \infty.
  \end{equation*}
  
  \begin{lem}\label{lem:pmom}
    Let \(M(t,\cdot)\) for \(t \in [0,T]\) be the unique entropy solution of \eqref{eq:claw} with flux function \eqref{eq:flux} and initial data \(M^0 \in \Rscr_p\) for \(p\in[1,\infty)\). Then \(M(t,\cdot) \in \Rscr_p\).
  \end{lem}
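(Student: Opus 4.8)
The goal is to show that the probability measure $\rho(t):=\del_x M(t,\cdot)$ — which is well defined since $M(t,\cdot)\in\Rscr$ for all $t\in[0,T]$ by Corollary~\ref{cor:remainInR} — has finite $p$-th moment $m_p(t):=\int_\R\abs{x}^p\dee\rho(t)$ for every $t$. Formally this is a Grönwall argument: $\rho$ solves the continuity equation $\del_t\rho+\del_x(\Ucal'_M\rho)=0$ (from \eqref{eq:claw} and Corollary~\ref{cor:BVcalcU}), so $\diff{}{t}m_p(t)=p\int_\R\abs{x}^{p-1}\sgn(x)\,\Ucal'_M(t,x)\dee\rho(t,x)$, which Hölder bounds by $p\,m_p(t)^{1-1/p}\norm{\Ucal'_M(t,\cdot)}_{\Lp{p}(\R,\rho(t))}$; dividing by $m_p(t)^{1-1/p}$ gives $\diff{}{t}m_p(t)^{1/p}\le\norm{\Ucal'_M(t,\cdot)}_{\Lp{p}(\R,\rho(t))}$, whose time integral is finite by the estimate displayed just before the statement. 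The main obstacle is that this computation is circular — it presupposes $m_p(t)<\infty$, which is what we want — and $\abs{x}^p$ is not an admissible test function; I would therefore run the argument on a family of bounded truncations designed so that the resulting estimate closes with constants independent of the truncation parameter.

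Concretely, fix a smooth convex weight $\omega_p\colon\R\to(0,\infty)$ with $\omega_p(x)=\abs{x}^p$ for $\abs{x}\ge1$, so that $c_1\abs{x}^p\le\omega_p(x)\le c_2(1+\abs{x}^p)$ and $\abs{\omega_p'(x)}\le C\omega_p(x)^{1-1/p}$ on all of $\R$ (an identity with $C=p$ for $\abs{x}\ge1$; near the origin $\omega_p,\omega_p'$ are bounded and $\omega_p$ is bounded below). Fix also a smooth nondecreasing concave cut-off $h$ with $h(s)=s$ for $s\le1$, $h(s)\le s$, $h\equiv2$ for $s\ge3$, $0\le h'\le1$, and set
\begin{equation*}
  \Psi_R(x):=\left(R\,h\!\left(\omega_p(x)^{1/p}/R\right)\right)^{p},\qquad R>0.
\end{equation*}
Using the concavity inequality $h(s)\ge s\,h'(s)$ one checks that $\Psi_R$ is smooth and bounded, $0\le\Psi_R\le\omega_p$, $\Psi_R\uparrow\omega_p$ pointwise as $R\to\infty$, $\Psi_R'$ is compactly supported, and — the key point — $\abs{\Psi_R'(x)}\le C\Psi_R(x)^{1-1/p}$ with the same constant $C$ for every $R$ (for $p=1$ this just says $\Psi_R$ is $C$-Lipschitz).

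Next I would insert the test function $\phiv(r,x)=\chi^{\epsv}(r)\Psi_R'(x)$, with $\chi^{\epsv}$ a smooth approximation of $\chi_{[s,t]}$, into the weak formulation~\eqref{eq:weak}, integrate by parts twice in $x$, use Corollary~\ref{cor:BVcalcU} in the form $\del_x\Ucal(r,M(r,\cdot))=\Ucal'_M(r,\cdot)\,\rho(r,\cdot)$, and let $\epsv\to0+$; since $\rho(s)$ and $\rho(t)$ are both probability measures, the constant values of $\Psi_R$ at $\pm\infty$ cancel and one obtains, for $0\le s\le t\le T$,
\begin{equation*}
  \int_\R\Psi_R\dee\rho(t)-\int_\R\Psi_R\dee\rho(s)=\int_s^t\int_\R\Psi_R'(x)\,\Ucal'_M(r,x)\dee\rho(r,x)\dee r.
\end{equation*}
Writing $H_R(r):=\int_\R\Psi_R\dee\rho(r)$ — finite since $\Psi_R$ is bounded, and continuous in $r$ by the narrow continuity of $r\mapsto\rho(r)$ inherited from $M\in C([0,T];\Lp{1}_{\mathrm{loc}}(\R))$ — and $c(r):=\norm{\Ucal'_M(r,\cdot)}_{\Lp{p}(\R,\rho(r))}$, the bound $\abs{\Psi_R'}\le C\Psi_R^{1-1/p}$ together with Hölder's inequality with exponents $\tfrac{p}{p-1}$ and $p$ (for which $(1-1/p)\tfrac{p}{p-1}=1$, so $\norm{\Psi_R^{1-1/p}}_{\Lp{p/(p-1)}(\R,\rho(r))}=H_R(r)^{1-1/p}$) gives the closed inequality
\begin{equation*}
  H_R(t)\le H_R(s)+C\int_s^t H_R(r)^{1-1/p}\,c(r)\dee r.
\end{equation*}

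Finally, the elementary fact that a continuous nonnegative $\phi$ with $\phi(t)\le\phi(0)+K\int_0^t\phi^{1-1/p}c$ satisfies $\phi(t)^{1/p}\le\phi(0)^{1/p}+\tfrac{K}{p}\int_0^t c$ (proved by comparison with the absolutely continuous majorant and differentiating its $1/p$-th power) yields, with $s=0$,
\begin{equation*}
  H_R(t)^{1/p}\le H_R(0)^{1/p}+\frac Cp\int_0^t c(r)\dee r\le\left(c_2\big(1+\textstyle\int_\R\abs{x}^p\dee\rho_0\big)\right)^{1/p}+\frac Cp\int_0^T c(r)\dee r,
\end{equation*}
using $H_R(0)\le\int_\R\omega_p\dee\rho_0<\infty$ since $M^0\in\Rscr_p$, and $\int_0^T c(r)\dee r<\infty$ by the displayed estimate. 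The right-hand side is independent of $R$, so monotone convergence ($\Psi_R\uparrow\omega_p$) gives $\int_\R\omega_p\dee\rho(t)<\infty$, whence $m_p(t)\le c_1^{-1}\int_\R\omega_p\dee\rho(t)<\infty$ and $M(t,\cdot)\in\Rscr_p$. Two points need care: the double integration by parts and the limit $\epsv\to0+$, which I would justify using only $M\in\Lp{\infty}$, $\Ucal(r,M(r,\cdot))\in\Lp{\infty}$ (as $M\in[0,1]$ and $\Ucal(r,\cdot)$ is continuous on $[0,1]$), and the $BV$ structure from Corollary~\ref{cor:BVcalcU}; and the construction of $\Psi_R$, whose whole purpose is to make $\Psi_R'$ compactly supported while retaining $\abs{\Psi_R'}\le C\Psi_R^{1-1/p}$ with $C$ uniform in $R$, thereby breaking the circularity noted at the outset.
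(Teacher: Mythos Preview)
Your proof is correct and follows the same overall strategy as the paper: derive the continuity-equation identity $\int\Psi\dee\rho(t)-\int\Psi\dee\rho(s)=\int_s^t\int\Psi'\,\Ucal'_M\dee\rho\dee r$ from the weak formulation, truncate the weight $\abs{x}^p$, close an integral inequality, and remove the truncation. The execution differs in two respects. First, the paper bounds the cross term by Young's inequality, $\Psi'\,\Ucal'_M\le (p-1)\abs{\tfrac1p\Psi'}^{p/(p-1)}+\abs{\Ucal'_M}^p$, and then applies the \emph{linear} Gr\"onwall inequality to $\int\phi_R\dee\rho(t)$ with the simple truncation $\phi_R(x)=(\abs{x}\wedge R)^p$; this is more elementary but produces an exponential factor $\e^{(p-1)t}$ and forces a separate argument for $p=1$ (via the identity $d_{\Wscr_1}=\norm{\cdot}_{\Lp{1}}$ and Lemma~\ref{lem:L1Lip}). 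You instead use H\"older and a Bihari-type comparison, which requires the more delicate truncation $\Psi_R$ engineered so that $\abs{\Psi_R'}\le C\Psi_R^{1-1/p}$ uniformly in $R$; the payoff is a sharper estimate without the exponential, namely $m_p(t)^{1/p}\lesssim m_p(0)^{1/p}+\int_0^t\norm{\Ucal'_M}_{\Lp{p}(\rho(r))}\dee r$, which is exactly the $d_{\Wscr_p}$-Lipschitz bound one expects from the continuity equation, and the case $p=1$ falls out as a degenerate instance rather than needing a separate treatment.
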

  \begin{proof}
    Let us denote by \(m_p(\rho(t))\) the \(p\)\textsuperscript{th} moment of \(\rho(t) \in \Pscr_p(\R)\).
    The case \(p=1\) follows almost immediately from the triangle inequality and \eqref{eq:L1Lip}.
    Indeed, we have
    \begin{equation*}
      m_1(\rho(t)) = d_{\Wscr_1}(\rho(t),\delta_0) \le d_{\Wscr_1}(\rho(0),\delta_0) + d_{\Wscr_1}(\rho(t),\rho(0)) = m_1(\rho(0)) + \norm{M(t)-M(0)}_{\Lp{1}(\R)},
    \end{equation*}
    and the result follows.
    
    For \(p > 1\) we have to work a bit more, making use of that the probability measure \(\rho\) solves the continuity equation \eqref{eq:EF:den}.
    First, pick \(\phi \in C^\infty_{\mathrm{c}}(\R)\) and introduce the test function \(\phiv(t,x) = \chi^\epsv(t) \phi'(x)\) for \eqref{eq:weak}, where \(\chi^\epsv\) is as in the proof of Lemma \ref{lem:L1Lip}.
    Letting \(\epsv \to 0+\) we obtain
    \begin{equation*}
      -\int_\R \phi'(x) (M(t,x)-M(s,x))\dee x + \int_s^t \int_\R \phi''(x) \Ucal(r,M(r,x))\dee x \dee r = 0,
    \end{equation*}
    and integrating by parts, omitting \(x\) in the differentials for brevity, we arrive at
    \begin{align*}
      \int_\R \phi(x)\dee\rho(t) &= \int_\R \phi(x)\dee\rho(s) + \int_s^t \int_\R \phi'(x) \Ucal'_M(r,x)\dee\rho(r) \dee r \\
      &\le \int_\R \phi(x)\dee\rho(s) + (p-1)\int_s^t \int_\R \abs*{\frac1p \phi'(x)}^{\frac{p}{p-1}}\dee\rho(r) \dee r + \int_s^t \int_\R \abs*{\Ucal'_M(r,x)}^p\dee\rho(r) \dee r,
    \end{align*}
    having applied Young's inequality.
    Now, define \(\phi_R(x) \coloneqq (\abs{x}\wedge R)^p\) such that \(\phi'_R(x) = p \abs{x}^{p-1}\sgn(x)\) for \(\abs{x} < R\) and \(\phi'_R(x) = 0\) for \(\abs{x} > R\).
    Since \(\phi'_R\) is compactly supported we can find a sequence of smooth approximations \((\phi^\epsv_R)'\) such that \((\phi^\epsv_R)'(x) = \phi'_R(x)\) for \(\abs{x} \le R-\epsv\), \((\phi^\epsv_R)'(x) = 0\) for \(\abs{x} \ge R\), and \((\phi^\epsv_R)'(x)\) tends to \(\phi_R'(x)\) from below (above) for \(R-\epsv < x < R\) (\(-R < x < \epsv-R\)). 
    In consequence, the primitive \(\phi^\epsv_R(x) = \int_{-\infty}^x(\phi^\epsv_R)'(y)\dee y\) tends to \(\phi_R(x)-R^p \in C_{\mathrm{c}}(\R)\) pointwise from below.
    By the dominated convergence theorem it then follows that
    \begin{align*}
      \int_\R (\phi_R(x)-R^p)\dee\rho(t) &\le \int_\R (\phi_R(x)-R^p)\dee\rho(s) + (p-1)\int_s^t \int_\R \phi_R \chi_{(-R,R)}(x)\dee\rho(r) \dee r \\
      &\quad+ \int_s^t \int_\R \abs*{\Ucal'_M(r,x)}^p\dee\rho(r) \dee r,
    \end{align*}
    and since \(\rho(t) \in \Pscr\) for \(t \in [0,T]\) we are led to
    \begin{equation*}
      \int_\R \phi_R(x) \dee\rho(t) \le \int_\R \phi_R(x)\dee\rho(s) + (p-1)\int_s^t \int_\R \phi_R(x)\dee\rho(r) \dee r + \int_s^t \norm{\Ucal'_M(r,\cdot)}_{\Lp{p}(\R,\rho_r)}^p\dee r.
    \end{equation*}
    The Grönwall inequality then yields
    \begin{equation*}
      \int_\R \phi_R(x)\dee\rho(t) \le \left( \int_\R \phi_R(x)\dee\rho_0 + \int_0^t \norm{\Ucal'_M(s,\cdot)}_{\Lp{p}(\R,\rho(s))}^p\dee s \right) \e^{(p-1)t},
    \end{equation*}
    and letting \(R \to \infty\) we use once more the dominated convergence theorem to conclude.
  \end{proof}
  
  \subsection{Entropy solutions of the Euler system}
  Now we want to follow the ideas of \cite{nguyen2008pressureless,nguyen2015one} and use the entropy solutions of \eqref{eq:claw} to define solutions of \eqref{eq:EF} by applying a two-dimensional extension of the \(BV\)-chain rule Corollary \ref{cor:BVcalcU}.
  That is, following the proof therein, we can prove a slight modification of \cite[Theorem 2.2]{nguyen2015one}.
  \begin{thm}\label{thm:2DBVcalc}
    Let \(p \ge 2\) and assume that \(M \in C([0,T]; \Lp{1}_\mathrm{loc}(\R))\), \(M \in \Rscr\) for \(t \in (0,T)\) and that it satisfies \eqref{eq:claw} in distributions \(\Dscr'((0,T)\times\R)\) with \(\Ucal(t,m)\) given by \eqref{eq:flux}.
    According to Proposition \ref{cor:BVcalcU} we let
    \begin{equation*}
      \Ucal'_M \del_x M(t,\cdot) = \del_x \Ucal(t,M) \enspace \text{ for every } t \in (0,T).
    \end{equation*}
    Then we have \(M\), \(\Ucal(t,M) \in BV_\mathrm{loc}((0,T)\times\R)\), \(\Ucal'_M \in \Lp{1}(\abs{\nabla M})\) and
    \begin{equation}\label{eq:gradU}
      \nabla \Ucal(t,M) = \Ucal_M' \nabla M + \left( \sum_{i=1}^{n} b_i(t) \Fcal_i(M), \: 0 \right) = \left( -(\Ucal_M')^2, \: \Ucal_M' \right) \del_x M + \left( \sum_{i=1}^{n} b_i(t) \Fcal_i(M), \: 0 \right),
    \end{equation}
    where \(\nabla = (\del_t, \del_x)\) denotes the \(BV\)-gradient.
  \end{thm}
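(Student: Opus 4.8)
The plan is to exhibit the space-time gradients of $M$ and of $\Ucal(t,M)$ as locally finite vector measures, which gives $M,\Ucal(t,M)\in BV_\mathrm{loc}((0,T)\times\R)$, and then to prove the chain rule \eqref{eq:gradU} by reducing the $x$-derivative directly to Corollary \ref{cor:BVcalcU} applied slicewise in $t$, and the $t$-derivative to the same slicewise statement via the conservation law \eqref{eq:claw} and an approximation in the outer variable.

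\emph{Step 1 (the gradients are measures).} Since $M(t,\cdot)\in\Rscr$ for every $t$, Fubini gives $-\iint M\,\partial_x\varphi=\int_0^T\!\int_\R\varphi\,\dee\rho(t)\,\dee t$ for $\varphi\in C_\mathrm{c}^\infty((0,T)\times\R)$, so $\partial_x M$ is the positive locally finite space-time measure $\rho(t)\,\dee t$ (mass $\le\abs{I}$ on any strip $I\times\R$). For a.e.\ $t$, Corollary \ref{cor:BVcalcU} gives $\partial_x\Ucal(t,M(t,\cdot))=\Ucal_M'(t,\cdot)\rho(t)$ with $\norm{\Ucal_M'(t,\cdot)}_{\Lp{p}(\R,\rho(t))}\le\sum_i\abs{B_i(t)}\norm{F_i}_{\Lp{p}(0,1)}$, and since the $B_i$ are bounded, Fubini again identifies $\partial_x\Ucal(t,M)$ with the locally finite measure $\Ucal_M'\,\rho(t)\,\dee t$. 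The conservation law \eqref{eq:claw}, holding in $\Dscr'((0,T)\times\R)$, then reads $\partial_t M=-\partial_x\Ucal(t,M)=-\Ucal_M'\,\rho(t)\,\dee t$, whence $\nabla M=(-\Ucal_M',\,1)\,\rho(t)\,\dee t$ is a locally finite vector measure and $M\in BV_\mathrm{loc}((0,T)\times\R)$; the latter also follows from the argument for Lemma \ref{lem:L1Lip}, which uses only the weak formulation. Since $\abs{\nabla M}=\sqrt{1+(\Ucal_M')^2}\,\rho(t)\,\dee t$, the claim $\Ucal_M'\in\Lp{1}(\abs{\nabla M})$ reduces to $\int_0^T\!\int_\R(\Ucal_M')^2\,\dee\rho(t)\,\dee t<\infty$, which holds because $\Lp{p}(\R,\rho(t))\hookrightarrow\Lp{2}(\R,\rho(t))$ ($p\ge2$, $\rho(t)$ a probability measure) together with the bound above and boundedness of the $B_i$; this is exactly where $p\ge2$ is used.

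\emph{Step 2 (the chain rule).} Writing $\Ucal(t,m)=\sum_{i=0}^n B_i(t)\Fcal_i(m)$ with $B_0\equiv1$, it suffices to show $\Fcal_i(M)\in BV_\mathrm{loc}((0,T)\times\R)$ with $\nabla\Fcal_i(M)=\Fcal_{i,M}'\,\nabla M$, where $\Fcal_{i,M}'$ is the slicewise Radon--Nikodym derivative of Lemma \ref{lem:BVcalc}: then the Leibniz rule $\partial_t(B_i\Fcal_i(M))=b_i\Fcal_i(M)+B_i\,\partial_t\Fcal_i(M)$ (valid since $B_i\in W^{1,1}(0,T)\cap\Lp{\infty}(0,T)$ and $\Fcal_i(M)$ is bounded, $M$ being $[0,1]$-valued and $\Fcal_i$ continuous), the identity $\sum_i B_i\Fcal_{i,M}'=\Ucal_M'$ from Corollary \ref{cor:BVcalcU}, and the substitution $\nabla M=(-\Ucal_M',1)\,\partial_x M$ together yield both forms of \eqref{eq:gradU}. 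To prove the per-term statement I would approximate $\Fcal_i$ by smooth $\Fcal_i^k$ with $\Fcal_i^k\to\Fcal_i$ in $W^{1,p}(0,1)$ (hence uniformly, as $p>1$) and $F_i^k:=(\Fcal_i^k)'\to F_i$ in $\Lp{p}(0,1)$; for the $C^1$ outer function the classical Vol'pert chain rule gives $\nabla\Fcal_i^k(M)=(F_i^k)_M\,\nabla M$, with $(F_i^k)_M=F_i^k\circ M$ off the jump set of $M$ and the average $\int_0^1 F_i^k(M^-+s(M^+-M^-))\,\dee s$ on it. Decomposing $\rho(t)=\sum_j m_j(t)\delta_{x_j(t)}+\rho^\mathrm{c}(t)$ and using Lemma \ref{lem:decomp}: on the atoms $(F_i^k)_M\to\Fcal_{i,M}'$ by uniform convergence of $\Fcal_i^k$, while $\int_\R\abs{(F_i^k)_M-\Fcal_{i,M}'}^p\dee\rho^\mathrm{c}(t)=\int_{J(t)^\mathrm{c}}\abs{F_i^k-F_i}^p\dee m\le\norm{F_i^k-F_i}_{\Lp{p}(0,1)}^p\to0$, so $(F_i^k)_M\to\Fcal_{i,M}'$ in $\Lp{p}(\R,\rho(t))$ for a.e.\ $t$, with the uniform bound $\norm{(F_i^k)_M(t,\cdot)}_{\Lp{p}(\R,\rho(t))}\le\norm{F_i^k}_{\Lp{p}(0,1)}$. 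Cauchy--Schwarz (using $p\ge2$), dominated convergence in $t$, and $\Ucal_M'\in\Lp{2}(\rho(t)\,\dee t)$ from Step 1 then give $(F_i^k)_M\,\nabla M\to\Fcal_{i,M}'\,\nabla M$ in total variation on compacts; since also $\Fcal_i^k(M)\to\Fcal_i(M)$ in $\Lp{1}_\mathrm{loc}$, closure of $BV_\mathrm{loc}$ under such limits gives $\Fcal_i(M)\in BV_\mathrm{loc}$ with the asserted gradient.

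The main obstacle I anticipate is this limit in Step 2: controlling the Vol'pert average $(F_i^k)_M$ when $F_i$ is merely $\Lp{p}$ forces one to use that $M(t,\cdot)$ is the cumulative distribution of a probability measure, so that by Lemma \ref{lem:decomp} it pushes the diffuse part of $\rho(t)$ to Lebesgue measure on a subset of $(0,1)$ while its jumps skip the ambiguity set of $F_i$. A secondary point needing care is identifying the space-time jump set of $M$ (hence of $\Ucal(t,M)$) with a countable union of Lipschitz shock curves carrying the slicewise atoms of $\rho(t)$, so that the two-dimensional Vol'pert formula and the slicewise formula of Corollary \ref{cor:BVcalcU} for $\Ucal_M'$ are consistent; this is routine once $M\in BV_\mathrm{loc}$, but should be made explicit.
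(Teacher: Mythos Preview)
Your proposal is correct and follows essentially the same approach the paper indicates: the paper does not give an explicit proof here but states that the result is a slight modification of \cite[Theorem~2.2]{nguyen2015one}, obtained by adapting the one-dimensional $BV$ chain rule of Lemma~\ref{lem:BVcalc}/Corollary~\ref{cor:BVcalcU} to the two-dimensional setting. Your Step~1 (identifying $\nabla M$ via the conservation law and the slicewise spatial derivative, exactly as in Lemma~\ref{lem:L1Lip}) and Step~2 (termwise approximation of $\Fcal_i$ by smooth functions, passing to the limit via Lemma~\ref{lem:decomp} and the $\Lp{p}$ bounds of Corollary~\ref{cor:BVcalcU}) reproduce precisely this strategy, and the two technical points you flag---the role of $p\ge2$ in controlling $\Ucal_M'$ against $\abs{\nabla M}$, and the consistency of the two-dimensional Vol'pert jump formula with the slicewise one---are the same issues handled in \cite{nguyen2015one}.
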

  
  Let us denote by \(\Tscr_p(\R)\) the tangent bundle of the Wasserstein space \(\Pscr_p(\R)\), i.e.,
  \begin{equation*}
    \Tscr_p(\R) = \left\{ (\mu, v) \colon \mu \in \Pscr_p(\R) \text{ and } v \in \Lp{p}(\R,\mu) \right\},
  \end{equation*}
  which we recognize as the generalization of \eqref{eq:TP2}.
  
  Now we can give the following existence proof of solutions to \eqref{eq:EF}, which can be compared with \cite[Theorem 1.3]{nguyen2015one}.
  
  \begin{thm}\label{thm:EFsols}
    Let \(p \in [2,\infty)\), \((\rho_0, v_0) \in \Tscr_p(\R) \) and denote by \(M^0\) the right-continuous distribution function of \(\rho_0\).
    Let \(M \in \Rscr_p\) be the corresponding entropy solution of \eqref{eq:claw} with flux function \eqref{eq:flux}.
    Define \(\rho(t,x) \coloneqq \del_x M(t,x)\) and \(v(t,x) \coloneqq \Ucal_M'(t,x)\) as in Theorem \ref{thm:2DBVcalc}.
    Then \((\rho(t,\cdot), v(t,\cdot)) \in \Tscr_p(\R)\) for \(t \in (0,T)\) and \((\rho,v)\) is a distributional solution of \eqref{eq:EF} for 
    \begin{equation*}
      f[\rho]=\sum_{i=1}^{n} b_i(t) \del_x \Fcal_i(M).
    \end{equation*}
    Moreover, we have \(\rho(t) \in \Pscr_p(\R)\) and \(v(t,\cdot) \in \Lp{p}(\R,\rho(t))\) for every \(t > 0\) and
    \begin{enumerate}[label=(\roman*)]
      \item \(\norm{v(t,\cdot)}_{\Lp{p}(\R,\rho(t))} \le \sum_{i=0}^n \abs{B_i(t)}\norm{F_i}_{\Lp{p}(0,1)}\) for a.e.\ \(t > 0\),
      \item \begin{equation}\label{eq:Wpcont}
        d_{\Wscr_p}(\rho(t), \rho(s)) \le \abs{t-s} \sum_{i=0}^n \norm{F_i}_{\Lp{p}(0,1)} \norm{B_i}_{\Lp{\infty}(0,T)}, \quad 0 \le s \le t \le T. 
      \end{equation}
    \end{enumerate}
    
  \end{thm}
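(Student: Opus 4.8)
\textit{Proof strategy.} The plan is to assemble the statement from results already in hand, with Theorem~\ref{thm:2DBVcalc} doing the heavy lifting for the PDE identities. I would first invoke Lemma~\ref{lem:pmom}: since $M^0 \in \Rscr_p$ we get $M(t,\cdot) \in \Rscr_p$ for every $t\in(0,T)$, hence $\rho(t,\cdot) = \del_x M(t,\cdot) \in \Pscr_p(\R)$, and the Grönwall estimate in that proof in fact bounds the $p$-th moment of $\rho(t,\cdot)$ uniformly on $[0,T]$. Next, applying Corollary~\ref{cor:BVcalcU} with $\mu = \rho(t,\cdot)$ shows that $v(t,\cdot) = \Ucal_M'(t,\cdot)$ belongs to $\Lp{p}(\R,\rho(t))$ and satisfies
\[
\norm{v(t,\cdot)}_{\Lp{p}(\R,\rho(t))} \le \sum_{i=0}^{n} \abs{B_i(t)}\,\norm{F_i}_{\Lp{p}(0,1)},
\]
which is precisely item~(i); together with the previous sentence this gives $(\rho(t,\cdot),v(t,\cdot)) \in \Tscr_p(\R)$ for every $t>0$.

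For the distributional identities, note that by construction $\rho = \del_x M$ and $\rho v = \Ucal_M'\,\del_x M = \del_x\Ucal(t,M)$, so
\[
\del_t\rho + \del_x(\rho v) = \del_x\bigl(\del_t M + \del_x \Ucal(t,M)\bigr) = 0 \quad\text{in }\Dscr'((0,T)\times\R)
\]
because $M$ solves \eqref{eq:claw}; this is \eqref{eq:EF:den}. For \eqref{eq:EF:mom} I would read off from \eqref{eq:gradU} the time-component
\[
\del_t\Ucal(t,M) = -(\Ucal_M')^2\,\del_x M + \sum_{i=1}^{n} b_i(t)\Fcal_i(M) = -\rho v^2 + \sum_{i=1}^{n} b_i(t)\Fcal_i(M),
\]
where $\rho v^2$ is a well-defined finite measure since $p\ge 2$ forces $v^2 = (\Ucal_M')^2 \in \Lp{p/2}(\R,\rho)\subset\Lp{1}(\R,\rho)$. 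Hence
\[
\del_t(\rho v) + \del_x(\rho v^2) = \del_t\del_x\Ucal(t,M) + \del_x(\rho v^2) = \del_x\bigl[\del_t\Ucal(t,M) + \rho v^2\bigr] = \sum_{i=1}^{n} b_i(t)\,\del_x\Fcal_i(M) = f[\rho].
\]
By Lemma~\ref{lem:BVcalc} each $\del_x\Fcal_i(M) = \Fcal_{i,M}'\,\rho$ with $\Fcal_{i,M}'\in\Lp{p}(\R,\rho)$, so $f[\rho]=f_\rho\rho$ with $f_\rho = \sum_i b_i(t)\Fcal_{i,M}'\in\Lp{p}(\R,\rho)\subset\Lp{2}(\R,\rho)$, in accordance with \eqref{eq:f:RN}. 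The initial traces $\rho(t,\cdot)\to\rho_0$ in $\Pscr_p(\R)$ and $\rho(t,\cdot)v(t,\cdot)\to\rho_0 v_0$ in $\Mscr(\R)$ as $t\to0+$ follow from $M\in C([0,T];\Lp{1}_\mathrm{loc}(\R))$ with $M(0,\cdot)=M^0$, the uniform moment bound, the continuity of $t\mapsto\Ucal(t,\cdot)$, and the identity $\del_x\Vcal^0(M^0)=\rho_0 v_0$ (using $X^0\circ M^0=\Id$ $\rho_0$-a.e.).

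Finally, for item~(ii): since $(\rho,v)$ solves the continuity equation \eqref{eq:EF:den}, $t\mapsto\rho(t,\cdot)$ is narrowly continuous (from $M\in C([0,T];\Lp{1}_\mathrm{loc}(\R))$) with uniformly bounded $p$-th moments, and $\int_0^T\norm{v(t,\cdot)}_{\Lp{p}(\R,\rho(t))}\dee t<\infty$ by~(i) and $B_i\in\Lp{\infty}(0,T)$, the standard correspondence between solutions of the continuity equation and absolutely continuous curves in $(\Pscr_p(\R),d_{\Wscr_p})$ (see, e.g., \cite{santambrogio2015optimal}) shows that $t\mapsto\rho(t,\cdot)$ is $\Wscr_p$-absolutely continuous with metric derivative at most $\norm{v(t,\cdot)}_{\Lp{p}(\R,\rho(t))}$; integrating and inserting~(i),
\[
d_{\Wscr_p}(\rho(t),\rho(s)) \le \int_s^t \norm{v(r,\cdot)}_{\Lp{p}(\R,\rho(r))}\dee r \le \abs{t-s}\sum_{i=0}^{n}\norm{F_i}_{\Lp{p}(0,1)}\norm{B_i}_{\Lp{\infty}(0,T)}.
\]
(For $p=1$ this already reduces to Lemma~\ref{lem:L1Lip} via \eqref{eq:metricEquiv1}.) I expect the main obstacle to be the rigorous justification of the momentum identity above: one must check that every distributional product occurring there --- chiefly $\rho v^2$ --- is well defined, which is exactly where $p\ge2$ is needed, and that the termwise manipulations and the interchange $\del_t\del_x=\del_x\del_t$ are legitimate, all resting on the two-dimensional $BV$ chain rule of Theorem~\ref{thm:2DBVcalc}; a lesser point is verifying the narrow continuity and moment control required to apply the Wasserstein metric-derivative characterization in~(ii).
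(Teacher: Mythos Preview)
Your proposal is correct and follows essentially the same route as the paper: it invokes Lemma~\ref{lem:pmom} for the moment bound, Corollary~\ref{cor:BVcalcU} for item~(i), Theorem~\ref{thm:2DBVcalc} (specifically \eqref{eq:gradU}) for both PDE identities, and the continuity-equation/absolutely-continuous-curve correspondence for item~(ii). The only cosmetic difference is that the paper cites \cite[Theorem~8.3.1]{ambrosio2008gradient} rather than \cite{santambrogio2015optimal} for the metric-derivative bound, and it phrases the momentum computation as $\del_t(\rho v)=\del_t\del_x\Ucal(t,M)=\del_x\del_t\Ucal(t,M)$ before expanding, which is exactly your calculation.
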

  \begin{proof}
    Since \(M\) satisfies \eqref{eq:claw} in the distributional sense, Theorem \ref{thm:2DBVcalc} yields that \eqref{eq:M:evol} is satisfied in the same sense.
    Differentiating with respect to \(x\) in the sense of distributions shows that \eqref{eq:EF:den} is satisfied.
    For the momentum equation \eqref{eq:EF:mom} we again find from Theorem \ref{thm:2DBVcalc} that
    \begin{align*}
      \del_t (\rho v) &= \del_t (\del_x \Ucal(t,M)) = \del_x (\del_t \Ucal(t,M) ) = \del_x \left( v\del_t M + \sum_{i=1}^{n} b_i(t) \Fcal_i(M)\right) \\
      &= \del_x (-v^2 \rho) + \sum_{i=1}^{n} b_i(t) \del_x \Fcal_{i}(M).
    \end{align*}
    Indeed, we have shown that the following momentum equation is satisfied in the distributional sense,
    \begin{equation*}
      \del_t (\rho v) + \del_x(\rho v^2) = \left(\sum_{i=1}^{n} b_i(t) \Fcal_{i,M}'(t,x)\right) \rho \quad \text{ in } \Dscr'((0,T)\times\R).
    \end{equation*}
    Now, the convergence \(\rho(t,\cdot) \to \rho_0\) in \(\Pscr(\R)\) follows from the convergence of \(M(t,\cdot)\) to \(M^0\) in \(\Lp{1}(\R)\) in \eqref{eq:L1Lip}: indeed, by \eqref{eq:metricEquiv1} this implies convergence in the 1-Wasserstein metric, which again implies narrow convergence, see, e.g, \cite[Theorem 7.12]{villani2003topics}.
    We must also show \(\rho v(t,\cdot) \to \rho_0 v_0\) in \(\Mscr(\R)\), and from \eqref{eq:gradU} we have
    \begin{equation*}
      \lim\limits_{t\to0+} \int_\R \phiv(x) v(t,x)\dee\rho(t,x) = -\lim\limits_{t\to0+} \int_\R \phiv'(x) \Ucal(t,M(t,x))\dee x = \int_\R \phiv'(x) \Vcal^0(M^0(x))\dee x.
    \end{equation*}
    It remains to show that the distributional derivative of \(\Vcal^0(M^0)\) is \(v_0\rho_0\); however, this is in a sense how \(\Vcal^0\) was defined.
    Indeed, using again the push-forward, we have
    \begin{equation*}
      \Vcal^0(M^0(x)) = \int_0^{M^0(x)} v_0 \circ X^0(m) \dee m = \int_0^1 \chi_{(-\infty,x]}v_0\circ X^0(m) \dee m = \int_{(-\infty,x]} v_0(x)\dee \rho_0(x),
    \end{equation*}
    and so we have a distributional solution of \eqref{eq:EF}.
    Statement (i) follows from Corollary \ref{cor:BVcalcU} applied to \(\Ucal(t,M)\).
    For the remaining statement we note that \(\rho(t) \in C([0,T];\Pscr_1(\R))\) follows from \(M \in C([0,T]; \Rscr_1)\), while \(\rho(t) \in \Pscr_p(\R)\) for \(t \in [0,T]\) by Lemma \ref{lem:pmom}.
    Therefore, \(\rho(t) \colon [0,T] \to \Pscr_p(\R)\) is a narrowly continuous curve satisfying the continuity equation \eqref{eq:EF:den} for \(v\) with \(\norm{v(t,\cdot)}_{\Lp{p}(\R,\rho(t))} \in \Lp{1}(0,T)\), and so by \cite[Theorem 8.3.1]{ambrosio2008gradient} it is also an absolutely continuous curve with \(\Wscr_p\)-metric derivative satisfying \(\abs{\rho'(t)} \le \norm{v(t,\cdot)}_{\Lp{p}(\R,\rho(t))}\) for a.e.\ \(t \in (0,T)\). Combining this with (i) we can integrate to obtain \eqref{eq:Wpcont}.
  \end{proof}
  
  \begin{exm}[Examples \ref{exm:flux:EP} and \ref{exm:flux:EPld} revisited]\label{exm:reflux}
    For the flux function \ref{eq:flux:EP}, the forcing term in Theorem \ref{thm:EFsols} becomes
    \begin{equation*}
      1 \cdot \Acal_M'(t,x) \rho = -\left(\frac{\alpha}{2}\left(M(t,x)+M(t,x-)-1\right)+\beta\right)\rho,
    \end{equation*}
    which is exactly the Euler--Poisson forcing.
    On the other hand, with linear damping as in \eqref{eq:flux:EPld} it becomes
    \begin{align*}
      \left(-\gamma \e^{-\gamma t} \cdot (\Vcal^0)'_M(t,x) + \e^{-\gamma t} \cdot \Acal_M'(t,x)\right)\rho &= -\gamma \left( \e^{-\gamma t} (\Vcal^0)_M'(t,x) + \frac{1-\e^{-\gamma t}}{\gamma} \Acal_M'(t,x) \right)\rho + \Acal_M'(t,x) \rho \\
      &= -\gamma v \rho + \Acal_M'(t,x) \rho,
    \end{align*}
    which is the Euler--Poisson forcing with linear damping.
  \end{exm}
  
  \begin{exm}[The Riemann problem for a single Dirac mass]
    Let us now consider the Riemann problem for our scalar conservation law \eqref{eq:claw}, \eqref{eq:flux:EP} with \(\beta = 0\), that is, initial data of the form
    \begin{equation*}
      M^0(x) = \begin{cases}
        M_l, & x < 0, \\
        M_r, & x \ge 0.
      \end{cases}
    \end{equation*}
    This corresponds to a Dirac mass of size \(M_r-M_l\) for the initial density \(\rho_0\), located at \(x=0\).
    Assume that this mass has initial velocity \(v^0\), so that \(\Vcal^0(m) = \Vcal^0(M_l) + v^0 m\) for \(m \in [M_l, M_r]\).
    On the same interval, the full flux function is then
    \(\Ucal(t,\theta) = \Vcal^0(M_l) + \int_{M_l}^\theta (v^0 - t\frac{\alpha}{2}(2m-1))\dee m\).
    If \(\alpha \ge 0\) then this is concave on the interval for all \(t \ge 0\), and its corresponding lower convex envelope is
    \begin{equation*}
      \Ucal_{\smallsmile}(t,m) = \Vcal^0(M_l) + \left(v^0 - t\frac{\alpha}{2}(M_l + M_r -1)\right) (m-M_l).
    \end{equation*}
    Therefore, for \(\alpha \ge 0\), the entropic solution is a shock wave connecting the states \(M_l\) and \(M_r\) with velocity \(\dot{x}(t)\) satisfying the Rankine--Hugoniot-condition , i.e.,
    \begin{equation*}
      M(t,x) = \begin{cases}
        M_l, & x < v^0 t - \frac{\alpha}{4}t^2 (M_l+M_r-1), \\ M_r, & x \ge v^0 t - \frac{\alpha}{4}t^2 (M_l+M_r-1).
      \end{cases}
    \end{equation*}
    That is, the Dirac mass continues moving along a parabolic, or linear if \(M_l = 0\) and \(M_r = 1\), trajectory; that is, particles remain particles.
    
    On the other hand, if \(\alpha < 0\) then the flux function is strictly convex on \([M_l,M_r]\) for all time, meaning \(\Ucal_{\smallsmile}(t,m) = \Ucal(t,m)\).
    For fixed \(t\) we consider the derivative of \(m \mapsto \Ucal_{\smallsmile}(t,m)\), which we denote by \(\Ucal'_{\smallsmile}(t,m)\).
    Since \(m \mapsto \Ucal'_{\smallsmile}(t,m)\) is monotone in \(m\) we may define its inverse \(v \mapsto (\Ucal'_{\smallsmile})^{-1}(t,v)\).
    The left and right constant states will travel with respective velocities \(\Ucal'(t,M_l) = v^0 - t\frac{\alpha}{2}(2M_l-1)\) and \(\Ucal'(t,M_r) > \Ucal'(t,M_l)\).
    The remaining middle section will be a continuous transition from \(m = M_l\) to \(m = M_r\), where the velocity corresponding to \(m\) is \(\Ucal'(t,m)\).
    Integrating we find \(x = v^0 t - \frac14 t^2 \alpha (2m-1)\), which we can invert to find the value of \(M(t,x)\) in the middle section, and we find the solution
    \begin{equation*}
      M(t,x) = \begin{cases}
        M_l, & x < v^0 t - \frac14 t^2 \alpha (2M_l-1), \\
        \frac{x-v^0t-\frac14 \alpha t^2}{\frac12 \alpha t^2}, & v^0 t - \frac14 t^2 \alpha (2M_l-1) \le x < v^0 t - \frac14 t^2 \alpha (2M_r-1) \\
        M_r, & x \ge v^0t - \frac14 t^2 \alpha (2M_r-1).
      \end{cases}
    \end{equation*}
    That is, the Dirac mass is immediately smoothed out with a parabolic rarefaction fan.
    Compare these examples with the second order diffusion of a Dirac mass in \cite[Equation (1.14)]{brenier2013sticky}, as well as the corresponding first-order solutions \cite[Equations (2.23)--(2.24)]{bonaschi2015equivalence}.
  \end{exm}
  
  From Example \ref{exm:reflux} we see that Theorem \ref{thm:2DBVcalc} is all well and good when the forcing term in \eqref{eq:gradU} turns out to match the forcing term for the original Euler system.
  For an entropy solution of \eqref{eq:claw} to yield a distributional solution \((\rho, v) = (\del_x M, \Ucal'_M)\) of \eqref{eq:EF}, we see by applying the \(BV\)-gradient of \eqref{eq:gradU} that this will be the case if
  \begin{equation}\label{eq:f&U}
    \del_x \pdiff{\Ucal(t,M)}{t} = f[\del_x M] \quad \text{in } \Dscr',
  \end{equation}
  which is indeed the compatibility condition we read from the flux function \eqref{eq:flux} and the force distribution appearing in the proof of Theorem \ref{thm:EFsols}.
  To ensure that \eqref{eq:f&U} holds, we can define entropy solutions of the Euler system as follows.
  \begin{dfn}[Entropy solution of the Euler system]\label{dfn:entropyEF}
    We define the distributional solution of \eqref{eq:EF} coming from an entropy solution \(M\) of the scalar conservation law \eqref{eq:claw} in Theorem \ref{thm:EFsols} to be an entropy solution of the Euler system \eqref{eq:EF} if the flux function \(\Ucal(t,m)\) additionally satisfies
    \begin{equation}\label{eq:F&U}
      \Ucal(t,m) = \Vcal^0(m) + \int_0^m \int_0^t F[M_s^{-1}](\omega)\dee\omega,
    \end{equation}
    where \(M_t^{-1}\) is the generalized inverse of \(M(t,\cdot)\), and \(F[X]\) is defined as in \eqref{eq:f&F}.
  \end{dfn}
  Combining Lemmas \ref{lem:decomp} and \ref{lem:BVcalc} with the relation \eqref{eq:f&F}, it follows that \eqref{eq:F&U} implies \eqref{eq:f&U}.
  We note that unless one is in a case where \(F[M^{-1}_t]\) simplifies, such as for Euler--Poisson, \eqref{eq:F&U} as it stands depends on the generalized inverse of \(M\), and in order to write it as a flux function depending only on \(M\), if possible, would require some additional information about the evolution of solutions of the system, e.g., that mass is preserved, which is used in the simplification for Euler--Poisson, and the evolution of the center of mass, see Section \ref{ss:CCZ}.
  In general, it would then seem that the Lagrangian solutions of Section \ref{s:gradflow} are simpler to work with, as one can work directly with the monotone rearrangement \(X\) appearing in \(F[X]\).
  
  \section{Equivalence of solution concepts}\label{s:equiv}
  In this section we will show that the Lagrangian solutions coming from \(\Lp{2}\)-gradient flow in Section \ref{s:gradflow} and the entropy solutions in Section \ref{s:entropy} are equivalent, and uniquely defined.
  Indeed, from their expositions we know that each solution concept gives rise to a uniquely defined distributional solution of the Euler--Poisson system \eqref{eq:EP} under suitable hypotheses on the forcing term.
  Our contribution is to show that they are equivalent already from their definitions.
  
  \subsection{Projections and shock admissibility} \label{ss:proj&admiss}
  As a preparation for the main statement, we present some auxiliary results which contain the core of our argument.
  
  \begin{lem}[Projection on \(\Hscr_{X}\) and the Radon--Nikodym derivative]\label{lem:ProjHX&RH}
    Let \(X \in \Kscr\) and \(W \in \Lp{2}(\Omega)\) be given.
    From these we can define the right-continuous generalized inverse \(M(x)\) of \(X(m)\) and any primitive \(\Wcal(m)\) such that \(\Wcal' = W\).
    Then we have the following identity,
    \begin{equation*}
      \Proj_{\Hscr_{X}} W = \Wcal_M'(X),
    \end{equation*}
    where \(\Hscr_{X}\) is defined in \ref{eq:HX} and \(\Wcal'_M(x)\) is the Radon--Nikodym derivative of \(\del_x \Wcal(M(x))\) with respect to \(\del_x M(x)\).
  \end{lem}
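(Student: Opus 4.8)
The plan is to reduce the statement to two explicit formulas that are already available: the description \eqref{eq:Proj:HX} of the projection $\Proj_{\Hscr_X}$, and the $BV$-chain rule of Lemma \ref{lem:BVcalc}. First I would set $\rho \coloneqq X_\#\mathfrak{m} \in \Pscr(\R)$, noting that its right-continuous cumulative distribution function is precisely $M$, that $X$ is then the right-continuous generalized inverse of $M$, and that $\int_\R \psi\dee\rho = \int_\Omega \psi\circ X\dee m$ for every Borel $\psi\ge 0$. Since $W \in \Lp2(\Omega)$ and $\Wcal$ is a primitive of $W$, we have $\Wcal \in W^{1,2}(\Omega)$, so Lemma \ref{lem:BVcalc} applies with $p = 2$ and $f = W$, the function $\Wcal$ playing the role of $F$ there (up to an irrelevant additive constant): it gives $\del_x(\Wcal\circ M) = \Wcal_M'\,\rho$ with $\Wcal_M'$ the function in \eqref{eq:BVchain}, together with $\norm{\Wcal_M'}_{\Lp2(\R,\rho)} \le \norm{W}_{\Lp2(\Omega)}$. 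By the change of variables this becomes $\norm{\Wcal_M'\circ X}_{\Lp2(\Omega)} = \norm{\Wcal_M'}_{\Lp2(\R,\rho)} < \infty$, so $\Wcal_M'\circ X \in \Lp2(\Omega)$; it then remains to check that $\Wcal_M'\circ X$ agrees $\mathfrak m$-a.e.\ with the right-hand side of \eqref{eq:Proj:HX}.

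I would do this on the two pieces of $\Omega = (\Omega\setminus\Omega_X)\sqcup\Omega_X$ separately. The open set $\Omega_X$ is a countable disjoint union of maximal intervals $(m_l,m_r)$, on each of which $X\equiv c$ for some $c\in\R$; by maximality one checks that $X^{-1}(\{c\})$ is the half-open interval $[m_l,m_r)$ (up to the single point $m_r$), so $c$ is an atom of $\rho$ with $\rho(\{c\}) = m_r-m_l$, $M(c-) = m_l$ and $M(c) = m_r$. The jump branch of \eqref{eq:BVchain} then gives, for $m\in(m_l,m_r)$,
\begin{equation*}
  \Wcal_M'(X(m)) = \frac{\Wcal(M(c))-\Wcal(M(c-))}{\rho(\{c\})} = \frac{\Wcal(m_r)-\Wcal(m_l)}{m_r-m_l} = \frac{1}{m_r-m_l}\int_{m_l}^{m_r} W(\omega)\dee\omega,
\end{equation*}
which is exactly the second branch of \eqref{eq:Proj:HX}. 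On $\Omega\setminus\Omega_X$ I would discard the countable (hence $\mathfrak m$-null) set $E$ of all endpoints of intervals of $\Omega_X$; for $m\notin\Omega_X\cup E$, writing $a\coloneqq X(m)$, the generalized-inverse relation $X(m)\le x\iff m\le M(x)$ yields $m\le M(a)$, while $X(m)=a$ forces $m\ge M(a-)$, so $M(a-)\le m\le M(a)$. If $a$ were an atom of $\rho$ it would be one of the levels $c$ above and $m$ would lie in the corresponding $[m_l,m_r)$ (or equal $m_r$), contradicting $m\notin\Omega_X\cup E$; hence $\rho(\{a\})=0$, so $M(a-)=M(a)=m$ and the first branch of \eqref{eq:BVchain} gives $\Wcal_M'(X(m)) = W(M(a)) = W(m)$, matching the first branch of \eqref{eq:Proj:HX}.

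Combining the two cases, $\Wcal_M'\circ X$ equals the right-hand side of \eqref{eq:Proj:HX} $\mathfrak m$-a.e.\ on $\Omega$, whence $\Proj_{\Hscr_X}W = \Wcal_M'(X)$. The main obstacle I anticipate is purely the bookkeeping that ties together flat intervals of $X$, atoms of $\rho$, and jumps of $M$ — in particular the identification of $X^{-1}(\{c\})$ with $[M(c-),M(c))$ and the fact that $M\circ X$ is the identity off the null set $E$; once these are in place the result is a direct substitution into \eqref{eq:BVchain} and \eqref{eq:Proj:HX}. An alternative would be to verify the defining orthogonality $W-\Wcal_M'(X)\in\Hscr_X^\perp$ via the characterization \eqref{eq:HXperp}, but the direct comparison above looks shorter.
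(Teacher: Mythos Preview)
Your proposal is correct and follows essentially the same approach as the paper's proof: both invoke the chain-rule formula \eqref{eq:BVchain} from Lemma \ref{lem:BVcalc} for $\Wcal'_M$, then verify the two branches of \eqref{eq:Proj:HX} separately by using that $M(X(m))=m$ for $m\notin\Omega_X$ and that $M(X(m))=m_r$, $M(X(m)-)=m_l$ on each maximal interval $(m_l,m_r)\subset\Omega_X$. You supply more of the generalized-inverse bookkeeping explicitly (the identification of atoms of $\rho$ with flat intervals of $X$, and the null set of endpoints), whereas the paper states these facts without proof, but the argument is the same.
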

  \begin{proof}
    From Lemma \ref{lem:BVcalc} we have that \(\Wcal'_M(x)\) is given by
    \begin{equation}\label{eq:WcalRN}
      \Wcal'_M(x) = \begin{cases}
        W(M(x)), & [[M(x)]] = 0 \\
        \ds \frac{\Wcal(M(x))-\Wcal(M(x-))}{M(x)-M(x-)}, & [[M(x)]] \neq 0,
      \end{cases}
    \end{equation}
    where \([[M(x)]] = M(x)-M(x-)\).
    As in Section \ref{ss:convex}, we can consider the open set \(\Omega_X\) as consisting of countably many disjoint maximal intervals, and if \(m \notin \Omega_X\), then \(M(t,X(t,m)) = m\).
    On the other hand, if \(m \in (m_l, m_r)\) for a maximal interval \((m_l, m_r) \subset \Omega_X\), then by the right-continuity of \(X\) and \(M\) we have \(M(X(m)) = m_r\) and \(M(X(m)-) = m_l\).
    Combining these properties with the expression for \(\Wcal'_M\),
    we find exactly \(\Proj_{\Hscr_{X}}W\) defined by \eqref{eq:Proj:HX}.
  \end{proof}
  Lemma \ref{lem:ProjHX&RH} provides a connection between the projection onto \(\Hscr_{X}\) and the Rankine--Hugoniot condition \eqref{eq:RH}.
  In particular, if \(\dot{X}(t) = \Proj_{\Hscr_{X(t)}} U(t)\) for some \(U(t) \in \Lp{2}(\Omega)\) with \(\Ucal(t,m) = \int_0^mU(t,\omega)\dee\omega\) we have
  \begin{equation}\label{eq:ProjHX&RH}
    (\Proj_{\Hscr_{X(t)}} U)(m) = \Ucal_M'(t,X(t,m)),
  \end{equation}
  which for \(m \in \Omega_{X(t)}\) corresponds exactly to the Rankine--Hugoniot condition for \(M\) and flux function \(\Ucal(t,M)\).
  
  Now, let \(X(t) \in \Kscr\) be a Lagrangian solution of the differential inclusion \eqref{eq:DIU} for which at time \(t\) we have \(\Omega_{X(t)} \neq \emptyset\).
  Then there is some maximal interval \((m_l, m_r) \subset \Omega_{X(t)}\) where \(X(t,m)\) is `flat', and so its generalized inverse \(M(t,x)\) has a jump of size \(m_r-m_l\) at \(x = X(t,m)\) for \(m \in (m_l,m_r)\).
  Then we observe from Lemma \ref{lem:ProjHX&RH} that having \(\dot{X} = \Proj_{\Hscr_{X(t)}}U\) corresponds exactly to the Rankine--Hugoniot condition \eqref{eq:RH} for \(M\) and flux function \(\Ucal\) given by the primitive of \(U\).
  
  \begin{lem}[Projection on \(T_X\Kscr\) and the Ole\u{\i}nik E-condition]\label{lem:ProjHX&O}
    Let \(X \in \Kscr\) and \(U \in \Lp{2}(\Omega)\) be given, and define \(M\) and \(\Ucal\) as in Lemma \ref{lem:ProjHX&RH}.
    Then \(U - \Proj_{\Hscr_{X}}U \in N_X\Kscr\), or equivalently \(\Proj_{\Hscr_{X}}U = \Proj_{T_X\Kscr}U\), if and only if \(\Proj_{\Hscr_{X}}U\) satisfies
    \begin{equation}\label{eq:ProjHX&O}
      (\Proj_{\Hscr_{X}}U)(m) \le \frac{\Ucal(m)-\Ucal(m_l)}{m-m_l}
    \end{equation}
    for any \(m \in (m_l, m_r)\), a maximal interval of \(\Omega_X\).
    In particular, if \(\dot{X}(t) = \Proj_{\Hscr_{X(t)}} U(t)\) for some \(U(t) \in \Lp{2}(\Omega)\), then for \(m \in \Omega_{X(t)}\), \eqref{eq:ProjHX&O} is exactly the Ole\u{\i}nik E-condition \eqref{eq:Oleinik} for \(M(t,x)\) and flux function \(\Ucal(t,m) = \int_0^mU(t,\omega)\dee\omega\).
  \end{lem}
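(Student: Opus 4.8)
The plan is to apply the normal-cone characterization of Lemma~\ref{lem:NX} to the element $W \coloneqq U - \Proj_{\Hscr_X}U$, whose primitive $\Xi_W(m) = \int_0^m W(\omega)\dee\omega$ is absolutely continuous. By that lemma, $W \in N_X\Kscr$ iff $\Xi_W \in \Nscr_X$, i.e.\ $\Xi_W \ge 0$ on $[0,1]$ and $\Xi_W = 0$ on $\Omega\setminus\Omega_X$. I would first note that the second condition is automatic here: the projection formula \eqref{eq:Proj:HX} gives $W = 0$ a.e.\ on $\Omega\setminus\Omega_X$ and $\int_{m_l}^{m_r}W\dee\omega = 0$ on every maximal interval $(m_l,m_r)\subset\Omega_X$, so that for any $m_0\in[0,1]\setminus\Omega_X$ — writing $[0,m_0]\cap\Omega_X$ as a countable union of maximal intervals, each contained in $[0,m_0]$ because $m_0\notin\Omega_X$ — one gets $\Xi_W(m_0)=\int_0^{m_0}W = 0$. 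Hence $\Xi_W$ vanishes on $[0,1]\setminus\Omega_X$, in particular at the endpoints of every maximal interval of $\Omega_X$, and membership in $\Nscr_X$ reduces to $\Xi_W\ge 0$ on each such interval.

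On a maximal interval $(m_l,m_r)\subset\Omega_X$, the projection $\Proj_{\Hscr_X}U$ is constant, equal to $\bar U\coloneqq (m_r-m_l)^{-1}\int_{m_l}^{m_r}U\dee\omega$, so using $\Xi_W(m_l)=0$ we compute, for $m\in(m_l,m_r)$,
\[
  \Xi_W(m)=\int_{m_l}^m\bigl(U(\omega)-\bar U\bigr)\dee\omega=\bigl(\Ucal(m)-\Ucal(m_l)\bigr)-\bar U\,(m-m_l),
\]
the additive constant of $\Ucal$ cancelling. Thus $\Xi_W(m)\ge 0$ on $(m_l,m_r)$ is exactly $\bar U \le \bigl(\Ucal(m)-\Ucal(m_l)\bigr)/(m-m_l)$, which is \eqref{eq:ProjHX&O} since $(\Proj_{\Hscr_X}U)(m)=\bar U$ there. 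Together with the previous reduction this gives $U-\Proj_{\Hscr_X}U\in N_X\Kscr$ iff \eqref{eq:ProjHX&O} holds on every maximal interval of $\Omega_X$. For the equivalent formulation $\Proj_{\Hscr_X}U=\Proj_{T_X\Kscr}U$, I would invoke Moreau's decomposition $U=\Proj_{T_X\Kscr}U+\Proj_{N_X\Kscr}U$ into orthogonal summands along the mutually polar cones $T_X\Kscr$ and $N_X\Kscr$ (cf.\ \eqref{eq:TC1}): since $\Proj_{\Hscr_X}U\in\Hscr_X\subset T_X\Kscr$, since $N_X\Kscr\subset\Hscr_X^\perp$, and since $U-\Proj_{\Hscr_X}U$ is the residual of an orthogonal projection onto $\Hscr_X$ and hence orthogonal to $\Proj_{\Hscr_X}U$, the splitting $U=\Proj_{\Hscr_X}U+(U-\Proj_{\Hscr_X}U)$ satisfies all the defining properties of Moreau's decomposition exactly when its second term lies in $N_X\Kscr$; uniqueness then gives the claimed equivalence.

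Finally, for the ``in particular'' statement, assume $\dot X(t)=\Proj_{\Hscr_{X(t)}}U(t)$ and fix a maximal interval $(m_l,m_r)\subset\Omega_{X(t)}$. As in the proof of Lemma~\ref{lem:ProjHX&RH}, right-continuity of $X$ and $M$ shows that $M(t,\cdot)$ jumps at $x=X(t,m)$ from $M_l=m_l$ to $M_r=m_r$, and the Rankine--Hugoniot speed is $\dot s(t)=(\Ucal(t,m_r)-\Ucal(t,m_l))/(m_r-m_l)=\bar U=(\Proj_{\Hscr_{X(t)}}U)(m)$. A short chord manipulation then shows that, for this value of $\dot s(t)$, each of the two inequalities in \eqref{eq:Oleinik} with $M_l<k<M_r$ is equivalent to $\Ucal(t,k)\ge\Ucal(t,m_l)+\bar U\,(k-m_l)$, i.e.\ to \eqref{eq:ProjHX&O} at $m=k$; so \eqref{eq:ProjHX&O} is precisely the Ole\u{\i}nik E-condition for $M(t,\cdot)$ and the flux $\Ucal(t,\cdot)$. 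The one delicate point is the reduction in the first paragraph: it is the averaging built into \eqref{eq:Proj:HX} that forces $\Xi_W$ back to zero across each flat piece of $X$, and this is exactly what makes the two-sided condition $\Xi_W\in\Nscr_X$ collapse to the one-sided E-condition.
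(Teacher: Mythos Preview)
Your proof is correct and follows essentially the same approach as the paper: define the primitive $\Xi_W$ of $W=U-\Proj_{\Hscr_X}U$, observe via \eqref{eq:Proj:HX} that it vanishes on $\Omega\setminus\Omega_X$, compute it explicitly on each maximal interval of $\Omega_X$, and invoke Lemma~\ref{lem:NX}. You add helpful detail the paper leaves implicit---the Moreau decomposition justifying the equivalence $\Proj_{\Hscr_X}U=\Proj_{T_X\Kscr}U$, and the chord manipulation identifying \eqref{eq:ProjHX&O} with \eqref{eq:Oleinik}---but the core argument is identical.
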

  \begin{proof}
    Define \(\Xi(m) = \int_0^m (U-\Proj_{\Hscr_{X}}U)(\omega)\dee\omega\) and observe that on any maximal interval \((m_l,m_r) \subset \Omega_X\), \(\Proj_{\Hscr_{X}}U\) by its definition \eqref{eq:Proj:HX} equals the average of \(U\) over the same interval, which shows that \(\Xi(m) = 0\) for any \(m \in \Omega\setminus\Omega_X\).
    On the other hand, for any \(m\) contained in a maximal interval \((m_l, m_r)\) we have
    \begin{equation*}
      \Xi(m) = \Xi(m) - \Xi(m_l) = \Ucal(m)-\Ucal(m_l) - (m-m_l)\Proj_{\Hscr_{X}}U,
    \end{equation*}
    and so \(\Xi(m) \ge 0\) if and only if \eqref{eq:ProjHX&O} holds.
    The claim is then a consequence of Lemma \ref{lem:NX}.
    If \(X(t,\cdot), U(t,\cdot) \in \Lp{2}(\Omega)\) were to depend on time, we see that the corresponding inequality would still hold.
  \end{proof}
  Observe that, strictly speaking, \eqref{eq:ProjHX&O} only gives the rightmost inequality in \eqref{eq:Oleinik}.
  However, we can obtain the other inequality,
  \begin{equation}\label{eq:ProjHX&O2}
    (\Proj_{\Hscr_{X}}U)(m) \ge \frac{\Ucal(m_r)-\Ucal(m)}{m_r-m},
  \end{equation}
  by considering \(\Xi(m_r)-\Xi(m)\) for \(m \in (m_l,m_r)\) together with the fact that \(\Xi(m_r) = 0\).
  
  For a right-continuous probability distribution function \(M(x)\) corresponding to \(\mu \in \Pscr\) we may then apply Lemma \ref{lem:BVcalc} to the Kru\v{z}kov entropy-entropy flux pair \(\eta_k(M)\) and \(q_k(t,M)\) from \eqref{eq:KruzkovPair} to find
  \begin{equation}\label{eq:etaRN}
    \begin{aligned}
      \eta_{k,M}'(x) &= \begin{cases}
        \sgn(M(x)-k), & \mu(\{x\}) = 0, \\[2mm]
        \ds \frac{\abs{M(x)-k}-\abs{M(x-)-k}}{\mu(\{x\})}, & \mu(\{x\}) \neq 0
      \end{cases} \\
      &= \begin{cases}
        \sgn(M(x)-k), & k \notin (M(x-),M(x)), \\[2mm]
        \ds \frac{M(x)+M(x-)-2k}{M(x)-M(x-)}, & k \in (M(x-),M(x))
      \end{cases}
    \end{aligned}
  \end{equation}
  and
  \begin{equation}\label{eq:qRN}
    \begin{aligned}
      q_{k,M}'(t,x) &= \begin{cases}
        \sgn(M(x)-k) U(t,M(x)), & \mu(\{x\}) = 0, \\[2mm]
        \ds \frac{[[\sgn(M-k)\left(\Ucal(t,M)-\Ucal(t,k)\right)]](x)}{\mu(\{x\})}, & \mu(\{x\}) \neq 0,
      \end{cases} \\
      &= \begin{cases}
        \sgn(M(x)-k) U(t,M(x)), & M(x-) = M(x), \\[2mm]
        \ds \sgn(M(x)-k) \frac{\Ucal(t,M(x))-\Ucal(t,M(x-))}{M(x)-M(x-)}, & k \notin (M(x-),M(x)) \neq \emptyset, \\[4mm]
        \ds \frac{\Ucal(t,M(x))+\Ucal(t,M(x-))-2\Ucal(t,k)}{M(x)-M(x-)}, & k \in (M(x-),M(x)),
      \end{cases}
    \end{aligned}
  \end{equation}
  where we as short-hand notation write \([[f(t,M)]](x) \coloneqq f(t,M(x))-f(t,M(x-))\) for a function \(f \colon [0,T]\times\R \to \R\).
  Note that we can apply Lemma \ref{lem:ProjHX&RH} to \eqref{eq:KruzkovPair} to obtain the identities
  \begin{equation*}
    \Proj_{\Hscr_{X}}\left(\sgn(m-k)\right) = \eta'_{k,M}(X), \qquad \Proj_{\Hscr_{X}}\left(\sgn(m-k)U(t,m)\right) = q'_{k,M}(t,X).
  \end{equation*}
  Combining the above results we can deduce the following result, which should be compared with the distributional inequality defining the Kru\v{z}kov entropy condition \eqref{eq:Kruzkov}.
  \begin{cor}
    Let \(X \in \Kscr\) and \(U(t,\cdot) \in \Lp{2}(\Omega)\) be given, and define \(M\) and \(\Ucal(t,\cdot)\) as in Lemma \ref{lem:ProjHX&O}.
    Assume furthermore that \(U-\Proj_{\Hscr_{X}}U \in N_X\Kscr\), then we have the inequality
    \begin{equation}\label{eq:entropyIneq}
      q_{k,M}'(t,X) \le \eta'_{k,M}(X) \Ucal_M'(t,X).
    \end{equation}
  \end{cor}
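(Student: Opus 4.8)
The plan is to prove \eqref{eq:entropyIneq} by a pointwise verification. Since $X_\#\mathfrak{m} = \mu \coloneqq \del_x M$, the functions composed with $X$ are $\mathfrak m$-measurable, so it suffices to check the inequality at $x = X(m)$ for every fixed $k \in \R$ and $\mathfrak m$-almost every $m \in \Omega$; I would do this by splitting $\Omega$ into $\Omega\setminus\Omega_X$ and $\Omega_X$, using the explicit formulas \eqref{eq:etaRN}, \eqref{eq:qRN} for $\eta_{k,M}'$ and $q_{k,M}'$ together with the formula \eqref{eq:WcalRN} from Lemma \ref{lem:BVcalc} for $\Ucal_M'$. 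Throughout $t$ is a fixed parameter, $M$ is the right-continuous generalized inverse of $X$, and $\Ucal(t,\cdot)$ a primitive of $U(t,\cdot)$ (the choice being irrelevant, since only differences of $\Ucal$ enter).

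On $\Omega\setminus\Omega_X$ the inequality is in fact an equality. For $\mathfrak m$-a.e.\ such $m$ the map $X$ is not locally constant at $m$; the only exceptions being the countably many left endpoints of flat intervals of $X$, one then has $\mu(\{X(m)\}) = 0$ and $M(X(m)) = m$ (the generalized-inverse fact already used in the proof of Lemma \ref{lem:ProjHX&RH}). The first branches of \eqref{eq:etaRN}, \eqref{eq:qRN} and of \eqref{eq:WcalRN} then give $\eta_{k,M}'(X(m)) = \sgn(m-k)$, $q_{k,M}'(t,X(m)) = \sgn(m-k)\,U(t,m)$ and $\Ucal_M'(t,X(m)) = U(t,m)$, so both sides of \eqref{eq:entropyIneq} equal $\sgn(m-k)\,U(t,m)$.

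On $\Omega_X$, fix the maximal interval $(m_l,m_r)\subset\Omega_X$ containing $m$ and put $x^\ast \coloneqq X(m)$, so that $M(x^\ast-) = m_l$, $M(x^\ast) = m_r$ and $\mu(\{x^\ast\}) = m_r - m_l > 0$. By Lemma \ref{lem:ProjHX&RH} and \eqref{eq:Proj:HX}, $\Ucal_M'(t,x^\ast) = (\Proj_{\Hscr_X}U(t,\cdot))(m) = \big(\Ucal(t,m_r) - \Ucal(t,m_l)\big)/(m_r - m_l) \eqqcolon s$. If $k \notin (m_l,m_r)$, the corresponding branches of \eqref{eq:etaRN}, \eqref{eq:qRN} give $\eta_{k,M}'(x^\ast) = \sgn(m_r-k)$ and $q_{k,M}'(t,x^\ast) = \sgn(m_r-k)\,s$, so \eqref{eq:entropyIneq} is once more an equality. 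The only substantive case is $k \in (m_l,m_r)$: there $\eta_{k,M}'(x^\ast) = (m_r+m_l-2k)/(m_r-m_l)$ and $q_{k,M}'(t,x^\ast) = \big(\Ucal(t,m_r)+\Ucal(t,m_l)-2\Ucal(t,k)\big)/(m_r-m_l)$, so, multiplying \eqref{eq:entropyIneq} through by $m_r - m_l > 0$, what has to be shown is
\[
  \Ucal(t,m_r) + \Ucal(t,m_l) - 2\Ucal(t,k) \le (m_r + m_l - 2k)\,s .
\]
This is exactly where the hypothesis enters: since $U - \Proj_{\Hscr_X}U \in N_X\Kscr$, Lemma \ref{lem:ProjHX&O} together with its complementary inequality \eqref{eq:ProjHX&O2}, applied at $m = k$, give $s\,(m_r - k) \ge \Ucal(t,m_r) - \Ucal(t,k)$ and $s\,(k - m_l) \le \Ucal(t,k) - \Ucal(t,m_l)$; subtracting the second from the first produces precisely the displayed bound. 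Collecting the cases yields \eqref{eq:entropyIneq} for every $k$ and $\mathfrak m$-a.e.\ $m$.

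I expect no real obstacle: the computation is bookkeeping with \eqref{eq:etaRN}--\eqref{eq:qRN}, and in fact \eqref{eq:entropyIneq} is an equality except in the single case where $k$ lies strictly between the two sides of a jump of $M$. The one genuine step — and the only one invoking the hypothesis — is combining the two one-sided Ole\u{\i}nik inequalities in that case; this is nothing but the classical fact that, across a shock, the E-condition is equivalent to the Kru\v{z}kov entropy inequality, here recast through $\Proj_{\Hscr_X}$, $N_X\Kscr$ and the Radon--Nikodym derivatives of Lemma \ref{lem:BVcalc}. The only points requiring a little care are that the boundary points of the maximal intervals of $\Omega_X$ form an $\mathfrak m$-null set, so the a.e.\ case split is legitimate, and the sign tracking in \eqref{eq:etaRN}--\eqref{eq:qRN} when $k$ coincides with a value taken by $M$.
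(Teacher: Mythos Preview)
Your proof is correct and follows essentially the same approach as the paper: both reduce \eqref{eq:entropyIneq} to an equality whenever $k \notin (M(X(m)-),M(X(m)))$, and in the remaining case both invoke the two one-sided Ole\u{\i}nik inequalities \eqref{eq:ProjHX&O} and \eqref{eq:ProjHX&O2} at the intermediate value $k$ and combine them to obtain the displayed bound. Your case split (first $\Omega\setminus\Omega_X$ versus $\Omega_X$, then the position of $k$) is slightly more explicit than the paper's, but the substance is identical.
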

  \begin{proof}
    Following the proof of Lemma \ref{lem:ProjHX&RH} while comparing \eqref{eq:WcalRN}, \eqref{eq:etaRN} and \eqref{eq:qRN} we see that this holds as an equality for the cases where \(k \notin (M(X(m)-),M(X(m)))\).
    For the remaining case, we consider \(k \in (m_l, m_r) = (M(X(m)-), M(X(m)))\), and by \eqref{eq:qRN} we have
    \begin{align*}
      q'_{k,M}(t,X(m)) &= \frac{\Ucal(t,m_r) + \Ucal(t,m_l) - 2\Ucal(t,k)}{m_r - m_l} \le \frac{m_r-k}{m_r-m_l} (\Proj_{\Hscr_{X}} U)(m) - \frac{k-m_l}{m_r-m_l} (\Proj_{\Hscr_{X}} U)(m) \\
      &= \frac{m_r+m_l-2k}{m_r-m_l} (\Proj_{\Hscr_{X}} U)(m) = \eta'_{k,M}(X(m)) \Ucal'_M(t,X(m)),
    \end{align*}
    where we have combined \eqref{eq:ProjHX&RH}, \eqref{eq:ProjHX&O} and \eqref{eq:ProjHX&O2}.
  \end{proof}
  
  By a similar reasoning, based on the formulas for the Radon-Nikodym derivatives in \eqref{eq:WcalRN}, \eqref{eq:etaRN} and \eqref{eq:qRN}, we can characterize entropy solutions of \eqref{eq:claw} in Definition \ref{dfn:entropy} in the space \(BV_\mathrm{loc}\) by the Ole\u{\i}nik E-condition \eqref{eq:Oleinik}, see \cite[Section 4.5]{dafermos2016hyperbolic} for a similar result.
  \begin{lem}\label{lem:KequivO}
    Assume \(M\) is a weak solution of \eqref{eq:claw} with a flux function of the form \(\Ucal(t,m) = \int_0^mU(t,\omega)\dee\omega\) for \(U(t)\in\Lp{2}(\Omega)\).
    Then it is an entropy solution if and only if it satisfies the Ole\u{\i}nik E-condition \eqref{eq:Oleinik} along its discontinuities, i.e., where \(M(t,x) > M(t,x-)\).
  \end{lem}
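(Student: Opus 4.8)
The plan is to follow the classical reduction of the distributional entropy inequality to an admissibility condition along shocks for $BV$ solutions, cf.\ \cite[Section 4.5]{dafermos2016hyperbolic}, replacing every classical composition by the $BV$-chain rule of Lemma \ref{lem:BVcalc} (applied slicewise in $t$) so as to accommodate a flux $\Ucal(t,\cdot)$ which is only $W^{1,2}$ in $m$, hence continuous but not Lipschitz. We work with a representative for which $M(t,\cdot)\in\Rscr$ and with a weak solution of class $BV_\mathrm{loc}([0,T]\times\R)$ — exactly the setting in which the claimed equivalence is meaningful, and which is guaranteed, e.g., by Lemma \ref{lem:L1Lip} for fluxes of the form \eqref{eq:flux}. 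Then $M$ has a countably $\mathcal{H}^1$-rectifiable jump set $\Gamma$, which away from an $\mathcal{H}^1$-null set is locally a graph $x=s(t)$ with one-sided (in $x$) traces $M_l\le M_r$ and shock speed $\dot s(t)$; by Lemma \ref{lem:BVcalc} the measures $\del_t\eta_k(M)$, $\del_x q_k(t,M)$ and $\del_x\Ucal(t,M)$ are well-defined for the Kru\v{z}kov pair \eqref{eq:KruzkovPair}, with Radon--Nikodym densities \eqref{eq:etaRN}, \eqref{eq:qRN} and $\Ucal'_M$.

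First I would split the entropy-production measure $\mathcal{E}_k\coloneqq\del_t\eta_k(M)+\del_x q_k(t,M)$ into its diffuse (absolutely continuous plus Cantor) part and its jump part, the latter carried by $\Gamma$. Off $\Gamma$, the formulas \eqref{eq:etaRN}--\eqref{eq:qRN} collapse to $\eta'_{k,M}=\sgn(M-k)$ and $q'_{k,M}=\sgn(M-k)\,\Ucal'_M$; since $M$ is a weak solution, the zero measure $\del_t M+\del_x\Ucal(t,M)$ has vanishing diffuse part, whence $\del_t M|_\mathrm{diff}=-\Ucal'_M\,\del_x M|_\mathrm{diff}$ and therefore $\mathcal{E}_k|_\mathrm{diff}=\sgn(M-k)\bigl(\del_t M+\Ucal'_M\,\del_x M\bigr)|_\mathrm{diff}=0$. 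Hence $\mathcal{E}_k=\mathcal{E}_k|_\Gamma$, and the $BV$ jump formula gives $\mathcal{E}_k|_\Gamma=(1+\dot s^2)^{-1/2}\bigl([[q_k(t,M)]]-\dot s\,[[\eta_k(M)]]\bigr)\mathcal{H}^1|_\Gamma$. Applying the same diffuse/jump splitting to $\del_t M+\del_x\Ucal(t,M)=0$ forces its jump part to vanish as well, i.e., yields the Rankine--Hugoniot identity \eqref{eq:RH}, $\dot s=[[\Ucal(t,M)]]/[[M]]$.

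What remains is the elementary algebraic identity at a fixed point of $\Gamma$: for $M_l<k<M_r$,
\begin{equation*}
  \dot s\,[[\eta_k(M)]]-[[q_k(t,M)]]=2\bigl(\Ucal(t,k)-\Ucal(t,M_l)-\dot s\,(k-M_l)\bigr),
\end{equation*}
whereas for $k\le M_l$ or $k\ge M_r$ the left-hand side vanishes by \eqref{eq:RH}. The right-hand side is $\ge 0$ for every $k\in(M_l,M_r)$ exactly when $\Ucal(t,\cdot)$ lies on or above the chord joining $(M_l,\Ucal(t,M_l))$ and $(M_r,\Ucal(t,M_r))$, which is equivalent to both inequalities of the Ole\u{\i}nik E-condition \eqref{eq:Oleinik} holding at $k$ — this is the equivalence of \eqref{eq:Kruzkov2} and \eqref{eq:Oleinik} for discontinuities noted after \eqref{eq:Oleinik}. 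Putting the pieces together: if $M$ is an entropy solution, then $\mathcal{E}_k\le0$ for all $k$ in a dense set, hence, $\Ucal(t,\cdot)$ being continuous and after a countable intersection, \eqref{eq:Oleinik} holds for all $k$ at $\mathcal{H}^1$-a.e.\ point of $\Gamma$; conversely, if \eqref{eq:Oleinik} holds along $\Gamma$, then $\mathcal{E}_k|_\Gamma\le0$ for every $k$ while $\mathcal{E}_k|_\mathrm{diff}=0$, so $\mathcal{E}_k\le0$ as a measure, i.e., \eqref{eq:Kruzkov2} holds.

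I expect the only genuinely delicate point to be the $BV$-bookkeeping in the two variables $(t,x)$ with a flux that is not Lipschitz in $M$: one must verify that $\Ucal(t,M)$ and $q_k(t,M)$ are $BV_\mathrm{loc}$ in $(t,x)$ with jump set contained in $\Gamma$, that on the diffuse part the relevant chain rule is the one supplied by Lemma \ref{lem:BVcalc} — equivalently the Vol'pert averaged superposition \eqref{eq:VolpertAvg}, which applies since $\eta_k$ and $q_k(t,\cdot)$ are absolutely continuous in $m$ — and that the diffuse part of the vanishing measure $\del_t M+\del_x\Ucal(t,M)$ is itself zero, which uses only the additivity of the diffuse/jump decomposition. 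Once $\Gamma$ and these decompositions are in place, the rest is the identity displayed above, already used in this paper to pass between \eqref{eq:Kruzkov2} and \eqref{eq:Oleinik}.
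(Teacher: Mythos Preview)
Your proposal is correct and arrives at the same algebraic reduction at the jumps, but the route differs from the paper's. You follow the classical two-dimensional $BV$ structure theory \`a la \cite[Section~4.5]{dafermos2016hyperbolic}: identify the rectifiable jump set $\Gamma\subset(0,T)\times\R$, split the entropy production $\mathcal{E}_k$ into diffuse and jump parts, kill the diffuse part via the weak equation, and then check the shock inequality on $\Gamma$ with respect to $\mathcal{H}^1|_\Gamma$. The paper instead exploits the one-dimensional monotone structure more directly: since $M$ is a weak solution, $\del_t M=-\Ucal'_M\,\del_x M$ as measures, so \emph{all} of $\nabla M$ is absolutely continuous with respect to the slicewise measure $\del_x M(t,\cdot)\,\dee t=\rho(t)\,\dee t$. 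Applying the Vol'pert chain rule componentwise then gives the entropy production as $(\eta'_{k,M}\Ucal'_M-q'_{k,M})\,\del_x M$, and the explicit Radon--Nikodym formulas \eqref{eq:WcalRN}, \eqref{eq:etaRN}, \eqref{eq:qRN} show this density vanishes identically off the atoms of $\rho(t,\cdot)$ and equals the Ole\u{\i}nik defect at the atoms. This sidesteps precisely the ``delicate point'' you flag---there is no need to verify separately that $\Ucal(t,M)$ and $q_k(t,M)$ are $BV_{\mathrm{loc}}$ in $(t,x)$ with jump set in $\Gamma$, because everything is already written against $\rho(t)\,\dee t$. Your approach is the more portable one and would transfer to settings without a monotone-in-$x$ unknown; the paper's shortcut is leaner here because $M(t,\cdot)\in\Rscr$ forces the spatial singular part to be pure jump and the atoms of $\rho(t)$ to coincide with the spatial slices of $\Gamma$.
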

  \begin{proof}
    Being a weak solution, \(M\) satisfies \(\del_t M = -\del_x \Ucal(t,M) = -\Ucal'_M(t,x) \del_x M\) in distributions.
    Then the Kru\v{z}kov entropy inequality can be rewritten as \(\del_t \eta + \del_x q \le 0\) in \(\Dscr'\), that is,
    \begin{align*}
      0 &\le -\int_0^T \int_\R \phiv(t,x) \eta'_{k,M}(t,x) \del_t M(t,x) \dee t - \int_0^T \int_\R \phiv(t,x) q'_{k,M}(t,x) \del_x M(t,x) \dee t \\
      &= \int_0^T \int_\R \phiv(t,x) \left( \eta_{k,M}'(t,x) \Ucal_M'(t,x) - q_{k,M}'(t,x) \right) \del_x M(t,x) \dee t \\
      &= \int_0^T \int_{\{x \colon \rho(t,\{x\}) > 0\}} \phiv(t,x) \left( \eta_{k,M}'(t,x) \Ucal_M'(t,x) - q_{k,M}'(t,x) \right) \dee\rho(t) \dee t
    \end{align*}
    for any nonnegative \(\phiv \in C^\infty_{\mathrm{c}}([0,T)\times\R)\), where we have combined \eqref{eq:WcalRN} for \(\Ucal(t,M)\) with \eqref{eq:etaRN} and \eqref{eq:qRN} to deduce the identity \(\eta_{k,M}'(t,x) \Ucal_M'(t,x) = q_{k,M}'(t,x)\) whenever \(\rho(t,\{x\}) = 0\) or \(k \notin (M(t,x-),M(t,x))\).
    That is, the Kru\v{z}kov condition reduces to a condition on the discontinuities of \(M(t,\cdot)\).
    Assume now there is such a discontinuity at \(x = \bar{x}\), i.e., \(m_r \coloneqq M(t,\bar{x}) > M(t,\bar{x}-) \eqqcolon m_l\), and let \(k \in (m_l,m_r)\).
    Then the Kru\v{z}kov condition implies for a.e.\ \(t\) that
    \begin{equation*}
      \frac{m_r+m_l-2k}{m_r-m_l} \frac{\Ucal(t,m_r)-\Ucal(t,m_l)}{m_r-m_l} \ge \frac{\Ucal(t,m_r)+\Ucal(t,m_l)-2\Ucal(t,k)}{m_r-m_l}
    \end{equation*}
    for any \(k \in (m_l,m_r)\), which is equivalent to the Ole\u{\i}nik E-condition \eqref{eq:Oleinik}, cf.\ \cite[Section 2.1]{holden2015front}.
    Therefore, we see that the Kru\v{z}kov condition is satisfied if and only if the Ole\u{\i}nik E-condition above holds for every such discontinuity for a.e.\ \(t\).
  \end{proof}
  
  A final useful observation concerns integrating orthogonal functions, i.e., for \(X \in \Kscr\) we have
  \begin{equation*}
    \int_\Omega U(m) W(m)\dee m = 0, \qquad U \in \Hscr_{X}, \quad W \in \Hscr_{X}^\perp.
  \end{equation*}
  In particular, for any \(W \in \Lp{2}(\Omega)\) we have that \(W - \Proj_{\Hscr_{X}}W \in \Hscr_{X}^\perp\), while for any smooth \(\phiv\) we have \(\phiv\circ X \in \Hscr_{X}\), and so
  \begin{equation}\label{eq:intPerp}
    \int_\Omega (\phiv\circ X) W \dee m = \int_\Omega (\phiv \circ X) (\Proj_{\Hscr_{X}}W) \dee m.
  \end{equation}
  
  \subsection{Equivalence theorem and proof}
  We now state our main results.
  
  \begin{thm}[Lagrangian solutions are entropy solutions]\label{thm:LagToEnt}
    Let \((X(t),V(t))\) for \(t \in [0,T]\) be a Lagrangian solution of \eqref{eq:DIU} in the sense of Definition \ref{dfn:LagSol}. Then \(M(t,\cdot)\), defined as the right-continuous generalized inverse of \(X(t)\), is an entropy solution of \eqref{eq:claw} for \(t \in [0,T]\) with flux function given by the primitive of the prescribed velocity \(U\).
  \end{thm}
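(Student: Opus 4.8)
The plan is to identify $M$ with the family of generalized inverses $M(t,\cdot) = M_{\rho(t)}$, where $\rho(t) \coloneqq X(t)_\#\mathfrak{m}$; by Lemma \ref{lem:LagSol:props} each $\rho(t)$ lies in $\Pscr_2(\R)$, so $M(t,\cdot) \in \Rscr$ with $M(0,\cdot) = M^0$, and $M(t,x) \in [0,1]$ gives $M \in \Lp{\infty}((0,T)\times\R)$, while $X \in \mathrm{Lip}_\mathrm{loc}([0,T];\Kscr)$ together with \eqref{eq:metricEquiv1} gives $M \in C([0,T];\Lp{1}(\R))$. With this regularity in hand, by Definition \ref{dfn:entropy} and Lemma \ref{lem:KequivO} it suffices to prove two things: (i) $M$ is a weak solution of \eqref{eq:claw} with flux $\Ucal(t,m) = \int_0^m U(t,\omega)\dee\omega$, the primitive of the prescribed velocity; and (ii) $M$ satisfies the Ole\u{\i}nik E-condition \eqref{eq:Oleinik} along its jumps for a.e.\ $t$.

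For (i) I would first record the identity $Q(t,x) = \Ucal(t,M(t,x))$, where $Q(t,x) \coloneqq \int_{(-\infty,x]} v(t,y)\dee\rho(t,y)$ and $v(t,\cdot)\circ X(t) = V(t)$ is the map from the last bullet of Lemma \ref{lem:LagSol:props} (defined for $t$ in the co-negligible set $\Tcal$). Indeed, by the change of variables \eqref{eq:varchange} and the equivalence $X(t,m) \le x \iff m \le M(t,x)$ one has $Q(t,x) = \int_0^{M(t,x)} V(t,m)\dee m$; since $X(t,\cdot)$ is constant on every maximal interval of $\Omega_{X(t)}$, the argument $M(t,x)$ never lies strictly inside such an interval, and because for $t\in\Tcal$ the function $V(t) = \Proj_{\Hscr_{X(t)}}U(t)$ is, by \eqref{eq:Proj:HX}, exactly the average of $U(t)$ over each of these intervals, the partial primitive of $V(t)$ up to $M(t,x)$ equals that of $U(t)$, i.e.\ $Q(t,x) = \Ucal(t,M(t,x))$ for a.e.\ $t$. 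The weak formulation \eqref{eq:weak} then follows by integrating the continuity equation satisfied by the push-forward flow $t\mapsto \rho(t) = X(t)_\#\mathfrak{m}$ with velocity $\dot X(t) = V(t) = v(t,\cdot)\circ X(t)$ against a test function $\phiv\in C_\mathrm{c}^\infty([0,T)\times\R)$, integrating by parts in $x$, and substituting $Q = \Ucal(t,M)$; the initial term $\int_\R M^0\phiv(0,\cdot)\dee x$ appears from the boundary term at $t=0$.

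For (ii), fix $t\in\Tcal$ and a jump of $M(t,\cdot)$ at $x=\bar x$, say $M(t,\bar x-) = m_l < m_r = M(t,\bar x)$; then $(m_l,m_r)$ is a maximal interval of $\Omega_{X(t)}$, and by \eqref{eq:Proj:HX} the value of $\Proj_{\Hscr_{X(t)}}U(t)$ on it is $\frac{\Ucal(t,m_r)-\Ucal(t,m_l)}{m_r-m_l}$, which is precisely the Rankine--Hugoniot speed \eqref{eq:RH} (this is the content of Lemma \ref{lem:ProjHX&RH}, cf.\ \eqref{eq:ProjHX&RH}). The differential inclusion \eqref{eq:DIU} together with the second and last bullets of Lemma \ref{lem:LagSol:props} give $\dot X(t) = V(t) = \Proj_{\Hscr_{X(t)}}U(t)$ and $U(t) - V(t) \in \del I_\Kscr(X(t)) = N_{X(t)}\Kscr$, hence $U(t) - \Proj_{\Hscr_{X(t)}}U(t) \in N_{X(t)}\Kscr$; Lemma \ref{lem:ProjHX&O}, supplemented by \eqref{eq:ProjHX&O2}, converts this membership into both inequalities of the Ole\u{\i}nik E-condition \eqref{eq:Oleinik} for $M(t,\cdot)$ and flux $\Ucal(t,\cdot)$ at $\bar x$. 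Since $(0,T)\setminus\Tcal$ is negligible, the E-condition holds for a.e.\ $t$ along every discontinuity, so by Lemma \ref{lem:KequivO} the weak solution $M$ is an entropy solution of \eqref{eq:claw} with the asserted flux.

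I expect the main obstacle to be (i), specifically the identity $Q(t,\cdot) = \Ucal(t,M(t,\cdot))$: one must check carefully that the $\Lp{2}$-projection $\Proj_{\Hscr_{X(t)}}$ appearing in the Lagrangian dynamics preserves the integral of the prescribed velocity $U$ over each flat interval of $X(t)$ and that $M(t,\cdot)$ skips the interiors of those intervals, so that the primitive of $V(t) = \dot X(t)$ and the primitive $\Ucal(t,\cdot)$ of $U(t)$ agree at the relevant arguments -- this is exactly where the ``$\dot X = \Proj_{\Hscr_X}U \iff$ Rankine--Hugoniot'' row of Table \ref{tab:corr} enters, and where the a.e.-in-$t$ restriction to $\Tcal$ has to be tracked. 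Once this identity and the cited Lemmas \ref{lem:ProjHX&RH}, \ref{lem:ProjHX&O}, \ref{lem:KequivO} are available, the remaining steps are routine.
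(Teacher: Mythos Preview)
Your proposal is correct and reaches the same conclusion, but the route for the entropy part differs from the paper's. The paper verifies the Kru\v{z}kov inequality \eqref{eq:Kruzkov2} directly: it introduces the primitive $\Phi(t,x) = -\int_x^\infty \phiv(t,y)\dee y$ of the test function, applies the chain rule to $\Phi(t,X(t,m))$, pushes forward to $\R$, and uses the pointwise inequality \eqref{eq:entropyIneq} (Corollary~5.3) to bound $q'_{k,M}$ by $\eta'_{k,M}\Ucal'_M$ inside the integral. You instead reduce, via Lemma~\ref{lem:KequivO}, to checking the Ole\u{\i}nik E-condition at jumps and then invoke Lemma~\ref{lem:ProjHX&O}; the paper explicitly notes this alternative in the remark immediately following the proof, so both routes are sanctioned. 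Your path is shorter and more conceptual once Lemma~\ref{lem:KequivO} is available, while the paper's direct computation avoids relying on that equivalence and makes the role of \eqref{eq:entropyIneq} explicit.

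For part (i), the paper also works through $\Phi(t,X(t,m))$ and identifies $V(t) = \Ucal'_M(t,X(t,\cdot))$ via Lemma~\ref{lem:ProjHX&RH}, rather than first isolating the identity $Q(t,x) = \Ucal(t,M(t,x))$ as you do. Your argument for that identity---that $M(t,x)$ never lands strictly inside a maximal interval of $\Omega_{X(t)}$ and that the primitives of $U(t)$ and $\Proj_{\Hscr_{X(t)}}U(t)$ agree at such points---is correct and is essentially a repackaging of the same lemma. One small point to tighten: when you say ``integrating the continuity equation \ldots\ and integrating by parts in $x$,'' you are implicitly testing the continuity equation against a primitive (in $x$) of $\phiv$, which is exactly the paper's $\Phi$; making that explicit would close the sketch.
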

  
  \begin{thm}[Entropy solutions are Lagrangian solutions]\label{thm:EntToLag}
    Let \(M(t,x)\) for \(t \in [0,T]\) be an entropy solution of the scalar conservation law \eqref{eq:claw} in the sense of Definition \ref{dfn:entropy} with flux function \eqref{eq:flux}.
    Then, \(X(t)\), defined as the right-continuous generalized inverse of \(M(t,\cdot)\), satisfies the differential inclusion \eqref{eq:DIU:X} for \(U = \pdiff{\Ucal}{m}\).
    If \(M\) additionally provides an entropy solution of the Euler system \eqref{eq:EF} in the sense of Definition \ref{dfn:entropyEF}, then \eqref{eq:DIU:U} is also satisfied, and \(X\) is a Lagrangian solution of \eqref{eq:DIU} for \(t \in [0,T]\). 
  \end{thm}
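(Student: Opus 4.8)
The plan is to prove the two assertions in turn: first the inclusion \eqref{eq:DIU:X} for a flux of the form \eqref{eq:flux}, and then, under the additional compatibility \eqref{eq:F&U}, that the pair $(X,U)$ with $U\coloneqq\del_m\Ucal$ is a Lagrangian solution in the sense of Definition \ref{dfn:LagSol}. Write $X(t)$ for the right-continuous generalized inverse of $M(t,\cdot)$. Since $M(t,\cdot)\in\Rscr$ by Corollary \ref{cor:remainInR}, we have $X(t)\in\Kscr$ automatically, and in the regime that puts us in the Lagrangian framework, $M^0\in\Rscr_2$ (i.e.\ $\rho_0\in\Pscr_2(\R)$), Lemma \ref{lem:pmom} gives $X(t)\in\Lp{2}(\Omega)$ for all $t$, while the $\Wscr_2$-estimate \eqref{eq:Wpcont} of Theorem \ref{thm:EFsols} combined with the isometry \eqref{eq:metricEquiv} yields $\norm{X(t)-X(s)}_{\Lp{2}(\Omega)}\le C\abs{t-s}$. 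Hence $X\in\mathrm{Lip}_\mathrm{loc}([0,T];\Kscr)$, and letting $t\to0+$ in the same estimate gives $X(0)=X^0$. It then remains to compute $\dot X(t)$ in $\Lp{2}(\Omega)$ and to verify $U(t)-\dot X(t)\in\del I_\Kscr(X(t))=N_{X(t)}\Kscr$ for a.e.\ $t$.

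The heart of the matter — and the step I expect to be the main obstacle — is the shock-admissibility identity $\dot X(t)=\Proj_{\Hscr_{X(t)}}U(t)$ for a.e.\ $t$. I would establish this in two halves. First, from the weak formulation \eqref{eq:weak}: testing against $\phiv_\epsv(t,x)=\chi^\epsv(t)\Phi(x)$ with $\Phi$ a primitive of an arbitrary $\phiv\in C^\infty_\mathrm{c}(\R)$, letting $\epsv\to0+$ as in the proofs of Lemmas \ref{lem:L1Lip} and \ref{lem:pmom}, then integrating by parts and invoking Lemma \ref{lem:BVcalc} and Lemma \ref{lem:ProjHX&RH} (with $\Wcal=\Ucal(t,\cdot)$, $W=U(t,\cdot)$), one finds $\int_\Omega(\phiv\circ X(t))\,\dot X(t)\dee m=\int_\Omega(\phiv\circ X(t))\,\Ucal'_M(t,X(t,\cdot))\dee m=\int_\Omega(\phiv\circ X(t))\,\Proj_{\Hscr_{X(t)}}U(t)\dee m$ for a.e.\ $t$; by \eqref{eq:HXperp} this says $\dot X(t)-\Proj_{\Hscr_{X(t)}}U(t)\in\Hscr_{X(t)}^\perp$. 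Second, I would argue that $\dot X(t)\in\Hscr_{X(t)}$ for a.e.\ $t$: a flat interval $(m_l,m_r)\subset\Omega_{X(t)}$ corresponds to a jump of $M(t,\cdot)$, which for a.e.\ $t$ is a genuine shock of the $BV_\mathrm{loc}$ solution $M$ (its jump set being rectifiable, cf.\ Lemma \ref{lem:L1Lip} and Theorem \ref{thm:2DBVcalc}), so $X(t,\cdot)\equiv s(t)$ on $(m_l,m_r)$ with $s$ locally Lipschitz, hence $\dot X(t,\cdot)\equiv\dot s(t)$ there; summing over the countably many flat intervals gives $\dot X(t)\in\Hscr_{X(t)}$. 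Since $\Proj_{\Hscr_{X(t)}}U(t)\in\Hscr_{X(t)}$ as well, the difference lies in $\Hscr_{X(t)}\cap\Hscr_{X(t)}^\perp=\{0\}$, which proves the identity. (Alternatively, one may deduce $\dot X(t)=\Ucal'_M(t,\cdot)\circ X(t,\cdot)$ from the fact, established in the proof of Theorem \ref{thm:EFsols}, that $\rho(t)=X(t)_\#\mathfrak{m}$ is an absolutely continuous curve in $\Pscr_2(\R)$ solving the continuity equation with velocity field $\Ucal'_M(t,\cdot)$, together with $\Ucal'_M(t,\cdot)\circ X(t,\cdot)=\Proj_{\Hscr_{X(t)}}U(t)\in\Hscr_{X(t)}$ from Lemma \ref{lem:ProjHX&RH}.)

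Granting $\dot X(t)=\Proj_{\Hscr_{X(t)}}U(t)$, the selection principle is then immediate from the entropy-side lemmas. Since $M$ is an entropy solution of \eqref{eq:claw} and lies in $BV_\mathrm{loc}$, Lemma \ref{lem:KequivO} makes this equivalent to the Ole\u{\i}nik E-condition \eqref{eq:Oleinik} holding along the discontinuities of $M(t,\cdot)$ for a.e.\ $t$; and by Lemma \ref{lem:ProjHX&O}, under the identity $\dot X(t)=\Proj_{\Hscr_{X(t)}}U(t)$ the Ole\u{\i}nik E-condition for $(M(t,\cdot),\Ucal(t,\cdot))$ is precisely the statement $U(t)-\Proj_{\Hscr_{X(t)}}U(t)\in N_{X(t)}\Kscr$. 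Chaining these, $U(t)-\dot X(t)\in N_{X(t)}\Kscr=\del I_\Kscr(X(t))$ for a.e.\ $t$, which together with $X(0)=X^0$ is exactly \eqref{eq:DIU:X}.

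Finally, assume in addition that $\Ucal$ satisfies \eqref{eq:F&U}. Using $M_s^{-1}=X(s)$ and $(\Vcal^0)'=V^0=v_0\circ X^0$, and differentiating \eqref{eq:F&U} in $m$ — legitimate because $s\mapsto F[X(s)]$ is continuous from $[0,T]$ into $\Lp{2}(\Omega)$ ($F$ being uniformly continuous and $X\in C([0,T];\Kscr)$), hence Bochner-integrable — gives $U(t,m)=V^0(m)+\int_0^t F[X(s)](m)\dee s$. Thus $U(0)=V^0$ and $\dot U(t)=F[X(t)]$, i.e.\ \eqref{eq:DIU:U} holds, with $U\in C^1([0,T];\Lp{2}(\Omega))$. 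Together with \eqref{eq:DIU:X}, the local Lipschitz regularity of $X$, and $X(0)=X^0$, this is the definition of a Lagrangian solution of \eqref{eq:DIU}, equivalently \eqref{eq:DIV}; so $X$ is a Lagrangian solution on $[0,T]$ (assuming, as throughout, that $F$ is uniformly continuous). As indicated, the one genuinely delicate point is the a.e.-$t$ $\Lp{2}(\Omega)$-identity $\dot X=\Proj_{\Hscr_X}U$: translating the $BV$/Rankine--Hugoniot structure of the scalar conservation law into the projection of the prescribed velocity onto $\Hscr_{X(t)}$, while controlling the flat sets $\Omega_{X(t)}$ and the limits in $\epsv$ and in the test functions.
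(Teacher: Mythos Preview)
Your proposal is correct and takes essentially the same approach as the paper: establish $X\in\mathrm{Lip}_{\mathrm{loc}}([0,T];\Kscr)$ and $X(0)=X^0$ from the $\Wscr_2$-estimate \eqref{eq:Wpcont}, invoke Lemmas \ref{lem:ProjHX&RH}, \ref{lem:ProjHX&O} and \ref{lem:KequivO} to translate the Ole\u{\i}nik E-condition into $U-\Proj_{\Hscr_X}U\in N_X\Kscr$, and identify $\dot X$ with $\Proj_{\Hscr_X}U$. The paper handles this last identification more tersely --- it asserts $\frac{d^+}{dt}X=\Ucal'_M\circ X$ upfront (i.e.\ your parenthetical alternative via the continuity equation and Lemma \ref{lem:ProjHX&RH}) and then cites \cite[Theorem 3.5]{brenier2013sticky} and \cite[Remark 3.9]{brezis1973operateurs} for the a.e.\ equality $\dot X=\Proj_{\Hscr_X}U$ --- whereas your primary argument via the weak formulation and the $\Hscr_X\cap\Hscr_X^\perp$ decomposition is a more hands-on but equally valid route to the same conclusion.
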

  
  Now, given initial data \((\rho_0,v_0) \in \Tscr_2\), we want to use the Lagrangian and entropy solution concepts to derive distributional solutions of \eqref{eq:EF}.
  Clearly, the Lagrangian initial data \((X^0, V^0)\) and the initial data \(M^0\) and flux function \(\Ucal^0\) for the conservation law \eqref{eq:claw} are in one-to-one correspondence through \(X^0, M^0\) being generalized inverses of one another and \(\Ucal^0 = \int^m_0 V^0(\omega)\dee\omega\).
  If \(f[\rho]\) furthermore is densely Lipschitz-continuous, then the corresponding \(F[X] \colon \Kscr \to \Lp{2}(\Omega)\) is also Lipschitz and the prescribed velocity \(U\) in \eqref{eq:U} is uniquely defined.
  Based on the previous theorems and the uniqueness of each solution concept in this case, we deduce the following corollary.
  
  \begin{cor}[Equivalence of solutions to the Euler system]
    Let \((\rho_0,v_0) \in \Tscr_2\) be initial data for \eqref{eq:EF}, and assume we have two sets of solutions \((\check{\rho}_t, \check{v}_t), (\hat{\rho}_t, \hat{v}_t) \in \Tscr_2 \) for \(t \in [0,T]\) provided by respectively the Lagrangian and entropy solution concept, see Definitions \ref{dfn:LagSol} and \ref{dfn:entropyEF}.
    Assume \(f[\rho]\) is densely Lipschitz-continuous. Then \( (\check{\rho}_t, \check{v}_t) = (\hat{\rho}_t, \hat{v}_t) \) in \(\Tscr_2\).
  \end{cor}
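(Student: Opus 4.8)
The plan is to funnel both given solutions through the correspondences of Theorems~\ref{thm:LagToEnt} and \ref{thm:EntToLag} and then appeal to the uniqueness of Lagrangian solutions in Theorem~\ref{thm:LagSol:EU}. First I would fix the Lagrangian initial datum attached to $(\rho_0,v_0)$: let $X^0\in\Kscr$ be the right-continuous monotone rearrangement of $\rho_0$, let $M^0$ be its right-continuous generalized inverse, and put $V^0=v_0\circ X^0$, $\Ucal^0=\Vcal^0$ with $\Vcal^0(m)=\int_0^mV^0(\omega)\dee\omega$. Since $f[\rho]$ is densely Lipschitz-continuous, \cite[Lemma 6.4]{brenier2013sticky} provides a unique Lipschitz operator $F\colon\Kscr\to\Lp{2}(\Omega)$ satisfying \eqref{eq:f&F}, and $F$ depends only on its argument in $\Kscr$. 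Hence Theorem~\ref{thm:LagSol:EU} applies and gives a \emph{unique} Lagrangian solution $(X,U)$ of \eqref{eq:DIU} on $[0,T]$ with datum $(X^0,V^0)$; by Theorem~\ref{thm:LagToEnt} its flux, the primitive $\Ucal(t,m)=\int_0^mU(t,\omega)\dee\omega$ of the prescribed velocity, equals $\Vcal^0(m)+\int_0^m\int_0^tF[X(s)](\omega)\dee s\dee\omega$, i.e.\ it satisfies \eqref{eq:F&U} with $M_s^{-1}=X(s)$.

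On the Lagrangian side, $(\check\rho_t,\check v_t)$ arises from a Lagrangian solution $\check X$ of \eqref{eq:DIU} with this datum, so $\check X\equiv X$ by uniqueness; the last bullet of Lemma~\ref{lem:LagSol:props} together with Lemma~\ref{lem:ProjHX&RH} (which identifies $V(t)=\Proj_{\Hscr_{X(t)}}U(t)$ with $\Ucal_M'(t,\cdot)\circ X(t)$ for a.e.\ $t$) yields $\check\rho_t=X(t)_\#\mathfrak m$ and $\check v_t=\Ucal_M'(t,\cdot)$ $\check\rho_t$-a.e. On the entropy side, $(\hat\rho_t,\hat v_t)=(\del_x\hat M(t,\cdot),\Ucal_{\hat M}'(t,\cdot))$ comes from an entropy solution $\hat M$ of \eqref{eq:claw} which, being an entropy solution of the Euler system in the sense of Definition~\ref{dfn:entropyEF}, carries a flux satisfying \eqref{eq:F&U}. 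Writing $\hat X(t)$ for the generalized inverse of $\hat M(t,\cdot)$, Theorem~\ref{thm:EntToLag} shows that $\hat X$ satisfies \eqref{eq:DIU:X} with prescribed velocity $\hat U=\del_m\Ucal$; moreover \eqref{eq:F&U} forces $\hat U(t)=V^0+\int_0^tF[\hat X(s)]\dee s$, so $\dot{\hat U}=F(\hat X(t))$ and \eqref{eq:DIU:U} holds as well. Thus $(\hat X,\hat U)$ is a Lagrangian solution of \eqref{eq:DIU} with the same datum $(X^0,V^0)$ and the same $F$, whence $\hat X(t)=X(t)$ in $\Lp{2}(\Omega)$ for every $t\in[0,T]$ by Theorem~\ref{thm:LagSol:EU}.

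It remains to read off equality of the induced solutions. Since $\hat X\equiv X\equiv\check X$, their right-continuous generalized inverses coincide, so $\hat M(t,\cdot)=\check M(t,\cdot)=M(t,\cdot)$ in $\Rscr$ and $\hat\rho_t=\check\rho_t=\del_xM(t,\cdot)$, giving $d_{\Wscr_2}(\hat\rho_t,\check\rho_t)=0$. Because both fluxes are given by \eqref{eq:F&U} evaluated along the common curve $M_s^{-1}=X(s)$, the flux functions agree, hence $\Ucal_{\hat M}'=\Ucal_{\check M}'$ and $\hat v_t=\check v_t$ holds $\rho_t$-a.e.; consequently $d_{\Vscr_2}((\hat\rho_t,\hat v_t),(\check\rho_t,\check v_t))=0$ by \eqref{eq:TP2&L2}. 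Therefore $(\check\rho_t,\check v_t)=(\hat\rho_t,\hat v_t)$ in $(\Tscr_2,d_{\Tscr_2})$ for all $t\in[0,T]$.

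The main obstacle I expect lies in the second paragraph: one must verify that the flux carried by the given entropy solution genuinely obeys the self-referential identity \eqref{eq:F&U}, with the generalized inverse of $\hat M$ inserted for $X(s)$, and not merely the abstract form \eqref{eq:flux}. This is exactly what upgrades $\del_t\Ucal$ to the Lagrangian force $F[\hat X(t)]$ and hence lets Theorem~\ref{thm:EntToLag} deliver a \emph{full} Lagrangian solution satisfying both \eqref{eq:DIU:X} and \eqref{eq:DIU:U}, rather than only the $X$-inclusion; once that is in place, the remainder is a bookkeeping exercise built entirely from the two equivalence theorems and the uniqueness results already in hand.
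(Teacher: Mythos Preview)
Your proposal is correct and follows precisely the route the paper indicates: the corollary is stated there without a detailed proof, simply as a consequence of Theorems~\ref{thm:LagToEnt} and~\ref{thm:EntToLag} together with the uniqueness of each solution concept under the dense Lipschitz assumption. Your concern in the final paragraph is already handled by Definition~\ref{dfn:entropyEF}, which builds the identity~\eqref{eq:F&U} with $M_s^{-1}$ into the very notion of entropy solution of the Euler system, so no extra verification is needed.
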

  
  \begin{proof}[Proof of Theorem \ref{thm:LagToEnt}]
    Assume we have a Lagrangian solution of \eqref{eq:DIU}; in particular, by Lemma \ref{lem:LagSol:props}
    we have \(\rho = X_\#\mathfrak{m}\) and there exists a unique map \(v \in \Lp{2}(\R,\rho)\) such that for \(t \in \Tcal\) we have \(V(t,\cdot) = v\circ X(t,\cdot) \in \Hscr_{X(t)}\). To give an idea of the proof strategy, we first show that the Lagrangian solution is a weak solution of \eqref{eq:claw}.
    
    Let \(\phiv \in C^\infty_{\mathrm{c}}([0,T)\times\R)\) be the test function we want to use in the weak formulation \eqref{eq:weak}, and define \(\Phi(t,x) = -\int_x^{\infty} \phiv(t,y)\dee y \in C_{\mathrm{b}}^\infty([0,T)\times\R)\) such that \(\Phi(T,x) = 0\) for any \(x\), while \(\lim\limits_{x\to\infty}\Phi(t,x) = 0\) for any \(t\).
    Then, the chain rule implies
    \begin{equation}\label{eq:ZeroPhi}
      0 -\lim\limits_{t\to0+}\Phi(t,X(t,m)) = \int_0^T \left( \Phi_t(t,X(t,m)) + \Phi_x(t,X(t,m)) \diff{^+}{t}X(t,m) \right)\dee t.
    \end{equation}
    Since \(X(t)\) converges strongly to \(X^0\) in \(\Lp{2}(\Omega)\) by definition of the Lagrangian solution, it follows that
    \begin{equation*}
      \lim\limits_{t\to0+} \int_\Omega \Phi(t,X(t,m))\dee m = \int_\Omega \Phi(0,X^0(m))\dee m = \int_\R \Phi(0,x)\del_x M^0(x) = -\int_\R \phiv(0,x)M^0(x)\dee x.
    \end{equation*}
    Then, integrating \eqref{eq:ZeroPhi} over \(\Omega = (0,1)\) and taking into account the BV-integration by parts formula in Lemma \ref{lem:BVcalc}, we obtain
    \begin{align*}
      \int_\R \phiv(0,x)M^0(x)\dee x &= \int_0^T \int_\Omega \left( \Phi_t(t,X(t,m)) + V(t,m) \Phi_x(t,X(t,m)) \right)\dee m \dee t \\
      &= \int_0^T \int_\Omega \left( \Phi_t(t,X(t,m)) + \Ucal_M'(t,X(t,m)) \Phi_x(t,X(t,m)) \right)\dee m \dee t \\
      &= \int_0^T \int_\R \left( \Phi_t(t,x) + \Ucal_M'(t,x) \Phi_x(t,x) \right)\del_xM(t,x) \dee t \\
      &= \int_0^T \int_\R \left( \Phi_t(t,x)\del_xM + \Phi_x(t,x)\del_x\Ucal(t,M) \right) \dee t \\
      &= \int_0^T \left[ \Phi_t(t,x) M(t,x) + \Phi_x(t,x) \Ucal(t,M(t,x)) \right]_{-\infty}^\infty \dee t \\
      &\quad- \int_0^T\int_\R \left( \Phi_{xt}(t,x) M(t,x) + \Phi_{xx}(t,x) \Ucal(t,M(t,x)) \right) \dee x \dee t \\
      &= -\int_0^T \int_\R \left( \phiv_{t}(t,x) M(t,x) + \phiv_{x}(t,x) \Ucal(t,M(t,x)) \right) \dee x \dee t.
    \end{align*}
    We conclude that \eqref{eq:weak} is satisfied and this is a weak solution of \eqref{eq:claw}.
    In the above, the second identity is a consequence of Lemmas \ref{lem:LagSol:props} and \ref{lem:ProjHX&RH}, in particular that \(V(t) = \Proj_{\Hscr_{X(t)}}U(t)\) for \(t \in \Tcal\), where \(\Tcal\) is dense in \((0,\infty)\), together with \eqref{eq:ProjHX&RH}.
    
    To show that it is also an entropy solution, note that we may subtract zero in the form of \(\int_0^T\int_\R \phiv_t\abs{k}\dee x\dee t\) on the left-hand side to deal with boundary terms as \(x\to-\infty\) when integrating by parts; indeed, for
    \begin{equation*}
      \int_{(-R,R]} \phiv_t (\abs{M-k}-\abs{k})\dee x = \left[\Phi_t(t,x) (\abs{M(t,x)-k}-\abs{k})\right]_{x=-R}^R - \int_{(-R,R]} \Phi_t \del_x\abs{M-k}
    \end{equation*}
    the boundary terms vanish as \(R \to \infty\).
    Then, recalling \eqref{eq:etaRN} and \eqref{eq:qRN}, we find
    \begin{align*}
      &\int_0^T \int_\R \left(\phiv_t \abs{M-k} + \phiv_x \sgn(M-k)(\Ucal(t,M)-\Ucal(t,k))\right)\dee x\dee t + \abs{k}\int_\R\phiv(0,x)\dee x \\
      &\quad= -\int_0^T \int_\R \left( \Phi_t \eta_M' + \Phi_x q_M' \right)(t,x)\del_xM \dee t \\
      &\quad= -\int_0^T \int_\Omega \left( \Phi_t \eta_M' + \Phi_x q_M' \right)(t,X(m))\dee m\dee t \\
      &\quad\ge -\int_0^T \int_\Omega \eta_M'(X(m)) \left( \Phi_t(t,X(m)) + \Ucal_M'(t,X(m))\Phi_x(t,X(m)) \right)\dee m\dee t \\
      &\quad= -\int_0^T \int_\Omega \sgn(m-k) \diff{^+}{t} \Phi(t,X(m)) \dee m\dee t \\
      &\quad= \int_\Omega \sgn(m-k) \Phi(0,X^0(m))\dee m,
    \end{align*}
    where we have used \eqref{eq:entropyIneq}, \eqref{eq:intPerp} and the BV-integration by parts formula in Lemma \ref{lem:BVcalc}.
    Applying \eqref{eq:intPerp} once more we find
    \begin{equation*}
      \int_\Omega \sgn(m-k) \Phi(0,X^0(m))\dee m = \int_\Omega \eta_M'(0,X^0(m)) \Phi(0,X^0(m))\dee m = \int_\R \eta_{M}'(0,x) \Phi(0,x)\del_x M^0(x),
    \end{equation*}
    and integrating by parts we have
    \begin{equation*}
      \int_\R \eta_{M}'(0,x) \Phi(0,x)\del_x M^0(x) = - \int_\R(\abs{M^0(x)-k}-\abs{k})\phiv(0,x)\dee x.
    \end{equation*}
    Combining the above relations we arrive at the desired inequality
    \begin{align*}
      &\int_0^T \int_\R \left(\phiv_t \abs{M-k} + \phiv_x \sgn(M-k)(\Ucal(t,M)-\Ucal(t,k))\right)\dee x\dee t + \abs{k} \int_\R \phiv(0,x)\dee x \\
      &\quad= -\int_0^T \int_\Omega \left( \Phi_t \eta_M' + \Phi_x q_M' \right)(t,X(t,m))\dee m \dee t \ge -\int_\R \phiv(0,x) \abs{M(0,x)-k}\dee x + \abs{k} \int_\R \phiv(0,x)\dee x,
    \end{align*}
    which we rearrange to find \eqref{eq:Kruzkov2}.
  \end{proof}
  
  \begin{proof}[Proof of Theorem \ref{thm:EntToLag}]
    Assume we have an entropy solution \(M(t,x)\) of \eqref{eq:claw}; in particular we have \(\rho = \del_x M\) and \(\rho v = \del_x \Ucal(t,M) = \Ucal_M'(t,x) \del_x M\), meaning \(v(t,x) = \Ucal_M'(t,x)\).
    
    We then introduce the right-continuous generalized inverse \(X(t,m)\) of \(M(t,x)\) through \(X_\#\mathfrak{m} = \rho\), as well as the velocity \(\diff{^+}{t}X(t,m) = V(t,m) \coloneqq v(t,X(t,m)) = \Ucal_M'(t,X(t,m))\) for \(t \ge 0\). Recall from Lemma \ref{lem:ProjHX&RH} that \(\Ucal'_M(t,X(t,m)) = \Proj_{\Hscr_{X(t)}}U(t)\).
    At any time \(t\), the generalized inverse \(X(t,m)\) must be a nondecreasing function of \(m\), meaning \(X(t,\cdot) \in \Kscr\), where we recall the convex cone \eqref{eq:cone}.
    Moreover, \(X\) is a locally Lipschitz curve \(X \colon [0,T] \to \Kscr\) due to \eqref{eq:Wpcont}.
    
    We must now show that \(U(t)-\Proj_{\Hscr_{X(t)}}U(t) \in N_{X(t)}\Kscr\) for a.e.\ \(t\).
    However, since \(M\) is an entropy solution, by Lemma \ref{lem:KequivO} it satisfies the Ole\u{\i}nik E-condition \eqref{eq:Oleinik} at any discontinuity of \(M(t,x)\) for a.e.\ \(t\), and so by \eqref{eq:ProjHX&RH} and Lemma \ref{lem:ProjHX&O} we conclude that \(U(t)-V(t)\) is contained in the normal cone \(N_{X(t)}\Kscr\) for a.e.\ \(t\).
    Then it follows that \(V(t) = \Proj_{T_{X(t)}\Kscr}U(t)\) for a.e.\ \(t\).
    Therefore \(X(t)\) solves the differential inclusion \eqref{eq:DIU:X} and is almost everywhere differentiable again due to \eqref{eq:Wpcont}.
    Then, as argued in the proof of \cite[Theorem 3.5]{brenier2013sticky}, see also \cite[Remark 3.9]{brezis1973operateurs},
    at every point of differentiability we must have \(\dot{X}(t,m) = V(t,m) = \Proj_{\Hscr_{X(t)}}U(t)\).
    It remains to show that \(\lim\limits_{t\to0+}X(t) = X^0\) in \(\Lp{2}(\Omega)\), but this follows from the identity \eqref{eq:metricEquiv} and the time-continuity \eqref{eq:Wpcont} for \(p=2\).
  \end{proof}
  
  \begin{rem}
    In the proof of Theorem 5.6 we used the Kru\v{z}kov, or integral, formulation to show that we obtain an entropy solution.
    Of course, we could have alternatively used Lemma \ref{lem:KequivO} and only verified the Ole\u{\i}nik E-condition at the discontinuities. 
  \end{rem}
  
  \begin{rem}[Connection to a uniqueness result for a related system]\label{rem:lefloch}
    As mentioned in the introduction, another Euler-type system on the line is studied in \cite{lefloch2015existence}, namely
    \begin{equation*}
      \del_t \rho + \del_x (\rho f'(v)) = 0, \qquad
      \del_t (\rho v) + \del_x (\rho v f'(v)) = \rho h\left(\int_{-\infty}^x\rho\dee y \right),
    \end{equation*}
    where \(h \colon \R \to \R\) is a given Lipschitz function, and \(f \colon \R \to \R\) is a smooth, convex function satisfying \(\lim_{\abs{v}\to+\infty}f(v)/\abs{v} = +\infty\).
    In particular, for \(f(v) = \frac12 v^2\) we recover a version of \eqref{eq:EF}.
    In their study they make use of a generalization of the Hopf--Lax--Ole\u{\i}nik formula, which is similar to the sticky Lagrangian solutions in Section \ref{s:gradflow} in the sense that there is a representation formula, see also \cite{tadmor2011variational}.
    An interesting observation is that their uniqueness results require \(h\) to be nondecreasing, which means that its primitive is concave.
    If one, to make sense of the above system in the case of measures with Dirac parts, replaced \(h(\int_{-\infty}^x\rho\dee y)\) with its Vol'pert average \eqref{eq:VolpertAvg}, the Lagrangian representation satisfying \eqref{eq:f&F} would be \(F[X](m) = h(m)\), independent of \(X\) as in the Euler--Poisson case.
    Furthermore, since \(h\) is nondecreasing, using Lemma \ref{lem:NX} one can then verify that this functional is sticking in the sense of Definition \ref{dfn:sticking}, meaning the unique Lagrangian solutions are globally sticky as in Definition \ref{dfn:LagSol:sticky}.
  \end{rem}
  
  \begin{rem}[Connection to a uniqueness result for the Euler--Poisson system]\label{rem:huang}
    Note that in the particular case of \eqref{eq:EF} with \(f[\rho] \equiv 0\), the ``other'' Ole\u{\i}nik condition \eqref{eq:onesided} has been established independently for both entropy solutions \cite{nguyen2008pressureless} and Lagrangian solutions \cite{natile2009wasserstein}.
    As mentioned in the introduction,  \cite{huang2023global} establishes uniqueness of entropy solutions for the attractive Euler--Poisson system \eqref{eq:EP} with \(\alpha = 1\), \(\beta=0\), and initial data \(\rho_0 \in \Pscr_\mathrm{c}(\R)\) and \(v_0 \in \Lp{\infty}(\R,\rho_0)\). 
    
    Observe that \(\Pscr_\mathrm{c}(\R) \subset \Pscr_2(\R)\), while \(v_0 \in \Lp{\infty}(\R,\rho_0)\) for \(\rho_0 \in \Pscr_\mathrm{c}(\R)\) also belongs to \(\Lp{2}(\R,\rho_0)\), so that Lagrangian solutions also cover such initial data.
    Of course, showing that \(\rho\) and \(v\) remain in these more restrictive spaces requires more analysis, but should rely on the finite speed of propagation due to bounded velocities, or equivalently, the flux function \eqref{eq:flux:EP} here being Lipschitz.
    
    Compared to our result, we merely argue that the Lagrangian solutions of the attractive Euler--Poisson system studied here are also entropy solutions in the sense of \cite{huang2023global}.
    Indeed, the weak continuity of the kinetic energy follows from the right-continuity of \(V\), cf.\ Lemma \ref{lem:LagSol:props}, which then carries over to \(\norm{V}^2_{\Lp{2}(\Omega)}\).
    It remains to verify \eqref{eq:onesided}, and to that end, as noted in \cite{natile2009wasserstein}, it is sufficient that the map \(m \mapsto X(t,m)-tV(t,m)\) is nondecreasing.
    Using the fact that \eqref{eq:EP} for \(\alpha \ge 0\) has globally sticky solutions given by the projection formulas \eqref{eq:sticky:X} and \eqref{eq:sticky:V}, together with \eqref{eq:Proj:K&HY} we find \(X(t)-tV(t) = \Proj_{\Hscr_{X(t)}}(Y(t)-tU(t))\).
    Since \(\Proj_{\Hscr_{X}}\) preserves monotonicity, it is sufficient to show that \(Y-tU\) is nondecreasing, and we easily compute \(Y(t)-tU(t) = X^0(m) + \frac12 t^2 \alpha(m-\frac12)\) which in our attractive case \(\alpha > 0\) clearly is increasing.
  \end{rem}

  \section{Particle dynamics for the Euler--Poisson equation}\label{s:EPparticle}
  To illustrate the equivalence between the Lagrangian and entropy solutions, we will in this section return to the particle dynamics for the Euler--Poisson system \eqref{eq:EP}.
  The motivation for considering particle dynamics comes from its central role in proving the equivalence of entropy solutions and gradient flows in \cite{bonaschi2015equivalence}.
  In addition, limits of particle dynamics play key roles in both the gradient flow formulation of \cite{brenier2013sticky} and the scalar conservation law-point of view in \cite{nguyen2008pressureless,nguyen2015one}.
  
  Particle solutions correspond to piecewise constant distribution functions of the corresponding conservation law, and this is where we will make use of the connection between the \(\Lp{2}\)-projections and shock admissibility conditions from the previous section.
  Indeed, a piecewise constant function will be a piecewise smooth solution of the conservation law, for which the Kru\v{z}kov entropy condition \eqref{eq:Kruzkov2} is equivalent to the Ole\u{\i}nik E-condition \eqref{eq:Oleinik} for the shocks.
  
  The benefit of considering the specific case of the Euler--Poisson system is, as seen in Section \ref{ss:LagEF}, that the forcing term \(F[X](m)\) turns out to be independent of the position \(X\).
  In particular, the prescribed velocity \(U\) can be computed for all times directly from the initial data, and will be a linear function of \(t\) since trajectories undergo constant acceleration between collisions.
  It is also of interest to study the distinction between the repulsive (\(\alpha<0\)) and non-repulsive (\(\alpha \ge 0\)) cases, since the global behavior of solutions is quite different in the two cases, as emphasized in \cite{brenier2013sticky}.
  
  Although the particle dynamics and their connection to the conservation law \eqref{eq:claw} for \(\alpha=0\) and \(\alpha \ge 0\) are respectively treated in \cite{brenier1998sticky} and \cite{nguyen2008pressureless}, we recall the key points here for completeness and ease of comparison.
  
  \subsection{Discrete setup}
  Recalling the setting of Section \ref{s:motivation}, we have \(n\) initially distinct particles located at positions \(x_i\) with velocities \(v_i\) and masses \(m_i\).
  From their masses we can define a partition of \(\Omega = (0,1)\) given by \(0 = \theta_0 < \theta_1 < \dots < \theta_{n-1} < \theta_n = 1\) where 
  \begin{equation}\label{eq:thetai}
    \theta_0 = 0, \qquad \theta_i = \sum_{j=1}^{i}m_j, \quad i \in \{1,\dots,n\}.
  \end{equation}
  Based on \eqref{eq:XV} we can define the functions
  \begin{equation}\label{eq:XVdisc}
    X(m) \coloneqq \sum_{i=1}^{n} x_i \chi_{[\theta_{i-1}, \theta_i)}(m), \qquad V(m) \coloneqq \sum_{i=1}^{n} v_i \chi_{[\theta_{i-1}, \theta_i)}(m),
  \end{equation}
  the finite-dimensional Hilbert space
  \begin{equation*}
    \Hscr_\mathbf{m} \coloneqq \left\{ X(m) = \sum_{i=1}^{n} x_i \chi_{[\theta_{i-1}, \theta_i)}(m) \colon \mathbf{x} = (x_1,\dots,x_n) \in \R^n \right\} \subset \Lp{2}(\Omega),
  \end{equation*}
  and the convex cone
  \begin{equation*}
    \Kscr_\mathbf{m} \coloneqq \left\{ X(m) = \sum_{i=1}^{n} x_i \chi_{[\theta_{i-1}, \theta_i)}(m) \colon \mathbf{x} = (x_1,\dots,x_n) \in \K^n \right\} \subset \Kscr \subset \Lp{2}(\Omega),
  \end{equation*}
  where we recall \(\K^n\) from \eqref{eq:cone:part}, so that \(X \in \Kscr_{\mathbf{m}} \subset \Kscr\) and \(V \in \Hscr_X\).
  Here \(X\) and \(V\) provide us with particle solutions as in \eqref{eq:particlesols}, since
  \begin{equation*}
    \rho = X_\#\mathfrak{m} = \sum_{i=1}^n m_i \delta_{x_i}, \qquad V = v \circ X, \qquad \rho v = \sum_{i=1}^n m_i v_i \delta_{x_i}.
  \end{equation*}
  Letting \(H(x)\) be the right-continuous Heaviside function, that is,
  \begin{equation*}
    H(x) = \begin{cases}
      1, & x \ge 0 \\ 0, & x < 0,
    \end{cases}
  \end{equation*}
  we can integrate the measures above to obtain discrete counterparts of \eqref{eq:MQ:def}, namely
  \begin{equation*}
    M(x) = \sum_{i=1}^{n}m_i H(x-x_i), \qquad Q(x) = \sum_{i=1}^{n}m_i v_i H(x-x_i).
  \end{equation*}
  Note that \(M\) above only takes values in the set \(\{\theta_i\}_{i=1}^n\) defined in \eqref{eq:thetai}.
  We recall that in order to define the scalar conservation law \eqref{eq:claw} we needed to find \(\Vcal^0\) such that \(Q = \Vcal^0(M)\), which turned out to be the primitive of \(V\).
  For the ``acceleration''-part \(\Acal(M)\) we need to compute the square of \(M\), and to do this we conveniently rewrite it as a sum of step functions with disjoint supports,
  \begin{equation*}
    M(x) = \sum_{i=1}^n \theta_i \chi_{[x_i,x_{i+1})}(x),
  \end{equation*}
  where we for further convenience define \(x_{n+1} \equiv \infty\).
  Then it is clear that
  \begin{equation*}
    M(x)^2 = \sum_{i=1}^n \theta_i^2 \chi_{[x_i,x_{i+1})}(x) = \sum_{i=1}^n (\theta_i^2-\theta_{i-1}^2) H(x-x_i) = \sum_{i=1}^n (\theta_i+\theta_{i-1}) m_i H(x-x_i).
  \end{equation*}
  This shows that \(-\frac{\alpha}{2}(M(t,x)^2-M(t,x))\) can be written as \(\int_0^{M(t,x)}A_n(m)\dee m\) for the piecewise constant function
  \begin{equation}\label{eq:PWacc}
    A_n(m) = \sum_{i=1}^n a_i \chi_{[\theta_{i-1}, \theta_i)}(m), \qquad a_i \coloneqq -\frac{\alpha}{2}\left(\theta_i + \theta_{i-1} -1\right).
  \end{equation}
  Note that \(\Acal_n(m) = \int_0^m A_n(\omega)\dee \omega\) is exactly the piecewise linear interpolation of the quadratic function \(\Acal(m) = -\frac{\alpha}{2}(m^2-m)\) on \([0,1]\) with breakpoints \(\{\theta_i\}_{i=1}^n\).
  Moreover, \eqref{eq:PWacc} is exactly the \(\Lp{2}\)-projection of \(F[X](m)= -\frac{\alpha}{2}(2m-1)\) onto \(\Hscr_\mathbf{m}\) as defined in \cite[Section 5]{brenier2013sticky}, namely
  \begin{equation*}
    F_{\mathbf{m}}[X] \coloneqq \Proj_{\Hscr_\mathbf{m}}F[X] = \sum_{i=1}^{n}a_{\mathbf{m},i} \chi_{[\theta_{i-1}, \theta_i)}(m), \quad a_{\mathbf{m},i} = \frac{1}{m_i} \int_{\theta_{i-1}}^{\theta_i}F[X](m)\dee m = -\frac{\alpha}{2}\left(\theta_i + \theta_{i-1} -1\right),
  \end{equation*}
  where we recognize \(a_{\mathbf{m},i} = A_n(\theta_{i-1}) = a_i\), the acceleration for our particles in \eqref{eq:PWacc}.
  This projected force satisfies \eqref{eq:f&F};
  indeed, recall from the derivation of \(\Acal\) in the continuous flux function \eqref{eq:flux:EP} that \(f[\rho] = -\frac{\alpha}{2}(M(x) + M(x-) -1)\rho\), and so we compute using \eqref{eq:XVdisc} and \eqref{eq:PWacc}
  \begin{align*}
    \int_\R \phiv(x) f[\rho](\dee x) &= -\frac{\alpha}{2}\int_\R \phiv(x) (M(x)+M(x-)-1) \rho\dee x \\
    &= -\frac{\alpha}{2}\sum_{i=1}^n \phiv(x_i) (\theta_i+\theta_{i-1}-1)(\theta_i-\theta_{i-1}) \\
    &= \sum_{i=1}^n \int_{\theta_{i-1}}^{\theta_i}\phiv(X(m))A_n(m)\dee m = \int_0^1 \phiv(X(m))A_n(m)\dee m.
  \end{align*}
  To summarize, from either point of view the acceleration of the \(i\)\textsuperscript{th} particle is \(a_i\) before collision, and we have justified the initial dynamics \eqref{eq:EPparticle}, which can be restated as the system
  \begin{equation*}
    \dot{x}_i = v_i, \qquad
    \dot{v}_i = -\frac{\alpha}{2}\sum_{j=1, j \neq i}^{n}m_j \sgn(x_i-x_j),
  \end{equation*}
  being the particle analogue of \eqref{eq:EP}.
  
  At this stage, with the initial particle evolution defined, we still need to derive the corresponding conservation law \eqref{eq:claw} for this discrete setting.
  From now on, to emphasize that we are considering particle solutions, we will add the subscript \(n\) to both the Lagrangian variables \(X\) and \(V\), the distribution functions \(M\) and \(Q\), and the primitive \(\Vcal^0\).
  Moreover, as the position \(t \mapsto x_i(t)\) and velocity \(t \mapsto v_i(t)\) of the \(i\)\textsuperscript{th} particle depend on time, so will the corresponding \(X_n(t,m)\), \(V_n(t,m)\), \(M_n(t,x)\) and \(Q_n(t,x)\).
  
  Recall from Section \ref{s:motivation} that the prescribed velocities \(u_i(t) = v_i^0 + t a_i\) played a key role in defining the dynamics at collision time, and so we define
  \begin{equation}\label{eq:U:EP:disc}
    U_n(t, m) \coloneqq \sum_{i=1}^n u_i(t) \chi_{[\theta_{i-1}, \theta_i)}(m) = V_n^0(m) + t A_n(m)
  \end{equation}
  inspired by the definition of \(V_n\).
  This is also the discrete counterpart of \eqref{eq:U:EP},
  and from the same considerations as in Section \ref{s:entropy} we find that the flux function for the scalar conservation law is the primitive \(\Ucal_n\) of \eqref{eq:U:EP:disc}, i.e.,
  \begin{equation}\label{eq:flux:EP:disc}
    \Ucal_n(t,m) = \int_0^m U_n(t,\omega)\dee \omega = \Vcal_n^0(m) + t \Acal_n(m),
  \end{equation}
  which is exactly the piecewise linear and continuous interpolation of the flux function \eqref{eq:flux:EP} with breakpoints \(\{\theta_i\}_{i=1}^n\).
  In summary, the scalar conservation law for the particle dynamics is
  \begin{equation}\label{eq:claw:disc}
    \del_t M_n + \del_x \Ucal_n(t,M_n) = 0,
  \end{equation}
  with flux function \(\Ucal_n\) defined in \eqref{eq:flux:EP:disc}.
  
  From the point of view of Lagrangian solutions, we have from our definition of the prescribed velocity that \(\dot{U}_n = F_{\mathbf{m}}[X_n]\), and so, for \(X_n\) to be a Lagrangian solution we require \(U_n - \dot{X}_n \in N_{X_n}\Kscr_{\mathbf{m}}\).
  
  \subsection{Dynamics prior to first collision}
  Here we show that the initial particle dynamics correspond to both the Lagrangian solutions and the entropy solutions of \eqref{eq:claw:disc}.
  
  Before any collision has happened, the particles \(\mathbf{x}\) are located within the interior of the cone \(\K^n\), and consequently the projection of the prescribed velocity onto the tangent cone is simply the identity operator, that is, \(\Proj_{T_{\mathbf{x}}\K^n} \mathbf{u} = \mathbf{u}\).
  Therefore the initial particle trajectories \(\mathbf{x}(t)\) are uniquely determined as the ``free'' trajectories \(\mathbf{y}(t) = \mathbf{x}^0 + t \mathbf{v}^0 + \frac12 t^2 \mathbf{a}\).
  In turn, \(X_n\) is uniquely defined as \(X_n = Y_n\), where
  \begin{equation*}
    Y_n(t,m) \coloneqq \sum_{i=1}^n y_i(t) \chi_{[\theta_{i-1}, \theta_i)}(m), \qquad y_i(t) = x_i^0 + t v_i^0 + \frac12 t^2 a_i.
  \end{equation*}
  Note that \(A_n = F_\mathbf{m}[X]\) is trivially Lipschitz-continuous as a function \(F_\mathbf{m}[X] \colon \Kscr_\mathbf{m} \to \Hscr_\mathbf{m}\) since it does not depend on \(X_n\), and so \((X_n,V_n) = (Y_n, U_n)\) is the unique Lagrangian solution from Theorem \ref{thm:LagSol:EU}.
  
  From the point of view of conservation laws, we see that the particles, through their representation by the piecewise constant mass distribution \(M_n\) and the piecewise linear and continuous flux function \(\Ucal_n\), correspond exactly to Dafermos' \cite{dafermos1972polygonal}, or the front tracking \cite{bressan2000hyperbolic,holden2015front}, method for scalar conservation laws, at least initially.
  This correspondence is exactly what was used to provide ``sticky particle'' solutions of the Euler system in \cite{brenier1998sticky}.
  Since \(M_n(t,x)\) is piecewise constant, it will be a weak solution of \eqref{eq:claw:disc}, the discrete counterpart of the scalar conservation law \eqref{eq:claw}, provided it satisfies the Rankine--Hugoniot condition \eqref{eq:RH} along the discontinuities \(x_i(t)\).
  Moreover, it will be an entropy solution if it additionally satisfies the Ole\u{\i}nik E-condition \eqref{eq:Oleinik} along the discontinuities, and in particular, it will be the unique \(BV\) solution of this conservation law for \(t \in [0,T]\).
  This is indeed the case, since along the \(i\)\textsuperscript{th} trajectory we have a left and right state, \(\theta_{i-1}\) and \(\theta_i\) respectively, such that \([[M_n]] = \theta_i - \theta_{i-1} = m_i\) and \([[\Ucal_n(t,M_n)]] = m_i (v_i^0 + ta_i) = m_i u_i(t)\).
  Therefore, the Rankine--Hugoniot condition \eqref{eq:RH} is satisfied since
  \begin{equation}\label{eq:RH:single}
    \dot{x}_i(t) = u_i(t) = \frac{[[\Ucal_n(t,M_n(t,x_i(t)))]]}{[[M_n(t,x_i(t))]]} = \frac{\Ucal_n(t,\theta_i)-\Ucal_n(t,\theta_{i-1})}{\theta_i-\theta_{i-1}},
  \end{equation}
  and because \(M_n\) cannot take any other values between \(\theta_{i-1}\) and \(\theta_i\), the Ole\u{\i}nik E-condition \eqref{eq:Oleinik} is trivially satisfied.
  Note that since \(U_n - \dot{X}_n = 0 \in N_{X_n}\Kscr_{\mathbf{m}}\), this is also the unique Lagrangian solution. 
  
  \subsection{Collisions in the non-repulsive and repulsive case}
  Now that we have seen that the two notions of solution are equivalent for the initial particle dynamics, it remains to show that this is still true when particles meet.
  We shall treat the non-repulsive (\(\alpha \ge 0\)) and repulsive (\(\alpha < 0\)) cases separately, as there are major differences in the collision dynamics in these two cases;
  in fact, as emphasized in \cite{brenier2013sticky}, \(\alpha \ge 0\) corresponds to a sticking functional \(F[X]\), while for \(\alpha < 0\) the functional does not satisfy the sticking condition of Definition \ref{dfn:sticking}.
  
  To begin with we will establish some convenient notation and terminology.
  Since the particles are ordered according to their positions, we see that the particles on the left must necessarily have velocities larger than or equal to the particles on the right before touching, or else they would not have met.
  To be precise, we define the quantities
  \begin{equation*}
    \Jcal_i(\tau) \coloneqq \{j\in\{1,\dots,n\} \colon x_j(\tau) = x_i(\tau)\}, \quad i_*(\tau) \coloneqq \min\Jcal_i(\tau), \quad i^*(\tau) \coloneqq \max\Jcal_i(\tau).
  \end{equation*}
  That is, \(\Jcal_i(\tau)\) is the set indices of particles located at \(x=x_i(\tau)\) at time \(t=\tau\), while \(i_*(\tau)\) and \(i^*(\tau)\) are respectively the smallest and greatest index in this set.
  From our previous reasoning, at a time of collision \(t=\tau\) we must necessarily have the chain of inequalities
  \begin{equation}\label{eq:chain}
    v_{i_*(\tau)}(\tau-) \ge v_{i_*(\tau)+1}(\tau-) \ge \cdots \ge v_{i^*(\tau)-1}(\tau-) \ge v_{i^*(\tau)}(\tau-).
  \end{equation}
  These are exactly the relations that give rise to the \textit{barycentric lemma} used in \cite{brenier1998sticky,nguyen2008pressureless} to connect the momentum conservation of particles to the Ole\u{\i}nik E-condition for the scalar balance law.
  
  For any \(\alpha \in \R\), the \(i\)\textsuperscript{th} particle initially moves along its free trajectory
  \(y_i(t) = x_i^0 + t v_i^0 + \frac12t^2 a_{i}\) as long as one remains in the interior of the cone \(\K^n\).
  Now assume that some subset of particles \(\Jcal_i(\tau) \ni i\) collide at time \(\tau\).
  Recalling the set \eqref{eq:OX} and the definition of \(X_n\) in \eqref{eq:XVdisc}, we see that \((\theta_{i_*(\tau)-1}, \theta_{i^*(\tau)})\) is a maximal interval of \(\Omega_{X_n(\tau)}\).
  For this set of particles, the Rankine--Hugoniot condition reads
  \begin{equation}\label{eq:RH:part}
    v_i(t) = \frac{\Ucal_n(t,\theta_{i^*(\tau)})-\Ucal_n(t,\theta_{i_*(\tau)-1})}{\theta_{i^*(\tau)}-\theta_{i_*(\tau)-1}} = \frac{\sum_{j\in\Jcal_i(\tau)}m_j u_j(t)}{\sum_{j\in\Jcal_i(\tau)}m_j}.
  \end{equation}
  On the other hand, suppose \(k \in (\theta_{l-1}, \theta_l]\) for some \(l \in \Jcal_i(\tau)\), then the expressions appearing in the Ole\u{\i}nik E-condition \eqref{eq:Oleinik} are given by
  \begin{equation*}
    \frac{\Ucal_n(t,k)-\Ucal_n(t,\theta_{i_*(\tau)-1})}{k-\theta_{i_*(\tau)-1}} = \frac{\sum_{j=i_*(\tau)}^{l-1}m_j u_j(t) + (k-\theta_{l-1})u_l(t)}{\sum_{j=i_*(\tau)}^{l-1}m_j + k-\theta_{l-1}}
  \end{equation*}
  and
  \begin{equation*}
    \frac{\Ucal_n(t,\theta_{i^*(\tau)})-\Ucal_n(t,k)}{\theta_{i^*(\tau)}-k} = \frac{\sum_{j=l+1}^{i^*(\tau)} m_j u_j(t) + (\theta_{l}-k)u_l(t)}{\sum_{j=l+1}^{i^*(\tau)} m_j + \theta_{l}-k}.
  \end{equation*}
  We observe that these expressions attain their extremal values at either \(k = \theta_{l-1}\) or \(k = \theta_l\) depending on the value of \(u_l\), and so it is clear that it is sufficient for us to verify \eqref{eq:Oleinik} for \(k = \theta_l\) for all \(l\in\Jcal_i(\tau)\).
  Moreover, we recall Lemma \ref{lem:ProjHX&O}, which says that \(U_n - \Proj_{\Hscr_{X_n}}U_n \in N_{X_n}\Kscr_{\mathbf{m}}\) is equivalent to the Ole\u{\i}nik E-condition.
  From this it follows that, for these particle dynamics, the Lagrangian and entropy solutions coincide.
  
  Since the particles in \(\Jcal_i(\tau)\) collided in the first place, we know that their free velocities \(u_j(\tau) = v_j(\tau-)\) for \(j \in \Jcal_i(\tau)\) satisfy the ordering \eqref{eq:chain}, meaning \(\theta \mapsto \Ucal_n(t,\theta)\) is concave on \((\theta_{i_*(\tau)-1}, \theta_{i^*(\tau)})\) at the collision time \(t=\tau\).
  This implies that the Ole\u{\i}nik E-condition is satisfied at \(t=\tau\), but what happens for \(t > \tau\) is highly dependent on the sign of \(\alpha\), i.e., if we are in the repulsive or non-repulsive case.
  Note that if one or more of the inequalities in \eqref{eq:chain} are strict, there is a loss of kinetic energy at impact due to Jensen's inequality and the convexity of \(v \mapsto v^2\).
  
  \subsubsection{The non-repulsive case}
  The important point in this case is that \(\Acal_n(\cdot)\) in \eqref{eq:flux:EP:disc} is strictly concave on \([0,1]\) for \(\alpha > 0\), or vanishes for \(\alpha =0\).
  We already established that \(\theta \mapsto \Ucal_n(\tau,\theta)\) is concave on \( (\theta_{i_*(\tau)-1}, \theta_{i^*(\tau)}) \), and so, since \(\theta \mapsto (t-\tau)\Acal_n(\theta)\) is concave for \(t > \tau\), it follows that \(\Ucal_n(t,\theta) = \Ucal_n(\tau,\theta) + (t-\tau)\Acal_n(\theta)\) remains concave on this interval.
  Therefore the Ole\u{\i}nik E-condition remains satisfied, and the entropic solution is to let the particles continue as a single amassed particle with velocity given by the Rankine--Hugoniot condition \eqref{eq:RH:part}.
  
  Equivalently, since \(\alpha \ge 0\) we have that the accelerations \(a_i\) from \eqref{eq:PWacc} are decreasing in \(i\), which combined with \eqref{eq:chain} for \(u_j(\tau) = v_j(\tau-)\), \(j \in \Jcal_i(\tau)\) means that this chain of inequalities remains valid for \(u_j(t)\), \(t > \tau\).
  If \(\alpha > 0\), the inequalities even become strict, so that \(\mathbf{u}(t) \notin T_{\mathbf{x}(t)}\K^n\) for all \(t > \tau\).
  Therefore, from here on the particle velocities will be given by the projection \(\mathbf{v}(t) = \Proj_{T_{\mathbf{x}(t)}\K^n}\mathbf{u}(t)\).
  The same is true when \(\alpha=0\), as there is no acceleration and the particles will continue moving with the mass-averaged velocity given by momentum conservation, or the Rankine--Hugoniot condition, at impact. 
  
  Therefore, in the non-repulsive case, once particles collide, they stick together for all time, exhibiting globally sticky behavior.
  This is in agreement with \(F_{\mathbf{m}}[X] = A_n \) for \(\alpha \ge 0\) being a sticking functional in the sense of Definition \ref{dfn:sticking}.
  In particular, for an attractive potential \(\alpha > 0\) there is some time \(T\) such that for \(t > T\) all particles have been lumped together, moving along the trajectory of the center of mass, which is unaffected by the internal forces and collision dynamics.
  Hence we recover exactly the sticky dynamics already detailed in \cite{nguyen2008pressureless} and \cite{brenier2013sticky}.
  As we will see next, the behavior for a repulsive potential, i.e., \(\alpha < 0\), is less straightforward.
  
  \subsubsection{The repulsive case}
  In this case \(\Acal_n\) is strictly convex since \(\alpha < 0\), which will change the dynamics, as the flux function will not remain concave on the jump set for all time after a collision.
  
  For instance, suppose that at the time of impact \(\tau\), \eqref{eq:chain} holds with all inequalities replaced by equality, that is, the particles meet tangentially.
  Then \(\Ucal(\tau,\theta)\) is linear for \(\theta \in (\theta_{i_*(\tau)-1}, \theta_{i^*(\tau)}) \), and for \(t > \tau\) the flux function \(\Ucal_n(t,\theta)\) will immediately turn strictly convex on \( (\theta_{i_*(\tau)-1}, \theta_{i^*(\tau)}) \), violating the Ole\u{\i}nik E-condition there, except on the ``trivial'' intervals \( (\theta_{i-1}, \theta_i) \), \(i \in \Jcal_i(\tau)\) corresponding to single particles.
  The entropy solution is therefore to have the particles immediately split apart and follow the free trajectories defined by \eqref{eq:RH:single}, and one can think of the particles as only grazing one another at \(t=\tau\).
  
  On the other hand, if \(\Ucal_n(\tau,\cdot)\) is strictly concave on \( (\theta_{i_*(\tau)-1}, \theta_{i^*(\tau)}) \), it will take some time before the Ole\u{\i}nik E-condition is violated.
  This coincides with the lower convex envelope \(\theta \mapsto \Ucal_{n,\smallsmile}(t,\theta)\) on this interval changing from a chord connecting the endpoints of the graph of \(\Ucal_n(t,\theta)\) to breaking into two or more affine segments connected at breakpoints \(\theta_j\) for \(j \in \Jcal_i(\tau)\).
  This happens when there is either a left subgroup \(\{i_*(\tau),\dots,j\} \subsetneq \Jcal_i(\tau)\) where the average of their prescribed velocities \(u_j(t)\) dip below the (average) velocity \(v_i(t)\) of the amassed particles \(\Jcal_i(\tau)\), or alternatively, there is a right subgroup \(\{j,\dots,i^*(\tau)\} \subsetneq \Jcal_i(\tau)\) such that their averaged prescribed velocities surpass the velocity of the amassed particle.
  The entropy solution is then to have the amassed particle break into two or more lumped particles corresponding to the aforementioned breakpoints.
  Since this ``fission'' happens when the subgroups of particles have the same velocity as that of the amassed particle, the kinetic energy does not change as it breaks apart.
  This solution can then be thought of as ``physical'' in the sense that the energy is nonincreasing.
  
  
  Since we now know that we can have amassed particles in the repulsive case as well, if only for a limited time, we observe that similar results hold for collisions between aggregated particles as well.
  Two amassed particles can collide and stick together for some limited time before breaking apart into new subgroups.
  All of this is governed by the flux function \(\Ucal_n(t,\theta)\).
  For this repulsive case \(\alpha < 0\), we see that there must be some time \(T\) such that the flux function is strictly convex for \(t > T\) and there are no more collisions between particles, which then move freely with velocities \(v_i(t) = u_i(t)\).
  
  In the introduction of \cite{brenier2013sticky}, the Lagrangian selection principle is motivated by an inequality involving the collision of two particles.
  Here we will give a concrete such example, to illustrate the ideas. 
  \begin{exm}[Two repulsive particles]\label{exm:2particles}
    Consider the case \(\alpha = -2\), \(\beta = 0\) and two particles where \(m_1 = m_2 = \frac12\), \(x_1^0 = 0\), \(v_1^0 = 2\), \(x_2^0 = 1\) and \(v_2^0 = 0\) such that \(a_1 = -\frac12\) and \(a_2 = \frac12\).
    Clearly, the free trajectories \(y_i\) and prescribed velocities \(u_i\) for \(i =1,2\) are given by
    \begin{equation*}
      y_1(t) = 2t -\frac{t^2}{4}, \quad u_1(t) = 2 - \frac{t}{2}, \qquad y_2(t) = 1 +\frac{t^2}{4}, \quad u_2(t) = \frac{t}{2}.
    \end{equation*}
    The undisturbed trajectories intersect at two points, given by the times \(t = 2\pm\sqrt{2}\).
    One way of obtaining a generalized Lagrangian solution is to project the undisturbed trajectories onto the cone \(\K^2\), and writing \(\tilde{\mathbf{x}} = \Proj_{\K^2}\mathbf{y}\) we then obtain
    \begin{equation*}
      \tilde{x}_1(t) = \begin{cases}
        2t -\frac{t^2}{4}, & \abs{t-2} \ge \sqrt{2}, \\
        \frac12 + t, & \abs{t-2} < \sqrt{2},
      \end{cases} \qquad \tilde{x}_2(t) = \begin{cases}
        1 +\frac{t^2}{4}, & \abs{t-2} \ge \sqrt{2}, \\
        \frac12 + t, & \abs{t-2} < \sqrt{2}.
      \end{cases}
    \end{equation*}
    That is, this generalized Lagrangian solution is given by letting the particles stick together as long as \(\mathbf{y} \notin \K^2\), i.e., \(y_1(t) > y_2(t)\), and otherwise they follow the free trajectories.
    Note that these trajectories correspond to the weak solution of the repulsive Euler--Poisson system found in the similar example \cite[Example 3]{tadmor2011variational}.
    On the other hand, the Lagrangian, or entropic, trajectories are given by letting the particles stick together as long as the prescribed velocity \(\mathbf{u} \notin T_{\mathbf{x}(t)}\K^2\), i.e., \(u_1(t) > u_2(t)\). This means that the ``right'' time to split is at \(t = 2\), and the corresponding trajectories are given by
    \begin{equation*}
      x_1(t) = \begin{cases}
        2t -\frac{t^2}{4}, & t < 2 -\sqrt{2}, \\
        \frac12 + t, & 2- \sqrt{2} \le t < 2, \\
        \frac12 + t -\frac{(t-2)^2}{4}, & t \ge 2,
      \end{cases} \qquad x_2(t) = \begin{cases}
        1 +\frac{t^2}{4}, & t < 2 -\sqrt{2}, \\
        \frac12 + t, & 2- \sqrt{2} \le t < 2, \\
        \frac12 + t +\frac{(t-2)^2}{4}, & t \ge 2.
      \end{cases}
    \end{equation*}
    
    \begin{figure}
      \begin{subfigure}[b]{0.495\linewidth}
        \includegraphics[width=1\linewidth]{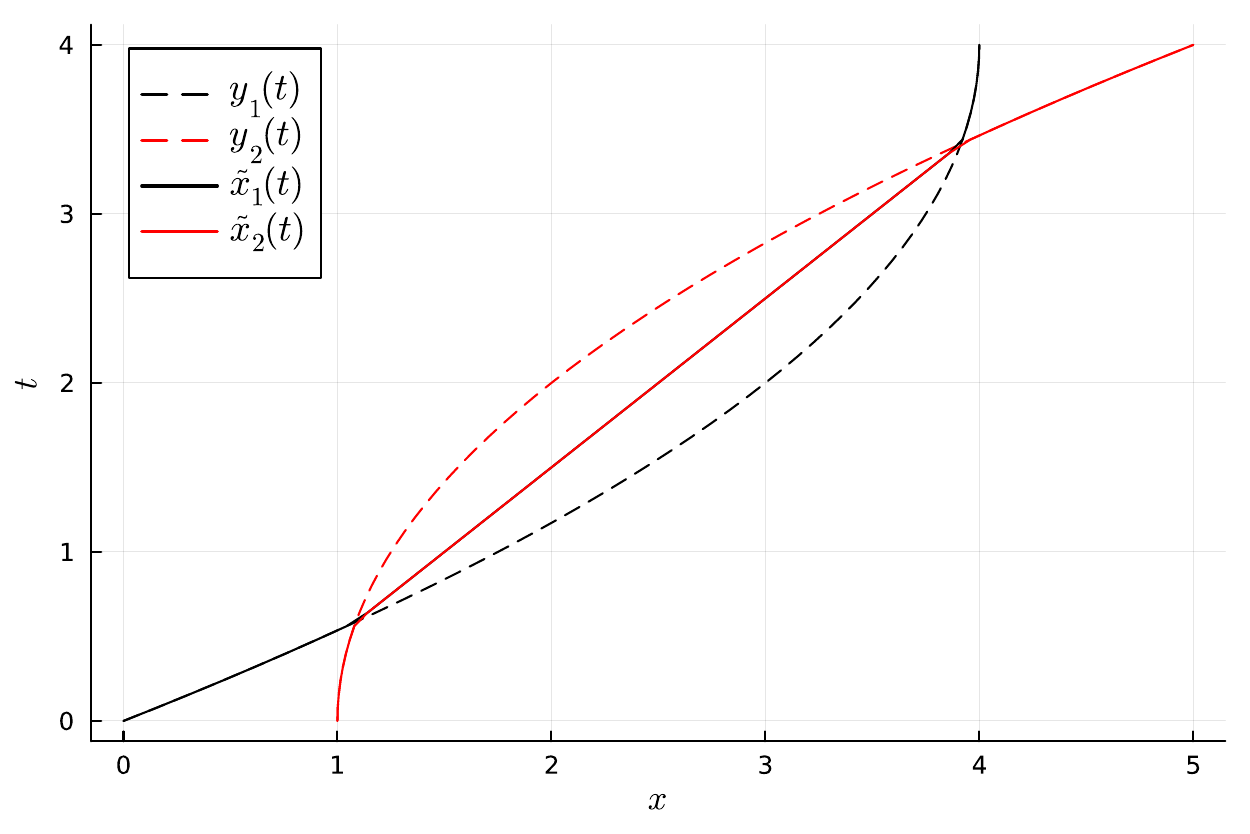}
        \caption{}
      \end{subfigure}
      \begin{subfigure}[b]{0.495\linewidth}
        \includegraphics[width=1\linewidth]{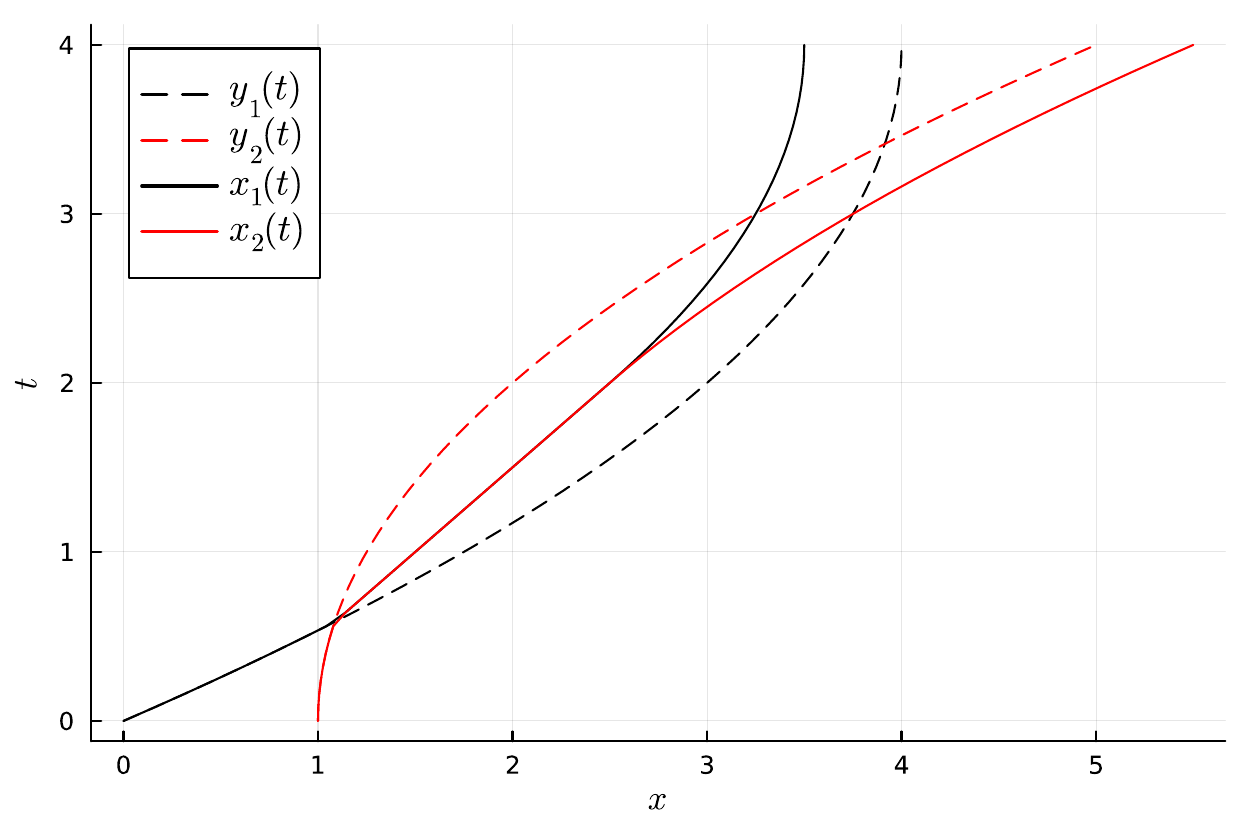}
        \caption{}
      \end{subfigure}
      \caption{Free trajectories \(y_i\) compared with \textsc{(a)} projected trajectories \(\tilde{x}_i = (\Proj_{\K^2}\mathbf{y})_i\) and \textsc{(b)} Lagrangian, or entropy, trajectories \(x_i(t)\) for \(i \in \{1,2\}\).}
      \label{fig:2particles}
    \end{figure}
    
    Figure \ref{fig:2particles} highlights the differences between the two solutions.
    In both cases, when the particles collide at time \(t = 2-\sqrt{2}\), the velocity is discontinuous, as one expects from the conservation of momentum during a collision, and Jensen's inequality shows that the kinetic energy decreases on impact.
    However, there is an important difference in the splitting of the particles.
    For \(x_i\) the velocity is continuous at \(t=2\) when the particles split, which ensures that the kinetic energy \(\frac12(\dot{x}_1^2 + \dot{x}_2^2)\) is conserved at the time of splitting.
    On the other hand, for \(\tilde{x}_i\), the velocity is discontinuous also at the splitting time \(t = 2+\sqrt{2}\), which by Jensen's inequality shows that the kinetic energy increases at this time.
    This would then suggest that the physically relevant way of splitting the particles is given by the entropic solution.
    Of course, another solution for which the energy is nonincreasing is to let the particles remain stuck after collision, but this would not take into account the repulsive forces of the problem, and violates the Ole\u{\i}nik E-condition.
    Indeed, for this amassed particle, the inequalities \eqref{eq:Oleinik} are equivalent to \(u_2(t) \le \dot{x}_i \equiv 1 \le u_1(t)\), which is violated for \(t > 2\).
  \end{exm}
  
  \begin{rem}[Numerical algorithm]
    Since one can use empirical measures for approximations, cf.\ \cite[Section 4]{bonaschi2015equivalence}, one could approximate general initial data for the Euler--Poisson system \eqref{eq:EP} and evolve it according to the particle dynamics described in this section, i.e., with particles following parabolic trajectories to approximate solutions.
    In practice, one could then implement a front-tracking algorithm in the spirit of \cite{lie2000front}, where one considers time-varying velocity fields leading to nonaffine front trajectories.
    One could also imagine doing the same with the more general system \eqref{eq:EF}, using an ODE solver to compute the trajectories between collisions.
  \end{rem}
  
  \section{Repulsive Poisson Forces}\label{s:examples}
  In this section we want to use the equivalence we have established to study some more involved examples.
  The first of these expands upon an example from \cite{brenier2013sticky}, and can be seen as a more involved, continuous version of Example \ref{exm:2particles}, although we here consider symmetric data which simplifies the analysis and formulas.
  In the second example and its subcases we use the equivalence framework on the repulsive Euler--Poisson system with a quadratic confinement potential studied in \cite{carrillo2016pressureless} with the aim of providing a well-defined notion of solution beyond singularity formation.
  
  \subsection{Entropic and non-entropic solutions to the repulsive Euler--Poisson system} \label{ss:BGSW}
  Let us consider the example found in \cite[Section 6.2.2]{brenier2013sticky} where the authors consider the Euler--Poisson system \eqref{eq:EP} with \(\alpha=-2\) and \(\beta=0\), and initial data \(X^0(m) = m-\frac12\), \(V^0(m) = -\sgn\left(m-\frac12\right)\) for the differential inclusion \eqref{eq:DIU}.
  Note that this corresponds to the initial data \(\rho_0 = \chi_{[-\frac12, \frac12]}\) and \(v_0(x) = -\sgn(x)\) for \eqref{eq:EP}, which immediately leads to mass concentration, or collision of trajectories, at the origin.
  In the example, they use the representation formula \eqref{eq:sticky:X} for sticky Lagrangian solutions to compute a trajectory which fails to satisfy the conditions for a Lagrangian solution.
  
  We will use this example as a starting point to highlight the equivalence of Lagrangian solutions, as defined in Section \ref{s:gradflow}, and entropy solutions, as defined in Section \ref{s:entropy}, for the repulsive Euler--Poisson system.
  In particular, we will compute the Lagrangian solution \(X(t,m)\) and its generalized inverse \(M(t,x)\) and show that \(M\) is an entropy solution.
  Furthermore, we will show that the inverse \(\tilde{M}\) of the trajectory \(\tilde{X}\) computed from \eqref{eq:sticky:X}, although it is a weak solution, does not satisfy the entropy condition.
  
  \subsubsection{Computing the (non-)Lagrangian trajectories}
  According to \eqref{eq:U:EP}, the prescribed velocity is
  \begin{equation*}
    U(t,m) = -\sgn\left(m-\frac12\right) + t \left(2m-1\right),
  \end{equation*}
  from which we find the free trajectory
  \begin{equation*}
    Y(t,m) = (1+t^2)\left(m-\frac12\right) - t\sgn\left(m-\frac12\right).
  \end{equation*}
  To obtain an admissible, i.e., nondecreasing in \(m\), trajectory we can simply project \(Y\) onto the cone \(\Kscr\), as indicated by the projection formula \eqref{eq:sticky:X}; to this end we introduce the primitive
  \begin{equation*}
    \Ycal(t,m) = \frac12 (1+t^2)\left[\left(m-\frac12\right)^2 -\frac14 \right] +t \left[ \frac12 - \abs*{m-\frac12} \right],
  \end{equation*}
  and then according to \cite[Theorem 3.1]{natile2009wasserstein}, we have that the projection of \(Y\) onto \(\Kscr\) is \(\tilde{X} = \Proj_\Kscr Y = \diff{^+}{m}\Ycal^{**}\).
  To find the lower convex envelope \(\Ycal^{**}\) of \(\Ycal\), we note that \(\Ycal(0,m)\) is convex, while for fixed \(t > 0\), \(\Ycal(t,m)\) has critical points at \(m-\frac12 = \pm \frac{t}{1+t^2}\) and a singular point at \(m = \frac12\).
  The critical points are local minima, while the singular point is a local maximum, and from this we deduce that the lower convex envelope is
  \begin{equation*}
    \Ycal^{**}(t,m) = \begin{cases}
      \Ycal(t,m), & \abs{m-\frac12} \ge \frac{t}{1+t^2}, \\
      \Ycal\left(t,\frac{t}{1+t^2}\right), & \abs{m-\frac12} < \frac{t}{1+t^2},
    \end{cases}
  \end{equation*}
  from which we find the corresponding trajectory
  \begin{align*}
    \tilde{X}(t,m) &= \begin{cases}
      (1+t^2)(m-\frac12) - t\sgn(m-\frac12), & \abs{m-\frac12} \ge \frac{t}{1+t^2}, \\
      0, & \abs{m-\frac12} < \frac{t}{1+t^2}.
    \end{cases} \\
    &= \sgn\left(m-\frac12\right) \max\left\{ \left(1+t^2\right)\abs*{m-\frac12} - t, 0 \right\}
  \end{align*}
  For the Lagrangian, or entropy, solution we will have to instead project the prescribed velocity \(U\) onto the tangent cone \(T_X \Kscr\), characterized by \eqref{eq:TC2}.
  Note for a given \(t\), \(U(t,m)\) is decreasing for \(\abs{m-\frac12} < \frac{1}{2t}\) and nondecreasing otherwise.
  In particular, we see that it is decreasing on \([0,1]\) for \(t < 1\).
  On the other hand, from the trajectories \(Y\) we see that initially we have \(\Omega_{X(t)} = \left\{m \colon \abs{m} < \frac{t}{1+t^2} \right\}\), and so for \(t < 1\) we have
  \begin{equation*}
    \Proj_{T_{X(t)}\Kscr}U(t,m) = \begin{cases}
      U(t,m), & \abs{m-\frac12} \ge \frac{t}{1+t^2} \\
      0, & \abs{m-\frac12} < \frac{t}{1+t^2}.
    \end{cases}
  \end{equation*}
  Combining the above, we see that at \(t=1\), all mass is concentrated at the origin with zero velocity.
  For \(0 \le t < 1\), \(\Omega_{X(t)}\) was contained in the set where \(U\) is decreasing.
  For \(t > 1\) we have the opposite situation, and therefore
  \begin{equation*}
    \Proj_{T_{X(t)}\Kscr}U(t,m) = \begin{cases}
      U(t,m), & \abs{m-\frac12} \ge \frac{1}{2t}, \\
      0, & \abs{m-\frac12} < \frac{1}{2t}
    \end{cases}
  \end{equation*}
  in this case.
  Integrating in time we obtain the Lagrangian solution
  \begin{equation}\label{eq:BGSW:X}
    X(t,m) = \begin{cases}
      (1+t^2)(m-\frac12) - t\sgn(m-\frac12), & \frac{t}{1+t^2} \le \abs*{m-\frac12} \le \min\left\{\frac{1}{2t}, \frac12 \right\}, \\
      0, & \abs*{m-\frac12} < \min\left\{\frac{t}{1+t^2}, \frac{1}{2t} \right\}, \\
      \left(m-\frac12\right)\left(t-\frac{1}{\abs{2m-1}}\right)^2, & \abs*{m-\frac12} \ge \min\left\{\frac{1}{2t}, \frac12 \right\}.
    \end{cases}
  \end{equation}
  The generalized Lagrangian solution \(\tilde{X}\) and the Lagrangian solution \(X\) are compared in Figure \ref{fig:BGSW:X}, where we observe that the trajectories emanating from mass diffusion in \(X\) are tangential, in contrast to \(\tilde{X}\).
  Note that in the original example of \cite{brenier2013sticky}, they use the characterization of Lemma \ref{lem:NX} to show that \(U - \dot{\tilde{X}} \notin N_{\tilde{X}}\Kscr\), hence \(\tilde{X}\) is not a Lagrangian solution.
  In particular, they provide a counterexample to the equivalent condition
  \begin{equation}\label{eq:BGSW:cond}
    \pdiff{}{t} \left( \Ycal(t,m) - \Ycal^{**}(t,m) \right) \ge 0, \qquad m \in \Omega_{\tilde{X}(t)}.
  \end{equation}
  This is then again equivalent to the Ole\u{\i}nik E-condition \eqref{eq:Oleinik}.
  \begin{figure}
    \begin{subfigure}[t]{0.495\linewidth}
      \includegraphics[width=1\linewidth]{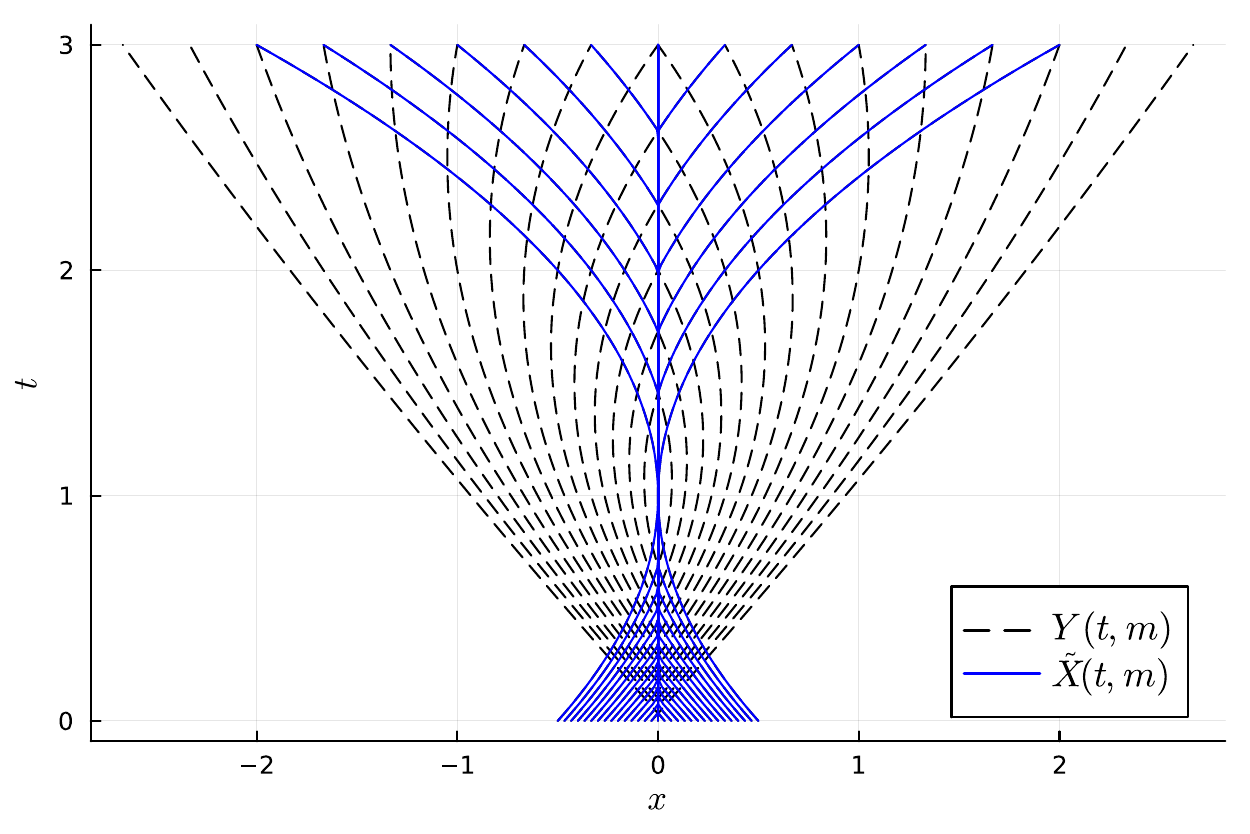}
      \caption{}
    \end{subfigure}
    \begin{subfigure}[t]{0.495\linewidth}
      \includegraphics[width=1\linewidth]{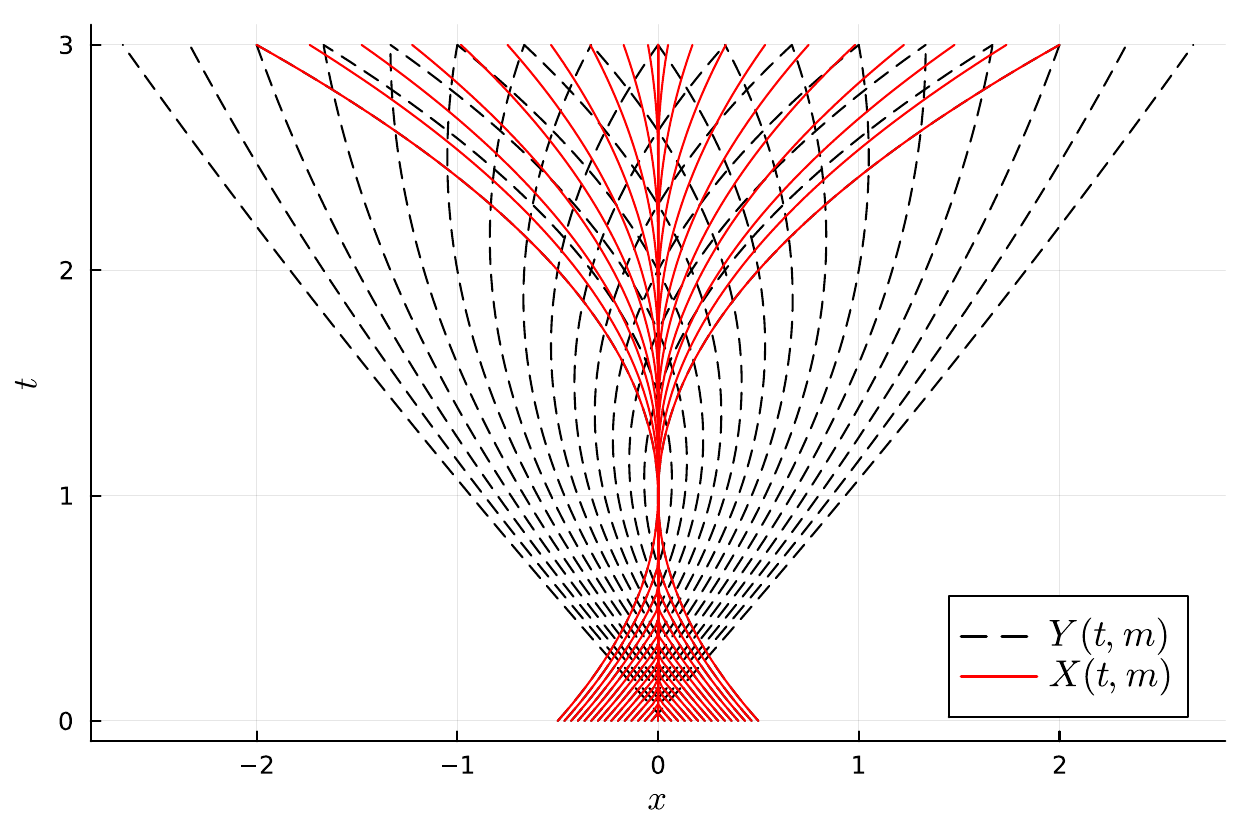}
      \caption{}
    \end{subfigure}
    \caption{The free trajectories \(Y(t,\theta_i)\) for \(\theta_i = \frac{i}{30}\), \(i \in \{0,1,\dots,30\}\), together with the (\textsc{a}) generalized Lagrangian solution \(\tilde{X}(t,\theta_i)\) and (\textsc{b}) Lagrangian solution \(X(t,\theta_i)\). Note that \(\tilde{X}(t,m) = X(t,m)\) for \(t \in [0,1]\).}
    \label{fig:BGSW:X}
  \end{figure}
  \begin{figure}
    \begin{subfigure}[t]{0.495\linewidth}
      \includegraphics[width=1\linewidth]{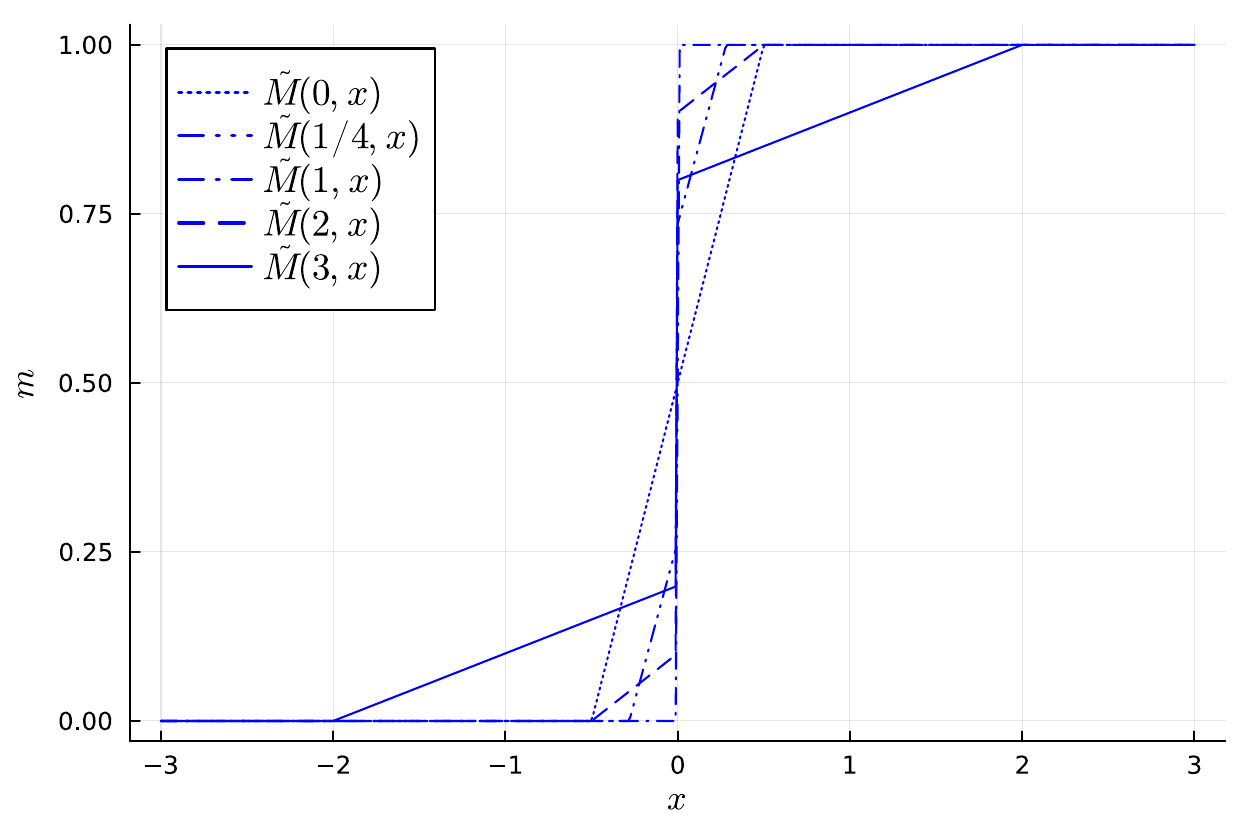}
      \caption{}
    \end{subfigure}
    \begin{subfigure}[t]{0.495\linewidth}
      \includegraphics[width=1\linewidth]{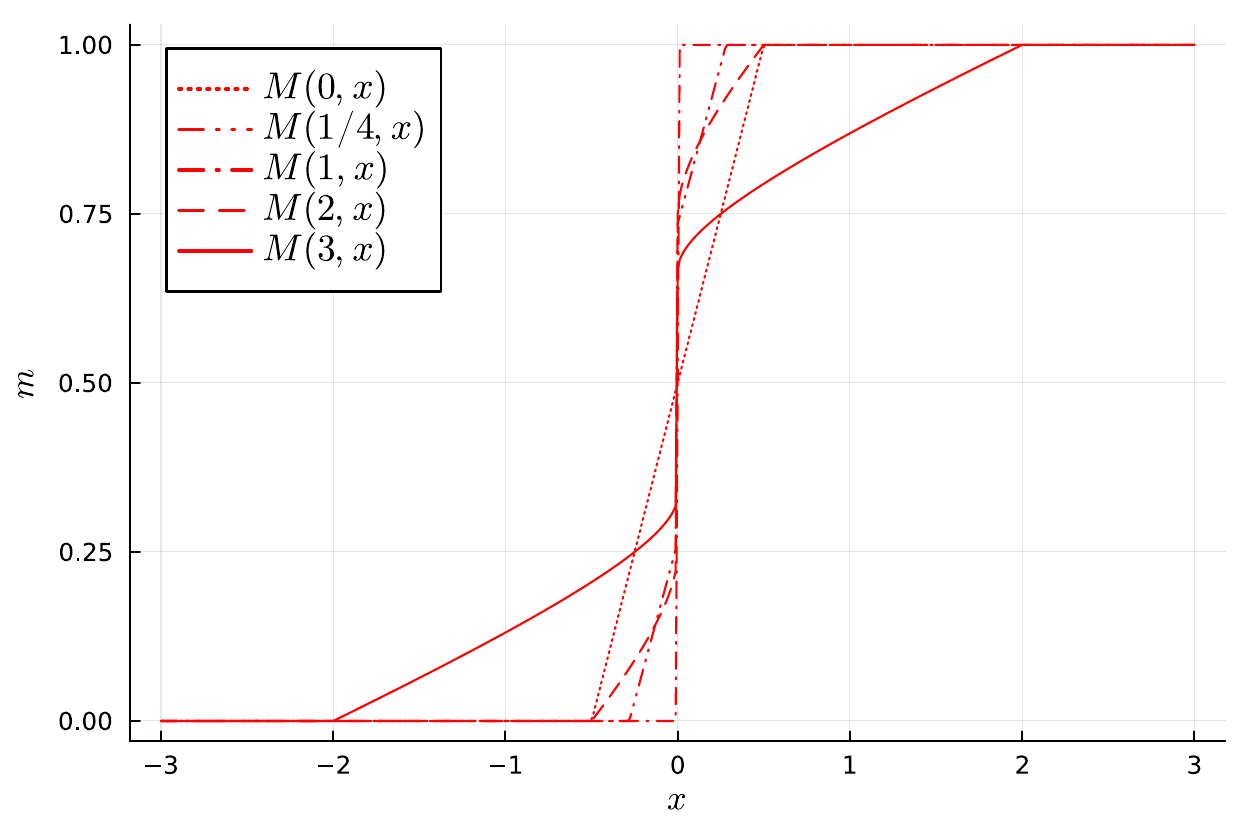}
      \caption{}
    \end{subfigure}
    \caption{The (\textsc{a}) non-entropic \(\tilde{M}(t,\cdot)\) and (\textsc{b}) entropic \(M(t,\cdot)\) weak solution for various times \(t\). These are the respective generalized inverses of \(\tilde{X}(t,\cdot)\) and \(X(t,\cdot)\) from Figure \ref{fig:BGSW:X}. Note that \(\tilde{M}(t,\cdot) = M(t,\cdot)\) for \(t \in [0,1]\).}
    \label{fig:BGSW:M}
  \end{figure}
  
  \subsubsection{Entropy admissibility}
  Next we will consider this problem from the point of view of the scalar conservation law \eqref{eq:claw}, which in this case has the flux function
  \begin{equation*}
    \Ucal(t,m) = t\left[\left(m-\frac12\right)^2 -\frac14 \right] + \frac12 - \abs*{m-\frac12}.
  \end{equation*}
  Some computations show that the corresponding right-continuous inverses of \(\tilde{X}\) and \(X\) are respectively
  \begin{equation*}
    \tilde{M}(t,x) = \begin{cases}
      \frac12 \left( 1 + \sgn(x) \right), & \abs{x} > \frac12(t-1)^2, \\
      \frac12 + \frac{x + t \sgn^{\!+}(x)}{1+t^2}, & \abs{x} \le \frac12(t-1)^2
    \end{cases}
  \end{equation*}
  and
  \begin{equation*}
    M(t,x) = \begin{cases}
      \tilde{M}(t,x), & t < 1, \\
      \begin{cases}
        \frac12 \left( 1 + \sgn(x) \right), & \abs{x} > \frac12(t-1)^2, \\
        \frac12 + \sgn^{\!+}(x)\left(\frac{\sqrt{\abs{x}}+\sqrt{2t+\abs{x}}}{2t}\right)^2, & \abs{x} \le \frac12(t-1)^2
      \end{cases}, & t \ge 1,
    \end{cases}
  \end{equation*}
  where we have introduced the right-continuous sign function \(\sgn^{\!+}(x)\), i.e., \(\sgn(x)\) with the altered convention \(\sgn^{\!+}(0) = 1\).
  Further computations, rather cumbersome in the case of \(M\), show that the generalized inverses are piecewise smooth solutions of \eqref{eq:claw}, with a jump discontinuity, or stationary shock, at the origin for \(t > 0\).
  Hence, to be weak and entropy solutions it is sufficient that they satisfy respectively the Rankine--Hugoniot condition and Ole\u{\i}nik E-condition at the shock.
  The symmetry of the flux function around \(m=\frac12\) yields \([[\Ucal(t,\tilde{M}(t,0))]] = 0 = [[\Ucal(t,M(t,0))]]\), so we see that both solutions satisfy the Rankine--Hugoniot condition at the stationary shock.
  
  In order to check the entropy conditions we have to study the convexity properties of the flux function \(\Ucal(t,m)\).
  To this end we note that \(\Ucal(0,m)\) is concave, and for a given \(t >0\), \(\Ucal(t,m)\) has critical points at \(t\abs{2m-1} = 1\) and a singular point at \(m = \frac12\).
  Again, the critical points are local minima and the singular point is a local maximum, showing that \(\Ucal(t,m)\) is convex for \(\abs{m-\frac12} \ge \frac{1}{2t}\) and concave for \(\abs{m-\frac12} \le \frac{1}{2t}\).
  
  Now we consider the sets in \([0,1]\) where \(\tilde{X}(t,\cdot)\) and \(X(t,\cdot)\) are constant, namely
  \begin{equation*}
    \Omega_{\tilde{X}(t)} = \left\{ m \colon \abs*{m-\frac12} < \frac{t}{1+t^2} \right\}, \qquad \Omega_{X(t)} = \left\{ m \colon \abs*{m-\frac12} < \min\left\{\frac{t}{1+t^2}, \frac{1}{2t} \right\} \right\}.
  \end{equation*}
  Observe that \(\Ucal(t,m)\) is always concave on the set \(\Omega_{X(t)}\), and since both this set and the flux function \(\Ucal(t,m)\) are symmetric around \(m=\frac12\), its lower convex envelope \(\Ucal_\smallsmile(t,m)\) on this set will be a horizontal chord connecting the endpoints.
  In consequence, we have \(\Ucal(t,m) > \Ucal_\smallsmile(t,m)\) for \(m \in \Omega_{X(t)}\), which in turn implies the Ole\u{\i}nik E-condition \eqref{eq:Oleinik}; compare this to the condition \eqref{eq:BGSW:cond}.
  On the other hand, on the set \(\Omega_{\tilde{X}(t)} \setminus \Omega_{X(t)}\), which becomes nonempty only for \(t > 1\), \(\Ucal(t,m)\) is convex.
  Therefore, the lower convex envelope on the set \(\Omega_{\tilde{X}(t)}\) will have a negative slope for \(-\frac{t}{1+t^2} < m < -\frac{1}{2t}\) and a positive slope for \(\frac{1}{2t}< m < \frac{t}{1+t^2}\), and this shows that the stationary shock of \(\tilde{M}\) violates \eqref{eq:Oleinik}.
  These two solutions are compared in Figure \ref{fig:BGSW:M}, where we observe, as seen from the formulas, that the shock decreases faster for the entropy solution \(M\).
  
  \subsubsection{Energy of trajectories}
  Recall that the energy for the system is the sum of the kinetic and potential energy, which for our Euler--Poisson system with \(\beta = 0\) reads
  \begin{equation*}
    \frac12 \int_\R v(t,x)^2\dee\rho(t,x) +\frac{\alpha}{4}\iint_{\R\times\R}\abs{x-y}\dee\rho(t,x)\dee\rho(t,y).
  \end{equation*}
  Using the push-forward relation and \(\alpha = -2\) this becomes
  \begin{equation*}
    \frac12 \int_\Omega V(t,m)^2\dee m - \frac12 \iint_{\Omega\times\Omega}\abs*{X(t,m)-X(t,\omega)}\dee m \dee \omega.
  \end{equation*}
  For the initial configuration we can compute
  \begin{equation*}
    \int_\Omega (V^0)^2(m)\dee m = 1, \qquad \iint_{\Omega\times\Omega}\abs*{X^0(m)-X^0(\omega)}\dee m \dee \omega = \frac13,
  \end{equation*}
  meaning the initial energy is \(\frac13\).
  At \(t = 1\), all mass is concentrated at the origin with zero velocity, and so both the kinetic and potential energy are zero, i.e., the initial amount of energy has dissipated.
  Now, given the formula \eqref{eq:BGSW:X} for the entropic solution for \(t \ge 1\), we can compute the kinetic and potential energy to find the identity
  \begin{equation*}
    \int_\Omega V(t,m)^2\dee m = \iint_{\Omega\times\Omega}\abs*{X(t,m)-X(t,\omega)}\dee m \dee \omega = \frac{(t-1)^3}{3t}, \qquad t \ge 1.
  \end{equation*}
  Hence, as the system tends to decrease its potential energy, for \(t \ge 1\) there is no more dissipation of energy for the entropic solution \((X,V)\), as the kinetic and potential energy exactly balance one another.
  On the other hand, for \((\tilde{X},\tilde{V})\) the energy will increase again for \(t>1\) and tend asymptotically to the initial value.
  Therefore, as in Example \ref{exm:2particles}, we see that the Lagrangian, or entropy, solution provides a more physically desirable behavior in the sense that energy is nonincreasing.
  
  \begin{rem}[The globally sticky solution]
    In the above we could also have considered a globally sticky solution where for \(0 \le t < 1\) the evolution is identical to \(X\) and \(\tilde{X}\), while for \(t \ge 1\) all mass is concentrated in the stationary shock at the origin.
    This would also be a generalized Lagrangian solution, as the zero element is always contained in the tangent cone \(T_{X}\Kscr\), as well as a weak solution of the scalar conservation law since the Rankine--Hugoniot condition is satisfied for the generalized inverse.
    In addition, there would clearly be no more dissipation of energy.
    However, from analogous arguments as before, it fails to satisfy the entropy condition, as \(\Ucal(t,m)\) is no longer concave on the whole of \([0,1]\) for \(t > 1\).
  \end{rem}
  
  \subsubsection{General energy considerations}
  Consider the repulsive Euler--Poisson system \eqref{eq:EP} with no external forcing, i.e.\ \(\alpha < 0\) and \(\beta=0\).
  Once more we write the energy in Lagrangian variables to obtain
  \begin{equation*}
    E(t) \coloneqq \frac12 \int_\Omega V(t,m)^2\dee m + \frac{\alpha}{4} \iint_{\Omega\times\Omega}\abs*{X(t,m)-X(t,\omega)}\dee m \dee \omega.
  \end{equation*}
  From \(V = \Proj_{T_{X(t)}\Kscr}U\) it follows that \(\norm{V}_{\Lp{2}(0,1)} \le \norm{U}_{\Lp{2}(0,1)}\). Then we can instead use an expression involving \(U\) to bound the energy, and the advantage of this is that we have a differential equation for \(U\).
  That is, for
  \begin{equation*}
    \hat{E}(t) \coloneqq \frac12 \int_\Omega U(t,m)^2\dee m + \frac{\alpha}{4} \iint_{\Omega\times\Omega}\abs*{X(t,m)-X(t,\omega)}\dee m \dee \omega
  \end{equation*}
  we find
  \begin{align*}
    \diff{^+}{t} \hat{E}(t) &= \frac{-\alpha}{2}\int_\Omega U(t,m) (2m-1)\dee m + \frac{\alpha}{4} \iint_{\Omega\times\Omega}\sgn(X(t,m)-X(t,\omega))(V(t,m)-V(t,\omega)) \dee m \dee \omega \\
    &=\frac{-\alpha}{2}\int_\Omega U(t,m) (2m-1)\dee m + \frac{\alpha}{2} \int_\Omega V(t,m) \int_\Omega \sgn(X(t,m)-X(t,\omega)) \dee \omega \dee m.
  \end{align*}
  Now we note that
  \begin{equation*}
    \int_\Omega \sgn(X(t,m)-X(t,\omega)) \dee \omega = \begin{cases}
      \displaystyle\int_0^m\dee\omega - \int_m^1\dee\omega = 2m-1, & \text{for a.e.\ }m \notin \Omega_{X(t)} \\[2mm]
      \displaystyle\int_0^{m_l}\dee\omega-\int_{m_r}^1\dee\omega = m_l+m_r-1, & m \in (m_l,m_r) \subset \Omega_{X(t)},
    \end{cases}
  \end{equation*}
  that is, for a.e.\ \(m \in \Omega\) we have
  \begin{equation}\label{eq:signavg}
    \int_\Omega \sgn(X(t,m)-X(t,\omega)) \dee \omega = \Proj_{\Hscr_{X(t)}} (2m-1).
  \end{equation}
  Rewriting a bit we then have
  \begin{align*}
    \diff{^+}{t} \hat{E}(t) = -\frac{\alpha}{2}\int_\Omega (U(t,m)-V(t,m)) (2m-1)\dee m - \frac{\alpha}{2}\int_\Omega V(t,m) \left(2m-1 - \Proj_{\Hscr_{X(t)}}(2m-1)\right)\dee m.
  \end{align*}
  In the above, the final term vanishes due to \(V \in \Hscr_{X(t)}\) and \(2m-1 - \Proj_{\Hscr_{X(t)}}(2m-1) \in \Hscr_{X(t)}^\perp\).
  On the other hand, integrating by parts, we have
  \begin{equation*}
    \int_{\Omega}(U(t,m)-V(t,m))(2m-1)\dee m = -2 \int_\Omega\int_0^m(U(t,\omega)-V(t,\omega))\dee \omega \dee m \le 0,
  \end{equation*}
  where the final inequality comes from \(U-V \in N_{X(t)}\Kscr\) and the characterization provided by Lemma \ref{lem:NX}.
  Since \(\alpha < 0\) we then see that \(\hat{E}(t)\) is decreasing in time.
  Hence, \(E(t) \le \hat{E}(t) \le \hat{E}(0) = E(0)\).
  Note that a different strategy is needed for the attractive case, as the acceleration will make the prescribed velocity \(\norm{U}_{\Lp{2}(0,1)}\) increase through collisions.
  
  \subsection{Repulsive Poisson and quadratic confinement forces}\label{ss:CCZ}
  As we have seen, internal forces of the Euler--Poisson system \eqref{eq:EP} are either repulsive or attractive depending on the sign of \(\alpha\).
  The dynamics become even more complex when both attractive and repulsive forces are present: for instance, one could combine repulsive Poisson forces with a quadratic confinement potential, like in \cite{carrillo2016pressureless}.
  Following these lines, we modify the damped Euler--Poisson system \eqref{eq:EPld} by taking \(\alpha = -\lambda^2 < 0\), \(\gamma \ge 0\) and \(\Phi(t,x) = \phi \ast \rho(t,x)\) with \(\phi(x) = \abs{x}-\frac12 x^2\); this gives a repulsive Poisson potential with quadratic confinement and linear damping as described by \eqref{eq:EPQ}, namely
  \begin{equation*}
    \del_t \rho + \del_x \left(\rho v\right) = 0, \qquad
    \del_t (\rho v) + \del_x \left(\rho v^2\right) = -\gamma \rho v + \lambda^2 \rho \int_\R \left(\sgn(x-y)-(x-y)\right)\dee\rho(t,y) - \beta \rho,
  \end{equation*}
  where we observe that the effect of the confinement potential is to drive \(\rho\) toward the center of mass with a force that increases linearly with distance.
  As mentioned in Example  \ref{exm:flux:EPld}, this force distribution \(f[\rho,v]\) is not directly covered by \eqref{eq:EF} since it also involves the velocity \(v\).
  However, the same argument as in \cite[Theorem 3.6]{brenier2013sticky} can be adapted for this case as well. Indeed, one can still consider the operator \(A(X,U) = (\del I _{\Kscr}(X)-U,F[X,U])\) as a Lipschitz perturbation of \(\del I_{\Kscr}(X)\), as long as \(F[X,U]\) is Lipschitz in both \(X\) and \(U\).
  
  \subsubsection{Derivation of the prescribed velocity and flux function in the smooth case}
  Note that, using the \textit{a priori} knowledge that \(\rho\) is a probability measure, we can, as for the Euler--Poisson system, rewrite \((\sgn\ast\rho)\rho = \del_x (M^2-M)\) as before.
  On the other hand, under the assumption that \(\rho(t) \in \Pscr_2(\R)\),  the term \((\Id\ast\rho)\rho\) is a well-defined measure since \((\Id\ast\rho)\) is continuous for a given \(t\).
  Indeed, for such \(\rho\) we should have \(\abs{(\Id\ast\rho)(t,x)} \le 1+\abs{x}+\overline{x^2}(t)\), where \(\overline{x^2}(t)\) is the second moment at time \(t\).
  Moreover, \((\Id\ast\rho)(t,x) = x+(\Id\ast\rho)(t,0) = x-\bar{x}(t)\) and \((\Id\ast\rho)(t,x)-(\Id\ast\rho)(t,y) = x-y\), where we have introduced the center of mass, or first moment, \(\bar{x}(t)\).
  Recalling Definitions \ref{dfn:bnd:f} and \ref{dfn:unicont:f}, we find that for fixed \(t\) it is a locally bounded, linear function in \(x\), with Lipschitz-constant 1.
  Integrating \((\Id\ast\rho)\rho\) we obtain
  \begin{equation*}
    \int_\R \dee\rho(t,y) \int_{(-\infty,x]} y\dee\rho(t,y) - \int_\R y \dee\rho(t,y) \int_{(-\infty,x]} \dee\rho(t,y) = \int_{(-\infty,x]} y\dee\rho(t,y) - \bar{x}(t) M(t,x).
  \end{equation*}
  Therefore we see that this term cannot directly be expressed in terms of the distribution function \(M\), contrary to the primitive of \((\sgn\ast\rho)\rho\).
  
  However, we can still apply some \textit{a priori} knowledge to the trajectory of the center of mass.
  This should not be affected by the internal forces, i.e., the forces coming from \(\phi'\ast\rho\), but only the external damping.
  Multiplying the continuity equation with \(x\) and integrating we obtain
  \begin{equation*}
    \dot{\bar{x}}(t) = \del_t \int_\R x\dee\rho(t,x) = \int_\R v\dee\rho(t,x) \eqqcolon \bar{v}(t),
  \end{equation*}
  that is, the velocity of the center of mass is given by the total momentum.
  Furthermore, integrating the momentum equation, the internal forces vanish due to symmetry, or, if one prefers, Newton's third law of motion, and we are left with
  \begin{equation*}
    \dot{\bar{v}}(t) = -\gamma \bar{v}(t) - \beta.
  \end{equation*}
  Integrating these equations we then obtain
  \begin{equation}\label{eq:CCZ:means}
    \bar{x}(t) = \bar{x}(0) + \bar{v}(0) \frac{1-\e^{-\gamma t}}{\gamma} + \beta \frac{1-\gamma t - \e^{-\gamma t}}{\gamma^2} \qquad \bar{v}(t) = \e^{-\gamma t}\bar{v}(0)-\beta\frac{1-\e^{-\gamma t}}{\gamma},
  \end{equation}
  where the case \(\gamma=0\) corresponds to the limit \(\gamma \to 0\).
  These relations should remain valid no matter how one resolves the breakdown of classical solutions, i.e., concentration of mass.
  
  Now we want to derive the flux function for our problem, by following the arguments leading up to \eqref{eq:Q&U}.
  We then suggestively introduce
  \begin{align*}
    u(t,X(t,m)) &= v_0 \circ X^0(m) + \int_0^t F[X](m)\dee s \\
    &= V^0(m) + \int_0^t \left[ -\gamma u(s,X(s,m)) +\lambda^2 \left( 2m-1 - X(s,m) + \bar{x}(s) \right) \right] \dee s,
  \end{align*}
  where \(F[X]\) is in accordance with \eqref{eq:f&F} for our right-hand side.
  This relation together with \(\dot{X}(t,m) = u(t,X(t,m)) = U(t,m)\) should be satisfied by the solution as long as it remains smooth; alternatively we can write
  \begin{equation*}
    \dot{X}(t,m) = U(t,m), \qquad \dot{U}(t,m) = -\gamma U(t,m) + \lambda^2 \left( 2m-1 + \bar{x}(t) - X(t,m) \right),
  \end{equation*}
  which should be compared to the smooth trajectories of the numerical example \cite[Equation (1.40)]{brenier2013sticky}.
  Differentiating we obtain the second-order inhomogeneous ODE 
  \begin{equation*}
    \ddot{U} + \gamma \dot{U} + \lambda^2 U = \lambda^2 \bar{v}(t), \quad 
  \end{equation*}
  with initial conditions
  \begin{equation*}
    U(0,m) = V^0(m), \qquad \dot{U}(0,m) = -\gamma V^0(m) + \lambda^2 (2m-1 + \bar{x}(0)-X^0(m)).
  \end{equation*}
  Note that since the total momentum, or mean velocity, \(\bar{v}\) from \eqref{eq:CCZ:means} satisfies \(\ddot{\bar{v}}+\gamma \dot{\bar{v}} = 0\), the above can be restated as a homogeneous ODE for the deviation \(\tilde{U} = U - \bar{v}\).
  Let us from here on write \(\gamma = 2\kappa\) to simplify expressions.
  We are interested in the case where classical solutions oscillate, and so we take \(\lambda > \kappa \ge 0\).
  Solving the ODE we obtain the following expression for the smooth prescribed velocity,
  \begin{equation}\label{eq:CCZ:U}
    \begin{aligned}
      U(t,m) &= \bar{v}(t) + \left( V^0(m) - \bar{v}(0) \right) \e^{-\kappa t} \cos\left(\sigma t\right) \\
      &\quad+ \left[\beta -\kappa \left(V^0(m)-\bar{v}(0)\right) - \lambda^2 \left(X^0(m) -2m+1 -\bar{x}(0)\right)\right] \e^{-\kappa t} \frac{\sin\left(\sigma t\right)}{\sigma},
    \end{aligned}
  \end{equation}
  where we have introduced \(\sigma \coloneqq \sqrt{\lambda^2-\kappa^2}\).
  The corresponding flux function is then given by the primitive of this velocity, which is of the form \eqref{eq:flux}.
  Indeed, this follows since \(\rho_0 \in \Pscr_2(\R)\) means \(\Id \in \Lp{2}(\R,\rho_0)\) which again implies \(\Id\circ X^0 = X^0 \in \Lp{2}(0,1)\), while \(v_0 \in \Lp{2}(\R,\rho_0)\) leads to \(V^0 = v_0\circ X^0 \in \Lp{2}(0,1)\).
  Writing
  \begin{equation*}
    \Xcal^0(m) = \int_{0}^{m}X^0(\omega)\dee \omega, \qquad \Vcal^0(m) = \int_{0}^{m}V^0(\omega)\dee\omega = \int_{0}^{m}v_0(X^0(\omega))\dee \omega,
  \end{equation*}
  the flux function can be expressed as
  \begin{align*}
    \Ucal(t,m) &= \bar{v}(t) m + \left( \Vcal^0(m) - \bar{v}(0) m \right) \e^{-\kappa t} \cos\left(\sigma t\right) \\
    &\quad+ \left[\beta m -\kappa \left(\Vcal^0(m)-\bar{v}(0) m\right) - \lambda^2 \left(\Xcal^0(m) -m^2+m -\bar{x}(0) m\right)\right] \e^{-\kappa t} \frac{\sin\left(\sigma t\right)}{\sigma} \\
    &= m \diff{\bar{x}}{t} + \left( \Vcal^0(m) - \bar{v}(0) m \right) \diff{}{t} \e^{-\kappa t} \frac{\sin(\sigma t)}{\sigma} \\
    &\quad+ \left[\beta m -\kappa \left(\Vcal^0(m)-\bar{v}(0) m\right) - \lambda^2 \left(\Xcal^0(m) -m^2+m -\bar{x}(0) m\right)\right] \diff{}{t} \e^{-\kappa t} \left( \cos(\sigma t) + \frac{\kappa}{\sigma}\sin(\sigma t) \right).
  \end{align*}
  
  \begin{rem}[Comparison with existing formulas]
    Let us verify that this agrees with the formulas found in \cite{carrillo2016pressureless}, where we first note that their mass is not normalized to one.
    In their formulas they denote the total (conserved) mass by \(M_0\) and the initial momentum by \(M_1\); we will instead denote these by respectively \(\mu_0\) and \(\mu_1\) to avoid confusion with the distribution function \(M\).
    Moreover, the parameters for their system are \(\lambda = 1\), \(\gamma = 2\kappa = 1\) and \(\beta=0\), such that \(1 < 4\mu_0\) is what they call case C, exhibiting oscillatory behavior.
    We would therefore have to rescale \(\rho\) to obtain the corresponding set of equations.
    Writing their density as \(\tilde{\rho} = \mu_0 \rho\) for \(\rho \in \Pscr(\R)\), this amounts to the rescaling \(\lambda^2 \mapsto \lambda^2 \mu_0\) and the identities \(\sigma = \frac12 \sqrt{4\mu_0-1}\), \(\bar{v}(0) = \mu_1/\mu_0\) in our equations.
    Now, for this set of parameters we would like our expression \eqref{eq:CCZ:U} to coincide with the smooth characteristic velocity given in \cite[Equation (2.8)]{carrillo2016pressureless}.
    However, their characteristics \(\eta(t,x)\), satisfying \(\del_t \eta(t,x) = u(t,\eta(t,x))\) and \(\eta(0,x) = x\), are parametrized by the initial position \(x \in \R\) rather than the material coordinate \(m \in [0,1]\) for our \(X(t,m)\).
    Since we are in the smooth case, we have the identity \(M(t,\eta(t,x)) = M^0(x)\), and so evaluating \eqref{eq:CCZ:U} at \(m = M^0(x)\) yields the characteristic velocity parametrized by \(x\).
    In addition we have the identities \(X^0(M^0(x)) = x\) and \(V^0(M^0(x)) = v_0(x)\) which hold \(\rho_0\)-a.e., and so we plug into \eqref{eq:CCZ:U} to find
    \begin{align*}
      U(t,M^0(x)) &= \frac{\mu_1}{\mu_0} \e^{-t} + \left( v_0(x) - \frac{\mu_1}{\mu_0}\right) \e^{-t/2} \cos\left(\frac{\sqrt{4\mu_0-1}}{2}t\right) \\
      &\quad+ \frac{2}{\sqrt{4\mu_0-1}} \left[ \mu_0 \left( 2M^0(x) - 1 + \bar{x}(0) - x\right) - \frac12\left(v_0(x) - \frac{\mu_1}{\mu_0}\right)  \right] \e^{-t/2} \sin\left(\frac{\sqrt{4\mu_0-1}}{2}t\right),
    \end{align*}
    which is exactly the formula we are after, since we have the relations
    \begin{equation*}
      \mu_0 M^0(x) = \int_{-\infty}^x \tilde{\rho}_0(y)\dee y, \qquad \mu_0 \bar{x}(t) = \int_\R y \tilde{\rho}(t,y)\dee y.
    \end{equation*}
    Hence, we are in agreement with \cite{carrillo2016pressureless} as long as the solutions remain classical, which is as far as their analysis goes.
  \end{rem}
  
  Next, with the Lagrangian and entropy solution framework in hand, we aim to provide well-defined solutions beyond the breakdown of classical solutions, and for this we give some examples.
  
  \subsubsection{A simple (computable) case}
  Let us, as in Section \ref{ss:BGSW}, take \(X^0(m) = m-\frac12\), but, as explained below, we will for simplicity take \(V^0(m) = -\left(m-\frac12\right)\).
  Then it is clear that the initial center of mass is \(\bar{x}(0) = 0\) and the initial total momentum, or average velocity, is \(\bar{v}(0) = 0\), which simplifies the flux function a bit.
  To simplify even more, we let the external force be zero, \(\beta = 0\).
  Computing \(\Xcal^0(m)\) we find
  \begin{equation*}
    \Xcal^0(m) = \int_0^m X^0(\omega)\dee \omega = \frac{m^2 -m}{2},
  \end{equation*}
  which leads to the flux function
  \begin{align*}
    \Ucal(t,m) &= \Vcal^0(m) \e^{-\kappa t}\left[ \cos\left(\sigma t\right) - \frac{\kappa}{\sigma} \sin\left(\sigma t\right) \right] + \frac{\lambda^2}{2}\left(m^2-m\right) \e^{-\kappa t} \frac{\sin(\sigma t)}{\sigma} \\
    &= -\frac12 \left(m^2-m\right)\e^{-\kappa t}\left[ \cos\left(\sigma t\right) - \frac{\kappa+\lambda^2}{\sigma} \sin\left(\sigma t\right) \right].
  \end{align*}
  Then the prescribed velocity \(U = \pdiff{}{m}\Ucal\) is
  \begin{equation*}
    U(t,m) = -\left(m-\frac12\right) \e^{-\kappa t}\left[ \cos\left(\sigma t\right) - \frac{\kappa+\lambda^2}{\sigma} \sin\left(\sigma t\right) \right]
  \end{equation*}
  which we integrate to find the free trajectory
  \begin{align*}
    Y(t,m) &= X^0(m) + V^0(m) \e^{-\kappa t} \frac{\sin\left(\sigma t\right)}{\sigma} - \frac{m^2-m}{2} \e^{-\kappa t} \left[\cos\left(\sigma t\right) + \frac{\kappa}{\sigma} \sin\left(\sigma t\right)\right] \\
    &= \left(m-\frac12\right) \left[ 2 - \e^{-\kappa t}\left( \cos\left(\sigma t\right) + \frac{1+\kappa}{\sigma} \sin\left(\sigma t\right) \right) \right].
  \end{align*}
  Since \(V^0\) is decreasing, the flux function is initially concave, and will periodically alternate between being concave and convex in between the times \(t\) satisfying \(\cot(\sigma t) = \frac{\kappa+\lambda^2}{\sigma}\), which coincide with the times when the free trajectory \(Y(\cdot,m)\) has its extremal amplitudes.
  Moreover, since mass concentrates when the free trajectories are no longer increasing, we see that either all or no mass concentrates depending on the function
  \begin{equation*}
    c(t) \coloneqq 2 - \e^{-\kappa t}\left( \cos\left(\sigma t\right) + \frac{1+\kappa}{\sigma} \sin\left(\sigma t\right) \right),
  \end{equation*}
  which attains its minimum \(c(\tau_*)\) at \(t = \tau_*\), where
  \begin{equation*}
    \tau_* = \arctan\left(\frac{\sigma}{\kappa+\lambda^2}\right), \qquad c(\tau_*) = 2 - \exp\left(-\frac{\kappa}{\sigma}\arctan\left(\frac{\sigma}{\kappa+\lambda^2}\right)\right)\frac{\sqrt{\lambda^2+2\kappa+1}}{\lambda}.
  \end{equation*}
  If \(c(\tau_*)\) is nonnegative, there is no mass concentration, except possibly at the instant \(t = \tau_*\) if \(c(\tau_*) = 0\).
  However, in this case, the velocity of the free trajectories will all be zero, and thus tangential to one another, and the Lagrangian solution will still be given by the smooth curves given by the free trajectory \(Y(t,m)\).
  On the other hand, if \(c(\tau_*) < 0\), then there is some \(\tau_0 \in (0,\tau_*)\) such that \(c(\tau_0) = 0\) and all mass collides at the origin.
  We note that in the undamped case \(\kappa = 0\) we explicitly find
  \begin{equation*}
    \tau_0 = \arcsin\left(\frac{\lambda \left(2-\sqrt{1-3\lambda^2}\right)}{1+\lambda^2}\right),
  \end{equation*}
  which also emphasizes that mass concentration can only occur for \(0 < \lambda < 1/\sqrt{3}\) in the undamped case.
  From time \(t= \tau_0\) onward we can no longer use the smooth free trajectory found by solving
  \begin{equation*}
    \dot{X}(t,m) = U(t,m) = V^0(m) + \int_0^t \left[ -2\kappa U(s,m) +\lambda^2 \left( 2m-1 - X(s,m) \right) \right] \dee s,
  \end{equation*}
  since \(X(t,m) = 0\) due to the concentration at the origin.
  Instead we take the velocity \(U(\tau_0,m)\) as initial condition for the equation
  \begin{equation*}
    U(t,m) = U(\tau_0,m) + \int_{\tau_0}^t \left[ -2\kappa U(s,m) +\lambda^2 \left( 2m-1 \right) \right] \dee s
  \end{equation*}
  and find
  \begin{align*}
    U(t) &= \e^{-2\kappa(t-\tau_0)}U(\tau_0,m) + \lambda^2 \e^{-2\kappa t}(t-\tau_0) (2m-1) \\
    &= \left(m-\frac12\right) \e^{-2\kappa t} \left[2\lambda^2(t-\tau_0) -\e^{\kappa \tau_0} \left( \cos(\sigma \tau_0) - \frac{\kappa}{\sigma}\sin(\sigma\tau_0) \right) \right].
  \end{align*}
  Since \(U(\tau_0,m)\) is decreasing in \(m\), meaning \(\Ucal(\tau_0,m)\) is concave, we see that the mass will remain at the origin until \(U(t,m)\) becomes increasing at times \(t > \tau_1\) where
  \begin{equation*}
    \tau_1 = \tau_0 + \frac{\e^{\kappa \tau_0}}{2\lambda^2} \left( \cos(\sigma \tau_0) - \frac{\kappa}{\sigma}\sin(\sigma\tau_0) \right),
  \end{equation*}
  and we note that \(U(\tau_1,m) = 0\).
  From this point on we solve the original second order, inhomogeneous equation with initial data \(X(\tau_1,m) = U(\tau_1,m) = 0\) to find
  \begin{equation*}
    X(t,m) = \left(2m- 1\right) \left[1 - \e^{-\kappa (t-\tau_1)} \cos\left(\sigma(t-\tau_1)\right) \right].
  \end{equation*}
  Note that even in the undamped case \(\kappa = 0\), although some kinetic energy is lost at the first collision \(t = \tau_0\), for the remaining collisions happening at \(t = \tau_1 + n 2\pi/\lambda\) for \(n \in \N\), the velocities are tangential and there is no loss of energy.
  On the other hand, in the damped case we observe that \(X(t,m) \to 2m-1\) as \(t \to \infty\), corresponding to the density \(\rho(t,x)\) tending to \(\frac12 \chi_{[-1,1]}(x)\).
  
  In total, the Lagrangian solution is given by 
  \begin{equation*}
    X(t,m) = \begin{cases}
      Y(t,m), & c(\tau_*) \ge 0, \\
      \begin{cases}
        Y(t,m), & 0 \le t < \tau_0, \\
        0, & \tau_0 \le t < \tau_1, \\
        (2m-1) \left[1 - \e^{-\kappa (t-\tau_1)} \cos\left(\sigma(t-\tau_1)\right) \right], & t \ge \tau_1
      \end{cases} & c(\tau_*) < 0.
    \end{cases}
  \end{equation*}
  As we have shown, this solution then also corresponds to the entropy solution since it is in accordance with the characterization of the tangent cone in \eqref{eq:TC2}.
  Like in the previous section, we can compare this with the trajectory \(\tilde{X} = \Proj_{\Kscr}Y\) defined by the projection formula \eqref{eq:sticky:X}, which in this case can be expressed as
  \begin{equation*}
    \tilde{X}(t,m) = \sgn\left(m-\frac12\right) \max\left\{ \sgn\left(m-\frac12\right)Y(t), 0 \right\} = \left(m-\frac12\right) \max\left\{c(t), 0\right\}.
  \end{equation*}
  See Figures \ref{fig:linearV0undamped} and \ref{fig:linearV0damped} for illustrations of the undamped and damped cases respectively.
  Note that for our initial velocity, \(\lambda > 0\) needs to be rather small to have concentration, meaning the repulsive force is quite weak and we see in Figures \ref{fig:linearV0undamped}\textsc{(b)} and \ref{fig:linearV0damped}\textsc{(b)} that the mass stays concentrated for a while before spreading out again.
  
  \begin{figure}
    \begin{subfigure}[b]{0.495\linewidth}
      \includegraphics[width=1\linewidth]{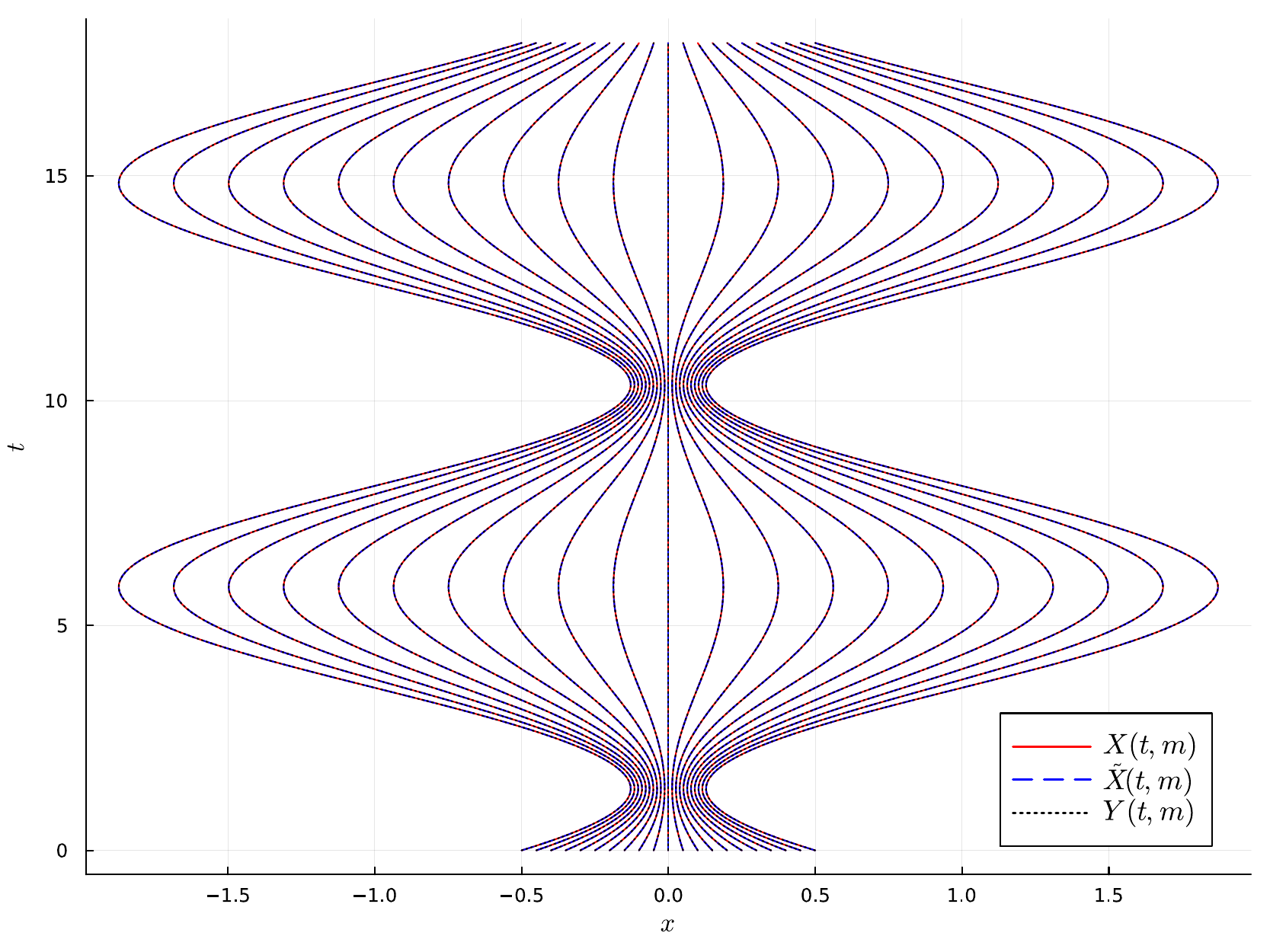}
      \caption{\(\lambda = 0.7\)}
    \end{subfigure}
    \begin{subfigure}[b]{0.495\linewidth}
      \includegraphics[width=1\linewidth]{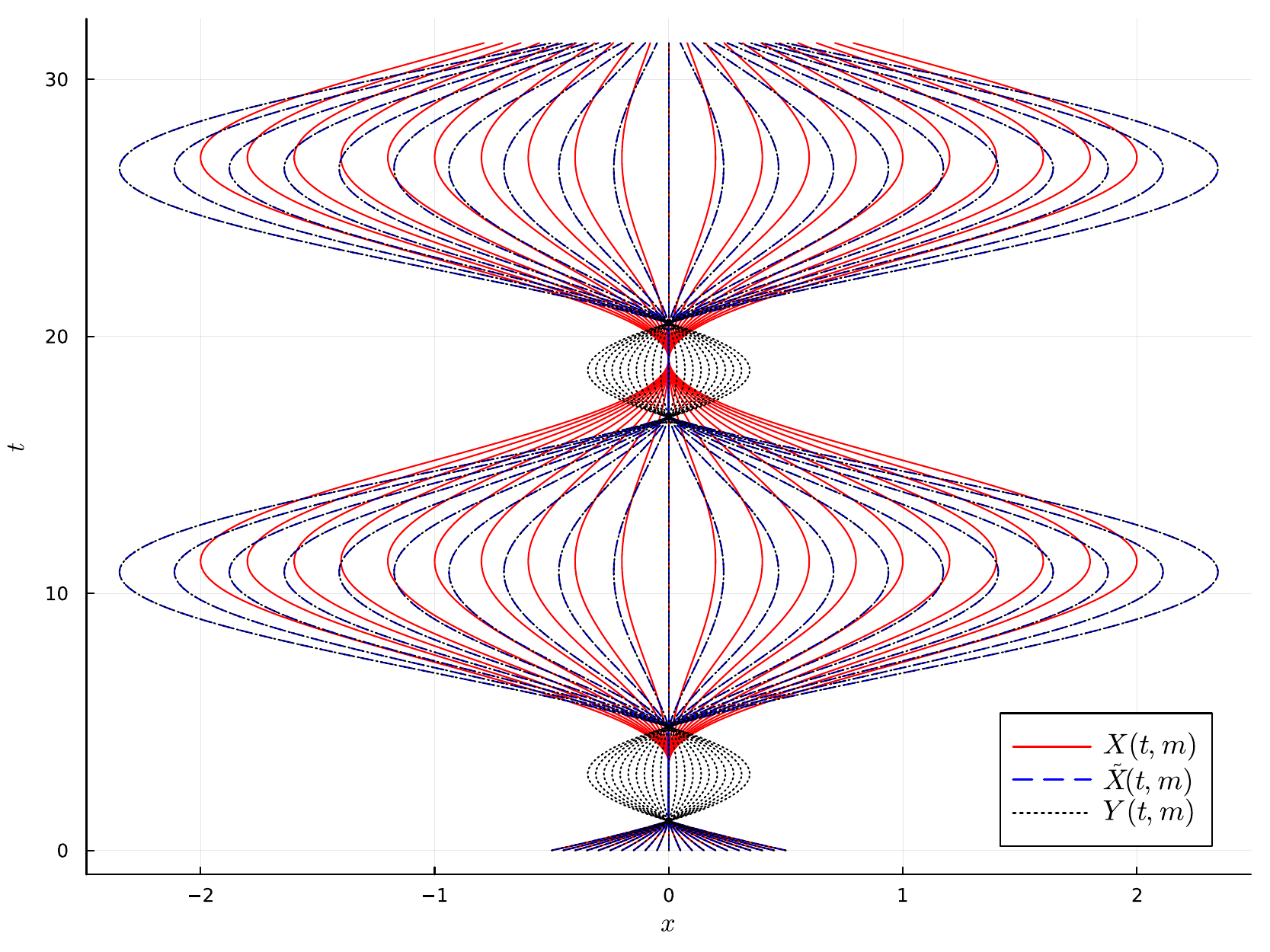}
      \caption{\(\lambda = 0.5\)}
    \end{subfigure}
    \caption{The trajectories \(X(t,\theta_i)\), \(\tilde{X}(t,\theta_i) = (\Proj_{\Kscr} Y)(t,\theta_i)\) and \(Y(t,\theta_i)\) for \(\theta_i = \frac{i}{20}\), \(i \in \{0,1,\dots,20\}\), \(\kappa = 0\) and sub- \textsc{(a)} and supercritical \textsc{(b)} values of \(\lambda\).}
    \label{fig:linearV0undamped}
  \end{figure}
  \begin{figure}
    \begin{subfigure}[b]{0.495\linewidth}
      \includegraphics[width=1\linewidth]{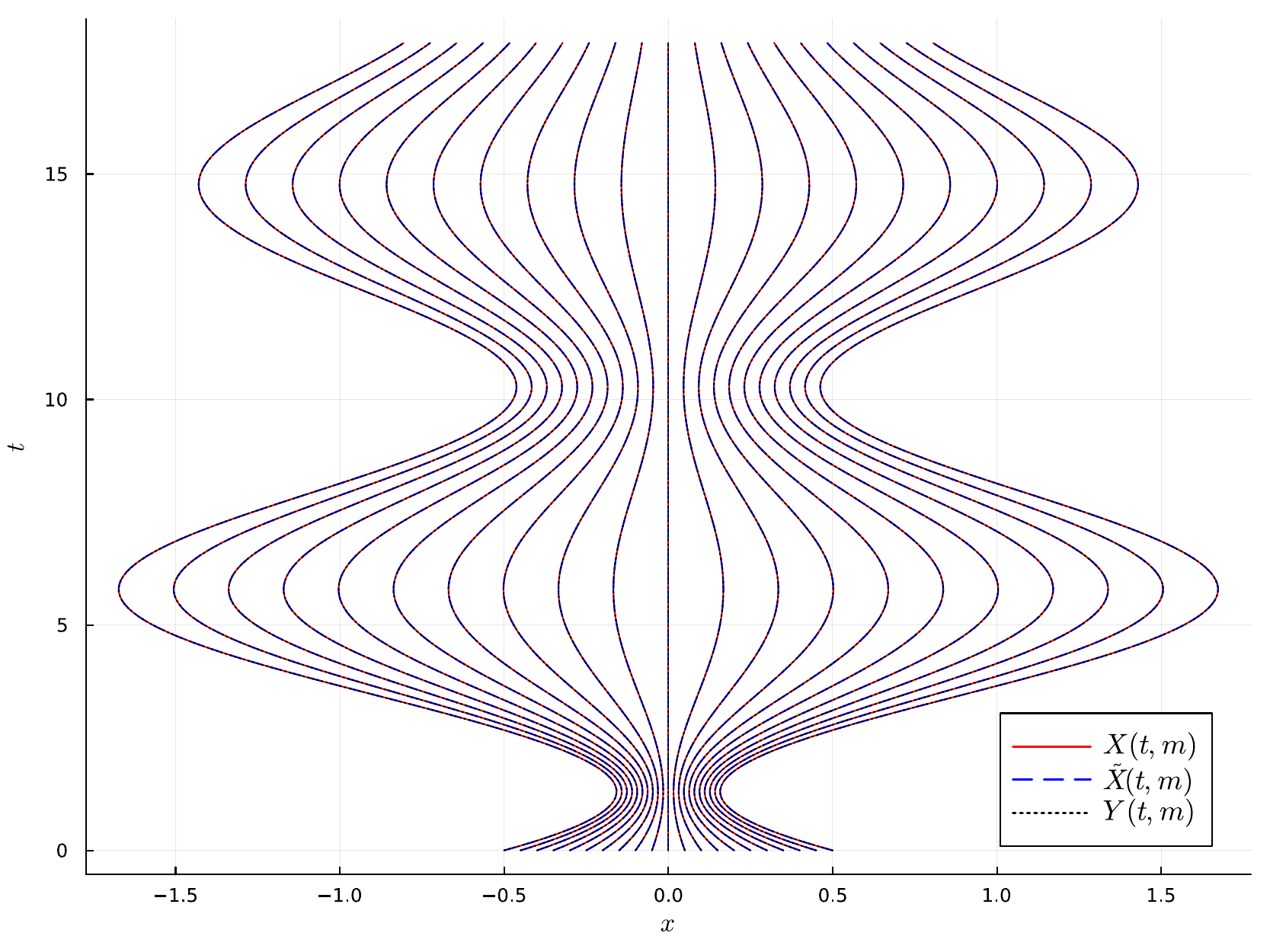}
      \caption{\(\lambda = 0.7\)}
    \end{subfigure}
    \begin{subfigure}[b]{0.495\linewidth}
      \includegraphics[width=1\linewidth]{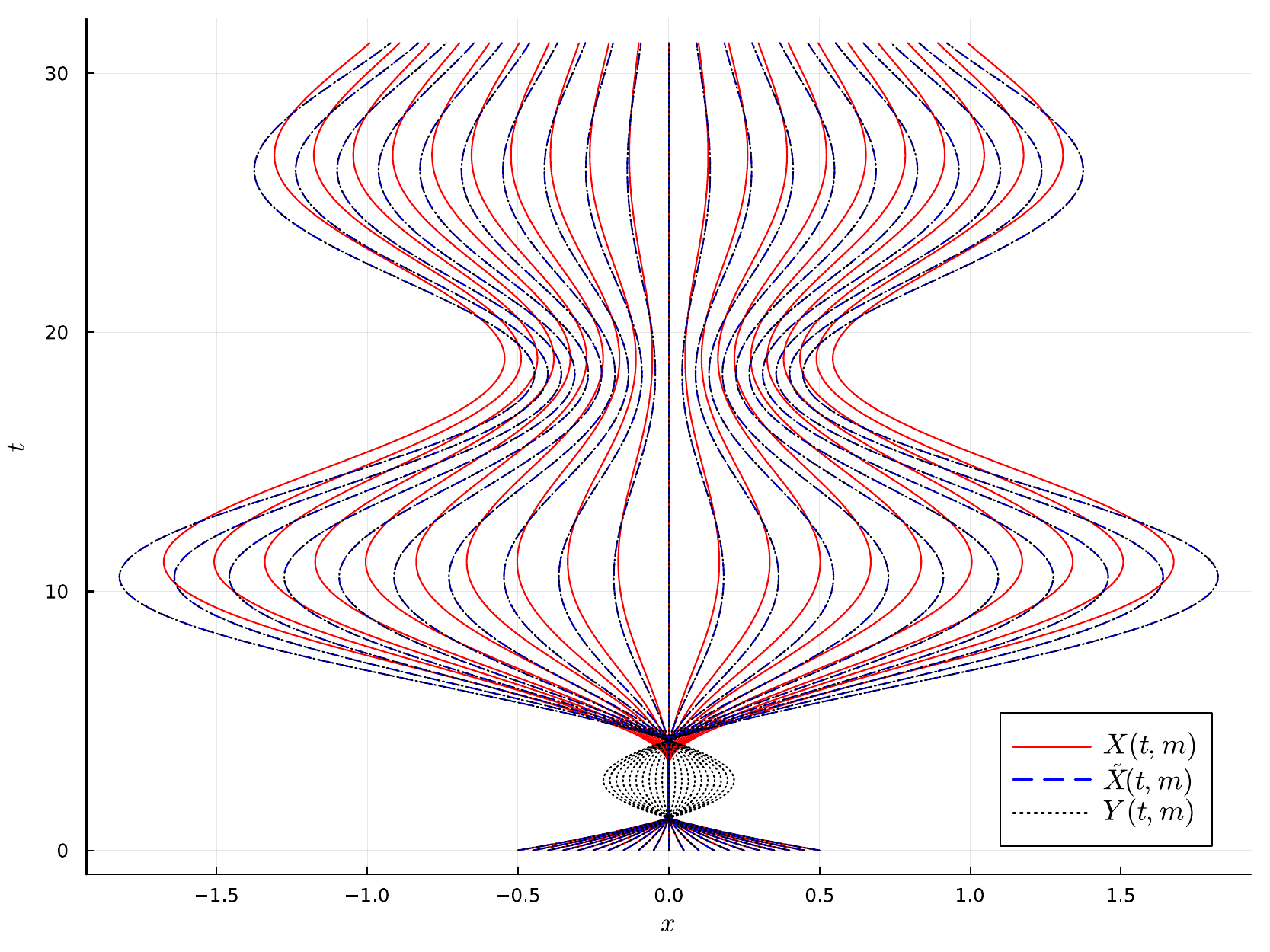}
      \caption{\(\lambda = 0.5\)}
    \end{subfigure}
    \caption{The trajectories \(X(t,\theta_i)\), \(\tilde{X}(t,\theta_i) = (\Proj_{\Kscr} Y)(t,\theta_i)\) and \(Y(t,\theta_i)\) for \(\theta_i = \frac{i}{20}\), \(i \in \{0,1,\dots,20\}\), \(\kappa=0.05\), and sub- \textsc{(a)} and supercritical \textsc{(b)} values of \(\lambda\).}
    \label{fig:linearV0damped}
  \end{figure}
  
  \subsubsection{A more involved example with particles}
  The choice of initial data \(X^0\) and \(V^0\) in the previous example simplified the analysis, since the convexity properties of the flux function \(m \mapsto \Ucal(t,m)\), or equivalently, the monotonicity properties of the prescribed velocity \(m \mapsto U(t,m)\) depended only on time \(t\).
  The smoothness of \(V^0\) then meant that the inter-particle forces had to be rather weak, i.e., \(\lambda\) small enough, for concentration to happen.
  Choosing instead initial data as in the repulsive Euler--Poisson case before, namely \(X^0(m) = m-\frac12\) and \(V^0(m) = -\sgn(m-\frac12)\), the discontinuous \(V^0\) ensures that mass will immediately concentrate near the origin.
  However, in this case, the convexity properties will depend on both \(t\) and \(m\), causing far more complex dynamics where mass will concentrate also outside the origin.
  
  Therefore, to simplify the analysis we will consider a particle approximation of this problem, where the uniformly distributed mass on \((-\frac12, \frac12)\) is first divided into four intervals \((\frac{i-3}{4}, \frac{i-2}{4})\) for \(i \in \{1,2,3,4\}\) for then to be concentrated at the midpoints \(x_i^0 = \frac{2i-5}{8}\) with \(m_i = \frac14\).
  Moreover we define the cumulative quantity \(\theta_i = \frac{i}{4}\) for \(i \in \{0,\dots,5\}\), where we note the relations \(x_i^0 = \frac12 \left(\theta_{i-1}+\theta_i -1\right)\) and \(v_i^0 = -\sgn(\theta_{i-1}+\theta_i -1)\) for \(i \in \{1,2,3,4\}\).
  As in the previous examples, the symmetry of the initial data ensures that the center of mass is always located at the origin, simplifying our equations, and so the initial particle dynamics are given by
  \begin{equation*}
    \dot{x}_i = v_i, \qquad
    \dot{v}_i = -2\kappa v_i + \lambda^2 \left( \theta_{i-1} + \theta_i -1 -x_i \right).
  \end{equation*}
  This we may integrate to find the ``smooth'' trajectories
  \begin{equation*}
    x_i(t) = x_i^0 \left[ 2 - \e^{-\kappa t} \left( \cos(\sigma t) + \frac{\kappa}{\sigma}\sin(\sigma t) \right) \right] + v_i^0 \e^{-\kappa t} \frac{\sin(\sigma t)}{\sigma}.
  \end{equation*}
  Initially we see that the inter-particle repulsion dominates the confinement force which attracts the particles to the center of mass at the origin.
  For \(\lambda > \kappa > 0\) small enough, the repulsive and damping forces will be too weak to stop the particles from colliding.
  By symmetry, particles \(i = 2, 3\) will collide first, at \(t = \tau_{1}^-\).
  Now these particles will be stuck at the origin for some time, where their prescribed velocities satisfy
  \begin{equation*}
    \dot{u}_i + 2\kappa u_i = \lambda^2 \left(\theta_{i-1} +\theta_i -1\right), \qquad u_i(\tau_1^-) = v_i(\tau_1^-),
  \end{equation*}
  which for \(t \ge \tau_1^-\) integrates to
  \begin{equation*}
    u_i(t) = \e^{-2\kappa(t-\tau_1^-)} v_i(\tau_1^-) + \lambda^2 \left(\theta_{i-1} +\theta_i -1\right) \frac{1 - \e^{-2\kappa(t-\tau_1^-)}}{2\kappa}, \qquad i \in \{2,3\}.
  \end{equation*}
  Since the particles collided at \(t = \tau_1^-\), we know that \(v_2(\tau_1^-) > v_3(\tau_1^-) = -v_2(\tau_1^-)\), and so \(v_i(t) = (\Proj_{T_{\mathbf{x}(t)}\K^n}\mathbf{u})_i = 0\) for \(i = 2, 3\) until \(u_3(t) \ge u_2(t)\).
  However, the second pair of particles hits the origin at time \(t = \tau_1^+ > \tau_1^-\) before this can happen.
  At this time we will have the chain of inequalities
  \begin{equation*}
    u_{1}(\tau_1^+) > u_2(\tau_1^+) > u_3(\tau_1^+) > u_4(\tau_1^+),
  \end{equation*}
  where by symmetry \(u_4(\tau_1^+) = -u_1(\tau_1^+)\) and \(u_3(\tau_1^+) = -u_2(\tau_1^+)\).
  Analogously, the prescribed velocity of the second particle pair for \(t \ge \tau_1^+\) is
  \begin{equation*}
    u_i(t) = \e^{-2\kappa(t-\tau_1^+)} v_i(\tau_1^+) + \lambda^2 \left(\theta_{i-1} +\theta_i -1\right) \frac{1 - \e^{-2\kappa(t-\tau_1^+)}}{2\kappa}, \qquad i \in \{1,4\}.
  \end{equation*}
  The Ole\u{\i}nik E-condition will be violated when \(u_1(t)\) changes sign at the time \(t = \tau_2^+\), and then \(v_i(t) = (\Proj_{T_{\mathbf{x}(t)}\K^n}\mathbf{u}(t))_i = u_i(t)\) for \(i = 1, 4\), releasing them from the origin with initial data \(x_i(\tau_2^+) = 0\) and \(v_i(\tau_2^+) = 0\),
  yielding trajectories
  \begin{equation*}
    x_i(t) = \left(\theta_{i-1} + \theta_i - 1 \right) \left[1 - \e^{-\kappa (t-\tau_2^+)} \left( \cos(\sigma (t-\tau_2^+)) + \frac{\kappa}{\sigma} \sin(\sigma (t-\tau_2^+))\right) \right]
  \end{equation*}
  At the later time \(t = \tau_2^- > \tau_2^+\), \(u_2(t)\) will change sign, and particles \(i = 2, 3\) will be released with corresponding trajectories
  \begin{equation*}
    x_i(t) = \left(\theta_{i-1} + \theta_i - 1 \right) \left[1 - \e^{-\kappa (t-\tau_2^-)} \left( \cos(\sigma (t-\tau_2^-)) + \frac{\kappa}{\sigma} \sin(\sigma (t-\tau_2^-))\right) \right]
  \end{equation*}
  For weak enough damping, at a time \(t = \tau_3\) there will be a new collision between particles \(i = 1, 2\) and \(i = 3, 4\) respectively.
  Since there was a collision, one has that \(v_{i}(\tau_3) > v_{i+1}(\tau_3)\) for \(i = 1, 3\), and the particles will stick together until the Ole\u{\i}nik E-condition no longer holds, i.e., when \(u_i(t) \le u_{i+1}(t)\), which happens at \(t=\tau_4\).
  In the interval \(t \in [\tau_3, \tau_4]\), the pairs of particles will in fact move along their centers of mass, which is the average of the two previous trajectories.
  In the damped case, \(t = \tau_4\) is the last time the particles are in contact, see Figure \ref{fig:4part_d}.
  Indeed, for \(t \ge \tau_4\), the trajectory of the \(i\)\textsuperscript{th} particle for \(i \in \{1,2,3,4\}\) is given by  
  \begin{align*}
    x_i(t) &= x_i(\tau_4) + \left( \theta_{i-1} + \theta_i -1 -x_i(\tau_4) \right) \left[1 - \e^{-\kappa (t-\tau_4)} \left( \cos(\sigma(t-\tau_4)) + \frac{\kappa}{\sigma}\sin(\sigma(t-\tau_4)) \right) \right] \\
    &\quad+ v_i(\tau_4) \e^{-\kappa (t-\tau_4)}\frac{\sin(\sigma(t-\tau))}{\sigma}.
  \end{align*}
  As we can see, for \(\kappa > 0\), the particle trajectories will asymptotically tend to the equilibrium state
  \begin{equation*}
    \lim\limits_{t\to\infty}x_i(t) \coloneqq \theta_{i-1} + \theta_i -1, \qquad \lim\limits_{t\to\infty} v_i(t) = 0,
  \end{equation*}
  which is the four-particle approximation of \(\rho = \frac12 \chi_{(-1,1)}\) and \(v \equiv 0\).
  In the undamped case \(\kappa=0\), the trajectories of particles 1 and 2, and similarly 3 and 4, will still periodically meet at times \(t = \tau_4 + 2\pi n/\sigma\), but tangentially, so there is no loss of energy, see Figure \ref{fig:4part_ud}.
  
  \begin{figure}
    \includegraphics[width=0.8\linewidth]{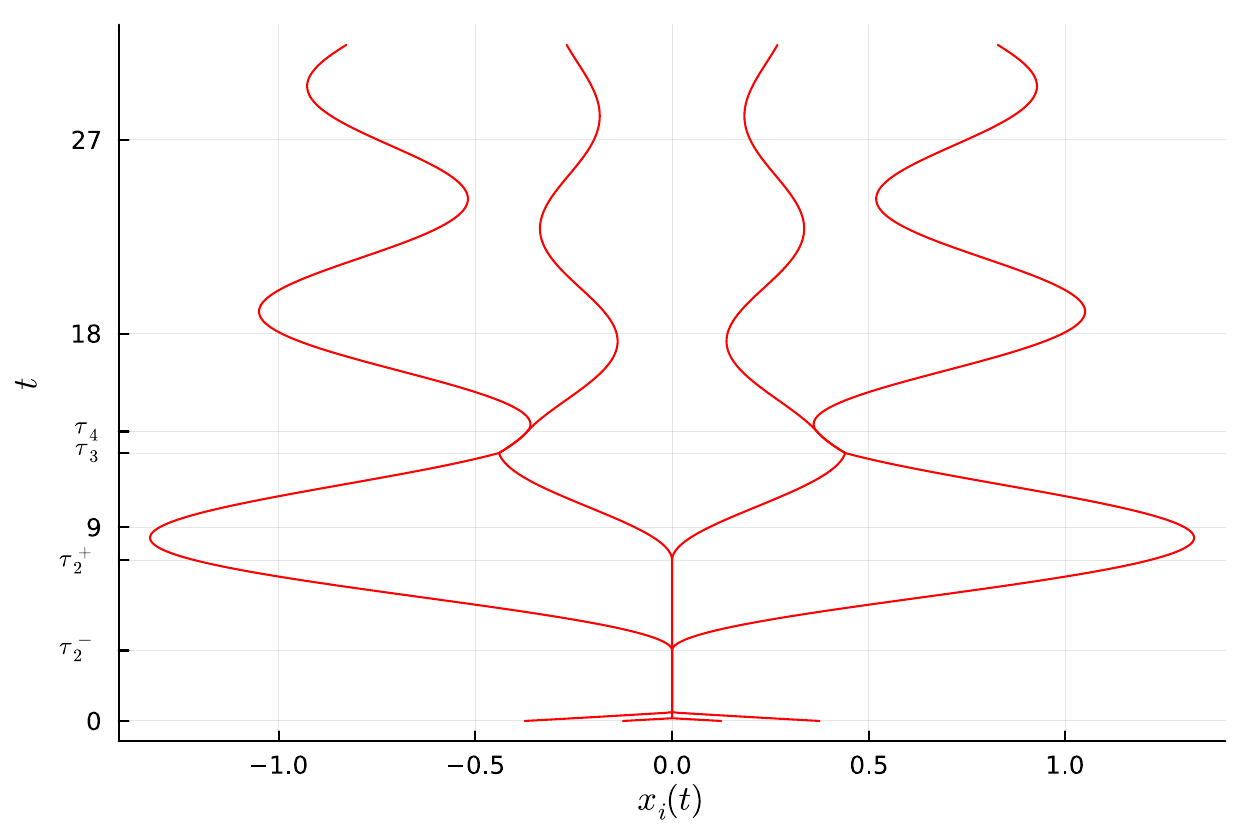}
    \caption{Poisson repulsion and quadratic confinement with \(\lambda = 0.6\) and damping parameter \(\kappa = 0.05\) for \(t \in [0, 6\pi/\sigma]\).} \label{fig:4part_d}
  \end{figure}
  \begin{figure}
    \includegraphics[width=0.8\linewidth]{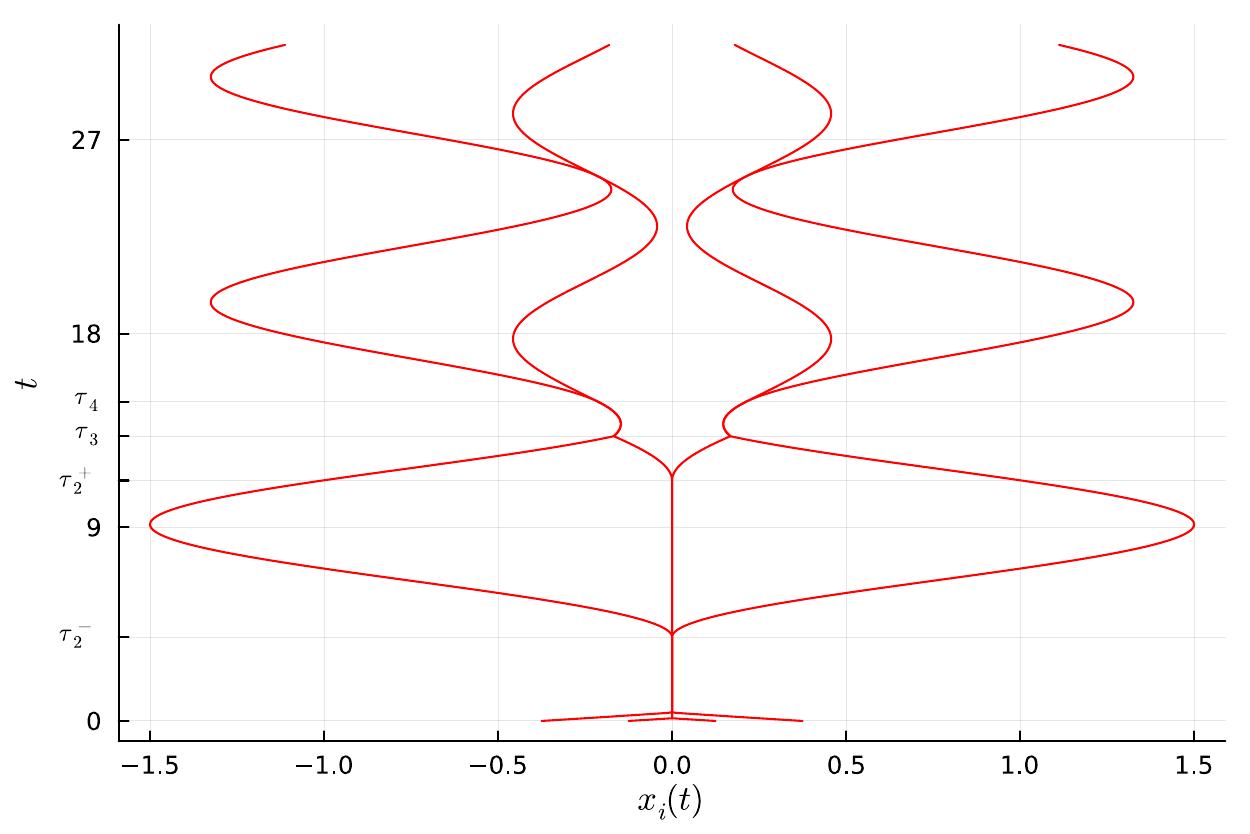}
    \caption{Poisson repulsion and quadratic confinement with \(\lambda = 0.6\) and no damping for \(t \in [0, 6\pi/\sigma]\).} \label{fig:4part_ud}
  \end{figure}
  
  The undamped case should be compared to the numerical example considered in \cite[Section 1.4]{brenier2013sticky} where a time-discrete scheme is applied to the repulsive Euler--Poisson system with neutralizing background and periodic domain; in this case the resulting equations correspond to the repulsive Poisson and quadratic confinement potential above.
  There it is observed that after an initial period of trajectories colliding and dissipating energy, at some point the trajectories only touch tangentially and no more energy is dissipated.
  
  \subsubsection{Asymptotic behavior of the damped system}
  In our previous examples we saw that the presence of damping drove our system to a steady equilibrium state in the form of a uniform probability distribution on a finite interval.
  This asymptotic behavior was also shown to happen at an exponential rate in \cite{carrillo2016pressureless} for solutions which remain classical.
  We will show that this is also the case for our Lagrangian, or entropic, solutions.
  Let us as above consider \(\lambda > \kappa > 0\), this is not strictly necessary and all the other cases can also be dealt with in the same manner. In Lagrangian variables, the system reads
  \begin{equation}\label{eq:EPquad:DI}
    \dot{X}(t) + \del I_\Kscr(X(t)) \ni U(t), \qquad \dot{U}(t) = -2\kappa U(t) + \lambda^2 \left(2m-1 + \int_\Omega X(t,\omega)\dee\omega -X(t) \right).
  \end{equation}
  Observe that \(\int_\Omega X(t,\omega)\dee\omega\) is simply the center of mass \(\bar{x}(t)\) expressed in Lagrangian variables, and in our case \(\beta = 0\) we find from \eqref{eq:CCZ:means}
  \begin{equation}\label{eq:asymptotes}
    \bar{x}_\infty \coloneqq \lim\limits_{t\to\infty}\bar{x}(t)  = \bar{x}(0) + \frac{\bar{v}(0)}{2\kappa}, \qquad \bar{v}_\infty = \lim\limits_{t\to\infty} \bar{v}(t) = 0.
  \end{equation}
  This suggests the steady equilibrium state with \(\dot{X} = V = 0\) and
  \begin{equation}\label{eq:steadyX}
    X = 2m-1 + \int_\Omega X^0(m)\dee m + \frac{1}{2\kappa}\int_\Omega V^0(m)\dee m = 2m -1 + \int_\R x \dee\rho_0(x) + \frac{1}{2\kappa}\int_\R v_0(x)\dee\rho_0(x),
  \end{equation}
  which, with appropriate rescaling of \(\rho\) and parameter choices, agrees with the asymptotics for global-in-time classical solutions in \cite[Theorem 4.1]{carrillo2016pressureless}.
  We will show that the damping term indeed forces any initial data \(X^0 \in \Kscr\), \(V^0 \in \Lp{2}(0,1)\) to this equilibrium state exponentially fast.
  
  \begin{thm}[Asymptotic behavior]
    Let \((\rho,v)\) be the solution of \eqref{eq:EPQ} corresponding to the Lagrangian, or entropy, solution for initial data \((\rho_0, v_0)\) satisfying \eqref{eq:init}. Then the asymptotic solution is given by
    \begin{equation*}
      \rho_\infty(x) \coloneqq \lim\limits_{t\to\infty}\rho(t,x) = \frac12 \chi_{(-1,1)}(x-\bar{x}_\infty), \qquad v_\infty(x) \coloneqq \lim\limits_{t\to\infty}v(t,x) \equiv 0
    \end{equation*}
    with \(\bar{x}_\infty\) defined in \eqref{eq:asymptotes}. Moreover, this convergence is exponentially fast in the \(\Tscr_2\)-metric \eqref{eq:TP2met} at a rate proportional to the damping factor \(\gamma = 2\kappa\).
  \end{thm}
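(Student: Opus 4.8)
The plan is to work throughout in the Lagrangian formulation \eqref{eq:EPquad:DI} and to exhibit a Lyapunov functional for the deviation from the steady state \eqref{eq:steadyX}. Since $\beta = 0$, the center-of-mass motion \eqref{eq:CCZ:means} is known explicitly and converges exponentially, with $\bar{x}(t) - \bar{x}_\infty = -\bar{v}(0)\e^{-\gamma t}/\gamma$ and $\bar{v}(t) = \bar{v}(0)\e^{-\gamma t}$; moreover one checks that $\int_\Omega U(t,m)\dee m = \bar{v}(t)$ along the solution. I would first \emph{decouple} this motion by setting $\hat{X}(t) \coloneqq X(t) - \bar{x}(t)$, $\hat{U}(t) \coloneqq U(t) - \bar{v}(t)$ and $\hat{V}(t) \coloneqq V(t) - \bar{v}(t)$. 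Using that $\Kscr$ and the cones $T_X\Kscr$, $N_X\Kscr$ are invariant under adding a spatially constant field, together with the variational characterisation of the metric projection, one verifies that $\hat{X}(t) \in \Kscr$ with $\Omega_{\hat X(t)} = \Omega_{X(t)}$, that $\hat V(t) = \Proj_{T_{\hat X(t)}\Kscr}\hat U(t)$, and that \eqref{eq:EPquad:DI} reduces to the autonomous differential inclusion
\begin{equation*}
  \dot{\hat X}(t) + \del I_\Kscr(\hat X(t)) \ni \hat U(t), \qquad \dot{\hat U}(t) = -2\kappa\,\hat U(t) + \lambda^2\bigl((2m-1) - \hat X(t)\bigr),
\end{equation*}
whose unique stationary solution is $\hat X_\infty(m) = 2m-1$, $\hat V_\infty \equiv 0$.

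Next, writing $Y(t) \coloneqq \hat X(t) - (2m-1)$ so that $\dot Y = \hat V$ and $\dot{\hat U} = -2\kappa\hat U - \lambda^2 Y$, I would study the functional
\begin{equation*}
  \mathcal L(t) \coloneqq \tfrac12\norm{\hat U(t)}_{\Lp{2}(\Omega)}^2 + \tfrac{\lambda^2}{2}\norm{Y(t)}_{\Lp{2}(\Omega)}^2 + \epsv\,\ip{Y(t)}{\hat U(t)},
\end{equation*}
where $\epsv>0$ is small; as in the energy estimates of Section~\ref{ss:BGSW} the kinetic term uses the \emph{prescribed} velocity $\hat U$ (recall $\norm{\hat V}_{\Lp{2}(\Omega)} \le \norm{\hat U}_{\Lp{2}(\Omega)}$), and for $\epsv < \min\{1,\lambda^2\}$ the functional $\mathcal L$ is comparable to $\norm{\hat U}_{\Lp{2}(\Omega)}^2 + \norm{Y}_{\Lp{2}(\Omega)}^2$. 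Differentiating $\mathcal L$ along the solution — legitimate because $\hat X$ is locally Lipschitz, so $t \mapsto \norm{Y(t)}_{\Lp{2}(\Omega)}^2$ and $t\mapsto \ip{Y(t)}{\hat U(t)}$ are locally Lipschitz with a.e.\ derivatives given by the expected formulas, while $\hat U \in C^1([0,\infty);\Lp{2}(\Omega))$ — three facts enter: (a) the damping contributes $-2\kappa\norm{\hat U}_{\Lp{2}(\Omega)}^2$; (b) the differential inclusion contributes $-\lambda^2\ip{\hat U - \hat V}{Y} \le 0$, since $\hat U - \hat V \in N_{\hat X}\Kscr$ and testing the normal-cone inequality against the competitor $2m-1\in\Kscr$ yields $\ip{\hat U - \hat V}{Y}\ge 0$ (the abstract counterpart of the sign obtained via Lemma~\ref{lem:NX} and \eqref{eq:signavg} in Section~\ref{ss:BGSW}); and (c) Moreau's polar-cone decomposition gives $\ip{\hat V}{\hat U} = \norm{\hat V}_{\Lp{2}(\Omega)}^2$. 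Collecting terms and applying Young's inequality to $-2\epsv\kappa\ip{Y}{\hat U}$ with parameter $\delta>0$,
\begin{equation*}
  \diff{}{t}\mathcal L(t) \le \bigl(-2\kappa + \epsv + \epsv\kappa\delta\bigr)\norm{\hat U(t)}_{\Lp{2}(\Omega)}^2 + \epsv\bigl(\kappa/\delta - \lambda^2\bigr)\norm{Y(t)}_{\Lp{2}(\Omega)}^2 .
\end{equation*}

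Choosing $\delta = 2\kappa/\lambda^2$ makes the $\norm{Y}^2$-coefficient equal to $-\epsv\lambda^2/2$, and then taking $\epsv>0$ small enough (of size $\asymp \min\{\kappa,1,\lambda^2\}$) makes the $\norm{\hat U}^2$-coefficient negative; combined with the comparability of $\mathcal L$ with $\norm{\hat U}_{\Lp{2}(\Omega)}^2 + \norm{Y}_{\Lp{2}(\Omega)}^2$ this yields $\diff{}{t}\mathcal L(t) \le -c\,\gamma\,\mathcal L(t)$ for a constant $c>0$ depending only on $\lambda$ (the argument uses only $\kappa,\lambda>0$, hence is insensitive to the under/overdamped distinction). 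Consequently $\norm{Y(t)}_{\Lp{2}(\Omega)}^2 + \norm{\hat V(t)}_{\Lp{2}(\Omega)}^2 \le \norm{Y(t)}_{\Lp{2}(\Omega)}^2 + \norm{\hat U(t)}_{\Lp{2}(\Omega)}^2 \lesssim \e^{-c\gamma t}$. To conclude, I would reassemble $X(t) - X_\infty = Y(t) + (\bar{x}(t) - \bar{x}_\infty)$ and $V(t) = \hat V(t) + \bar{v}(t)$, so that $\norm{X(t) - X_\infty}_{\Lp{2}(\Omega)}$ and $\norm{V(t)}_{\Lp{2}(\Omega)}$ both decay at rate $\propto\gamma$ by the above together with the exponential decay of $\bar x(t)-\bar x_\infty$ and $\bar v(t)$ from \eqref{eq:CCZ:means}. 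Since $X_\infty(m) = 2m-1+\bar x_\infty$ is precisely the monotone rearrangement of $\rho_\infty = \tfrac12\chi_{(-1,1)}(\cdot-\bar x_\infty)$ and $v_\infty\circ X_\infty \equiv 0$, the isometry \eqref{eq:TP2&L2} gives $d_{\Tscr_2}\bigl((\rho(t),v(t)),(\rho_\infty,v_\infty)\bigr)^2 = \norm{X(t)-X_\infty}_{\Lp{2}(\Omega)}^2 + \norm{V(t)}_{\Lp{2}(\Omega)}^2 \lesssim \e^{-c\gamma t}$, which is the assertion.

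I expect the main obstacle to be the rigorous justification of the reduction to the centered autonomous system — in particular, that the metric projection onto $T_X\Kscr$ commutes with subtraction of the constant-in-$m$ field $\bar v(t)$, that $\mathcal L$ may be differentiated along the merely locally Lipschitz curve $X(\cdot)$, and that the constraint term indeed has the right sign at points where $\hat X$ is flat. All of these are standard within the convex-analytic framework recalled in Sections~\ref{ss:convex}--\ref{ss:LagDI} (translation invariance of $\Kscr$ and of its tangent and normal cones, right-differentiability of Lagrangian solutions, absolute continuity of $t\mapsto\norm{X(t)}_{\Lp{2}(\Omega)}^2$, and Moreau's decomposition), but they should be carried out with care; the Lyapunov computation itself is then a routine damped-oscillator estimate.
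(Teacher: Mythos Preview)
Your proposal is correct and follows essentially the same route as the paper: pass to Lagrangian variables, subtract the center-of-mass motion to obtain an autonomous inclusion, and control the deviation $Y=\hat X-(2m-1)$ via a quadratic Lyapunov functional in $(Y,\hat U)$ with a cross term, using $\hat U-\hat V\in N_{\hat X}\Kscr$ and the competitor $2m-1\in\Kscr$ to give the constraint term the right sign. The only notable difference is in the choice of coefficients: the paper takes $c_1=\lambda^2$, $c_2=\kappa$ in $\tfrac{c_1}{2}\norm{Y}^2+c_2\ip{Y}{\hat U}+\tfrac12\norm{\hat U}^2$, which makes the cross term in the derivative vanish identically and yields the sharp decay rate $\e^{-2\kappa t}=\e^{-\gamma t}$ (indeed the functional equals $\sigma^2\norm{Y}^2+\norm{\hat U+\kappa Y}^2$ in the underdamped case). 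Your generic small-$\epsv$ argument with Young's inequality gives a rate $c\gamma$ with $c$ depending on $\lambda$, which still proves the theorem as stated but is not sharp; conversely, as you note, your argument is insensitive to the under/overdamped distinction.
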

  \begin{proof}
    Let us start by considering the evolution of the energy, which is the sum of a kinetic and a potential part
    \begin{equation*}
      \frac12 \int_\R v(t,x)^2\dee\rho(t,x) + \frac{\lambda^2}{2} \iint_{\R\times\R} \left( \frac12(x-y)^2 - \abs{x-y} \right)\dee\rho(t,y)\dee\rho(t,x),
    \end{equation*}
    and which in Lagrangian variables reads
    \begin{equation*}
      E(t) \coloneqq \frac12 \int_\Omega V(t,m)^2\dee m + \frac{\lambda^2}{2} \iint_{\Omega\times\Omega} \left( \frac12(X(t,m)-X(t,\omega))^2 - \abs{X(t,m)-X(t,\omega)} \right)\dee\omega\dee m.
    \end{equation*}
    However, there is the issue that we do not really have an evolution equation for the velocity \(V\).
    Fortunately, we have the relation \(V = \Proj_{T_X\Kscr} U\), meaning \(\norm{V}_{\Lp{2}(0,1)} \le \norm{U}_{\Lp{2}(0,1)}\), which allows us to bound \(E(t)\) from above by
    \begin{equation*}
      \hat{E}(t) \coloneqq \frac12 \int_\Omega U(t,m)^2\dee m + \frac{\lambda^2}{2} 	\iint_{\Omega\times\Omega} \left( \frac12(X(t,m)-X(t,\omega))^2 - \abs{X(t,m)-X(t,\omega)} \right)\dee\omega\dee m,
    \end{equation*}
    and we have an evolution equation for \(U\).
    Differentiating this quantity we find
    \begin{align*}
      \diff{^+}{t}\hat{E}(t) &= \int_\Omega U(t,m) \left[ -2\kappa U(t,m) + \lambda^2 \left(2m-1 +\int_\Omega X(t,\omega)\dee\omega -X(t,m) \right) \right]\dee m \\
      &\quad+ \frac{\lambda^2}{2} \int_\Omega \int_\Omega \left( X(t,m) - X(t,\omega) - \sgn\left(X(t,m)-X(t,\omega)\right) \right) (V(t,m)-V(t,\omega))\dee\omega\dee m \\
      &= -2\kappa \int_\Omega U(t,m)^2\dee m + \lambda^2\int_\Omega U(t,m) \left(2m-1 +\int_\Omega X(t,\omega)\dee\omega -X(t,m) \right)\dee m \\
      &\quad+ \lambda^2 \int_\Omega V(t,m) \left( X(t,m) - \int_\Omega X(t,\omega)\dee\omega - \int_\Omega \sgn\left(X(t,m)-X(t,\omega)\right)\dee\omega \right)\dee m \\
      &= -2\kappa \int_\Omega U(t,m)^2\dee m - \lambda^2 \int_\Omega \left(U(t,m)-V(t,m)\right) \left( X(t,m) - \int_\Omega X(t,\omega)\dee\omega \right)\dee m  \\
      &\quad+ \lambda^2 \int_\Omega V(t,m) \left(2m-1 - \Proj_{\Hscr_{X(t)}}(2m-1)\right)\dee m + \lambda^2 \int_\Omega \left(U(t,m)-V(t,m)\right) \left(2m-1\right)\dee m,
    \end{align*}
    where we again have used \eqref{eq:signavg}.
    In the final expression, the second and third integral vanish due to Lemma \ref{lem:LagSol:props} and \eqref{eq:intPerp}; the second because of \(U-V \in N_X\Kscr \subset \Hscr_{X}^\perp\), while the third vanishes for a.e.\ \(t\) since \(V(t) \in \Hscr_{X(t)}\) for a.e.\ \(t\).
    We note that \(U-V \in N_X\Kscr\) while \(2m-1\) is nondecreasing, which allow us to combine the characterizations \eqref{eq:TC1} and \eqref{eq:TC2} to deduce that the final term is nonpositive, yielding
    \begin{equation*}
      \diff{^+}{t}\hat{E}(t) \le -2\kappa \int_\Omega U(t,m)^2\dee m.
    \end{equation*}
    From this we deduce \(E(t) \le \hat{E}(t) \le \hat{E}(0) = E(0)\).
    Moreover, this implies that \(V\) decays exponentially: indeed,
    \begin{align*}
      \frac12 \int_\Omega U(t,m)^2\dee m \le \hat{E}(t) \le \hat{E}(0) - 2\kappa \int_0^t \int_\Omega U(s,m)^2\dee m \dee s
    \end{align*}
    which by Gr\"{o}nwall's inequality implies
    \begin{equation*}
      \norm{V(t)}_{\Lp{2}(0,1)}^2 \le \norm{U(t)}_{\Lp{2}(0,1)}^2 \le 2 \e^{-4\kappa t} \hat{E}(0) = 2 \e^{-4\kappa t} E(0).
    \end{equation*}
    This suggests that a damped system, i.e., with \(\kappa > 0\), will approach an equilibrium state at an exponential rate proportional to \(\kappa\).
    To simplify notation we introduce
    \begin{align*}
      \tilde{X}(t,m) &= X(t,m)-2m+1 - \bar{x}(t), \\
      \tilde{V}(t,m) &= V(t,m) - \bar{v}(t), \quad \tilde{U}(t,m) = U(t,m) - \bar{v}(t),
    \end{align*}
    where we note that the center of mass \(\bar{x}(t) = \bar{X}(t)\) and total momentum \(\bar{v}(t) = \bar{V}(t)\) satisfy
    \begin{equation*}
      \diff{^+}{t} \bar{X}(t) = \bar{V}(t), \qquad \bar{v}(t) = \int_0^1 V(t,\omega)\dee\omega = \int_0^1 U(t,\omega)\dee\omega.
    \end{equation*}
    Here \((X-\tilde{X},V-\tilde{V})\) corresponds to a uniform distribution centered at the center of mass for the system, and according to \eqref{eq:asymptotes} this converges uniformly pointwise in \(\Omega\) to the proposed steady state \eqref{eq:steadyX}.
    From the fact that \( -\tilde{X}, \tilde{V} \in T_{X}\Kscr \), \eqref{eq:TC1}, \eqref{eq:intPerp}, \( \tilde{U}-\tilde{V} = U - V \in N_X\Kscr \) and \( \int_0^1 \tilde{X}\dee m = 0 = \int_0^1 \tilde{U}\dee m \) we derive the following relations,
    \begin{align*}
      \int_0^1 \tilde{X} \tilde{V}\dee m &\le \int_0^1 \tilde{X} \tilde{U} \dee m = \int_0^1 \tilde{X} U \dee m, \\
      \int_0^1\tilde{V}\tilde{U}\dee m &= \int_0^1 \tilde{V}^2 \dee m = \int_0^1 V^2 \dee m - \bar{v}^2 \le \int_0^1 U^2\dee m - \bar{v}^2 = \int_0^1 \tilde{U}^2\dee m = \int_0^1 \tilde{U} U \dee m.
    \end{align*}
    The first identity in the second line holds for a.e.\ \(t\) since \(\tilde{V}(t) \in \Hscr_{X(t)}\) for a.e.\ \(t\), alternatively it holds for all times with an inequality since \(\tilde{V} \in T_X\Kscr\).
    From these relations, the evolution equations for \((\tilde{X},\tilde{V})\) and \eqref{eq:EPquad:DI},   
    we then, omitting the subscript \(\Lp{2}(0,1)\) from norms and inner products for brevity, obtain
    \begin{equation*}
      \diff{^+}{t} \left(\frac{c_1}{2} \norm{\tilde{X}}^2 + c_2 \ip{\tilde{X}}{\tilde{U}} + \frac12\norm{\tilde{U}}^2 \right)
      \le -c_2 \lambda^2 \norm{\tilde{X}}^2 + (c_1-2\kappa c_2 -\lambda^2) \ip{\tilde{X}}{\tilde{U}} + (c_2-2\kappa) \norm{\tilde{U}}^2
    \end{equation*}
    for \(c_1, c_2 \ge 0\).
    Choosing \(c_1 = \lambda^2\) and \(c_2 = \kappa\), we can apply Gr\"{o}nwall's inequality to find
    \begin{equation*}
      \lambda^2\norm{\tilde{X}(t)}^2 + 2\kappa \ip{\tilde{X}(t)}{\tilde{U}(t)} + \norm{\tilde{U}(t)}^2 \le \left(\lambda^2\norm{\tilde{X}(0)}^2 + 2\kappa \ip{\tilde{X}(0)}{\tilde{U}(0)} + \norm{\tilde{U}(0)}^2\right) e^{-2\kappa t},
    \end{equation*}
    or, recalling \(\lambda^2 = \kappa^2 + \sigma^2\),
    \begin{equation*}
      \sigma^2 \norm{\tilde{X}(t)}^2 + \norm{\tilde{U}(t) + \kappa \tilde{X}(t)}^2 \le \left( \sigma^2 \norm{\tilde{X}(0)}^2 + \norm{\tilde{U}(0) + \kappa \tilde{X}(0)}^2\right) \e^{-2\kappa t}.
    \end{equation*}
    That is, both \(\tilde{X}\) and \(\tilde{V}\) tend exponentially fast to zero in the \(\Lp{2}\)-norm with rates proportional to \(\gamma = 2\kappa\).
    Then, as a Lagrangian solution satisfies \(V(t,m) = v(t,X(t,m))\) for a.e.\ \(t\), we use \eqref{eq:TP2&L2} to conclude.
  \end{proof}
  
  \subsubsection{Lyapunov stability of undamped system}
  In the undamped cases of our examples, we have seen that the solutions did not tend to a steady state.
  One could then instead ask, if two solutions are close in some sense, will they remain close?
  
  To this end, take two solutions \((X_1, V_1)\) and \((X_2,V_2)\) with corresponding prescribed velocities \(U_1\) and \(U_2\).
  We further introduce the averaged quantities \(\bar{x}_i\), \(\bar{v}_i\), as well as \(\hat{X}_i \coloneqq X_i - \bar{x}_i\), \(\hat{V}_i \coloneqq V_i - \bar{v}_i\), \(\hat{U}_i \coloneqq U_i - \bar{v}_i\) for \(i\in\{1,2\}\).
  From the equations we then get 
  \begin{equation*}
    \diff{^+}{t} \left( \hat{X}_1-\hat{X}_2 \right) = \hat{V}_1 - \hat{V}_2, \qquad \diff{}{t} \left(\hat{U}_1-\hat{U}_2\right) = - \lambda^2 \left(\hat{X}_1 - \hat{X}_2\right),
  \end{equation*}
  leading to
  \begin{align*}
    \diff{^+}{t} \left( \lambda^2 \norm{\hat{X}_1-\hat{X}_2}^2 + \norm{\hat{U}_1-\hat{U}_2}^2 \right) &= 2\lambda^2 \ip{\hat{X}_1-\hat{X}_2}{\hat{V}_1-\hat{V}_2} - 2\lambda^2 \ip{\hat{U}_1-\hat{U}_2}{\hat{X}_1-\hat{X}_2} \\
    &= -2\lambda^2 \ip{\hat{X}_1-\hat{X}_2}{U_1-V_1 - (U_2-V_2)} \\
    &= 2\lambda^2 \left( \ip{\hat{X}_1}{U_2-V_2} + \ip{\hat{X}_2}{U_1-V_1} -\sum_{i=1}^2 \ip{\hat{X}_i}{U_i-V_i} \right).
  \end{align*}
  Now, since \(\hat{X}_i \in \Hscr_{X_i}\) and \(U_i-V_i \in N_{X_i}\Kscr \subset \Hscr_{X_i}^\perp\), we have \( \ip{\hat{X}_i}{U_i-V_i} = 0 \).
  For the first cross-term, we note that \(\hat{X}_1(t,\cdot)\) is nondecreasing, so it belongs to the tangent cone \(T_{X_2}\Kscr\) by Lemma \ref{eq:TC2}, and then it follows from \eqref{eq:TC1} that
  \(\ip{\hat{X}_1}{U_2-V_2} \le 0\).
  The same argument applies to the second cross-term, and we have shown
  \begin{equation*}
    \diff{^+}{t} \left( \lambda^2 \norm{\hat{X}_1-\hat{X}_2}^2 + \norm{\hat{U}_1-\hat{U}_2}^2 \right) \le 0,
  \end{equation*}
  that is,
  \begin{equation*}
    \lambda^2 \norm{\hat{X}_1(t)-\hat{X}_2(t)}^2 + \norm{\hat{U}_1(t)-\hat{U}_2(t)}^2 \le \lambda^2 \norm{\hat{X}_1(0)-\hat{X}_2(0)}^2 + \norm{\hat{V}_1(0)-\hat{V}_2(0)}^2.
  \end{equation*}
  In particular, comparing \((X_1(t), V_1(t)) = (X(t), V(t))\) with the `equilibrium state' \( (\tilde{X}_2(t), \tilde{V}_2(t)) = (2m-1+\bar{x}(t),\bar{v}(t)) \), writing \(\tilde{X} = \hat{X}-(2m-1)\), we have the stability result
  \begin{equation*}
    \lambda^2 \norm{\tilde{X}(t)}^2 + \norm{\hat{V}(t)}^2 \le \lambda^2 \norm{\tilde{X}(t)}^2 + \norm{\hat{U}(t)}^2 \le \lambda^2 \norm{\tilde{X}(0)}^2 + \norm{\hat{V}(0)}^2.
  \end{equation*}
  In particular, for this undamped case we have \(\bar{v}(t) \equiv \bar{v}(0)\) such that \(\bar{x}(t) = \bar{x}(0) + t \bar{v}(0)\), and so the above result can be restated as
  \begin{equation*}
    \lambda^2 \norm{\tilde{X}(t)}^2 + \norm{V(t)}^2 \le \lambda^2 \norm{\tilde{X}(t)}^2 + \norm{U(t)}^2 \le \lambda^2 \norm{\tilde{X}(0)}^2 + \norm{V(0)}^2.
  \end{equation*}
  Let us finally mention that this stability estimate resembles the result in \cite[Theorem 3.6]{brenier2013sticky}, and we point it out here as a consequence of the previous results.
  
  \subsubsection*{Acknowledgments}
  JAC was supported by the Advanced Grant Nonlocal-CPD (Nonlocal PDEs for Complex Particle Dynamics: Phase Transitions, Patterns and Synchronization) of the European Research Council Executive Agency (ERC) under the European Union's Horizon 2020 research and innovation programme (grant agreement No. 883363) and the EPSRC grant number EP/V051121/1.
  STG was supported by the Research Council of Norway through grant no.~286822, \textit{Wave Phenomena and Stability --- a Shocking Combination (WaPheS)}, and the Swedish Research Council under grant no.~2021-06594 while in residence at Institut Mittag-Leffler in Djursholm, Sweden during the fall semester of 2023.
  
  

\end{document}